\tikzset{shorten <>/.style={shorten >=#1,shorten <=#1}}
\newcounter{nodemaker}
\tikzset{Rightarrow/.style={double equal sign distance,>={Implies},->},
triple/.style={-,preaction={draw,Rightarrow}},
quadruple/.style={preaction={draw,Rightarrow,shorten >=0pt},shorten >=1pt,-,double,double
distance=0.2pt}}
\tikzset{%
    symbol/.style={%
        draw=none,
        every to/.append style={%
            edge node={node [sloped, allow upside down, auto=false]{$#1$}}}
    }
}
\newcommand{\dbtilde}[1]{\accentset{\approx}{#1}}
\newtheorem{theorem}{Theorem}[subsection]
\newtheorem*{theorem*}{Theorem}
\newtheorem{proposition}[theorem]{Proposition}
\newtheorem{corollary}[theorem]{Corollary}
\newtheorem{corollary'}[theorem]{Corollary}
\newtheorem{lemma}[theorem]{Lemma}
\theoremstyle{definition}
\newtheorem{definition}[theorem]{Definition}
\theoremstyle{definition}
\newtheorem{remark}[theorem]{Remark}
\theoremstyle{definition}
\newtheorem{division}[theorem]{}
\title{The spectral site revisited}
\author{Axel Osmond\thanks{Istituto Grothendieck, Mondovì, Italy}}
\date{March 2023}
\begin{document}

\maketitle

\begin{abstract}

We give the site-theoretic account of the spectral construction as first introduced by Coste. We provide a detailed examination of the geometric properties of the spectrum, in particular what classes of topoi it produces when applied to the different classes of objects and maps in a geometry. We also give a new proof of the spectral adjunction for set-valued models and the classifying property of the structure sheaf. We also discuss the opposition between ``gros and petit" spectra and relation with some canonical classifying sites. We then describe the general case of a modelled topos. We also prove that the spectrum of a locally modelled topos is local over its base and deduce a new proof of the spectral adjunction for the general case. We finally give several examples from the world of propositional algebras, and in particular recover classifying topoi of first order theories in various fragment of logic as spectra of some suitably modelled topoi. 

     \smallskip \noindent \textbf{Keywords.} Spectrum, geometry, (locally) modelled topos, spectral site, fibered site. \\
\smallskip \noindent \textbf{MSC2020.} 14A20, 06D50, 18F10, 03G30.
\end{abstract}

 {
   \hypersetup{linkcolor=black}
   \tableofcontents
  }

\newpage

\section*{Introduction}

This work, which derives directly from chapter 7 and 9 of the author's thesis \cite{osmond:tel-03609605}, provides an in-depth description of the geometric properties of the spectrum associated with a geometry. Several manner exists to construct spectra, either an abstract way through 2-dimensional universal properties as in \cite{Cole} or \cite{dubuc2000axiomatic}, focusing on its classifying purpose though without actually any insight on the spatial aspects of the spectrum, or more a more concrete pointset manner as in \cite{Diers}; this paper deals somewhat with a middle-way, the site-theoretic approach of the construction of the spectrum. Such an approach combine both the advantages of giving a spatial insight and encoding the universal properties one expect from the spectrum. Though we start from the recipe given in \cite{Coste} (and the akin approach in \cite{Anel}) when defining the spectral site, most of our the work is done in a new manner, developing many aspects that were left mostly implicit our sources. \\

A first section gives all required notions about geometries and a few generalities about factorization systems and geometric extensions as we are going to use throughout this paper. Most of this is treated in a more detailed way in a upcoming companion paper, also in the first three chapters of \cite{osmond:tel-03609605}.\\

In section 2 we give the spectral site of set-valued models, which gathers the finitely presented etale maps under it, and prove it possesses the desired universal property of the spectrum: in particular it induces a restricted spectral adjunction \cref{biadjunction for set-models}. Then we provide a pseudolimit decomposition \cref{pro-etale spectrum} for the spectrum of arbitrary etale maps, and give some geometric properties of the spectra, proving in particular that the spectrum turns etale maps into etale geometric morphisms (\cref{spectrum of fp-etale maps is etale}) and local objects into local topoi (\cref{Spec of local object is local}). We also give a careful examination of the universal property of the structure sheaf and its associated \emph{generic etale map} and \emph{generic local unit}, from which we deduce a restricted spectral adjunction for set-valued models. We also give presentation sites for topoi classifying either etale maps and (etale) maps toward a local object without fixing the domain, and discuss also the relation between gros and petit topos.  \\


In section 3, we provide a detailed account of the spectral site of an arbitrary modelled topos. 
We also use fibrational techniques to prove the structure sheaf of the spectrum to be a local object. We also describe the geometric and sheaf data associated to the canonical fibration, which will be proven to be the unit of the spectral adjunction. \\

Section 4 is devoted to proving the pseudofunctoriality of the spectrum: though this aspect was somewhat swept under the rug in most sources, this problem is actually highly non trivial and requires a large amount of work, involving some techniques of extensions of models along the equivalence between a sheaf topos and its own category of sheaves for its canonical topology, and splitting the problem between the \emph{horizontal and vertical} morphisms of modelled topoi as they were distinguished at \cref{factorization of morphisms of modelled topoi}. \\ 

Section 5 is devoted to proving the spectral adjunction in the general case, for arbitrary modelled topoi, combining the two previous sections. The main lemma is to prove the existence of a retraction of the unit for locally modelled topoi (see \cref{geometric part of the counit}), which generalizes also the localness of the spectrum of a locally modelled topos, but this time \emph{over its base topos} -- see \cref{spectrum of a locally modelled topoi is local over its base}. \\

Section 6 describes the functoriality of this method relative to transformations of geometries, and how we get a \emph{comparison} functor between the spectra associated with two different geometries related by a transformation. \\

Finally, section 7 gather examples from Stone-like duality, some of them were not until now very well known as instance of the spectral construction. We also exploit those examples to construct classifying topoi of theory in various fragment of first order logic as spectra of modelled topoi consisting of the classifyier for the theory of objects (or its multi-sorted analogs) together with some internal Lindenbaum-Tarksi algebra eligible to a spectral construction or another. Finally we present the subobject hyperdoctrines associated to syntactic categories of first order theories as instances of modelled topoi and describe how their spectra are related to the classifying topoi of those theories through local geometric morphisms. 

\section*{Ackonowledgements}

The author is very grateful to Dorette Pronk and Eduardo Dubuc for their reviewing efforts of the thesis from which most of this paper originates. He would like also to thanks Mathieu Anel for numerous discussions and observations that were crucial in our comprehension of this theory, and he ackowledges that this work is very indebted to \cite{Anel}. He should also thanks Olivia Caramello for discussions about various aspect of topos theory involved in this work. Finally he also thanks Morgan Rogers for helpful comments on several points of this topic.  
\newpage

\section{Geometries and (locally) modelled topoi}

In this section we recapitulate the essential about geometries (aka. admissibility structures), from which we are going to build spectral functors in the main part of the paper. It is primarily a concentrate of chapters 1, 3 and 6 of \cite{osmond:tel-03609605} treating respectively of left generated factorization systems, geometries proper and categories of modelled topoi.  

We first expose, as a preliminary definition, the following version of the notion of \emph{geometry}, which encapsulates the categorical and logical data from which will be extracted the geometric content of each spectral construction. Though several possible formulations exist, emphasizing either the logical, categorical or geometric aspect, we chose here to follow \cite{Coste} definition (though called here \emph{triple} or also \emph{admissibility structure}) though using \cite{lurie2009derived} terminology for its evocative virtue. 

\begin{definition}\label{geometry}
A \emph{geometry}\index{geometry} is the data of\begin{itemize}
    \item a finite limit theory $\mathbb{T}$ 
    \item a \emph{saturated class} $\mathcal{V}$ in $ \mathbb{T}[\mathcal{S}]$
    \item a Grothendieck pretopology $J$ on $\mathcal{C}_\mathbb{T}$ whose covers are made duals of maps in $\mathcal{V}$.
\end{itemize} 
\end{definition}

This definitions involves three interdependent data, which will be unfolded : \begin{itemize}
    \item the finite limit theory will code for a class of \emph{ambient objects}; in particular the category of its set-valued model will be a locally finitely presentable category, but we are going to consider more generally the category of all possible models regardless of their base topos;
    \item the saturated class (whose definition will be given in the next subsection) will be a class of finitely presented maps from which one will generate a factorization system in categories of models of the theory of ambient objects in any Grothendieck topoi;
    \item the Grothendieck pretopology can also be visualized as a geometric extension of the theory of ambient objects; its models will be called \emph{local objects}. The crucial condition of being generated in the saturated class will enable a special interaction between local objects and factorization.
\end{itemize}

Let us now recall briefly the content of the notions involved in this definition. First, the factorization data encoded in the saturated class:

\subsection{Left generated factorization systems}

\begin{definition}\label{saturated class}
A \emph{saturated class} in $\mathbb{T}[\mathcal{S}]$ is a set $ \mathcal{V} \subseteq \mathbb{T}[\mathcal{S}]^2_{\omega}$ of finitely presented maps such that:\begin{itemize}
    \item $\mathcal{V}$ contains isomorphisms and is stable under composition,
    \item $\mathcal{V}$ is right-cancellative
    \item $ \mathcal{V}$ is closed under finite colimits in $ { \mathbb{T}[\mathcal{S}]}^2$
    \item $\mathcal{V}$ is closed under pushouts along arbitrary maps between finitely presented objects
\end{itemize}
\end{definition} 

\begin{remark}
    From its definition, a saturated class is always small as consisting of finitely presented maps.
\end{remark}

In several sources were described process to induce from a saturated class a factorization system in a locally presentable category, as in \cite{Coste}, \cite{Anel} (also summed up in \cite{osmond:tel-03609605}[section 1.3]). This process involves the class of all left maps that are of finite presentation in the coslice under each object, which will generate arbitrary left maps out of this objects.

\begin{definition}\label{etale generator}
For any object $B$ in $\mathcal{B}$, define the \emph{etale generator at $B$}\index{etale generator} as full subcategory $ \mathcal{V}_B$ of $ B \downarrow \mathcal{B}$ consisting of morphisms $ n : B \rightarrow C$ such that there exists some $ l : K \rightarrow K'$ in $\mathcal{V}$ and $ a : K \rightarrow B$ exhibiting $ n$ as the pushout     
\[\begin{tikzcd}
	{K} & {K'} \\
	{B} & {C}
	\arrow["{a}"', from=1-1, to=2-1]
	\arrow["{l}", from=1-1, to=1-2]
	\arrow[from=1-2, to=2-2]
	\arrow["{n}"', from=2-1, to=2-2]
	\arrow["\lrcorner"{very near start, rotate=180}, from=2-2, to=1-1, phantom]
\end{tikzcd}\]
\end{definition}

\begin{division}
The etale generator $\mathcal{V}_B$ can be shown to consists of finitely presented objects of in $B \downarrow \mathcal{B}$ and to be closed under finite colimits. Hence we know the category $ \Ind(\mathcal{V}_B)$ to be locally finitely presentable: its objects consists of maps $ l : B \rightarrow C$ under $B$ that are filtered colimits of maps in $\mathcal{V}_B$.
\end{division}
 
\begin{proposition}[{\cite{osmond:tel-03609605}[Proposition 1.1.3.2}]\label{factorization from saturated class}
Let $ \mathbb{T}$ be a finite limit theory and $\mathcal{V}$ be a saturated class in $\mathbb{T}[\mathcal{S}]$: then any arrow $ f : B \rightarrow C$ in $ \mathcal{B}$ admits a factorization 
\[ 
\begin{tikzcd}
B \arrow[rr, "f"] \arrow[rd, "{\colim \, \mathcal{V}_B\downarrow f }"'] &                                                 & C \\
                                                        & {\underset{\mathcal{V}_B\downarrow f}{\colim} \,C} \arrow[ru, "r_f"'] &       
\end{tikzcd} \]
with $ l_f$ is in $\Ind(\mathcal{V}_B)$ and $ r_f$ is in $\mathcal{V}^\perp$.
\end{proposition}

The interest of left generated factorization systems is that, being axiomatizable by a finite limit theory encoded by the small lex category $ \mathcal{V}^{\op}$, they are inherited in the categories of models in Grothendieck topoi beyond $\mathcal{S}$; moreover not only the left maps, but also the right maps behave as nicely as possible: in particular we will see that both left and right maps are stable under inverse and direct images of geometric morphisms, and that the factorizations in sheaf topoi are computed essentially component-wise. Complete proofs of this section can be found at \cite{osmond:tel-03609605}[Section 3.3.2].

\begin{division}For a left generated factorization system $(\mathcal{L}, \mathcal{R})$ generated from a saturated class $ \mathcal{V}$, the associated factorization system, the class of finitely presented left maps $ \mathcal{V}$ in $\mathbb{T}[\mathcal{S}]_{\omega}$ is dual to a class of morphisms in the syntactic category $ \mathcal{C}_\mathbb{T}$. That is, an arrow $ n : K_\phi \rightarrow K_\psi$ (with $ \phi$, $\psi$ the presentation formulas of the domain and codomains) in $\mathcal{V}$ corresponds to an arrow $ [\theta_n(\overline{x}, \overline{y})]_\mathbb{T} : \{ \overline{y}, \psi \} \rightarrow  \{ \overline{x}, \phi \}$, which, as a $\mathbb{T}$-provably functional formula, should be seen as a function symbol coding for an operation. \end{division}

\begin{division}
The previous items were stated in the context of a locally finitely presentable category $\mathcal{B}$: we now abstract ourselves from this and work at the level of the underlying finite limit theory $\mathbb{T}$. From the definition of a saturated class, the category $ \mathcal{V}^{\op}$ has finite limits, hence codes itself for a finite limit theory which admits as classifying topos $ \widehat{\mathcal{V}^{\op}}= [ \mathcal{V}, \mathcal{S}]$ which we denote as $\mathcal{S}[\mathcal{L}] $. In particular this allows us to define for each Grothendieck topos $\mathcal{E}$ a class of arrows $ \mathcal{L}[\mathcal{E}]$ in $\mathbb{T}[\mathcal{E}]$ as 
\[   \mathcal{L}[\mathcal{E}]\simeq \Geom\big{[}\mathcal{E}, \mathcal{S}[\mathcal{L}] \big{]} \simeq \Lex[\mathcal{V}^{\op}, \mathcal{E}] \]
\end{division}

As well as it is sufficient to test the orthogonality property of right maps between set-valued models against only the finitely presented etale maps, right maps in category of models in arbitrary topoi also can be characterized in a way only involving the saturated class rather than all left maps:

\begin{proposition}
For a Grothendieck topos $\mathcal{E}$, a morphism in $\mathbb{T}[\mathcal{E}]$, seen as a natural transformation in $\Lex[\mathcal{C}_{\mathbb{T}},\mathcal{E}]$
\[\begin{tikzcd}
	{\mathcal{C}_{\mathbb{T}}} && {\mathcal{E}}
	\arrow["{F}"{name=0}, from=1-1, to=1-3, curve={height=-12pt}]
	\arrow["{E}"{name=1, swap}, from=1-1, to=1-3, curve={height=12pt}]
	\arrow[Rightarrow, "{u}", from=0, to=1, shorten <=2pt, shorten >=2pt]
\end{tikzcd}\]
is right orthogonal to $ \mathcal{V}$ if and only if its naturality square at a morphism $ [\theta_n(\overline{x}, \overline{y})]_\mathbb{T} : \{ \overline{y}, \psi \} \rightarrow  \{ \overline{x}, \phi \}$ dual of a morphism $n$ in $\mathcal{V}$ is a pullback in $\mathcal{E}$
\[\begin{tikzcd}
	{F(\{ \overline{y}, \psi \})} & {E(\{ \overline{y}, \psi \})} \\
	{F(\{ \overline{x}, \phi \})} & {E(\{ \overline{x}, \phi \})}
	\arrow["{F([\theta_n(\overline{x}, \overline{y})]_\mathbb{T})}"', from=1-1, to=2-1]
	\arrow["{u_{\{ \overline{y}, \psi \}}}", from=1-1, to=1-2]
	\arrow["{E([\theta_n(\overline{x}, \overline{y})]_\mathbb{T})}", from=1-2, to=2-2]
	\arrow["{u_{\{ \overline{x}, \phi \}}}"', from=2-1, to=2-2]
	\arrow["\lrcorner"{very near start, rotate=0}, from=1-1, to=2-2, phantom]
\end{tikzcd}\]
In the following we denote as $ \mathcal{R}[\mathcal{E}]$ the class of right maps in $\mathcal{E}$. 
\end{proposition}

\begin{proposition}[{\cite{osmond:tel-03609605}[Proposition 3.3.2.9]}]
For a left generated factorization system, right maps are stable under inverse image: any geometric morphism $ f : \mathcal{F} \rightarrow \mathcal{E}$ induces a functor $ f^* : \mathcal{R}[\mathcal{E}] \rightarrow \mathcal{R}[\mathcal{F}]$. 
\end{proposition}

Now, recall that models of finite limit theories in sheaf topoi are sheaves of set-valued models over the base site, and morphisms between them are natural transformation. In particular we have:

\begin{proposition}[ {\cite{osmond:tel-03609605}[Proposition 3.3.2.10]} ]\label{localness is a global property}
If $ \mathcal{E} \simeq \Sh (\mathcal{C}_\mathcal{E}, J_\mathcal{E})$, then a transformation $ u : F \rightarrow E$ in $ \mathcal{E}$ is a right map if and only if  for any object $c $ in $ \mathcal{C}$, the component $ u_c : F(c) \rightarrow E(c)$ is in $\mathcal{R}$. 
\end{proposition}

Now it appears that the factorization structure $ (\mathcal{L}, \mathcal{R})$ generated from $ \mathcal{V} $ in $\mathbb{T}[\mathcal{S}]$ is inherited in the category $\mathbb{T}[\mathcal{E}]$ in any Grothendieck topos, and moreover in a functorial and point-wise way. The point-wiseness of this factorization was first established at \cite{Coste}[Theorem 3.6.3]. 

\begin{proposition}\label{factorization in topoi}
For any Grothendieck topos $\mathcal{E}$ with a standard site of presentation $ (\mathcal{C}_\mathcal{E}, J_\mathcal{E})$, we have a factorization system $(\mathcal{L}[\mathcal{E}], \mathcal{R}[\mathcal{E}]) $ in $\mathbb{T}[\mathcal{E}]$. Moreover, for any $ f: \mathcal{F} \rightarrow \mathcal{E}$ we have adjunctions
\[\begin{tikzcd}
	{\mathcal{L}[\mathcal{E}]} && {\mathcal{L}[\mathcal{F}]}
	\arrow["{f_*}"{name=0, swap}, from=1-1, to=1-3, curve={height=12pt}]
	\arrow["{f^*}"{name=1, swap}, from=1-3, to=1-1, curve={height=12pt}]
	\arrow["\dashv"{rotate=-90}, from=1, to=0, phantom]
\end{tikzcd} \quad \quad \begin{tikzcd}
	{\mathcal{R}[\mathcal{E}]} && {\mathcal{R}[\mathcal{F}]}
	\arrow["{f_*}"{name=0, swap}, from=1-1, to=1-3, curve={height=12pt}]
	\arrow["{f^*}"{name=1, swap}, from=1-3, to=1-1, curve={height=12pt}]
	\arrow["\dashv"{rotate=-90}, from=1, to=0, phantom]
\end{tikzcd}  \]
\end{proposition}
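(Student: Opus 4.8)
The plan is to bootstrap the factorization system of \cref{factorization from saturated class} on $\mathbb{T}[\mathcal{S}]$ up to $\mathbb{T}[\mathcal{E}]$, using the presheaf topos $\widehat{\mathcal{C}_\mathcal{E}}$ as an intermediate stage and then descending to sheaves. For the presheaf topos everything is objectwise: $\mathbb{T}[\widehat{\mathcal{C}_\mathcal{E}}] = [\mathcal{C}_\mathcal{E}^{\op},\mathbb{T}[\mathcal{S}]]$, $\Et[\widehat{\mathcal{C}_\mathcal{E}}] = \Lex[\mathcal{V}^{\op},\widehat{\mathcal{C}_\mathcal{E}}] = [\mathcal{C}_\mathcal{E}^{\op},\Ind(\mathcal{V})]$, and since evaluation at each object of $\mathcal{C}_\mathcal{E}$ is the inverse image of a point, the domains and codomains of étale maps, the pullback condition cutting out $\Loc[\widehat{\mathcal{C}_\mathcal{E}}]$ (which is therefore the class of objectwise-local maps, a case of the proposition just proven), and the $(\Et,\Loc)$-factorization are all computed objectwise. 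So the first step is the formal fact that a factorization system on a category induces one, objectwise, on every functor category into it: $(\Et[\widehat{\mathcal{C}_\mathcal{E}}],\Loc[\widehat{\mathcal{C}_\mathcal{E}}])$ is a factorization system on $\mathbb{T}[\widehat{\mathcal{C}_\mathcal{E}}]$, with factorizations, orthogonality and uniqueness read off pointwise from the one on $\mathbb{T}[\mathcal{S}]$.

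Next I would descend along the reflective inclusion $\mathbb{T}[\mathcal{E}]\hookrightarrow\mathbb{T}[\widehat{\mathcal{C}_\mathcal{E}}]$, whose reflector is post-composition with the sheafification $a_{J_\mathcal{E}}$ --- the inverse image of the embedding $\mathcal{E}\hookrightarrow\widehat{\mathcal{C}_\mathcal{E}}$, hence left exact. The crucial input is that $a_{J_\mathcal{E}}$ preserves both classes: it preserves local maps because they are stable under inverse image (the proposition above), and it preserves étale maps because, being the inverse image of a geometric morphism, it induces a functor between categories of models of the finite limit theory classified by $\mathcal{S}[\Et]$, hence carries $\Et[\widehat{\mathcal{C}_\mathcal{E}}]$ into $\Et[\mathcal{E}]$. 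Given this, factorizations in $\mathbb{T}[\mathcal{E}]$ are produced by factoring $f\colon F\to E$ objectwise in $\mathbb{T}[\widehat{\mathcal{C}_\mathcal{E}}]$ as $F\to G\to E$ and applying $a_{J_\mathcal{E}}$: since $F,E$ are already sheaves this gives $F\to a_{J_\mathcal{E}}G\to E$ with left factor in $\Et[\mathcal{E}]$ and right factor in $\Loc[\mathcal{E}]$ (the latter being the objectwise-local maps among sheaf morphisms, by the proposition just proven). The remaining ingredient --- orthogonality $\Et[\mathcal{E}]\perp\Loc[\mathcal{E}]$ together with the identifications ${}^{\perp}\Loc[\mathcal{E}]=\Et[\mathcal{E}]$ and $\Et[\mathcal{E}]^{\perp}=\Loc[\mathcal{E}]$ --- then follows from the general transfer of a factorization system along a reflective localization whose (here left-exact) reflector preserves both classes, the preservation of $\Loc$ being exactly what makes the right class of the transferred system coincide with $\Loc[\mathcal{E}]$; and the objectwise avatar of the corollary above shows that $a_{J_\mathcal{E}}G$ is a sheaf of local objects whenever $E$ is, which is the form needed for the spectrum.

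The step I expect to be the main obstacle is reconciling the two descriptions of the classes across this reflection. In $\mathbb{T}[\mathcal{E}]$ the codomain of an étale map and the middle object of a factorization are the \emph{sheafifications} of their objectwise counterparts, not those counterparts themselves --- equivalently, the middle-object functor of the factorization on $\mathbb{T}[\mathcal{S}]$ need not preserve the limits occurring in the descent conditions, since the left class $\Et=\Ind(\mathcal{V})$ of a factorization system is closed under colimits but in general not under limits in the arrow category. So one must check with some care that $a_{J_\mathcal{E}}$ sends étale maps to étale maps compatibly with the domain and codomain functors, and that $\Et[\mathcal{E}]$ as defined by $\Lex[\mathcal{V}^{\op},\mathcal{E}]$ really agrees with the left class produced by the transfer; once this matching is settled nothing else is delicate. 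In particular the asserted adjunctions are formal: for a geometric morphism $f\colon\mathcal{F}\to\mathcal{E}$, post-composition with the left-exact inverse image $f^{*}$ sends $\Lex[\mathcal{V}^{\op},\mathcal{E}]$ to $\Lex[\mathcal{V}^{\op},\mathcal{F}]$ and preserves the pullback condition, giving functors $\Et[\mathcal{E}]\to\Et[\mathcal{F}]$ and $\Loc[\mathcal{E}]\to\Loc[\mathcal{F}]$; post-composition with $f_{*}$, left exact and pullback-preserving because it is a right adjoint, gives functors $\Et[\mathcal{F}]\to\Et[\mathcal{E}]$ and $\Loc[\mathcal{F}]\to\Loc[\mathcal{E}]$; and the adjunction $f^{*}\dashv f_{*}$ is inherited by post-composition, yielding the two displayed adjunctions.
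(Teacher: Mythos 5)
Your proposal is correct and follows essentially the paper's own route: the paper likewise factorizes $f$ pointwise over the definition site $\mathcal{C}_\mathcal{E}$, sheafifies the resulting presheaf of middle objects, and obtains the étale and local parts from that, while the adjunctions are justified exactly as you do — both classes are models of finite limit theories ($\Lex[\mathcal{V}^{\op},-]$ and the pullback condition), hence preserved by post-composition with the lex functors $f^*$ and $f_*$, which inherit the adjunction. Your extra care about matching $\Et[\mathcal{E}]=\Lex[\mathcal{V}^{\op},\mathcal{E}]$ with the transferred left class and about orthogonality after sheafification is a welcome elaboration of steps the paper leaves implicit.
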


\begin{division}
    In the remaining part of the paper, following a mixed terminology arising from different sources, we will give special name to the left and right classes induced from the saturated class $ \mathcal{V}$ in a geometry $ (\mathbb{T}, \mathcal{V}, J)$: \begin{itemize}
    \item the left maps between set-valued $\mathbb{T}$-models will be called \emph{etale maps} and we will denote their class as $\Et$. In particular the saturated class $\mathcal{V}$ will coincide the class of finitely presented etale maps; more generally, for an arbitrary Grothendieck topos $\mathcal{E}$, the class of etale maps in $\mathbb{T}[\mathcal{E}]$ will be denoted  $ \Et[\mathcal{E}]$;
    \item the right maps between set-valued $\mathbb{T}$-models will be called \emph{local maps} and we will denote their class as $\Loc$; more generally, for an arbitrary Grothendieck topos $\mathcal{E}$, the class of local maps in $\mathbb{T}[\mathcal{E}]$ will be denoted  $ \Loc[\mathcal{E}]$.
\end{itemize}
\end{division}

\subsection{Admissibility}

Now we turn to the second component of the notion of geometry, the choice of a Grothendieck topology coding for some geometric extension. 

\begin{division}Recall that a model of $\mathbb{T}_J$ is an object in $\mathbb{T}[\mathcal{S}]$ which is local relative to the dual in $\mathbb{T}[\mathcal{S}]_{\omega}$ of covering families in $J$. But the families dual to cover in $J$ can be extended to the whole category $ \mathbb{T} [\mathcal{S}]$ as follows:
\end{division}

\begin{definition}\label{generalized covers}
Define the \emph{generalized $J$-covers}\index{generalized $J$-covers} as consisting, for each $ B$ in $\mathbb{T}[\mathcal{S}]$, of the families $({n_i} : B {\rightarrow} B_i)_{i \in I}$ such that there exists some $ a : K \rightarrow B$ and some family $(l_i : K \rightarrow K_i)_{i \in I}$ dual to a $J$-cover such that for each $i \in I$ one has a pushout 
\[\begin{tikzcd}
	K & B \\
	{K_i} & {B_i}
	\arrow["{l_i}"', from=1-1, to=2-1]
	\arrow["a", from=1-1, to=1-2]
	\arrow["{n_i}", from=1-2, to=2-2]
	\arrow[from=2-1, to=2-2]
	\arrow["\lrcorner"{anchor=center, pos=0.125, rotate=180}, draw=none, from=2-2, to=1-1]
\end{tikzcd}\]
\end{definition}

Then the property of injectiveness of local objects relatively to $J$-covers extends automatically to those extended covers (which was observed also in \cite{Anel}[Lemma 22]):

\begin{division}
An object $ A$ in $\mathbb{T}[\mathcal{S}]$ is $J$-local if and only if for any object $ B$ in $\mathbb{T}[\mathcal{S}]$, any arrow $ f : B \rightarrow A$ and any generalized $J$-cover $ (n_i : B \rightarrow B_i)_{i \in I}$, there is a factorization of $f$ for some $i \in I$ 
\[ \begin{tikzcd}
       & B \arrow[]{ld}[swap]{n_i} \arrow[]{rd}{n_j} \arrow[]{rr}{f} & & A \\ B_i & ... & B_j \arrow[dashed]{ru}[swap]{\exists}
       \end{tikzcd} \]
In fact $A$ is $J$-local if and only if for any generalized cover under it $ (n_i : A \rightarrow B_i)_{i \in I}$, one has a retraction  
\[ \begin{tikzcd}
       & A \arrow[]{ld}[swap]{n_i} \arrow[]{rd}{n_j} \arrow[equal]{rr}{} & & A \\ B_i & ... & B_j \arrow[dashed]{ru}[swap]{\exists}
       \end{tikzcd} \]
\end{division}

\begin{definition}
For an object $ B$ in $\mathbb{T}[\mathcal{S}]$, a \emph{local unit}\index{local unit} of $B$ is an etale map $ n : B \rightarrow A$ toward a $J$-local object. 
\end{definition}

\begin{remark}
Beware that local units are not required to be finitely presented in general. Etale maps will play the role of saturated compacts of the spectral topology, while finitely presented etale maps will play the role of basic compact opens from which we are going to construct the spectral topology. While this is not apparent in $\mathbb{T}[\mathcal{S}]$ which is ``on the algebraic side", this is more intuitive in the opposite category $\mathbb{T}[\mathcal{S}]^{ \op}$, whose objects should now be thought of as spaces, where the etale morphisms could be seen as generalized inclusions, and the generalized covers induced from $J$ as covers over objects.
\end{remark}

\begin{remark}
Local objects are like \emph{focal spaces}\index{focal spaces}, that is, spaces with a least point in the specialization order. For instance, in a topological space $X$ and a point $x \in X$, the focal component of $X$ in $x$ is the intersection of all neighborhoods of $x$, and this is the upset $ \uparrow x$ in the specialization order. local units behave like inclusions of the form $ \uparrow_\sqsubseteq x \hookrightarrow X$ as such upsets are unreachable by open covering: indeed, in a cover of $\uparrow_\sqsubseteq x$, one open must contain $x$ itself. But as open are up-sets for the specialization order, this open is the whole $\uparrow_\sqsubseteq x$. Hence maximal points, as they do not admit non trivial local units, are alike those $ x $ such that $ \uparrow_\sqsubseteq x = \{ x \}$.\\

In particular, triangles between local units 
\[\begin{tikzcd}
	{B} & {A_1} \\
	& {A_2}
	\arrow["{x_1}", from=1-1, to=1-2]
	\arrow["{n}", from=1-2, to=2-2]
	\arrow["{x_2}"', from=1-1, to=2-2]
\end{tikzcd}\]
should be seen as coding for specialization order between the corresponding minimal point $ x_1 \leq x_2$. Then in $\mathbb{T}[\mathcal{S}]^{ \op}$ this will be turned into an inclusion of focal component $ \uparrow x_2 \subseteq \uparrow x_1$.
\end{remark}

The generation condition in the axioms of geometry ensures the following key property relating local objects and local maps:

\begin{lemma}[{\cite{osmond:tel-03609605}[Lemma 3.3.1.8]}]\label{Gliding for local objects along local maps}
Any object $A$ admitting a local morphism into a local object $ u : A \rightarrow A_0 $ is itself a local object.  
\end{lemma}
\begin{proof}
If $ (n_i :A {\longrightarrow} B_i)_{i \in I}$ is a generalized cover of $A$ then its pushout along $u$ is a generalized cover for $ A_0$, hence admits a lifting $r$ for some $i$, so we have a square that diagonalizes because $u $ is local and $ n_i$ is in $ \mathcal{V}_A$ :
\[ \begin{tikzcd}
A \arrow[equal]{r}{} \arrow[]{d}[swap]{n_i} & A \arrow[]{d}{u } \\ B_i \arrow[]{r}[swap]{r} \arrow[dashed]{ru}{\exists} & A_0 
\end{tikzcd} \]
\end{proof}

From this we deduce the following, which is a form of admissibility for set-valued models; a first occurence of this result is for instance \cite{Coste}[Theorem 3.4.1]

\begin{corollary}\label{local object glide along local maps}
For any arrow $f : B \rightarrow A$ in $\mathbb{T}[\mathcal{S}]$ with $ A$ a $J$-local object, the $ (\Et, \Loc)$ factorization of $f$ 
\[\begin{tikzcd}
	{B} && {A} \\
	& {A_f}
	\arrow["{f}", from=1-1, to=1-3]
	\arrow["{n_f}"', from=1-1, to=2-2]
	\arrow["{u_f}"', from=2-2, to=1-3]
\end{tikzcd}\]
returns a $J$-local object $A_f$
\end{corollary}

We have seen in the previous subsection that the factorization structure is inherited by the category of models in any Grothendieck topos. Similarly, let us see now how the admissibility structure itself is inherited, inducing the multireflectivity of the category of local object and local maps at any Grothendieck topos. 

\begin{proposition}[{\cite{osmond:tel-03609605}{Proposition 3.3.3.1}}]
Let $\mathcal{E}$ be a Grothendieck topos and $ u : F \rightarrow E$ in $\Loc[\mathcal{E}]$ with $ E$ in $\mathbb{T}_J[\mathcal{E}]$: then $F$ itself is in $\mathbb{T}_J[\mathcal{E}]$.
\end{proposition}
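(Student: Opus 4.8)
Since $u : F \to E$ lies in $\Loc[\mathcal{E}]$, it is in particular a morphism of $\mathbb{T}$-models in $\mathcal{E}$, so $F : \mathcal{C}_\mathbb{T} \to \mathcal{E}$ is already lex. By Diaconescu's theorem and the identification $\mathbb{T}_J[\mathcal{E}] = \Lex_J[\mathcal{C}_\mathbb{T}, \mathcal{E}]$, all that remains is to prove that $F$ is $J$-continuous, and for a lex functor it suffices to check this on a generating set of covers. By the definition of a geometry the covers of $J$ are generated by duals of maps in $\mathcal{V}$; concretely, a generating cover of a presentation object $\{\overline{x},\phi\}$ is a family $\big([\theta_{l_i}(\overline{x},\overline{y}_i)]_\mathbb{T} : \{\overline{y}_i, \psi_i\} \to \{\overline{x},\phi\}\big)_{i \in I}$ dual to a (cone of) map(s) $l_i : K_\phi \to K_{\psi_i}$ in $\mathcal{V}$. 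So I must show that for each such family the canonical arrow $\langle F([\theta_{l_i}]_\mathbb{T})\rangle_{i\in I} : \coprod_{i \in I} F(\{\overline{y}_i,\psi_i\}) \to F(\{\overline{x},\phi\})$ is an epimorphism in $\mathcal{E}$.

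\textbf{Key step.} Fix such a generating cover. Since each $l_i$ belongs to $\mathcal{V}$, the definition of a local arrow tells us that the naturality square of $u$ at $[\theta_{l_i}]_\mathbb{T}$ is a pullback in $\mathcal{E}$; that is, $F([\theta_{l_i}]_\mathbb{T}) : F(\{\overline{y}_i,\psi_i\}) \to F(\{\overline{x},\phi\})$ is obtained by pulling back $E([\theta_{l_i}]_\mathbb{T}) : E(\{\overline{y}_i,\psi_i\}) \to E(\{\overline{x},\phi\})$ along $u_{\{\overline{x},\phi\}}$. Now I invoke the standard topos facts: the pullback functor $u_{\{\overline{x},\phi\}}^* : \mathcal{E}/E(\{\overline{x},\phi\}) \to \mathcal{E}/F(\{\overline{x},\phi\})$ has a right adjoint (the dependent product), hence preserves coproducts, so $\coprod_{i\in I} F(\{\overline{y}_i,\psi_i\}) \to F(\{\overline{x},\phi\})$ is precisely the pullback of $\coprod_{i\in I} E(\{\overline{y}_i,\psi_i\}) \to E(\{\overline{x},\phi\})$ along $u_{\{\overline{x},\phi\}}$. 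Since $E$ lies in $\mathbb{T}_J[\mathcal{E}]$, the latter arrow is an epimorphism; and epimorphisms in a Grothendieck topos are stable under pullback, so the former is epic as well.

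\textbf{Conclusion.} Therefore $F$ sends every generating $J$-cover to an epimorphic family, hence is $J$-continuous, and so $F \in \Lex_J[\mathcal{C}_\mathbb{T},\mathcal{E}] = \mathbb{T}_J[\mathcal{E}]$. This is the exact topos-level analogue of the earlier lemma that a local object receiving an $\Et$-map from — here rather a $\Loc$-map into — a $J$-local object is itself $J$-local; the pushout–diagonalization argument used there over $\mathcal{S}$ is replaced by the Diaconescu characterization together with the left-exactness of pullback functors. The only points requiring a little care are the reduction of $J$-continuity to generating covers and matching the legs of those covers with duals of maps in $\mathcal{V}$, so that the pullback property of local arrows actually applies — once this is in place the argument is immediate and no genuine obstacle arises.
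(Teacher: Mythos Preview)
Your proof is correct and follows essentially the same approach as the paper: both arguments use the pullback squares coming from the definition of a local arrow at maps in $\mathcal{V}$, then invoke stability of coproducts under pullback (you via the right adjoint to the pullback functor, the paper via ``colimits are stable by pullback'') to identify $\coprod_i F(\{\overline{y}_i,\psi_i\}) \to F(\{\overline{x},\phi\})$ as the pullback of the corresponding map for $E$, and conclude by stability of epimorphisms under pullback.
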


This says that for any Grothendieck topos $\mathcal{E}$, the category of $\mathbb{T}$-models in $ \mathcal{E}$ inherits the admissibility structure defined by the geometry $ (\mathbb{T},\mathcal{V}, J)$

\begin{corollary}\label{admissible fact in topos}
 Let $\mathcal{E}$ be a Grothendieck topos: then for any $ f: F \rightarrow E$ in $\mathbb{T}[\mathcal{S}]$ with $ E$ in $\mathbb{T}_J[\mathcal{E}]$, then in the $(\Et[\mathcal{E}], \Loc[\mathcal{E}])$-factorization 
\[\begin{tikzcd}
	{F} && {E} \\
	& {H_f}
	\arrow["{f}", from=1-1, to=1-3]
	\arrow["{n_f}"', from=1-1, to=2-2]
	\arrow["{u_f}"', from=2-2, to=1-3]
\end{tikzcd}\]
the middle term is in $\mathbb{T}_J[\mathcal{E}]$.
\end{corollary}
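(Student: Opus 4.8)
The plan is to read this statement off directly from the two propositions just established, as it is a genuine corollary: no fresh argument is required beyond assembling them. First I would read the displayed arrow $f$ as a morphism of $\mathbb{T}[\mathcal{E}]$ (so that the $(\Et[\mathcal{E}],\Loc[\mathcal{E}])$-factorization is the one produced in the previous proposition), and fix a standard site presentation $\mathcal{E}\simeq\Sh(\mathcal{C}_\mathcal{E},J_\mathcal{E})$, which exists for every Grothendieck topos; this is what licenses the use of the proposition giving the factorization system $(\Et[\mathcal{E}],\Loc[\mathcal{E}])$ on $\mathbb{T}[\mathcal{E}]$.

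Then, given $f:F\rightarrow E$ in $\mathbb{T}[\mathcal{E}]$ with $E$ in $\mathbb{T}_J[\mathcal{E}]$, I would take its factorization
\[\begin{tikzcd}
	{F} && {E} \\
	& {H_f}
	\arrow["{f}", from=1-1, to=1-3]
	\arrow["{n_f}"', from=1-1, to=2-2]
	\arrow["{u_f}"', from=2-2, to=1-3]
\end{tikzcd}\]
with $n_f$ in $\Et[\mathcal{E}]$ and $u_f$ in $\Loc[\mathcal{E}]$. Since $u_f$ is a local morphism whose codomain $E$ lies in $\mathbb{T}_J[\mathcal{E}]$, the proposition on the stability of $J$-local objects under local morphisms applies verbatim and yields $H_f\in\mathbb{T}_J[\mathcal{E}]$, which is exactly the assertion.

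There is no real obstacle at this stage: the substantive work has already been discharged, the genuinely non-trivial input being the stability of colimits under pullback in a Grothendieck topos exploited in the proof of that stability proposition. The only points I would spend a line on are that the statement is to be understood with $f$ in $\mathbb{T}[\mathcal{E}]$, and that the factorization — hence the object $H_f$ — does not depend on the chosen standard presentation of $\mathcal{E}$, which follows from the essential uniqueness of orthogonal factorizations and so makes the conclusion independent of that auxiliary choice.
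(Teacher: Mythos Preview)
Your proposal is correct and matches the paper's intended argument: the corollary is stated without proof precisely because it follows immediately from the two preceding propositions (existence of the $(\Et[\mathcal{E}],\Loc[\mathcal{E}])$ factorization, and stability of $\mathbb{T}_J$-models under local morphisms), exactly as you assemble them. Your observation that $f$ should be read in $\mathbb{T}[\mathcal{E}]$ rather than $\mathbb{T}[\mathcal{S}]$ correctly identifies a typo in the statement.
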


All of this justifies the terminology for local objects and local transformations as our objects of interest behave locally, at points, as such objects. We also have the following closure property under retracts:

\begin{proposition}[{\cite{osmond:tel-03609605}[Proposition 3.3.3.9]} ]\label{retract of local arrows are local}
In any topos $\mathcal{E}$, a retract of a local object still is a local object.  Similarly retracts of local maps between local objects still are local, as it can be tested point-wisely. 
\end{proposition}

\begin{division}
The main result of the construction of spectra is the existence of a certain adjunction at the level of categories of modelled topoi, as we are going to see in the next sections. If we restrict to the categories of models in fixed topoi, then no such adjunction exists (would they exist, there would be no need of constructing spectra); yet we meet an intermediate situation of \emph{multiadjunction}.
Recall briefly that a \emph{right multiadjoint} is a functor $ U : \mathcal{A} \rightarrow \mathcal{B}$ such that any $B$ in $\mathcal{B}$ admits a small multi-initial family in the comma $B \downarrow U$, that is, a cone of \emph{local units} $ (n_x : B \rightarrow U(A_x))_{x \in I_B}$ such that any $ f : B \rightarrow U(A)$ in $\mathcal{B}$ admits a unique factorization through a unique $x \in I_B$ of the form
\[\begin{tikzcd}
	B && {U(A)} \\
	& {U(A_x)}
	\arrow["f", from=1-1, to=1-3]
	\arrow["{n_x}"', from=1-1, to=2-2]
	\arrow["{U(u_f)}"', from=2-2, to=1-3]
\end{tikzcd}\]
The local units assume conjointly the universal property of the unit of an adjunction. See \cite{Diers} and \cite{osmond2020diersI} for all details on this notion: we would like to emphisize this precise application of the notion of multi-adjunction (established at \cite{osmond:tel-03609605}[theorem 3.3.3.6]):
\end{division}
\begin{theorem}\label{Accessible Mradj at arbitrary topos}
Any a geometry $(\mathbb{T}, \mathcal{V}, J)$ induces at each Grothendieck topos $ \mathcal{E}$ a right multi-adjoint 
\[\begin{tikzcd}
	{\mathbb{T}_J[\mathcal{E}]^{\Loc}} & {\mathbb{T}[\mathcal{E}]}
	\arrow["{\iota_{J,\Loc}[\mathcal{E}]}", hook, from=1-1, to=1-2]
\end{tikzcd}\]
\end{theorem}

    In general, geometric extensions lack a free construction: for instance there is no universal way to correct a commutative ring into a local ring, for the latter are models of a non-cartesian geometric theory. Geometries are intermediate situations where this failure is somewhat not to bad as one at least has a topos-wise ``multi-free" construction in presence of a suited factorization system. As we will see, the purpose of the spectrum is to provide a way to glue altogether those multi-adjunctions into a true global adjunction -- provided one is allowed to navigate across topoi to properly gather the cones of local units as the stalks of some sheaf behaving as the free local object. The details of the multi-adjunction aspect of this question are addressed in chapters 2, 3, 4 of \cite{osmond:tel-03609605}, and will be gathered in an upcomming companion paper.

\subsection{Morphisms of geometries}

Let us also give a few words on the 2-category of geometries and how they can be compared.

\begin{definition}\label{morphisms of geometries}
A \emph{transformation of geometries}\index{geometry!transformation of} $ (\mathbb{T}_1, \mathcal{V}_1, J_1) \rightarrow  (\mathbb{T}_2, \mathcal{V}_2, J_2)$ consists of a functor $ \Phi : \mathcal{C}_{\mathbb{T}_1} \rightarrow \mathcal{C}_{\mathbb{T}_2}$ such that\begin{itemize}
    \item $ \Phi$ is lex
    \item $ \Phi ( \mathcal{V}_1) \subseteq \mathcal{V}_2$
    \item $ \Phi$ induces a morphism of sites $ (\mathcal{C}_{\mathbb{T}_1}, J_1) \rightarrow (\mathcal{C}_{\mathbb{T}_2}, J_2) $
\end{itemize}
\end{definition}

\begin{definition}
In the following we denote $ \mathfrak{Geom}$ the 2-category whose \begin{itemize}
    \item 0-cells are geometries $ (\mathbb{T},  \mathcal{V}, J) $
    \item 1-cells are transformations of geometries $ (\mathbb{T}_1, \mathcal{V}_1, J_1) \rightarrow  (\mathbb{T}_2, \mathcal{V}_2, J_2)$
    \item and 2-cells between two transformations of geometries $ \Phi$, $ \Psi$ are natural transformation between the underlying lex functors $ \alpha : \Phi \Rightarrow \Psi$.
\end{itemize}
\end{definition}

\begin{remark}
Observe that $ \Phi : \mathcal{C}_{\mathbb{T}_1} \rightarrow \mathcal{C}_{\mathbb{T}_2}$ induces a geometric morphism $ \Phi^* \dashv \Phi_*$ (we also denote as $\Phi$) which moreover restricts to the sheaf subtopoi as follows:
\[\begin{tikzcd}
	{\Sh({\mathcal{C}_{\mathbb{T}_2}},J_2)} & {\widehat{\mathcal{C}_{\mathbb{T}_2}}} \\
	{\Sh({\mathcal{C}_{\mathbb{T}_1}},J_1)} & {\widehat{\mathcal{C}_{\mathbb{T}_1}}}
	\arrow["{\iota_2}", hook, from=1-1, to=1-2]
	\arrow["{\iota_1}"', hook, from=2-1, to=2-2]
	\arrow["{\Phi}", from=1-2, to=2-2]
	\arrow["{\Phi}"', from=1-1, to=2-1]
\end{tikzcd}\]

\end{remark}

\begin{division}
Moreover, since $ \Phi$ is a morphism of site, any $ J_2$-continuous lex functor $ F : \mathcal{C}_{\mathbb{T}_1} \rightarrow \mathcal{E}$ is sent by precomposition with $\Phi$ to a $ J_1$-continuous lex functor $\Phi[\mathcal{E}]_*F= F\Phi$. Hence the direct image part of the adjunction above restricts to local objects; the same is true regarding local maps: for any Grothendieck topos $ \mathcal{E}$, we have $ \Phi[\mathcal{E}]_*(\Loc_1[\mathcal{E}]) \subseteq \Loc_2[\mathcal{E}] $. This tel us that any transformation of geometries induces a pseudocommutative square intertwinning the associated right multi-adjoints obtained at \cref{Accessible Mradj at arbitrary topos}:
\[\begin{tikzcd}
	{\mathbb{T}_{J_2}[\mathcal{E}]} & {{\mathbb{T}_2}[\mathcal{E}]} \\
	{\mathbb{T}_{J_1}[\mathcal{E}]} & {{\mathbb{T}_1}[\mathcal{E}]}
	\arrow["{\Phi[\mathcal{E}]_*}", from=1-2, to=2-2]
	\arrow["{\iota_2[\mathcal{E}]}", hook, from=1-1, to=1-2]
	\arrow["{\iota_1[\mathcal{E}]}"', hook, from=2-1, to=2-2]
	\arrow[dashed, from=1-1, to=2-1]
\end{tikzcd}\]

In the following, notably the examples, we will prefer to visualize the transformations of geometries through such squares (in particular the version for set-valued models) rather than as lex functors satisfying the condition above, for the evocative virtue of such a representation. 
\end{division}

\subsection{$\mathbb{T}$-Modelled topoi}

\begin{division}
We have a pseudofunctor, which is representable by $\mathcal{S}[\mathbb{T}]$ 
\[\begin{tikzcd}
	{\GTop^{\op}} & {\Cat} & {}
	\arrow["{\mathbb{T}[-]}", from=1-1, to=1-2]
\end{tikzcd}\]
and is defined as follows:\begin{itemize}
    \item for 0-cells, it returns the category $\mathbb{T}[\mathcal{E}]$ of $\mathbb{T}$-models
    \item for a 1-cell $ f : \mathcal{F} \rightarrow \mathcal{E}$, it returns the inverse image functor 
    \[ \mathbb{T}[\mathcal{E}] \stackrel{f^*}{\longrightarrow} \mathbb{T}[\mathcal{F}] \]
    which is moreover lex and cocontinuous, being left adjoint to the direct image functor 
    \[ \mathbb{T}[\mathcal{F}] \stackrel{f_*}{\longrightarrow} \mathbb{T}[\mathcal{E}] \]
    \item on a 2-cell $ \alpha : f \Rightarrow g$ it returns the natural transformation also denoted as $\alpha$
\[\begin{tikzcd}
	{\mathbb{T}[\mathcal{E}] } && {\mathbb{T}[\mathcal{F}] }
	\arrow["{f^*}"{name=0}, from=1-1, to=1-3, curve={height=-12pt}]
	\arrow["{g^*}"{name=1, swap}, from=1-1, to=1-3, curve={height=12pt}]
	\arrow[Rightarrow, "{\alpha}", from=0, to=1, shorten <=2pt, shorten >=2pt]
\end{tikzcd}\]
\end{itemize}

This defines a 2-fibration with 1-truncated fibers
\[\begin{tikzcd}
	{\int\mathbb{T}[-]} & {\GTop}
	\arrow["{p_\mathbb{T}}", from=1-1, to=1-2]
\end{tikzcd}\]
This bicategory can be considered as a category of all models of $\mathbb{T}$ regardless of their base topos. An object in this category is a \emph{modelled topos}\index{modelled topos}\index{modelled topos!morphism of}, that is, a pair $( \mathcal{E},E)$ with $ \mathcal{E}$ a Grothendieck topos and $ E $ in $\mathbb{T}[E]$. However in the following, we choose to work with an \emph{algebraic convention} in the sense that we want morphisms between modelled topoi to have the orientation of the morphisms of models rather than the orientation of the underlying geometric morphism. To this end, we use the following, which is nothing but the \emph{direct fibration} associated to $ \mathbb{T}[-]$:\end{division}

\begin{definition}
The bicategory $ \mathbb{T}\hy\GTop$ of $\mathbb{T}$-modelled topoi has\begin{itemize}
    \item for 0-cells, modelled topoi $ (\mathcal{E}, E)$
    \item for 1-cells, $(f, \phi) : (\mathcal{F}, F) \rightarrow (\mathcal{E}, E)$ with $ f: \mathcal{E} \rightarrow \mathcal{F}$ a geometric morphism and $ \phi$ consisting of a pair $ (\phi^\flat, \phi^\sharp)$ with 
    \[ \phi^*F \stackrel{f^\flat}{\longrightarrow} E  \quad \textrm{ and } \quad F \stackrel{\phi^\sharp}{\longrightarrow } f_*E \]
    mates along the adjunction $f^* \dashv f_*$
    \item for 2-cells $ \alpha : (f,\phi) \rightarrow (g,\psi)$, 2-cell $ \alpha : f \rightarrow g$ in $\GTop$ satisfying
\[\begin{tikzcd}
	{f^*F} & E \\
	{g^*F}
	\arrow["{\alpha^\flat_F}"', from=1-1, to=2-1]
	\arrow["{\phi^\flat}", from=1-1, to=1-2]
	\arrow["{\psi^\flat}"', from=2-1, to=1-2]
\end{tikzcd} \hskip0.5cm 
\begin{tikzcd}
	F & {f_{*}E} \\
	& {g_{*}E}
	\arrow["{\phi^\sharp}", from=1-1, to=1-2]
	\arrow["{\psi^\sharp}"', from=1-1, to=2-2]
	\arrow["{\alpha^\sharp_E}"', from=2-2, to=1-2]
\end{tikzcd}\] 
\end{itemize}
\end{definition}
From its construction, this bicategory is equipped with a forgetful functor which is both a fibration and an opfibration $ p_\mathbb{T} : \mathbb{T}\hy\GTop \rightarrow \GTop^{\op}$. 

\begin{division}\label{factorization of morphisms of modelled topoi}
Before turning to local objects, it is worth giving some details about the morphisms in this category, which will be shown enlightening in Section 4.3. The 2-category $\mathbb{T}\hy\GTop $ inherits from its fibrational and opfibrational properties over $\GTop^{\op}$ two factorization systems
\[ \textrm{ (Vertical, Cartesian) and (Cocartesian, Vertical) } \]
as seen below in the two alternative factorizations of a same morphism 
\[\begin{tikzcd}
	{(\mathcal{F}_1,F_1)} && {(\mathcal{F}_2,f^*F_1)} \\
	{(\mathcal{F}_1,f_*F_2)} && {(\mathcal{F}_2,F_2)}
	\arrow["{(1_{F_1}, (\phi^\sharp, \phi^\sharp))}"', from=1-1, to=2-1]
	\arrow["{(f, (1_{f^*F_1}, \eta_{F_1}))}", from=1-1, to=1-3]
	\arrow["{(1_{F_2}, (\phi^\flat, \phi^\flat))}", from=1-3, to=2-3]
	\arrow["{(f, ( \epsilon_{F_2}, 1_{f_*F_2}))}"', from=2-1, to=2-3]
	\arrow["{(f,\phi)}"{description}, from=1-1, to=2-3]
\end{tikzcd}\]
provided by the respective unit-counit triangles for the $ f^*\dashv f_*$-adjunction
\[
\begin{tikzcd}
	& {f^*f_*F_2} \\
	{f^*F_1} & {F_2}
	\arrow["{f^*\phi^\sharp}", from=2-1, to=1-2]
	\arrow["{\epsilon_{F_2}}", from=1-2, to=2-2]
	\arrow["{\phi^\flat}"', from=2-1, to=2-2]
\end{tikzcd} \hskip 1cm \begin{tikzcd}
	{f_*f^*F_1} \\
	{F_1} & {f_*F_2}
	\arrow["{\eta_{F_1}}", from=2-1, to=1-1]
	\arrow["{f_*\phi^\flat}", from=1-1, to=2-2]
	\arrow["{\phi^\sharp}"', from=2-1, to=2-2]
\end{tikzcd}\]
 \end{division}

\subsection{$\mathbb{T}_J$-locally modelled topoi}

In the previous chapter, the category of locally modelled topoi was a non-full sub-bicategory of the oplax slice over the classifier of local objects: in fact this can be obtained as the Grothendieck construction associated to a sub-pseudofunctor of the representable. 

\begin{division}
The local data associated to $ (\mathcal{V},J)$ also define a pseudofunctor
\[\begin{tikzcd}
	{\GTop^{\op}} & {\Cat}
	\arrow["{\mathbb{T}_J[-]^{\Loc}}", from=1-1, to=1-2]
\end{tikzcd}\]
and again we can consider the associated direct 2-fibration; but for each topos $\mathcal{E}$, we have a non-full inclusion of category 
\[ \mathbb{T}_J[\mathcal{E}]^{\Loc[\mathcal{E}]} \hookrightarrow \mathbb{T}[\mathcal{E}] \]
inducing the following non-full sub-bicategory of $\mathbb{T}\hy\GTop$ defined:\end{division}

\begin{definition}
The bicategory $ \mathbb{T}_J\hy\GTop^\Loc$ of \emph{ $\mathbb{T}_J$-locally modelled topoi}\index{locally modelled topoi}\index{locally modelled topoi!morphism of} has \begin{itemize}
    \item for 0-cells pairs $ (\mathcal{E}, E)$ with $ E $ in $ \mathbb{T}_J[\mathcal{E}]$,
    \item for 1-cells pairs $( f,\phi) : (\mathcal{F}, F) \rightarrow (\mathcal{E},E)$ with $ f : \mathcal{E} \rightarrow \mathcal{F}$ and $ \phi $ is such that $ \phi^\flat : f^*F \rightarrow E$ is in $\Loc[\mathcal{E}]$,
    \item and the same 2-cells as $\mathbb{T}\hy\GTop$.
\end{itemize}
\end{definition}

In particular this inclusion is a strict morphism of opfibrations 
\[\begin{tikzcd}
	{\mathbb{T}_J\hy\GTop^\Loc} && {\mathbb{T}\hy\GTop} \\
	& {\GTop^{\op}}
	\arrow["{\iota_{J,\Loc}}"{name=0}, from=1-1, to=1-3, hook]
	\arrow[from=1-1, to=2-2, "p_{J,\Loc}"']
	\arrow[from=1-3, to=2-2, "p_\mathbb{T}"]
	\arrow[Rightarrow, "{=}" description, from=0, to=2-2, shorten <=2pt, shorten >=2pt, phantom, no head]
\end{tikzcd}\]
so that in the following, we may innocently write $\iota_{J,\Loc}(\mathcal{E},E)$ as $ (\mathcal{E}, wE)$ where we denote abusively $ w$ for the faithful (but non-full) inclusion $ w_\mathcal{E} : \mathbb{T}_J[\mathcal{E}]^{\Loc[\mathcal{E}]} \hookrightarrow \mathbb{T}[\mathcal{E}]$: in other words, $ \iota_{J,\Loc}$ just forget the localness of models and arrows without modifying the underlying topos. However beware that $ p_{J,\Loc}$ does not inherit the fibration structure of $p_\mathbb{T} $ as $ \mathbb{T}_J$ models are not stable under direct image of geometric morphisms. 

\begin{remark}\label{iota is conservative}
We should really emphasize that the inclusion is not full, for the restriction imposed on the inverse image part of morphism of locally modelled topoi. However it is faithful, and moreover it can be shown to lift equivalences -- see \cite{osmond:tel-03609605}[lemma 6.1.2.3].
\end{remark}

The problem of the spectrum is to construct a left bi-adjoint to this inclusion functor -- that is, to provide a ``free" locally modelled topos under any modelled topos. There are several manners to do so, and in this paper, we will focus on a concrete construction of the spectrum as a locally modelled topos, describing both a site for the underlying topos and the associated local object as a structure sheaf.\\

In the remaining of this paper, we fix a geometry $ ( \mathbb{T},\mathcal{V},J)$ and denote as $ \Et$ and $\Loc$ the associated factorization system in $\mathbb{T}[\mathcal{S}]$, as well as $\mathbb{T}_J$ the geometric extension of $\mathbb{T}$ encoded by $J$. 
\section{Spectral site of a set-valued model}

We process in two times in order to give a detailed insight of the process of the construction. We first emphasize the construction of the spectrum for a set valued model, as well as several geometric properties of the spectra of the different class of objects and maps involved in a geometry. We also want to give a very explicate description of how processes the universal property of the associated structure sheaf, and present a first version of the spectral adjunction for set-valued models which is more akin to what people encounter in algebraic geometry. We also discuss sheaf representability condition, which are an important topic of sheaf theoretic algebra. While the key results of relative to the construction and the adjunction are already present in \cite{Coste} and also \cite{Anel}, we give here quite different description

\subsection{Spectral topology on the etale generator}

Recall that for any set-valued $\mathbb{T}$-model $B$ we denote as $ \mathcal{V}_B$ the etale generator of $B$, which consists of arrows obtained as pushouts under $B$ of finitely presented etale maps in $\mathcal{V}$.

\begin{definition}
For any $ B$ in $ \mathbb{T}[\mathcal{S}]$, the opposite category of the etale generator $ \mathcal{V}_B^{\op}$ can be equipped with the \emph{spectral pretopology}\index{spectral!pretopology}, which is the Grothendieck pretopology $ J_B$ defined from the duals of the families
\[ \Bigg{(} \begin{tikzcd}
B \arrow[]{r}{n} \arrow[]{rd}[swap]{n_i} & C\arrow[]{d}{m_i} \\ & C_i 
\end{tikzcd} \Bigg{)}_{ i \in I} \]
such that there exists some arrow $b : K \rightarrow C$ and a covering family $ (l_i : K \rightarrow K_i)_{i \in I}$ dual of a covering family in $J$ such that for each $i \in I$ we have
\[\begin{tikzcd}
	K & C & B \\
	{K_i} & {C_i}
	\arrow["n"', from=1-3, to=1-2]
	\arrow["{m_i}"{description}, from=1-2, to=2-2]
	\arrow["{n_i}", from=1-3, to=2-2]
	\arrow["b", from=1-1, to=1-2]
	\arrow[from=2-1, to=2-2]
	\arrow["{l_i}"', from=1-1, to=2-1]
	\arrow["\lrcorner"{anchor=center, pos=0.125, rotate=180}, draw=none, from=2-2, to=1-1]
\end{tikzcd}\]
\end{definition}

\begin{remark}\label{spectral topology for finitely presented objects}
In particular, in the case of a finitely presented object $K$, we know $ \mathcal{V}_K$ to be made of finitely presented etale arrows in $\mathcal{V}$, that is, $ \mathcal{V}_K \hookrightarrow \mathcal{V}$ for $ \mathcal{V}$ is closed under pushouts between finitely presented objects. Then, in particular, any map $ n $ in $\mathcal{V}_K$ has a finitely presented codomain. But now, a covering family in $J_K(n)$ is induced by pushout from some $ (n_i : K_0 \rightarrow K'_i)_{i \in I}  $ corresponding to a covering family in $J$ as follows
\[\begin{tikzcd}
	K & {K'} & {K_0} \\
	& {K_i} & {K_i'}
	\arrow["n", from=1-1, to=1-2]
	\arrow["{m_i}"{description}, from=1-2, to=2-2]
	\arrow["{n_i}"', from=1-1, to=2-2]
	\arrow["k"', from=1-3, to=1-2]
	\arrow[from=2-3, to=2-2]
	\arrow["{l_i}", from=1-3, to=2-3]
	\arrow["\lrcorner"{anchor=center, pos=0.125, rotate=90}, draw=none, from=2-2, to=1-3]
\end{tikzcd}\]
But $J$, as a Grothendieck coverage, is closed under pullback, and hence the pushout family $ (m_i : K' \rightarrow K_i)_{i \in I}$ is dual to a covering family in $J(K')$. Conversely, any family $ (m_i : K' \rightarrow K_i)_{i \in I}$ dual to a cover of $ K'$ in $J$ induces trivially a covering family of $l$ in $J_K$ as it is a pushout of itsefl along the identity $1_{K'}$. Hence a familly $ (m_i : n \rightarrow n_i)_{i \in I}$ is covering in $J_K$ if and only if $ (m_i : \cod(n) \rightarrow \cod(n_i))_{i \in I}$ is dual to a covering family in $J(\cod(n))$.  
\end{remark}

\begin{definition}
For a set-valued model $ B$ in $ \mathbb{T}[\mathcal{S}]$, $(\mathcal{V}_B^{\op}, J_B) $ is called the \emph{spectral site}\index{spectral!site} of $B$. Then define the \emph{(Coste) spectrum}\index{spectrum!Coste} of $B$ as 
\[ \Spec (B) =  \Sh(\mathcal{V}_B^{\op}, J_B) \]
We have a geometric embedding into the presheaf topos over $\mathcal{V}_B^{\op}$
\[\begin{tikzcd}
	{\Spec(B)} && {\widehat{\mathcal{V}_B^{\op}}}
	\arrow[""{name=0, anchor=center, inner sep=0}, "{\iota_B }"', start anchor=-15, bend right=20, hook, from=1-1, to=1-3]
	\arrow[""{name=1, anchor=center, inner sep=0}, "{\mathfrak{a}_{J_B}}"', end anchor=15, bend right=20, from=1-3, to=1-1]
	\arrow["\dashv"{anchor=center, rotate=-90}, draw=none, from=1, to=0]
\end{tikzcd}\]
\end{definition}

\begin{remark}
Observe that the sheafification functor $\mathfrak{a}_{J_B}$ extends into a functor between categories of $\mathbb{T}$-models and $ \mathbb{T}_J$-models. In fact we have a pseudocommutative square 
\[\begin{tikzcd}
	{\mathbb{T}[\widehat{\mathcal{V}_B^{\op}}]} & {\mathbb{T}[\Spec(B)]} \\
	{\mathbb{T}_J[\widehat{\mathcal{V}_B^{\op}}]} & {\mathbb{T}_J[\Spec(B)]}
	\arrow["{\mathfrak{a}_{J_B}^*}", from=1-1, to=1-2]
	\arrow[hook, from=2-1, to=1-1]
	\arrow[hook, from=2-2, to=1-2]
	\arrow["{\mathfrak{a}_{J_B}^*}"', from=2-1, to=2-2]
	\arrow["\simeq"{description}, draw=none, from=1-1, to=2-2]
\end{tikzcd}\]
which is the pseudonaturality square of the natural transformation 
\[\begin{tikzcd}
	{\Geom[\widehat{\mathcal{V}_B^{\op}}, -]} && { \Geom[\Spec(B) ,-]}
	\arrow["{\Geom[\iota_B, -]}", Rightarrow, from=1-1, to=1-3]
\end{tikzcd}\]
at the inclusion $ \iota_J : \mathcal{S}[\mathbb{T}_J] \hookrightarrow \mathcal{S}[\mathbb{T}]$. 
\end{remark}

\begin{division}\label{Spec(f) in the set case}
Now we turn to the functoriality of the construction. For a morphism $ f: B_1 \rightarrow B_2$ in $\mathbb{T}[\mathcal{S}]$, the geometric morphism $\Spec(f)$ is computed from the pushout functor along $f$
\[\begin{tikzcd}[row sep=tiny]
	{\mathcal{V}_{B_1}} \arrow[]{r}{f_*}&  {\mathcal{V}_{B_2}}
\end{tikzcd}\]
sending a finitely presented etale arrow to its pushout
\[\begin{tikzcd}
	{B_1} & {B_2} \\
	{\cod(n)} & {\cod(f_*n)}
	\arrow["n"', from=1-1, to=2-1]
	\arrow["f", from=1-1, to=1-2]
	\arrow["{f_*n}", from=1-2, to=2-2]
	\arrow["{n_*f}"', from=2-1, to=2-2]
	\arrow["\lrcorner"{anchor=center, pos=0.125, rotate=180}, draw=none, from=2-2, to=1-1]
\end{tikzcd}\]
But now, observe that the pushout functor sends finite colimits of $\mathcal{V}_{B_1} $ to finite colimits in $\mathcal{V}_{B_2}$ hence defines a lex functor $ \mathcal{V}^{\op}_{B_1} \rightarrow \mathcal{V}^{\op}_{B_2}$. Moreover, this functor is $J_{B_1}$-continuous, by composition of pushouts. Hence Diaconescu applies and returns an extension  
\[\begin{tikzcd}[column sep=huge]
	{\mathcal{V}_{B_1}^{\op}} & {\mathcal{V}_{B_2}^{\op}} \\
	{\Spec(B_1)} & {\Spec(B_2)}
	\arrow["{f_*}", ""{name=0, anchor=center, inner sep=0}, from=1-1, to=1-2]
	\arrow["{\mathfrak{a}_{J_{B_2}}\katayo}", from=1-2, to=2-2]
	\arrow["{\mathfrak{a}_{J_{B_1}}\katayo}"', from=1-1, to=2-1]
	\arrow[""{name=1, anchor=center, inner sep=0}, "{\lan_{\mathfrak{a}_{J_{B_1}}\katayo}\mathfrak{a}_{J_{B_2}}\katayo f_*}"', from=2-1, to=2-2]
	\arrow["{\simeq}"{description}, shorten <=5pt, shorten >=5pt, draw=none, from=0, to=1]
\end{tikzcd}\]
which is the inverse image part of $\Spec(f)$. \end{division}

\subsection{Etale maps produce (pro-)etale geometric morphisms}

\begin{division}
$\mathcal{V}_B^{\op}$ is a lex site coding for ``basic compact open inclusions". Objects of the sheaf topos $ \Spec(B) \hookrightarrow [\mathcal{V}_B, \mathcal{S}]$ should be seen as generalized opens of the spectral topology, while objects of $\Ind(\mathcal{V}_B)$, which are arbitrary etale arrows under $B$, should be seen as saturated compacts of the spectral topology. In particular, the embedding $ \mathcal{V}_B^{\op} \hookrightarrow \Spec(B)$ exibits $ \mathcal{V}_B$ as a basis of \emph{basic compact sets that are open} -- and $\mathcal{V}^{\op}_B$ as a basis of \emph{open sets that are compacts}. 
\end{division}

The following observation (already present in \cite{Anel}[proposition 38] motivates the name for etale arrows:

\begin{proposition}\label{spectrum of fp-etale maps is etale}
Finitely presented etale arrows $ n : B \rightarrow C$ under $B$ correspond to etale geometric morphisms of the form:
\[ \Spec(C) \simeq \Spec(B) / \mathfrak{a}_{J_B}\katayo_n \stackrel{\Spec(n)}{\longrightarrow} \Spec(B) \]
\end{proposition}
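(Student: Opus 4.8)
The plan is to reduce the statement to an equivalence of sites and then feed it into the standard description of a slice of a sheaf topos. Recall that for any site $(\mathcal{D}, K)$ and object $d$ of $\mathcal{D}$ there is a canonical equivalence $\Sh(\mathcal{D}, K)/\mathfrak{a}_K(\hirayo(d)) \simeq \Sh(\mathcal{D}/d, K_d)$, where $K_d$ is the induced topology — a sieve covers iff its image in $\mathcal{D}$ is a $K$-cover — and under which the slice projection on the left (which is always an \emph{étale} geometric morphism) corresponds to the geometric morphism induced by the domain functor $\mathcal{D}/d \to \mathcal{D}$; see \cite{sga4}. Since $\hirayo^*_n$ is by definition the representable presheaf on the site $\mathcal{V}_B^{\op}$ at the object $n$, taking $(\mathcal{D}, K) = (\mathcal{V}_B^{\op}, J_B)$ and $d = n$ gives
\[ \Spec(B)/\mathfrak{a}_{J_B}(\hirayo^*_n) \simeq \Sh\big( (n/\mathcal{V}_B)^{\op}, (J_B)_n \big), \]
with projection to $\Spec(B)$ induced by $n/\mathcal{V}_B \to \mathcal{V}_B$. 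It therefore suffices to produce (i) an equivalence $\mathcal{V}_C \simeq n/\mathcal{V}_B$ over $\mathcal{B}$, (ii) a matching of $J_C$ with the slice topology $(J_B)_n$, and (iii) an identification of the resulting projection with $\Spec(n)$.

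For (i): since the morphisms of the étale generator are commuting triangles under $B$, a morphism $n \to m$ in $\mathcal{V}_B$ is exactly a map $g : C \to \cod(m)$ with $g n = m$, and one sends it to the object $\big(C \xrightarrow{g} \cod(m)\big)$ of $\mathcal{V}_C$; conversely an object $\big(C \xrightarrow{g} D\big)$ of $\mathcal{V}_C$ is sent to $gn : B \to D$ equipped with $g$. The first assignment lands in $\mathcal{V}_C$ because $m, n \in \Et = \Ind(\mathcal{V})$, hence $g \in \Et$ by right-cancellation of the left class of a factorization system, while \cref{Anel lemma} turns a single-$\mathcal{V}$-map presentation of $m$ over $B$ into one of $g$ over $C$, so that $g$ is finitely presented under $C$. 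The second lands in $\mathcal{V}_B$ because $\mathcal{V}_B$ is closed under post-composition by maps of $\mathcal{V}_C$: writing $C = B +_{K_0} K_0'$ and $g$ as a pushout of a $\mathcal{V}$-map along some $b : K \to C$ with $K$ finitely presented, \cref{Anel lemma} factors $b$ through a finite stage of the pushout, and then closure of $\mathcal{V}$ under composition and under pushout along maps of $\overrightarrow{\mathcal{B}_{\fp}}$ yields a single-$\mathcal{V}$-map presentation of $gn$ over $B$. The two assignments are visibly mutually inverse on morphisms; this is essentially \cite{osmond2020diersI}[section 3] in the finitely presented case, which I would cite rather than rerun in detail.

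Step (ii) is a comparison of definitions: unwinding \cite{Coste}'s definition of $J_C$ (families under $C$ obtained by pushing out duals of $J$-covers along a map from a finitely presented object) and that of the induced topology $(J_B)_n$, a family over $\big(C \xrightarrow{g} D\big)$ is a $J_C$-cover precisely when its image over $m_g := g n$ is a $J_B$-cover, since both conditions ask for the same datum of a map $K \to D$ and a dual $J$-cover whose pushouts along it form the family — the extra arrow $g$, resp. $n$, playing no role in the covering condition. For (iii): the slice projection $p$ has inverse image $(-) \times \mathfrak{a}_{J_B}(\hirayo^*_n)$, which on the site $\mathcal{V}_B^{\op}$ is transported to the restriction along $n/\mathcal{V}_B \to \mathcal{V}_B$; on the other side $\Spec(n)^*$ is induced, via Diaconescu, by the pushout functor $n_* : \mathcal{V}_B^{\op} \to \mathcal{V}_C^{\op}$ followed by sheafification, sending $\hirayo^*_{m'}$ to $\mathfrak{a}_{J_C}(\hirayo^*_{n_* m'})$. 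A short Yoneda computation — using that maps out of $n_* D' = C +_B D'$ lying under $B$ are pairs consisting of a map out of $C$ under $B$ and a map out of $D'$ under $B$ — identifies $\hirayo^*_{n_* m'}$ with $\hirayo^*_{m'} \times \hirayo^*_n$ over $\hirayo^*_n$ already at the presheaf level, so the two inverse-image functors agree on representables, hence (both being cocontinuous and representables generating) agree after sheafification; thus $\Spec(n) \simeq p$.

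The main obstacle is step (i). The purely formal right-cancellation of $\Et$ only delivers that the comparison maps are étale; seeing that they are finitely presented — equivalently, that $\mathcal{V}_B$, whose members are pushouts of a \emph{single} $\mathcal{V}$-map, is closed under the composites appearing above — is exactly the compactness content of \cref{Anel lemma}, and that is where the work lies. Everything else is either the standard slice-of-sheaves equivalence or a routine unwinding of the covering families.
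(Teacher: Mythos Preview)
Your proof is correct and follows essentially the same route as the paper: establish the equivalence of sites $\mathcal{V}_C \simeq n\downarrow\mathcal{V}_B$ (with matching topologies) and then invoke the SGA4 slice-of-sheaves equivalence. You are in fact more careful than the paper, which simply asserts the site equivalence without justification; your observation that well-definedness in both directions is not formal and requires \cref{Anel lemma} is exactly right, and your step (iii) identifying the slice projection with $\Spec(n)$ fills in something the paper only gestures at in the subsequent remark.
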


\begin{proof}
For $n : B \rightarrow C $ in $\mathcal{V}_B$ we have an equivalence of categories 
\[ \mathcal{V}_C \simeq n\downarrow\mathcal{V}_B \]
sending $ m : C \rightarrow D$ to the triangle 
\[\begin{tikzcd}
	B \\
	C & D
	\arrow["n"', from=1-1, to=2-1]
	\arrow["m"', from=2-1, to=2-2]
	\arrow["mn", from=1-1, to=2-2]
\end{tikzcd}\]
and conversely any triangle $ l = mn $ in $ n \downarrow \mathcal{V}_B$ to the underlying arrow $ m$. In particular we have 
\[ \widehat{\mathcal{V}_C^{\op}} \simeq \widehat{(n\downarrow\mathcal{V}_B)^{\op}} \simeq \widehat{\mathcal{V}^{\op}_B}/\katayo_n \]

But also by the expression of slices in a sheaf category (see \cite{sga4}[III, Proposition 5.4]), we know that the topology induced on $ n \downarrow \mathcal{V}_B$ by $J_B$ is the same as $J_C$ -- this is the corresponding topology $ J'$ corresponding to $ J_C$ through the equivalence above -- and we have 
\[ \Spec(C) \simeq \Sh( n \downarrow \mathcal{V}^{\op}_B, J')  \simeq  \Spec(B) / \mathfrak{a}_{J_B}\katayo_n \]\end{proof}

\begin{remark}\label{etale as pb}
Observe that we have a 2-pullback square in the bicategory of Grothendieck topoi
\[\begin{tikzcd}
	{\Spec(B)/\mathfrak{a}_{J_B}\katayo_n} & {\widehat{\mathcal{V}_B^{\op}}/\katayo_n} \\
	{\Spec(B)} & {\widehat{\mathcal{V}_B^{\op}}}
	\arrow["{\Spec(n)}"', from=1-1, to=2-1]
	\arrow[""{name=0, anchor=center, inner sep=0}, "{\iota_B}", hook, from=2-1, to=2-2]
	\arrow["{\overline{\katayo_n}}", from=1-2, to=2-2]
	\arrow[from=1-1, to=1-2]
	\arrow["\lrcorner"{anchor=center, pos=0.125}, draw=none, from=1-1, to=0]
\end{tikzcd}\]
exhibiting the etale geometric morphism $\Spec(n)$ as the 2-pullback of the etale geometric morphism associated to ${\katayo_n} $.
\end{remark}

\begin{remark}
The further left adjoint of the inverse image will be induced from the postcomposition functor $ \mathcal{V}^{\op}_C \rightarrow \mathcal{V}^{\op}_B $ sending $ m : C \rightarrow D$ to the composite $ mn : B \rightarrow D$ which is in $\mathcal{V}_B$. This functor defines a left adjoint 
\[\begin{tikzcd}
	{\mathcal{V}_{B}^{\op}} && {\mathcal{V}_C^{\op}}
	\arrow["{n_*}"{name=0, swap}, from=1-1, to=1-3, curve={height=12pt}]
	\arrow["{n^*}"{name=1, swap}, from=1-3, to=1-1, curve={height=12pt}]
	\arrow["\dashv"{rotate=-90}, from=1, to=0, phantom]
\end{tikzcd}\] 
\end{remark}

The intuition that objects of $\mathcal{V}_B$ are compact can be formalized thanks to the following property. Recall that a geometric morphism is said to be \emph{tidy}\index{geometric morphism!tidy} if its direct image part preserves filtered colimits. From \cite{moerdijk2000proper}[Theorem 4.8] we know that tidy geometric morphisms are stable under 2-pullback.

\begin{proposition}\label{spectrum of finitely presented etale map is tidy}
For $ n : B \rightarrow C$ in $\mathcal{V}_B$, the geometric morphism $\Spec(n) : \Spec(C) \rightarrow \Spec(B)$ is tidy.
\end{proposition}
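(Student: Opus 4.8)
The plan is to exploit the pullback description of $\Spec(n)$ from \cref{etale as pb} together with the cited stability result from \cite{moerdijk2000proper}. Concretely, the statement in \cref{etale as pb} exhibits $\Spec(n)$ as the $2$-pullback of the geometric morphism $\overline{\hirayo^*_n} : \widehat{\mathcal{V}_B^{\op}}/\hirayo^*_n \to \widehat{\mathcal{V}_B^{\op}}$ along the embedding $\iota_B$. Since tidy geometric morphisms are stable under $2$-pullback (by \cite{moerdijk2000proper}[Theorem 4.8], as recalled just above the statement), it suffices to prove that the \'etale geometric morphism $\overline{\hirayo^*_n}$ associated to the presheaf $\hirayo^*_n$ on $\mathcal{V}_B^{\op}$ is tidy.

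So the core of the proof is: for an object $X$ of a presheaf topos $\widehat{\mathcal{C}}$, the \'etale geometric morphism $\widehat{\mathcal{C}}/X \to \widehat{\mathcal{C}}$ is tidy precisely when $X$ is a ``compact'' object in a suitable sense, and here one should verify this compactness for $X = \hirayo^*_n$. The direct image part of $\overline{\hirayo^*_n}$ sends an object $(A \to X)$ of $\widehat{\mathcal{C}}/X$ to the object $A^X$ built by a right adjoint to pullback along $X \to 1$; tidiness asks this to preserve filtered colimits. Since $\hirayo^*_n$ is the representable presheaf $\hirayo_n$ at the object $n$ of $\mathcal{V}_B$ (recall $\mathcal{V}_C \simeq n\downarrow\mathcal{V}_B$, so $n$ itself is an object of $\mathcal{V}_B$), the slice $\widehat{\mathcal{C}}/\hirayo_n \simeq \widehat{\mathcal{C}/n}$, and the composite with $\iota_B$ on the other side of the pullback square is controlled by the equivalence $\Spec(C) \simeq \Spec(B)/\mathfrak{a}_{J_B}(\hirayo^*_n)$ from the preceding proposition. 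The key point is that the further left adjoint $n_!$ of the inverse image exists (noted in the remark just before the statement, coming from postcomposition $\mathcal{V}_C^{\op} \to \mathcal{V}_B^{\op}$), so $\Spec(n)$ is in fact an \'etale (hence locally connected, in particular) geometric morphism; and \'etale geometric morphisms $\mathcal{E}/X \to \mathcal{E}$ between Grothendieck toposes are tidy exactly when $X$ is a compact object of $\mathcal{E}$, i.e.\ $\mathcal{E}[X,-]$ preserves filtered colimits of subobjects, or more robustly when the terminal map $X \to 1$ is a proper (indeed tidy) map.

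Thus the plan reduces to two verifications. First, identify $\Spec(n)$ with an \'etale geometric morphism $\Spec(B)/Y \to \Spec(B)$ where $Y = \mathfrak{a}_{J_B}(\hirayo^*_n)$ — this is exactly the content of the previous proposition and \cref{etale as pb}. Second, show $Y$ is a compact object of $\Spec(B)$: the representable $\hirayo^*_n$ on the lex site $\mathcal{V}_B^{\op}$ is a finitely presentable (in fact tiny) object of $\widehat{\mathcal{V}_B^{\op}}$ because $\mathcal{V}_B$ has finite colimits and $n$ is an object of it, hence the $\mathcal{V}_B^{\op}$-cover-preserving sheafification $\mathfrak{a}_{J_B}$ sends it to a compact object of the subtopos $\Spec(B) = \Sh(\mathcal{V}_B^{\op},J_B)$; equivalently, $n$ as an object of the syntactic-type site is compact, so the corresponding representable sheaf has a compact terminal map. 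Then tidiness of $\Spec(n)$ follows, either directly from this compactness via the standard characterization of tidy \'etale morphisms, or by invoking that the pullback of the tidy morphism $\overline{\hirayo^*_n}$ along $\iota_B$ is tidy.

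I expect the main obstacle to be the second verification — pinning down exactly why $\mathfrak{a}_{J_B}(\hirayo^*_n)$ is a compact object of $\Spec(B)$ and translating ``compact object'' into ``tidy \'etale morphism'' with the correct definition in play. One has to be careful that $J_B$-sheafification need not preserve all representables' compactness for an arbitrary site, but here the site $\mathcal{V}_B^{\op}$ is lex and the topology $J_B$ is generated by the small families dual to $J$-covers pushed forward, so representables remain compact after sheafification; making this precise may require citing a lemma on compact objects in toposes of sheaves over a lex site, or arguing via the explicit plus-construction. Once compactness of $Y$ is secured, the reduction to \cite{moerdijk2000proper}[Theorem 4.8] and \cref{etale as pb} is entirely formal.
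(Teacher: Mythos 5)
Your opening reduction is exactly the paper's: by \cref{etale as pb}, $\Spec(n)$ is the bipullback along $\iota_B$ of the etale geometric morphism $\widehat{\mathcal{V}_B^{\op}}/\hirayo^*_n \rightarrow \widehat{\mathcal{V}_B^{\op}}$, and by \cite{moerdijk2000proper}[Theorem 4.8] it suffices to show that this latter morphism is tidy. The paper finishes in one line from there: $\hirayo^*_n$ is a representable, hence finitely presented, object of the presheaf topos $\widehat{\mathcal{V}_B^{\op}}$, so the internal hom $(-)^{\hirayo^*_n}$ preserves filtered colimits (note $\mathcal{V}_B^{\op}$ is lex, so this is immediate on representables), which is precisely tidiness of the associated etale morphism; stability under bipullback then gives tidiness of $\Spec(n)$. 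Nothing about the sheafified object $\mathfrak{a}_{J_B}(\hirayo^*_n)$ is needed.

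The gap in your proposal is that you then make the centerpiece the claim that $\mathfrak{a}_{J_B}(\hirayo^*_n)$ is a compact object of $\Spec(B)$, asserting that sheafification sends the finitely presentable representable to a compact object and that etale morphisms $\mathcal{E}/X \rightarrow \mathcal{E}$ are tidy exactly when $\mathcal{E}[X,-]$ preserves filtered colimits of subobjects. Neither step is right as stated: tidiness of $\mathcal{E}/X \rightarrow \mathcal{E}$ is the condition that its \emph{direct image} (equivalently, the internal hom $(-)^X$, up to the usual pullback) preserves filtered colimits, which is an internal condition, not the external-hom condition you quote (that one is in the orbit of properness/compactness, and even then only for colimits of subobjects); and there is no reason for $\mathfrak{a}_{J_B}$ to preserve finite presentability here, since $J_B$-covering families may be infinite (they come from arbitrary small disjunctions in the geometric extension), so the inclusion $\Spec(B) \hookrightarrow \widehat{\mathcal{V}_B^{\op}}$ need not preserve filtered colimits and $\mathfrak{a}_{J_B}(\hirayo^*_n)$ need not be externally compact in $\Spec(B)$. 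This is exactly why the paper establishes tidiness upstairs in the presheaf topos and transports it along the bipullback rather than working in $\Spec(B)$. You do mention this correct shortcut parenthetically (``or by invoking that the pullback of the tidy morphism $\overline{\hirayo^*_n}$ along $\iota_B$ is tidy''); committing to it, and dropping the compactness-in-$\Spec(B)$ detour, turns your proposal into the paper's proof.
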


\begin{proof}
Recall we can express $\Spec(n)$ as the pullback of the etale geometric morphism $\widehat{\mathcal{V}_B^{\op}}/\katayo_n \rightarrow \widehat{\mathcal{V}^{\op}_B} $ along $\iota_B$. But we know that $\katayo_n$ is a finitely presented object in the presheaf topos $ \widehat{\mathcal{V}_B^{\op}}$, so that the associated internal hom functor $ (-)^{\katayo_n}$ preserves filtered colimits: hence its associated etale geometric morphism is tidy, and hence its pullback $ \Spec(n)$ also is. 
\end{proof}

\begin{remark}
Arbitrary etale arrows are not in the topos $\Spec(B)$, but rather from the side of points and saturated compacts. Hence they do not correspond to etale geometric morphisms in general. In fact observe that an arbitrary etale map $ l : B \rightarrow C$ is an object of $\Ind(\mathcal{V}_B)$, for the factorization system $ (\Et, \Loc)$ was left generated from $\mathcal{V}$; but we have $  \Ind(\mathcal{V}_B) \simeq \Pro(\mathcal{V}_B^{\op})^{\op} $, which is the pro-completion of $ \mathcal{V}_B^{\op}$, whose objects are those functors $ \mathcal{V}_{B}^{\op} \rightarrow \mathcal{S}$ which are cofiltered limits of representables: this mimics the fact that arbitrary etale maps are constructed as cofiltered intersection of basic open compact sets. For this reason, \cite{Anel} says \emph{pro-etale}\index{pro-etale map} for what we call arbitrary etale, reserving ``etale" for the basic ones. This can be formalized into the following result:\end{remark}

\begin{theorem}\label{pro-etale spectrum}
Let $ l : B \rightarrow C$ be an arbitrary etale arrow under $B$. Then $ \Spec(C)$ decomposes as a cofiltered pseudolimit
\[ \Spec(C) \simeq \underset{(n,a) \in \int \! \hirayo_l}{\bilim} \Spec(\cod(n)) \]
\end{theorem}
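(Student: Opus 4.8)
The plan is to combine the pseudocolimit decomposition of the etale generator $\mathcal{V}_C$ from \cref{etale generator as a pseudocolimit} with the fact that sheaves over $\mathcal{V}_C^{\op}$ are classified by a site built from these data. Recall that $\int\!\hirayo_l$ is cofiltered, so $\int\!\hirayo_l^{\op}$ is filtered; the pseudofunctor $\mathcal{V}_{\cod(-)} : \int\!\hirayo_l^{\op} \rightarrow \mathcal{C}at$ sends $(n,a)$ to $\mathcal{V}_{\cod(n)}$ and transition maps to pushout functors. By \cref{etale generator as a pseudocolimit}, the pushout functors $a_* : \mathcal{V}_{\cod(n)} \to \mathcal{V}_C$ exhibit $\mathcal{V}_C$ as the pseudocolimit $\colim_{(n,a)} \mathcal{V}_{\cod(n)}$. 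Passing to opposite categories, $\mathcal{V}_C^{\op} \simeq \colim_{(n,a) \in \int\!\hirayo_l}\mathcal{V}_{\cod(n)}^{\op}$ is a filtered pseudocolimit of lex categories along lex functors (the pushout functors preserve finite colimits, hence their opposites preserve finite limits).

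First I would check that the relevant topologies are compatible with the transition functors: for a morphism $m : (n_1,a_1) \to (n_2,a_2)$ in $\int\!\hirayo_l$, the pushout functor $m_* : \mathcal{V}_{\cod(n_1)} \to \mathcal{V}_{\cod(n_2)}$ is $J_{\cod(n_1)}$-continuous (by composition of pushouts, exactly as in the functoriality discussion preceding \cref{pro-etale spectrum} for the maps $\Spec(f)$), and likewise each $a_* : \mathcal{V}_{\cod(n)} \to \mathcal{V}_C$ is $J_{\cod(n)}$-continuous with codomain carrying $J_C$. Moreover the topology $J_C$ on $\mathcal{V}_C^{\op}$ is \emph{generated} by the images of the $J_{\cod(n)}$: a $J_C$-cover of an object $m \in \mathcal{V}_C$ arises from a cover $(l_i : K \to K_i)_{i\in I}$ dual to a $J$-cover together with a map $b : K \to \cod(m)$; since $K$ is finitely presentable and $C \simeq \colim_{(n,a)}\cod(n)$, the map $b$ factors through some $\cod(n)$, and then by \cref{Anel lemma} (as used in the proof of \cref{etale generator as a pseudocolimit}) the whole cover descends to a $J_{\cod(n)}$-cover whose pushout along $a$ recovers the given cover. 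Hence $(\mathcal{V}_C^{\op}, J_C)$ is the filtered pseudocolimit of the sites $(\mathcal{V}_{\cod(n)}^{\op}, J_{\cod(n)})$ in the 2-category of lex sites and continuous lex functors.

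Next I would invoke the standard fact (the topos-theoretic counterpart of \cite{sga4}, that $\Sh(-,-)$ sends filtered pseudocolimits of lex sites to filtered pseudolimits of toposes, equivalently that $2$-limits of toposes are computed by the corresponding $2$-colimits of sites; one can also argue directly through Diaconescu, since for any topos $\mathcal{E}$, $\Geom[\mathcal{E},\Sh(\mathcal{V}_C^{\op},J_C)] \simeq \Lex_{J_C}[\mathcal{V}_C^{\op},\mathcal{E}] \simeq \lim_{(n,a)} \Lex_{J_{\cod(n)}}[\mathcal{V}_{\cod(n)}^{\op},\mathcal{E}] \simeq \lim_{(n,a)}\Geom[\mathcal{E},\Spec(\cod(n))]$, the middle equivalence being exactly the universal property of the pseudocolimit site applied hom-wise). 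Since $\Spec(C) = \Sh(\mathcal{V}_C^{\op},J_C)$ and $\Spec(\cod(n)) = \Sh(\mathcal{V}_{\cod(n)}^{\op}, J_{\cod(n)})$, representability then yields
\[ \Spec(C) \simeq \underset{(n,a)\in\int\!\hirayo_l}{\underleftarrow{\lim}} \Spec(\cod(n)) \]
as a cofiltered pseudolimit, where the transition geometric morphisms are the $\Spec(m_*)$ for $m$ in $\int\!\hirayo_l$.

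The main obstacle I expect is the careful bookkeeping in the claim that $J_C$ is \emph{exactly} the topology generated by the images of the $J_{\cod(n)}$ under the $a_*$ — one direction (the images are $J_C$-covers) is immediate from composition of pushouts, but the converse requires using finite presentability of the domains of the generating covers together with \cref{Anel lemma} to push a given $J_C$-cover back down to a finite stage, and then checking that two such descents agree in the pseudocolimit. This is essentially a repetition of the essential-surjectivity and fullness arguments already carried out in the proof of \cref{etale generator as a pseudocolimit}, applied now at the level of covering families rather than objects; once that is in place, the passage to toposes is formal.
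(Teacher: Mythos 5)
Your proposal is correct and follows essentially the same route as the paper: transport the topology along the equivalence of \cref{etale generator as a pseudocolimit}, prove that $J_C$ coincides with the topology generated by the images of the $J_{\cod(n)}$ (one direction by composition of pushouts, the converse by finite presentability of the domain of the generating cover, \cref{Anel lemma}, and identification of the two descents via an opcartesian morphism in the pseudocolimit), and then apply the SGA4-type statement \cref{cofiltered pseudolimit of topos} to pass from the pseudocolimit site to the pseudolimit of toposes. The only slight imprecision is the sentence claiming $b$ factors through some $\cod(n)$ because $C \simeq \colim \cod(n)$ (since $b$ lands in $\cod(m)$, one first writes $m \simeq a_*m'$ by essential surjectivity and then factors $b$ through an intermediate pushout stage supplied by \cref{Anel lemma}), but you invoke exactly the right machinery for this, so the argument matches the paper's.
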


\begin{proof}
From \cite{osmond:tel-03609605}[Theorem 1.1.4.3], we know that \[ \mathcal{V}_C \simeq \underset{(n,a) \in \int \! \hirayo_l}{\pscolim} \mathcal{V}_{\cod(n)} \]
Now recall that for each $ (n,a)$ in $\int \!\hirayo_l$, the opposite category of the etale generator $ \mathcal{V}_{\cod(n)}$ can be equipped with a pretopology $J_{\cod(n)}$, and the opposite category of the pseudocolimit \[ ({\pscolim}_{(n,a) \in \int \! \hirayo_l}\mathcal{V}_{\cod(n)})^{\op}  \]
can be equipped with the coarsest topology \[ \langle \bigcup_{(n,a) \in \int \!\hirayo_l} q_{(n,a)}(J_{\cod(n)}) \rangle  \]
making the canonical inclusion continuous
\[\begin{tikzcd}
	{\mathcal{V}_{\cod(n)}^{\op} } & {({\pscolim}_{(n,a) \in \int \! \hirayo_l}\mathcal{V}_{\cod(n)})^{\op} }
	\arrow["{q_{(n,a)}^{\op}}", from=1-1, to=1-2]
\end{tikzcd}\]

From \cite{sga4} and \cite{dubuc2011construction}, and also the general version of \cref{cofiltered pseudolimit of topos} on cofiltered pseudolimits of Grothendieck topoi, we know that the corresponding sheaf topos is the pseudolimit 
\[ \Sh\big{(}({\pscolim}_{(n,a) \in \int \! \hirayo_l}\mathcal{V}_{\cod(n)})^{\op} , \langle \bigcup_{(n,a) \in \int \!\hirayo_l} q_{(n,a)}(J_{\cod(n)}) \rangle\big{)} \simeq \underset{(n,a) \in \int \! \hirayo_l}{\bilim} \Spec(\cod(n)) \]
and moreover, this topology is exactly the image of the induced topology on the pseudocolimit along the equivalence of categories above with $\mathcal{V}_C$.
Now we can also glue the image of the $ J_{\cod(n)}$-covering families along the pushout functors $a_*$ to generate a topology on $ \mathcal{V}^{\op}_C$
\[ \langle \bigcup_{(n,a) \in \int \!\hirayo_l} a_*(J_{\cod(n)}) \rangle  \]
We must prove that any covering family $ a_*(J_{\cod(n)})$ is covering in $J_C$, and conversely that any $J_C$ covering family is covering in the topology induced from the colimit. \\

First, let us prove that a cover in the jointly generated pretopology is covering in $J_C$. For $ (n,a) $ in $ \int \! \hirayo_l$, $ m$ an object of $ \mathcal{V}_{\cod(n)}$ and a covering family $ (l_i)_{i\in I}$ induced as 
\[\begin{tikzcd}
	K & D & {\cod(n)} \\
	{K_i} & {D_i}
	\arrow["{l_i}"{description}, from=1-2, to=2-2]
	\arrow["{k_i}"', from=1-1, to=2-1]
	\arrow[from=2-1, to=2-2]
	\arrow["b", from=1-1, to=1-2]
	\arrow["m"', from=1-3, to=1-2]
	\arrow["{m_i}", from=1-3, to=2-2]
	\arrow["\lrcorner"{anchor=center, pos=0.125, rotate=180}, draw=none, from=2-2, to=1-1]
\end{tikzcd}\]
in $J_{\cod(n)}(m)$, consider the composition of pushouts as in the diagram below
\[\begin{tikzcd}
	K && D && {\cod(n)} & B \\
	&&& {a_*D} && C \\
	{K_i} && {D_i} \\
	&&& {a_*D_i}
	\arrow["{l_i}"{description, pos=0.3}, from=1-3, to=3-3]
	\arrow["{k_i}"', from=1-1, to=3-1]
	\arrow[from=3-1, to=3-3]
	\arrow["b", from=1-1, to=1-3]
	\arrow["m"', from=1-5, to=1-3]
	\arrow["\lrcorner"{anchor=center, pos=0.125, rotate=180}, draw=none, from=3-3, to=1-1]
	\arrow["{m_*a}"{description}, from=1-3, to=2-4]
	\arrow["a"{description}, from=1-5, to=2-6]
	\arrow["n"', from=1-6, to=1-5]
	\arrow["l", from=1-6, to=2-6]
	\arrow["{a_*m_i}", from=2-6, to=4-4]
	\arrow["{m_*l_i}"{description}, from=2-4, to=4-4]
	\arrow[from=3-3, to=4-4]
	\arrow[""{name=0, anchor=center, inner sep=0}, from=2-6, to=2-4]
	\arrow["\lrcorner"{anchor=center, pos=0.125, rotate=180}, shift right=2, draw=none, from=4-4, to=3-3]
	\arrow[curve={height=12pt}, from=3-1, to=4-4]
	\arrow["\lrcorner"{anchor=center, pos=0.125, rotate=180}, shift left=3, draw=none, from=4-4, to=1-1]
	\arrow["\lrcorner"{anchor=center, pos=0.125, rotate=90}, draw=none, from=2-4, to=1-5]
	\arrow["{m_i}"{description}, from=2-4, to=3-3]
	\arrow[shorten >=7pt, no head, from=1-5, to=2-4]
	\arrow["\lrcorner"{anchor=center, pos=0.125, rotate=90}, draw=none, from=4-4, to=0]
	\arrow[curve={height=12pt}, from=1-1, to=2-4, crossing over]
\end{tikzcd}\]
Then we see that for each $ i \in I $ the arrow $ a_*l_i$ is also the pushout $ (m_*a \,b)k_i  $, and this exhibits the pushout family $(m_*l_i)_{i \in I}$ as a pushout of a family in $J$, and hence as a covering family of $J_C$. \\

Conversely let us prove that a $J_C$-cover is covering in the jointly generated pretopology. For a covering family of some $m$ in $\mathcal{V}_C$ for $J_C$
\[\begin{tikzcd}
	&& C \\
	K & D \\
	& {K_i} & {D_i}
	\arrow["m"', from=1-3, to=2-2]
	\arrow["{m_i}", from=1-3, to=3-3]
	\arrow["{l_i}", from=2-2, to=3-3]
	\arrow["b", from=2-1, to=2-2]
	\arrow[from=3-2, to=3-3]
	\arrow["{k_i}"', from=2-1, to=3-2]
	\arrow["\lrcorner"{anchor=center, pos=0.125, rotate=180}, draw=none, from=3-3, to=2-1]
\end{tikzcd}\]
we know from the essential surjectivity of the equivalence above that $ m$ is induced as some $ a_*m'$ for some $ (n,a) \in \int \! \hirayo_l$


\[\begin{tikzcd}
	& {\cod(n)} && C \\
	D' && D \\
	&&& {D_i}
	\arrow["m"{description}, from=1-4, to=2-3]
	\arrow["{m_i}", from=1-4, to=3-4]
	\arrow["{l_i}"{description}, from=2-3, to=3-4]
	\arrow[from=2-1, to=2-3]
	\arrow["{m'}"', from=1-2, to=2-1]
	\arrow["a", from=1-2, to=1-4]
	\arrow["\lrcorner"{anchor=center, pos=0.125, rotate=180}, draw=none, from=2-3, to=1-2]
\end{tikzcd}\]

Now observe that from the situation below
\[\begin{tikzcd}
	{\cod(n)} & C \\
	{D'} & D \\
	K
	\arrow["m"{description}, from=1-2, to=2-2]
	\arrow[from=2-1, to=2-2]
	\arrow["{m'}"', from=1-1, to=2-1]
	\arrow["a", from=1-1, to=1-2]
	\arrow["\lrcorner"{anchor=center, pos=0.125, rotate=180}, draw=none, from=2-2, to=1-1]
	\arrow["b"{description}, from=3-1, to=2-2]
\end{tikzcd}\]
there exists by general property of finitely presented objects some $(n_1,a_1)$ in $\int \!\hirayo_l$ and a factorization of $b$ through the intermediate pushout as below
\[\begin{tikzcd}
	{\cod(n)} & {\cod(n_1)} & C \\
	{D'} & {{a'}_*D'} & D \\
	& K
	\arrow["m"{description}, from=1-3, to=2-3]
	\arrow["{m'}"', from=1-1, to=2-1]
	\arrow["b"{description}, from=3-2, to=2-3]
	\arrow[from=2-1, to=2-2]
	\arrow["{a'}", from=1-1, to=1-2]
	\arrow[from=2-2, to=2-3]
	\arrow["\lrcorner"{anchor=center, pos=0.125, rotate=180}, draw=none, from=2-3, to=1-2]
	\arrow["{a_1}", from=1-2, to=1-3]
	\arrow[from=1-2, to=2-2, "a'_*m'"]
	\arrow["\lrcorner"{anchor=center, pos=0.125, rotate=180}, draw=none, from=2-2, to=1-1]
	\arrow["c", dashed, from=3-2, to=2-2]
\end{tikzcd}\]
and now we can pushout the family $(k_i)_{i \in I}$ of $J$ along $c$ to get a covering family $ (c_*k_i)_{i \in I}$ in $J_{\cod(n)_1}$ of the object $ a'_*m'$ in $\mathcal{V}_{\cod(n_1)}$

\[\begin{tikzcd}
	& {\cod(n)} && {\cod(n_1)} && C \\
	{D'} && {a'_*D'} & {} & D \\
	&& {} & {c_*K_i} && {D_i} \\
	&& K \\
	&&& {K_i}
	\arrow["m"{description}, from=1-6, to=2-5]
	\arrow["{m_i}", from=1-6, to=3-6]
	\arrow["{l_i}"{description}, from=2-5, to=3-6]
	\arrow[from=3-4, to=3-6]
	\arrow["{c_*k_i}"{description}, from=2-3, to=3-4]
	\arrow["{m'}"', from=1-4, to=2-3]
	\arrow[""{name=0, anchor=center, inner sep=0}, "{m_i'}"{description, pos=0.3}, from=1-4, to=3-4]
	\arrow["{a_1}", from=1-4, to=1-6]
	\arrow["\lrcorner"{anchor=center, pos=0.125, rotate=180}, draw=none, from=2-5, to=1-4]
	\arrow[shorten >=9pt, no head, from=4-3, to=3-4]
	\arrow["b"{description}, from=3-4, to=2-5]
	\arrow[""{name=1, anchor=center, inner sep=0}, "{k_i}"', from=4-3, to=5-4]
	\arrow[from=5-4, to=3-6]
	\arrow[from=5-4, to=3-4]
	\arrow[no head, from=2-3, to=2-4]
	\arrow[from=2-4, to=2-5]
	\arrow[from=2-1, to=2-3]
	\arrow["{m'}"', from=1-2, to=2-1]
	\arrow["{a'}", from=1-2, to=1-4]
	\arrow["c", from=4-3, to=2-3]
	\arrow["\lrcorner"{anchor=center, pos=0.125, rotate=-90}, draw=none, from=3-4, to=4-3]
	\arrow["\lrcorner"{anchor=center, pos=0.125, rotate=180}, draw=none, from=2-3, to=1-2]
	\arrow["\lrcorner"{anchor=center, pos=0.125, rotate=180}, draw=none, from=3-6, to=0]
	\arrow["\lrcorner"{anchor=center, pos=0.125, rotate=-90}, draw=none, from=3-6, to=1]
\end{tikzcd}\]
Moreover, the objects $ ((n,a),m')$ and $ ((n_1,a_1),a'_*m')$ are identified in the pseudocolimit for they are related through an opcartesian morphism, so that $ (c_*k_i)_{i \in I} $ is both covering for the class of $ ((n,a),m')$ in the induced topology on the pseudocolimit $ \colim_{(n,a) \in \int \! \hirayo_l} \mathcal{V}_{\cod(n)}$, and is sent to the covering family $(l_i)_{i \in I}$ of $J_C$ through the pushout functor $ {a_1}_*$. Hence the $J_C$-cover $ (l_i)_{i\in I}$ is in the induced topology $ {a_1}_*(J_{\cod(n_1)})$, and hence in the jointly generated topology. This proves that the equivalence of categories above induces an equivalence of sites
\[  \big{(}(\underset{(n,a) \in \int \! \hirayo_l}{\pscolim} \mathcal{V}_{\cod(n)})^{\op}, \langle \bigcup_{(n,a) \in \int \!\hirayo_l} q_{(n,a)}(J_{\cod(n)} \rangle \big{)}   \simeq  \big{(}(\mathcal{V}_C^{\op}, J_C \big{)}\]
so that they induce the same sheaf topos, which proves the desired limit decomposition of $\Spec(C)$.
\end{proof}

\begin{remark}
   In fact one could dispense form this tedious proof and rather announce that this result is an immediate consequence of \cref{biadjunction for set-models} where $\Spec$ is exhibited as a left adjoint, for which it has to send (filtered) colimits in $\mathbb{T}[S]$ to filtered colimits in $\mathbb{T}_J^{\Loc}\hy\GTop$, which can be proved to be computed in $ \mathbb{T}\hy\GTop$ and have as underlying topos a cofiltered bilimit topos (see \cite{osmond:tel-03609605}[Propositions 6.2.2.1 and 6.2.2.2]). However we think interesting to remain concrete and dispense to invoke those general statements about bicolimit of modelled topoi which belong to another work.
\end{remark}

\begin{remark}\label{sheafification is cartesian at etale}
Observe that the pseudocolimit decomposition of the etale generator induces first a bilimit decomposition of the corresponding presheaf topos:
 \[\widehat{\mathcal{V}^{\op}_C} \simeq \underset{(n,a) \in \int \! \hirayo_l}{\bilim} \widehat{\mathcal{V}^{\op}_B}/\katayo_n \]
 Since each $ \Spec(\cod(n)) $ for $ (n,a) \in \int \hirayo_l$ expresses as an etale geometric morphism obtained as a pullback as in \cref{etale as pb}, and pullback commutes with limits, we have 
\[\begin{tikzcd}
	{\underset{(n,m) \in \int \! \hirayo_l}{\bilim} \Spec(B)/\mathfrak{a}_J\katayo_n } & {\underset{(n,m) \in \int \! \hirayo_l}{\bilim} \widehat{\mathcal{V}^{\op}_B}/\katayo_n } \\
	{\Spec(B) } & {\widehat{\mathcal{V}^{\op}_B}}
	\arrow[from=1-1, to=1-2]
	\arrow[from=1-2, to=2-2]
	\arrow[from=1-1, to=2-1]
	\arrow[""{name=0, anchor=center, inner sep=0}, "{\iota_B}"', from=2-1, to=2-2]
	\arrow["\lrcorner"{anchor=center, pos=0.125}, draw=none, from=1-1, to=0]
\end{tikzcd}\]
so that we have a pullback in $\GTop$
\[\begin{tikzcd}
	{\Spec(C)} & {\widehat{\mathcal{V}_C^{\op}}} \\
	{\Spec(B)} & {\widehat{\mathcal{V}_B^{\op}}}
	\arrow["{\iota_C}", from=1-1, to=1-2, hook]
	\arrow[from=1-2, to=2-2]
	\arrow["{\Spec(n)}"', from=1-1, to=2-1]
	\arrow["{\iota_B}"', from=2-1, to=2-2, hook]
	\arrow["\lrcorner"{anchor=center, pos=0.125}, draw=none, from=1-1, to=2-2]
\end{tikzcd}\]
This means that the natural inclusion $ \Spec(-) \hookrightarrow \widehat{\mathcal{V}_{(-)}^{\op}}$ is cartesian at etale maps $ l \in \Et$. 
\end{remark}

\subsection{Spectrum and local data}

In particular we have the following, for local units are etale arrows (that are seldom finitely presented):

\begin{proposition}\label{points of the spectrum are local units}
Points of $\Spec(B)  $ correspond to local units $ x: B \rightarrow A$. 
\end{proposition}

\begin{proof}
First observe that a point of the spectrum, that is a $J_B$-continuous lex functor $ x $ in $ \Lex[ \mathcal{V}_B^{\op}, \mathcal{S}]$ is in particular an object of the ind-completion of ${\mathcal{V}_B}$, hence an object of $ B \downarrow \mathbb{T}[\mathcal{S}]$, so we can write $ x $ as an etale arrow $ x : B \rightarrow A$; now the condition of continuity says that for a covering $ (m_i^{\op} : n_i \rightarrow n )_ {i \in I} $ in $ \mathcal{V}_B^{\op}$ one has 
\[
\begin{tikzcd}
\displaystyle{\underset{i \in I}{\coprod}} \; x(n_i) \arrow[two heads]{rr}{\langle x(m_i) \rangle_{i \in I}} && x(n)
\end{tikzcd}
 \]
 But Yoneda tells us that
 \ \begin{align*}
     x(n) &\simeq \Ind(\mathcal{V}_B)[ \hirayo_n, x] \\
     &= \{ l : C \rightarrow A \mid \; ln=x \}
 \end{align*} and the surjectivity property above expresses the existence of the dashed arrow in the following diagram for some $i \in I$: 
 \[\begin{tikzcd}
 & B \arrow[bend right]{ldd}[swap]{n_i} \arrow[]{d}[description]{n} \arrow[bend left]{rdd}{x} & \\ & C \arrow[]{ld}[description]{m_i} \arrow[]{rd}[description]{l} & \\ C_i \arrow[dashed]{rr}[swap]{} & & A  
 \end{tikzcd}\]

We must prove that $A$ is a local object: let $ (n_i : K \rightarrow K_i)_{i \in I}$ be a $J$-cover and $a : K \rightarrow A$. Recall first that $x $, as an etale map, is the filtered colimit of the finitely presented etale maps under $B$ over it: $ x = \colim_{(n,l) \in V_\mathcal{B}\downarrow x}$, so is $A = \colim_{(n,l) \in V_\mathcal{B}\downarrow x} \cod(n)$. But then for $K$ is finitely presented, $a : K \rightarrow A$ lift through some $ l : C \rightarrow A$ as $ b : K \rightarrow C$ for some factorization of $x$ as above. Then we can consider the pushout family $ (b_*n_i)_{i \in I}$ which is a $ J_B$-cover of $n$ in $\mathcal{V}_B$: but then we see in the following diagram 
\[\begin{tikzcd}
	& C & B \\
	K && A \\
	& {b_*K_i} \\
	{K_i}
	\arrow["{n_i}", from=2-1, to=4-1]
	\arrow["b", from=2-1, to=1-2]
	\arrow["l"{description}, from=1-2, to=2-3]
	\arrow["n"', from=1-3, to=1-2]
	\arrow["x", from=1-3, to=2-3]
	\arrow["{b_*n_i}"{description, pos=0.7}, from=1-2, to=3-2]
	\arrow[from=4-1, to=3-2]
	\arrow["\lrcorner"{anchor=center, pos=0.125, rotate=180}, draw=none, from=3-2, to=2-1]
	\arrow["a"{description, pos=0.3}, from=2-1, to=2-3, crossing over]
	\arrow[dashed, from=3-2, to=2-3]
\end{tikzcd}\]
that there must be a dashed arrow as provided by $J_B$-injectiveness of $x$, which ensures existence of a lift for $A$. Hence $A$ is local and $x$ is a local unit. \end{proof}

\begin{remark}
More generally, geometric morphisms $ x: \mathcal{E} \rightarrow \Spec(F)$ will correspond to etale arrows $ x :\, !_{\mathcal{E}}^*B \rightarrow E$ in $\Et[\mathcal{E}]$ with $E$ in $\mathbb{T}_J[\mathcal{E}]$: however this correspondence can only be seen through the structure sheaf, which will be described later in this section. For now let us focus on the set-valued points.
\end{remark}

\begin{division}
At the level of points, an etale map of finite presentation $ n : B \rightarrow C $ produces a discrete fibration 
   \[ \pt(\Spec(C)) \simeq  \pt(\Spec(B)/\mathfrak{a}_{J_B}(\katayo_n)) \rightarrow \pt(\Spec(B)) \]
More generally, this remains true for an arbitrary etale map $ l : B \rightarrow C$; cartesian lifts are computed as follows: if $ x$ is a local unit under $B$ and $ x'$ a local unit under $C$, then a morphism $ x \rightarrow x'l$ is a square as below
\[\begin{tikzcd}
	B & C \\
	A & {A'}
	\arrow["l", from=1-1, to=1-2]
	\arrow["{x'}", from=1-2, to=2-2]
	\arrow["x"', from=1-1, to=2-1]
	\arrow["f"', from=2-1, to=2-2]
\end{tikzcd}\]
with $f$ an arbitrary arrow; but the etale-local factorization of this very map comes equiped with a diagonalization of the square below
\[\begin{tikzcd}
	B && C \\
	A & {A_f} & {A'}
	\arrow["l", from=1-1, to=1-3]
	\arrow["{x'}", from=1-3, to=2-3]
	\arrow["x"', from=1-1, to=2-1]
	\arrow["{n_f}"', from=2-1, to=2-2]
	\arrow["{u_f}"', from=2-2, to=2-3]
	\arrow["{x''}"{description}, from=1-3, to=2-2]
\end{tikzcd}\]
where the diagonal $x'' : C \rightarrow A_f$ is a local unit equiped with a lift $ x'' \rightarrow x'$ given by $u_f$. Moreover, as the functor of points $ \pt \simeq \Geom[\mathcal{S}, -]$ preserves pseudolimits, we have a pseudolimit of category 
\[ \pt(\Spec(C)) \simeq  \underset{(n,a) \in \int \! \hirayo_l}{\pslim} \pt(\Spec(\cod(n)) \]
\end{division} 
 
\begin{remark}
Observe that arbitrary etale maps under $B$ correspond to points of the presheaf topos $ \widehat{\mathcal{V}^{\op}_B}$ as $ \Lex[\mathcal{V}^{\op}_B, \mathcal{S}] \simeq\Ind(\mathcal{V}_B)$. 
\end{remark} 

\begin{division}As the term ``etale" was justified by the fact that finitely presented etale morphisms in $\mathcal{V}_B$ where sent to etale geometric morphisms by $\Spec$, the name of ``local" for objects is justified as follows. Recall that a geometric morphism $ f : \mathcal{F} \rightarrow \mathcal{E}$ is said to be \emph{local}\index{geometric morphism!local} if its inverse image part $f^*$ is full and faithful and moreover the direct image part $ f_*$ has a further right adjoint $ f^!$. In particular, a Grothendieck topos $\mathcal{E}$ is said to be \emph{local}\index{topos!local} if the global section functor $ \Gamma : \mathcal{E} \rightarrow \mathcal{S}$ has a further right adjoint -- the full-and-faithfulness condition being automatic in this context. For topoi are presentable, the adjoint functors tells us this amounts for $ \Gamma$ to preserving colimits, or equivalently, since $\Gamma = \mathcal{E}[1_{\mathcal{E}}, -]$, for $1_\mathcal{E}$ to be \emph{tiny}.  \\ 


Local topoi can be presented through a special class of sites, see \cite{elephant}[C3.6.3 (d)]. A site $(\mathcal{C},J)$ is said to be \emph{local} if it has a terminal object $1$ which is $J$-indecomposable (that is, the only covering sieve for $1$ is the maximal sieve). Then for such a site, $\Sh(\mathcal{C},J)$ is a local topos. 
\end{division}

\begin{division}
The prototypical example of local geometric morphism is the universal domain map $\partial_0 : \mathcal{E}^2 \rightarrow \mathcal{E}$ of a topos. Now for any point $ p : \mathcal{S} \rightarrow \mathcal{E}$, we can consider the \emph{Grothendieck Verdier localization at $p$}\index{Grothendieck-Verdier localization}, which is defined as the pseudopullback\[\begin{tikzcd}
	{\mathcal{E}_p} & {\mathcal{E}^2} \\
	{\mathcal{S}} & {\mathcal{E}}
	\arrow["{\partial_0}", from=1-2, to=2-2]
	\arrow["p"', from=2-1, to=2-2]
	\arrow["{p^*\partial_0}"', from=1-1, to=2-1]
	\arrow[from=1-1, to=1-2]
	\arrow["\lrcorner"{anchor=center, pos=0.125}, draw=none, from=1-1, to=2-2]
\end{tikzcd}\]
Its universal property is that for any Grotendieck topos $ \mathcal{F}$, we have an equivalence with the cocomma category
\[ \Geom[\mathcal{F}, \mathcal{E}_p] \simeq p \; !_\mathcal{F} \! \downarrow \Geom[\mathcal{F}, \mathcal{E}] \]
In particular, when $ \mathcal{F}$ is $\mathcal{S}$, we have an equivalence of categories
\[ \pt(\mathcal{E}_p) \simeq p\downarrow \pt(\mathcal{E})  \]
From \cite{localmap}[Theorem 3.7] we know that if $ \mathcal{E}$ has $ (\mathcal{C},J)$ as a lex site of definition, then $ \mathcal{E}_p$ can be expressed as a cofiltered pseudolimit of etale geometric morphisms 
\[ \mathcal{E}_p \simeq \underset{(C,a) \in \int p^*}{\bilim} \mathcal{E}/\mathfrak{a}_J\katayo_C  \]
where $ \int p^*$ is the cofiltered category of elements of the $J$-flat functor $ p^*: \mathcal{C} \rightarrow \mathcal{S}$. \end{division}

\begin{proposition}\label{Spec of local object is local}
Let $ A$ be a local object in $\mathbb{T}[\mathcal{S}]$: then $ \Spec(A)$ is a local topos. 
\end{proposition}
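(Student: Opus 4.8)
The plan is to realize $\Spec(A)=\Sh(\mathcal{V}_A\op,J_A)$ as the sheaf topos of a lex site whose underlying category has a terminal object admitting only the trivial covering sieve, and then to invoke the site-theoretic characterization of local toposes. First I would observe that $\mathcal{V}_A$ has an \emph{initial} object, namely the identity $1_A\colon A\to A$: it lies in $\mathcal{V}_A$ because it is the pushout of an identity along a map from a finitely presented object, and it is exactly the trivial local form of $A$, which is available here precisely because $A$ is already $J$-local. For any $n\colon A\to C$ in $\mathcal{V}_A$ the structure map $n$ is the unique $\mathcal{V}_A$-morphism $1_A\to n$, so $\top:=1_A$ is the terminal object of the lex category $\mathcal{V}_A\op$, and every sieve on $\top$ is either the maximal one or omits $\top$ entirely.

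Next I would unwind the definition of $J_A$ at the object $1_A$: a generating covering family of $\top$ is a family $(m_i\colon A\to C_i)_{i\in I}$ obtained by pushing out along some $b\colon K\to A$ the duals $(l_i\colon K\to K_i)_{i\in I}$ of a $J$-covering family, that is, precisely a generalized $J$-cover of $A$ in the sense of \cref{generalized covers}. This basis is stable under pullback in $\mathcal{V}_A\op$ (pushout in $\mathcal{V}_A$, by composition of pushouts, as in the remark following \cref{generalized covers}), so the $J_A$-covering sieves of $\top$ are exactly those containing such a family. Since $A$ is $J$-local, the proposition characterizing $J$-local objects through generalized covers supplies, for each such family, an index $i_0$ and a retraction $r\colon C_{i_0}\to A$ with $r\,m_{i_0}=1_A$; as $\mathcal{V}_A$ is a full subcategory of $A\downarrow\mathcal{B}$, this $r$ is a $\mathcal{V}_A$-morphism $(C_{i_0},m_{i_0})\to\top$. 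Hence in $\mathcal{V}_A\op$ the identity of $\top$ factors through the member $m_{i_0}$ of the generating family, so the sieve it generates is maximal, and the only $J_A$-covering sieve on $\top$ is the maximal sieve.

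Finally I would conclude by the standard fact that a Grothendieck topos presented by a lex site $(\mathcal{C},J)$ whose underlying category has a terminal object $1$ with $J(1)$ trivial is local: here $\Gamma\colon\Sh(\mathcal{C},J)\to\mathcal{S}$ is evaluation at $1$; triviality of $J(1)$ gives $\mathfrak{a}_J(P)(1)\simeq P(1)$ for every presheaf $P$, so $\Gamma$ factors as the cocontinuous sheafification followed by evaluation at $1$ on presheaves and is therefore cocontinuous, while it is also continuous as the direct image $\gamma_*$ of the terminal geometric morphism; a colimit-preserving functor between locally presentable categories has a right adjoint, and the full-and-faithfulness of $\gamma^*$ is automatic in this context. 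Specializing to $(\mathcal{C},J)=(\mathcal{V}_A\op,J_A)$ and $1=\top$ then shows that $\Spec(A)$ is a local topos; alternatively one could identify $\Spec(A)$ with the Grothendieck--Verdier localization $\widehat{\mathcal{V}_A\op}_{\,p_0}$ at the canonical point $p_0$ classifying the trivial local form $1_A\colon A\to A$, and read off locality from the cited description of such localizations.

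The step I expect to be the crux is the middle one: matching the abstract definition of $J_A$ at the identity with the concrete notion of generalized $J$-cover of $A$, and then using $J$-locality to obtain the retraction that collapses every cover of the terminal point to the maximal sieve. Once that is in place, the passage to local toposes is purely formal.
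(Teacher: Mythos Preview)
Your proof is correct. Both you and the paper hinge on the same key observation: since $A$ is $J$-local, every generalized $J$-cover of $A$ --- equivalently, every $J_A$-basic cover of the terminal object $1_A$ of $\mathcal{V}_A^{\op}$ --- admits a retraction, so the only $J_A$-covering sieve on $\top$ is maximal. From there you take the direct site-theoretic route (triviality of $J_A(\top)$ forces $\Gamma$ to commute with sheafification and hence preserve colimits, whence the further right adjoint), whereas the paper follows precisely the Grothendieck--Verdier approach you mention as an alternative: the $J_A$-continuity of $\hirayo_{1_A}$ yields a point $p_{1_A}$ of $\Spec(A)$, and the localization $\Spec(A)_{p_{1_A}}$ is computed as the cofiltered limit over $\int p_{1_A}^*$, which, having $(1_A,1_{1_A})$ as initial object, collapses to $\Spec(A)/\mathfrak{a}_{J_A}\hirayo^*_{1_A}\simeq\Spec(A)$. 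Your argument is more elementary and self-contained; the paper's fits its running theme of pro-\'etale limit decompositions and feeds directly into the next corollary identifying the spectrum of a local form with a Grothendieck--Verdier localization. One minor imprecision: the membership $1_A\in\mathcal{V}_A$ holds for \emph{every} $A$, since identities lie in the saturated class $\mathcal{V}$; the hypothesis that $A$ be $J$-local is needed only at the retraction step, not to produce the terminal object.
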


\begin{proof}
 We prove that the spectral site $ (\mathcal{V}^{\op}_A, J_A)$ is local: indeed, for $ A$ is a local object, for any $J$-cover $(n_i : A \rightarrow B_i)_{i \in I}$ the identity $ 1_A$ factorizes through some $ n_i$. Similarly, in $\mathcal{V}_A$, any $J_A$-cover $ (m_i : 1_A \rightarrow n_i)_{i \in I}$ of the initial object $ 1_A$ admits a retraction for some $i$ so that the corresponding $ m_i$ retracts on $1_A$. Hence the dual of $1_A$ is part of the sieve generated by the cover $ (m_i)_{i \in I}$: but as it is the initial object of $\mathcal{V}_A$, this means that any arrow is par of this generated sieve: hence any $J_A$-cover of $1_A$ generates the maximal sieve, which is the only covering sieve for $1_A$. Hence $(\mathcal{V}^{\op}_A, J_A)$ is a local site, and $\Spec(A)$ a local topos.
\end{proof}

We also have a partial converse of this proposition, allowing to test whether an object is local at the level of its spectrum -- though this requires strictly an additional assumption of subcanonicity. 

\begin{proposition}
    Suppose that for any object $B$, the spectral topology $ J_B$ is subcanonical. Then an object $ A$ in $\mathbb{T}[\mathcal{S}]$ is local if and only if $\Spec(A)$ is a local topos.
\end{proposition}

\begin{proof}
    The direct implication was done in the previous item. The converse relies on subcanonicity. If $ J_A$ is granted as subcanonical, then any corepresentable $ \katayo_n$ for $n $ in $\mathcal{V}_A$ is actually a sheaf for $J_A$. Now if $ A$ is such that $ \Spec(A)$ is local, then the terminal object $ 1_{\Spec(A)}$ is tiny, and moreover coincides with the representable $ \katayo_{1_A}$. As $ J_A$-covers are sent to colimits in $\Spec(A)$, any $J_A$-cover $(m_i : 1_A \rightarrow n_i)_{i \in I}$ yields a colimit decomposition $ \katayo_{1_A}  \simeq \colim_{i \in I} \katayo_{n_i}$ and this decomposition admits a section $  \katayo_{1_A} \rightarrow \katayo_{n_i}$ as $ \hirayo_{1_A}$ is tiny. But we supposed $J_A$ to be subcanonical, so that $ \mathcal{V}_A^{\op} \hookrightarrow \Spec(A)$ is full and faithful, so that this sections must come from a unique section $ r: n_i \rightarrow 1_A$ of $m_i$. This exactly says that $A$ lifts any of its extended $J_A$-covers, hence is a $J$-local object. 
\end{proof}

We also have this corollary from \cref{pro-etale spectrum}:

\begin{corollary}
For $ x : B \rightarrow A_x$ a local unit under $ B$, the geometric morphism $\Spec(x) : \Spec(A_x) \rightarrow \Spec(B)$ is the Grothendieck-Verdier localization of $\Spec(B)$ at the point $p_x : \mathcal{S} \rightarrow \Spec(B)$. 
\end{corollary}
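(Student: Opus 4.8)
The plan is to identify both toposes with one and the same cofiltered pseudolimit. First I would use the proposition above to recognize $p_x$: under the equivalence $\pt(\Spec(B)) \simeq \Lex_{J_B}[\mathcal{V}_B^{\op},\mathcal{S}]$ the point attached to the local form $x : B \to A_x$ is the one whose flat restriction along $\mathfrak{a}_{J_B}\hirayo^* : \mathcal{V}_B^{\op} \to \Spec(B)$ is the functor $\hirayo_x : \mathcal{V}_B^{\op} \to \mathcal{S}$, $n \mapsto \Ind(\mathcal{V}_B)[n,x]$. Consequently the cofiltered category of elements $\int p_x^*$ occurring in the Grothendieck--Verdier formula of \cite{localmap}[Theorem 3.7] is literally $\int\hirayo_x \simeq \mathcal{V}_B\downarrow x$, which is also the index category of the pseudocolimit $\mathcal{V}_{A_x}\simeq \underset{(n,a)\in\int\hirayo_x}{\underrightarrow{\colim}}\mathcal{V}_{\cod(n)}$ of \cref{etale generator as a pseudocolimit}.

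Next I would match the two diagrams indexed by this category. For $(n,a)\in\int\hirayo_x$, the proposition on finitely presented etale arrows gives $\Spec(B)/\mathfrak{a}_{J_B}\hirayo^*_n \simeq \Spec(\cod(n))$, and the transition maps on both sides are the evident etale geometric morphisms induced by the arrows of $\mathcal{V}_B\downarrow x$ (on the localization side by slicing, on the spectral side by postcomposition of etale maps); so the diagram $(n,a)\mapsto \Spec(B)/\mathfrak{a}_{J_B}\hirayo^*_n$ of \cite{localmap}[Theorem 3.7] and the diagram $(n,a)\mapsto \Spec(\cod(n))$ of \cref{pro-etale spectrum} agree up to canonical equivalence.

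Putting these together, \cref{pro-etale spectrum} applied to the etale arrow $x : B \to A_x$ yields
\[ \Spec(A_x) \simeq \underset{(n,a)\in\int\hirayo_x}{\underleftarrow{\lim}}\Spec(\cod(n)) \simeq \underset{(n,a)\in\int p_x^*}{\underleftarrow{\lim}}\Spec(B)/\mathfrak{a}_{J_B}\hirayo^*_n \simeq \Spec(B)_{p_x}, \]
the last equivalence being the cited Grothendieck--Verdier localization. Finally I would check that this is an equivalence of toposes \emph{over} $\Spec(B)$: the structure map of $\Spec(B)_{p_x}$ is the cone leg $p_x^*\partial_0$, while $\Spec(x)$ is exhibited by \cref{pro-etale spectrum} as the cone leg of the decomposition of $\Spec(A_x)$; since under the identification above both legs are the same family of etale geometric morphisms $\Spec(\cod(n))\to\Spec(B)$, the equivalence carries $\Spec(x)$ to $p_x^*\partial_0$, which is exactly the assertion.

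The step I expect to be the main obstacle is this last compatibility: not merely that $\Spec(A_x)$ and $\Spec(B)_{p_x}$ are equivalent as toposes, but that the equivalence is over $\Spec(B)$, i.e. that $\Spec(x)$ itself is the Grothendieck--Verdier localization morphism. This requires tracking the pseudolimit cone legs through the equivalence $\mathcal{V}_{A_x}\simeq\underset{(n,a)}{\underrightarrow{\colim}}\,\mathcal{V}_{\cod(n)}$ of \cref{etale generator as a pseudocolimit}, where the opcartesian-localization description of the pseudocolimit and \cref{Anel lemma} already carry most of the weight.
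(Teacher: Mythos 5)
Your argument is exactly the route the paper intends: the corollary is stated as a direct consequence of \cref{pro-etale spectrum} combined with the Grothendieck--Verdier pseudolimit formula, identifying $\int p_x^*$ with $\int\hirayo_x \simeq \mathcal{V}_B\downarrow x$ and each $\Spec(B)/\mathfrak{a}_{J_B}\hirayo^*_n$ with $\Spec(\cod(n))$. Your additional verification that the equivalence is over $\Spec(B)$ (so that $\Spec(x)$ really is the localization morphism) is a point the paper leaves implicit, and you handle it correctly.
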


To conclude this section, we characterize the class of geometric morphisms the spectrum sends local maps to. Let us first introduce the following notion, which is also related to \cite{caramello2020denseness}[Section 4.7]:

\begin{definition}
A geometric morphism $ f : \mathcal{E } \rightarrow \mathcal{F}$ is said to be \emph{terminally connected}\index{terminally connected! geometric morphism} if the inverse image part $ f^*$ lifts global elements, that is, if for any $ F$ in $\mathcal{F}$ one has 
\[  \mathcal{E}[1,f^*(F)] \simeq \mathcal{F}[1, F]  \]
\end{definition}

\begin{lemma}\label{testing terminal connectedness with local sites}
Let $ f: (\mathcal{C}, J) \rightarrow (\mathcal{D},K)$ be a morphism of sites between small lex sites with $ (\mathcal{D}, K)$ a local site. If $f$ lifts global elements then $\Sh(f) : \Sh(\mathcal{D},J) \rightarrow \Sh(\mathcal{C},J)$ is terminally connected. Moreover, if $ (\mathcal{C},J)$ and $ (\mathcal{D},K)$ both are subcanonical, the converse also holds.  
\end{lemma}

\begin{proof}
Suppose that $f$ lifts global elements, so that we have a natural isomorphism $ \mathcal{D}[1_\mathcal{D}, f] \simeq \mathcal{C}[1_\mathcal{C}, -]$. We prove this suffices to lift also global elements of $\Sh(f)^*$ that arise in $\Sh(\mathcal{D},K)$. Let be $a : 1_{\Sh(\mathcal{D},K)} \rightarrow \Sh(f)^*E$ with $E$ a object of $\Sh(\mathcal{C},J)$. Since $ \mathcal{C}$ generates $\mathcal{E}$ under colimits and $ 1_{\Sh(\mathcal{D},K)}$ is tiny by localness of ${\Sh(\mathcal{D},K)}$, there exists a factorization of the form
\[\begin{tikzcd}
	{1_{\Sh(\mathcal{D},K)}} & {\Sh(f)^*E} \\
	& {\Sh(f)^*\mathfrak{a}_J\hirayo_C}
	\arrow["a", from=1-1, to=1-2]
	\arrow["b"', from=1-1, to=2-2]
	\arrow["{\Sh(f)^*(x)}"', from=2-2, to=1-2]
\end{tikzcd}\]
which is moreover essentially unique in the sense that for any two such lifts are identified with $a$ in the colimit $ \colim_{(C,a) \in \mathfrak{a}_J\hirayo\downarrow E} \Sh(\mathcal{D},K)[1_{\Sh(\mathcal{D},K)}, \Sh(f)^*\hirayo_C] $.

As $ \Sh(f)^*$ restricts as $f$ from $\mathcal{C}$ to $\mathcal{D}$, one has $ \Sh(f)^*\mathfrak{a}_J\hirayo_C \simeq \mathfrak{a}_K \hirayo_{f(C)}$ 
It suffices then to prove one can lifts uniquely global elements of the form $b: 1_{\Sh(\mathcal{D},K)} \rightarrow \mathfrak{a}_K \hirayo_{f(C)} $. Beware that such maps do not necessarily comes from $ \mathcal{D}$ for the topology $ K$ is not supposed to be subcanonical so $\mathfrak{a}_K\hirayo$ may not be fully faithful. However, since $K$ covers are sent to canonical cover in $\Sh(\mathcal{D},K)$, and the terminal object comes from the terminal object of the site $ 1_{\Sh(\mathcal{D},K)} \simeq \mathfrak{a}_K\hirayo_{1_\mathcal{D}}$, we know that any such global element $b$ is induced through the universal property of colimits by the image of a $K$-cover $ (u_i : D_i \rightarrow 1_{\mathcal{D}})_{i \in I}$ together with a cocone $ (v_i : D_i \rightarrow f(C))_{i \in I}$ such that for any $i$ in $I$ one has
\[\begin{tikzcd}
	{\mathfrak{a}_K\hirayo_{D_i}} \\
	{1_{\Sh(\mathcal{D},K)}} & {\mathfrak{a}_K\hirayo_{f(C)}}
	\arrow["b"', from=2-1, to=2-2]
	\arrow["{\mathfrak{a}_K\hirayo_{u_i}}"', from=1-1, to=2-1]
	\arrow["{\mathfrak{a}_K\hirayo_{v_i}}", from=1-1, to=2-2]
\end{tikzcd}\]
But by localness of the site (equivalently, by localness of $\Sh(\mathcal{D},K)$), the identity of $1_{\mathcal{D}} $ factorizes through some $ u_i$ as $ 1_{\mathcal{D}} = u_iw $ in $\mathcal{D}$. This factorization is essentially unique in the sense that for any other factorization through $ 1_{\mathcal{D}}= u_{i'}w'$ then $ w,w'$ factorize through the pullback $D_i \times_D D_{i'} $ and are hence identified in the corresponding colimit in $\Sh(\mathcal{D},K)$. Hence we get an element $ v_i w : 1_{\mathcal{D}} \rightarrow f(C)$ in $\mathcal{D}$, which comes uniquely from an element $ \overline{c}:  1_{\mathcal{C}} \rightarrow C$ in $\mathcal{C}$. For $ w $ is a section of $ u_i$, we have that $f(c) = b$.  \\

In the case where both sites are subcanonical, that is, if both $ \mathfrak{a}_J\hirayo$ and $ \mathfrak{a}_K\hirayo$ are fully faithful, assuming $\Sh(f)$ to be terminally connected yields the following isomorphism in each $C$ of $\mathcal{C}$:
\begin{align*}
    \mathcal{D}[1_\mathcal{D}, f(C)] &\simeq \Sh(\mathcal{D},K)[ 1_{ \Sh(\mathcal{D},K)}, \Sh(f)^*(\hirayo_C)] \\
    &\simeq \Sh(\mathcal{C},J)[ 1_{ \Sh(\mathcal{C},J)},\hirayo_C]\\
    &\simeq \mathcal{C}[1_\mathcal{C}, C]
\end{align*}
\end{proof}

\begin{proposition}\label{spectrum of local map is terminally connected}
Let $u : A_1 \rightarrow A_2$ be a local map between local objects. Then $ \Spec(u) : \Spec(A_2) \rightarrow \Spec(A_1)$ is terminally connected. 
\end{proposition}

\begin{proof}
Recall that $\Spec(u)^*$ restricts as a site morphism $ u_* : (\mathcal{V}^{\op}_{A_1}, J_{A_1}) \rightarrow (\mathcal{V}^{\op}_{A_2}, J_{A_2})$ given by the pushout functor along $u$. From the proof of \cref{Spec of local object is local}, we know in particular that $(\mathcal{V}^{\op}_{A_2}, J_{A_2})$ is a local site. Hence from \cref{testing terminal connectedness with local sites}, it suffices to prove $u_*$ to lift global elements at the level of the spectral sites. Now, suppose one has, for some etale map $ n : A_1 \rightarrow \cod(n)$ in $ \mathcal{V}_{A_1}$, a global element of $ u_*n$, which is the same as a retraction as below
\[\begin{tikzcd}
	{A_1} & {A_2} \\
	{\cod(n)} & {\cod(u_*n)} & {A_2}
	\arrow[Rightarrow, no head, from=1-2, to=2-3]
	\arrow["u", from=1-1, to=1-2]
	\arrow["{u_*n}"{description}, from=1-2, to=2-2]
	\arrow["r"', from=2-2, to=2-3]
	\arrow["n"', from=1-1, to=2-1]
	\arrow["{n_*u}"', from=2-1, to=2-2]
	\arrow["\lrcorner"{anchor=center, pos=0.125, rotate=180}, draw=none, from=2-2, to=1-1]
\end{tikzcd}\]
so that we have a factorization $ u = r n_*u n $. But then, for $u$ is local and $n$ is etale, we know by general properties of orthogonal factorization systems that $n$ must retract on $ 1_{A_1}$ for the etale part of $u$ is invertible, which produces a retraction $ \overline{r}$ as below
\[\begin{tikzcd}
	{A_1} && {A_2} \\
	& {A_1} && {A_2} \\
	{\cod(n)} && {\cod(u_*n)}
	\arrow[Rightarrow, no head, from=1-3, to=2-4]
	\arrow["u", from=1-1, to=1-3]
	\arrow["r"', from=3-3, to=2-4]
	\arrow["n"', from=1-1, to=3-1]
	\arrow["{n_*u}"', from=3-1, to=3-3]
	\arrow["\lrcorner"{anchor=center, pos=0.125, rotate=180}, draw=none, from=3-3, to=1-1]
	\arrow[Rightarrow, no head, from=1-1, to=2-2]
	\arrow["u"{description, pos=0.2}, from=2-2, to=2-4]
	\arrow["{u_*n}"{description, pos=0.3}, crossing over, from=1-3, to=3-3]
	\arrow["{\overline{r}}"{description}, dashed, from=3-1, to=2-2]
\end{tikzcd}\]
and we have by cancellation of pushout that $ u_*(\overline{r}) = r$; moreover such an $\overline{r}$ has to be unique: for any other choice $r'$ of retraction with $ u_*(r') = r$, one would have 
$ u (r')= r n_*u = u \overline{r}$ and $ r'n = \overline{r}n$, so that $r'$ and $\overline{r}$ would be simultaneously equalized by $n$ and coequalized by $u$: but it is a general property of factorization systems that any two arrows that are simultaneously equalized by a left map and coequalized by a right map are actually equal, which enforces that $ r'= \overline{r}$. Hence $ u_*$ lifts global elements, and by the previous lemma, this prove $ \Spec(u)$ is terminally connected. 
\end{proof}

\begin{remark}\label{Terminal geometry}
This proves that, for any geometry $ (\mathbb{T}, \mathcal{V},J)$, the functor sending a set-valued $\mathbb{T}$-model in $ \mathbb{T}[\mathcal{S}]$ to the underlying topos of its spectrum (which we will abusively denote $ \Spec$ in this remark) restricts as below
\[\begin{tikzcd}
	{\mathbb{T}_J[\mathcal{S}]^{\Loc}} & {\mathbb{T}[\mathcal{S}]} \\
	(\textbf{LTop}^{\textbf{TCo}})^{\op} & \GTop^{\op}
	\arrow["{\iota_{\mathcal{V},J}}", hook, from=1-1, to=1-2]
	\arrow[dashed, from=1-1, to=2-1]
	\arrow["\Spec", from=1-2, to=2-2]
	\arrow[hook, from=2-1, to=2-2]
\end{tikzcd}\]
where $\textbf{LTop}^{\textbf{TCo}}$ denotes the bicategory of local Grothendieck topoi and terminally connected geometric morphisms between them. Moreover, one can show that terminally connected geometric morphisms are left orthogonal in a 2-categorical sense to etale geometric morphisms: this suggests that a kind of ``universal" 2-dimensional notion of geometry on the bicategory of Grothendieck topoi is involved in any instance of geometry. This will be investigated in a future work.
\end{remark}

\subsection{Gros versus petit spectrum}

In this subsection we describe standard sites associated to the different generic classifiers constructed in the previous sections. The techniques used here allow to construct such classifiers of arrows, arrows toward local objects, etale arrows and local forms, but \emph{before} fixing their domain; the specification of the domain is achieved as a variation of this construction and begets the spectrum as described in the previous section. Here the sites are constructed from the arrow category, the etale generator and convenient topologies on them derived from the Grothendieck pretopology provided by a choice of geometry.

\begin{division}
Before embarking into those constructions, let us give a word on the ambivalence of some limits in $\Lex$. It was observed first in \cite{Cole} that in $\Lex$ pseudopowers coincide with bitensors, so that in fact 
\[  (\mathbb{T}[\mathcal{S}]_{\omega}^{\op})^2 \simeq \mathbb{T}[\mathcal{S}]_{\omega}^{\op} \otimes 2 \]
This oddity comes from the fact that the power projections come equiped with retractions, provided by the identity arrow, which behave as tensor inclusions. Though we will not make use of it, let us also recall here that similarly, pseudoproducts in $\Lex$ are also bicoproducts, and that the pseudoterminal object $1$ is also bi-initial. Those properties generalize similar statements about the category of $\wedge$-semilattices. 
\end{division}

\begin{division}
Recall that $ (\mathbb{T}[\mathcal{S}]_{\omega}^{\op})^2$ is the syntactic site of the theory of morphisms of $ \mathbb{T}$-models. We have indeed that 
\begin{align*}
    \pt(\widehat{(\mathbb{T}[\mathcal{S}]_{\omega}^2)^{\op} }) &\simeq \Lex[(\mathbb{T}[\mathcal{S}]_{\omega}^2)^{\op}, \mathcal{S}] \\
    &\simeq \Ind(\mathbb{T}[\mathcal{S}]_{\omega}^2) \\
    &\simeq \mathbb{T}[\mathcal{S}]^2
\end{align*}

From the previous remark the equivalence above generalizes to arbitrary Grothendieck topoi as 
\begin{align*}
   \GTop[\mathcal{E},\widehat{(\mathbb{T}[\mathcal{S}]_{\omega}^{\op})^2] } &\simeq \Lex[(\mathbb{T}[\mathcal{S}]_{\omega}^{\op})^2, \mathcal{E}] \\
   &\simeq \Lex[\mathbb{T}[\mathcal{S}]_{\omega}^{\op} \otimes 2 , \mathcal{E}] \\
    &\simeq \mathbb{T}[\mathcal{E}]^2
\end{align*}
This exhibits $ \widehat{(\mathbb{T}[\mathcal{S}]_{\omega}^{\op})^2} $ as the bipower $ \mathcal{S}[\mathbb{T}]^2 $ in $ \GTop$, that is, as the classifier of morphisms between $ \mathbb{T}$-models.
\end{division}

\begin{division}\label{classifier of etale maps and factorization}
Similarly, from the factorization system $ (\Et, \Loc)$ was left generated from $ \mathcal{V}$, we have
\begin{align*}
    \Et &= \Ind(\mathcal{V}) \\ 
    &\simeq \Lex[\mathcal{V}^{\op}, \mathcal{S}] \\
    &\simeq \pt( \widehat{\mathcal{V}^{\op}})
\end{align*} 
so that $ \widehat{\mathcal{V}^{\op}}$ is the classifier of etale maps $ \mathcal{S}[\Et]$. \\

Moreover, the factorization can be constructed as follows from a site theoretic point of view. As $ \mathcal{V}$ is closed in $ \mathbb{T}[\mathcal{S}]_{\omega}^2$ under finite colimits, the inclusion $ \mathcal{V}^{\op} \hookrightarrow (\mathbb{T}[\mathcal{S}]_{\omega}^2)^{\op}$ is lex. Hence for a lex functor $ \ulcorner n \urcorner : \mathcal{V}^{\op} \rightarrow \mathcal{E} $ coding for an etale map in $ \mathbb{T}[\mathcal{E}]$, the left Kan extension 
\[\begin{tikzcd}
	{\mathcal{V}^{\op}} & {\mathcal{E}} \\
	{(\mathbb{T}[\mathcal{S}]_{\omega}^2)^{\op}}
	\arrow["{\iota_{\mathcal{V}}^{\op}}"', hook, from=1-1, to=2-1]
	\arrow["{\ulcorner n \urcorner}", from=1-1, to=1-2]
	\arrow[""{name=0, anchor=center, inner sep=0}, "{\lan_{\iota_{\mathcal{V}}^{\op}} \ulcorner n \urcorner}"', from=2-1, to=1-2]
	\arrow["\simeq"{description}, Rightarrow, draw=none, from=1-1, to=0]
\end{tikzcd}\]
is the name of $ \iota_{\Et} (n) $, that is, of the image of $n $ along the inclusion $ \Et \hookrightarrow \mathbb{T}[\mathcal{S}]^2$. On the other hand, for a lex functor $ f :(\mathbb{T}[\mathcal{S}]_{\omega}^2)^{\op} \rightarrow \mathcal{E}  $ coding for an arbitrary map, then its restriction along $ \iota_\mathcal{V}^{\op}$ codes for the etale part of its factorization, that is \[ \ulcorner f \urcorner\iota_{\mathcal{V}}^{\op} = \ulcorner n_f \urcorner  \]
while the counit of $\ulcorner f \urcorner$ 
\[\begin{tikzcd}
	{\mathcal{V}^{\op}} && {\mathcal{E}} \\
	{(\mathbb{T}[\mathcal{S}]_{\omega}^2)^{\op}}
	\arrow["{\iota_{\mathcal{V}}^{\op}}"', hook, from=1-1, to=2-1]
	\arrow["{\ulcorner f \urcorner\iota_{\mathcal{V}}^{\op}}", from=1-1, to=1-3]
	\arrow[""{name=0, anchor=center, inner sep=0}, "{ \ulcorner f \urcorner}"{description}, from=2-1, to=1-3]
	\arrow[""{name=1, anchor=center, inner sep=0}, "{\lan_{\iota_{\mathcal{V}}^{\op}} \ulcorner f \urcorner\iota_{\mathcal{V}}^{\op}}"', bend right=30, from=2-1, to=1-3, end anchor=260, start anchor=0]
	\arrow["{\epsilon_{\ulcorner f \urcorner}}", shorten <=2pt, shorten >=2pt, Rightarrow, from=1, to=0]
\end{tikzcd}\]
is the name of the local part, that is
\[ \epsilon_{\ulcorner f \urcorner} =  u_f  \]
which is obtained by whiskering $ \epsilon_{\ulcorner f \urcorner} $ with the morphism of site provided by the identity functor 
\[\begin{tikzcd}
	{(\mathbb{T}[\mathcal{S}]_{\omega}^{\op}, J)} & {((\mathbb{T}[\mathcal{S}]_{\omega}^2)^{\op}, J^2)}
	\arrow["{\textup{id}}", from=1-1, to=1-2]
\end{tikzcd}\]
\end{division}

\begin{remark}
Observe that the full faithfulness of $ \iota_{\mathcal{V}}^{\op}$ ensures that $ \ulcorner n \urcorner \simeq \lan_{\iota_{\mathcal{V}}^{\op}} \ulcorner n \urcorner \iota_ {\mathcal{V}}^{\op} $, that is, that an etale map is its own etale part. 
\end{remark}


\begin{division}
Now, we can equip  $ (\mathbb{T}[\mathcal{S}]_{\omega}^{\op})^2 $ with the following pretopology $ J^2$: for an arrow $ k : K \rightarrow K'$, a covering family for $ J^2(k)$ is a family of squares 
\[\begin{tikzcd}
	K & {K_i} \\
	{K'} & {K'_i}
	\arrow["k"', from=1-1, to=2-1]
	\arrow["{u_i}", from=1-1, to=1-2]
	\arrow["{k_i}", from=1-2, to=2-2]
	\arrow["{v_i}"', from=2-1, to=2-2]
\end{tikzcd}\]
such that $ (n_i : K \rightarrow K_i)_{i \in I}$ is in $J(K)$. Then observe that $ (n'_i : K' \rightarrow K'_i)_{i \in I}$ is automatically covering by the stability axiom of pretopologies. In particular one can choose the squares in a family in $J^2$ to be the pushouts squares along $k$ of a covering family $ (n_i : K \rightarrow K_i)_{i \in I}$ in $J(K)$.\\

Then the adjunction 
\[\begin{tikzcd}
	{\mathbb{T}[\mathcal{S}]_{\omega}^2} && {\mathbb{T}[\mathcal{S}]_{\omega}}
	\arrow[""{name=0, anchor=center, inner sep=0}, "\cod", curve={height=-12pt}, from=1-1, to=1-3]
	\arrow[""{name=1, anchor=center, inner sep=0}, "{\textup{id}}", curve={height=-12pt}, from=1-3, to=1-1]
	\arrow["\dashv"{anchor=center, rotate=-90}, draw=none, from=1, to=0]
\end{tikzcd}\]
defines an adjunction between morphisms of sites
\[\begin{tikzcd}
	{((\mathbb{T}[\mathcal{S}]_{\omega}^2)^{\op}, J^2)} && {((\mathbb{T}[\mathcal{S}]_{\omega})^{\op},J)}
	\arrow[""{name=0, anchor=center, inner sep=0}, "\cod"', bend right=20, from=1-1, to=1-3, start anchor = 340]
	\arrow[""{name=1, anchor=center, inner sep=0}, "{\textup{id}}"', hook, bend right=20, from=1-3, to=1-1, end anchor = 20]
	\arrow["\dashv"{anchor=center, rotate=-90}, draw=none, from=1, to=0]
\end{tikzcd}\]
as $ \textup{id}$ sends $ J$-covers to $ J^2$-covers. 
\end{division}

\begin{proposition}
The topos $ \Sh((\mathbb{T}[\mathcal{S}]_{\omega}^{\op})^2 , J^2) $ classifies morphisms between $\mathbb{T}_J$-models.
\end{proposition}

\begin{division}
Now we can also construct an intermediate topology in such a way that the associated sheaf topos classifies maps whose codomain is local -- and the domain arbitrary. Define the pretopology $ J_{\cod}$ on $ (\mathbb{T}[\mathcal{S}]_{\omega}^2)^{\op}$ whose families consist of those squares as above, but where only the codomain family $ (n'_i : K' \rightarrow K'_i )_{i \in I}$ are supposed to be $J$-covering. \\

Then in the adjunction 
\[\begin{tikzcd}
	{\mathbb{T}[\mathcal{S}]_{\omega}^2} && {\mathbb{T}[\mathcal{S}]_{\omega}}
	\arrow[""{name=0, anchor=center, inner sep=0}, "\cod"', curve={height=12pt}, from=1-1, to=1-3]
	\arrow[""{name=1, anchor=center, inner sep=0}, "{!_{(-)}}"', hook, curve={height=12pt}, from=1-3, to=1-1]
	\arrow["\dashv"{anchor=center, rotate=-90}, draw=none, from=1, to=0]
\end{tikzcd}\]
where $ !_{(-)}$ sends $ K$ to the initial map $ !_K : 0 \rightarrow K$ -- as 0 is always finitely presented -- defines an adjunction of morphisms of sites 
\[\begin{tikzcd}
	{((\mathbb{T}[\mathcal{S}]_{\omega}^2)^{\op}, J_{\cod})} && {(\mathbb{T}[\mathcal{S}]_{\omega}^{\op},J)}
	\arrow[""{name=0, anchor=center, inner sep=0}, "{\cod^{\op}}", bend left=20, from=1-1, to=1-3, start anchor=20]
	\arrow[""{name=1, anchor=center, inner sep=0}, hook', "{!_{(-)}^{\op}}", bend left=20, from=1-3, to=1-1, end anchor=340]
	\arrow["\dashv"{anchor=center, rotate=-90}, draw=none, from=1, to=0]
\end{tikzcd}\]
On one hand, the codomain functor sends $J_{\cod}$ families to $J$ families by construction; on the other hand, if $ (n_i : K \rightarrow K_i)_{i \in I}$ is in $ J(K)$, then the family consisting of those squares
\[\begin{tikzcd}
	0 & 0 \\
	K & {K_i}
	\arrow["{!_K}"', from=1-1, to=2-1]
	\arrow[Rightarrow, no head, from=1-1, to=1-2]
	\arrow["{!_{K_i}}", from=1-2, to=2-2]
	\arrow["{n_i}"', from=2-1, to=2-2]
\end{tikzcd}\]
is in $J_{\cod}(!_{K})$. 
\end{division}

\begin{proposition}
The sheaf topos $ \Sh((\mathbb{T}[\mathcal{S}]_{\omega}^2)^{\op}, J_{\cod})$ classifies morphisms toward $ \mathbb{T}_J$-models.
\end{proposition}

\begin{proof}
Let $ \ulcorner f \urcorner : (\mathbb{T}[\mathcal{S}]_{\omega}^2)^{\op} \rightarrow \mathcal{E}$ be the name of a morphism $ f : B \rightarrow A$ of $ \mathbb{T}$-models in $\mathcal{E}$. Then by Yoneda lemma, requiring it to be $J_{\cod}$-continuous amounts to saying that for any $ k : K \rightarrow K'$ and any family in $J_{\cod}(k)$, any square $ (a,b) : k \rightarrow f$ factorizes as below for some $i$
\[\begin{tikzcd}[row sep=small]
	K && B \\
	& {K_i} \\
	{K'} && A \\
	& {K_i'}
	\arrow["a", from=1-1, to=1-3]
	\arrow["k"', from=1-1, to=3-1]
	\arrow["b"'{pos=0.4, description}, from=3-1, to=3-3]
	\arrow["f", from=1-3, to=3-3]
	\arrow["{n_i}"{description}, from=1-1, to=2-2]
	\arrow["{n'_i}"', from=3-1, to=4-2]
	\arrow[dashed, from=2-2, to=1-3]
	\arrow[dashed, from=4-2, to=3-3]
	\arrow["{k_i}"{description, pos=0.3}, from=2-2, to=4-2, crossing over]
\end{tikzcd}\]
In particular, for any $ (n_i : K \rightarrow K_i)_{i \in I}$, we have a factorization 
\[\begin{tikzcd}[row sep=small]
	0 && B \\
	& 0 \\
	K && A \\
	& {K_i}
	\arrow["{!_B}", from=1-1, to=1-3]
	\arrow["k"', from=1-1, to=3-1]
	\arrow["b"'{pos=0.4, description}, from=3-1, to=3-3]
	\arrow["f", from=1-3, to=3-3]
	\arrow[Rightarrow, no head, from=1-1, to=2-2]
	\arrow["{!_B}"{description}, from=2-2, to=1-3]
	\arrow[dashed, from=4-2, to=3-3]
	\arrow["{k_i}"{description, pos=0.3}, from=2-2, to=4-2, crossing over]
	\arrow["{n_i}"', from=3-1, to=4-2]
\end{tikzcd}\]
so that $A$ is actually $J$-local. 
\end{proof}

\begin{division}
Now we would need to present the topos classifying local forms, that are, etale maps toward local objects. We saw that $ \mathcal{V}^{\op}$ was a site for the classifier of etale maps: then we claim it suffices to restrict the codomain topology to get the classifier of etale forms. As $ J$ is generated in $ \mathcal{V}$, we can consider in $ \mathcal{V}^{\op}$ the restricted pretopology $J_{\cod}\mid_{\mathcal{V}^{\op}} $ consisting of squares of finitely presented maps:
\[\begin{tikzcd}
	K & {K_i} \\
	{K'} & {K_i'}
	\arrow["m"', from=1-1, to=2-1]
	\arrow["{n_i}", from=1-1, to=1-2]
	\arrow["{m_i}", from=1-2, to=2-2]
	\arrow["{n_i'}"', from=2-1, to=2-2]
\end{tikzcd}\]
such that the bottom families $ (n'_i : K' \rightarrow K'_i)_{i \in I}$ are in $J(K')$. Beware that $ \mathcal{V}$ is a full subcategory of $\mathbb{T}[\mathcal{S}]^2_\omega$ so that it also contain squares with non-etale top and bottom arrows, but the generation condition makes we actually consider squares entirely made of $\mathcal{V}$ maps for the pretopology. \\

Then the inclusion of the basic etale maps defines a morphism of site 
\[\begin{tikzcd}
	{(\mathcal{V}^{\op}, J_{\cod}\mid_{\mathcal{V}^{\op}})} & {((\mathbb{T}[\mathcal{S}]_{\omega}^2 )^{\op}, J_{\cod})}
	\arrow["{\iota_\mathcal{V}^{\op}}", from=1-1, to=1-2, hook]
\end{tikzcd}\]
\end{division}

\begin{proposition}\label{local morphisms between classifiers}
The sheaf topos $ \Sh(\mathcal{V}^{\op}, J_{\cod}\mid_{\mathcal{V}^{\op}})$ is the classifier of local forms. Moreover we have a local geometric morphism 
\[\begin{tikzcd}
	{\Sh(\mathcal{V}^{\op}, J_{\cod}\mid_{\mathcal{V}^{\op}}) } & {\Sh((\mathbb{T}[\mathcal{S}]_{\omega}^2)^{\op}, J_{\cod})}
	\arrow["{}", from=1-1, to=1-2]
\end{tikzcd}\]
whose center is the geometric morphism $\Sh(\iota_\mathcal{V}^{\op})$. 
\end{proposition}

\begin{proof}
This is a combination of the previous remarks. A $J_{\cod}\mid_{\mathcal{V}^{\op}} $-continuous $ \ulcorner n \urcorner : \mathcal{V}^{\op} \rightarrow \mathcal{E}$ is the name of an etale map, and from \cref{classifier of etale maps and factorization}, we can consider its left Kan extension $ \lan_{\iota_\mathcal{V}^{\op}} \ulcorner n \urcorner$, which is $ J_{\cod}$-continuous as $ \iota_{\mathcal{V}}$ is a morphism of site: hence its codomain is a local object, so that $ \ulcorner n \urcorner$ is the name of a local form.\\

Moreover, the morphism of site $ \iota_{\mathcal{V}^{\op}}$ happens to be also a comorphism of site: indeed, for any $ n : K \rightarrow K'$ in $\mathcal{V}$ and $ ((u_i, n_i') : n \rightarrow k_i)_{i \in I}$ a cover in $ J_{\cod}(\iota_{\mathcal{V}}^{\op}(n)$ -- with, beware, $u_i$ and $ k_i$ arbitrary finitely presented maps -- we have that $ (n'_i)_{i \in I}$ is a family of etale maps so each composite $ n_i'n$ is an etale map and hence the family consisting of the squares
\[\begin{tikzcd}
	K & K \\
	{K'} & {K'_i}
	\arrow["n"', from=1-1, to=2-1]
	\arrow[Rightarrow, no head, from=1-1, to=1-2]
	\arrow["{n'_in}", from=1-2, to=2-2]
	\arrow["{n'_i}"', from=2-1, to=2-2]
\end{tikzcd}\]
is a $J_{\cod}\mid_{\mathcal{V}^{\op}}$-cover in $\mathcal{V}$, and we have a factorization at each $i$
\[\begin{tikzcd}
	&& {K_i} \\
	K & K & {K'_i} \\
	{K'} & {K'_i}
	\arrow["n"', from=2-1, to=3-1]
	\arrow[Rightarrow, no head, from=2-1, to=2-2]
	\arrow["{n'_in}"{description}, from=2-2, to=3-2]
	\arrow["{n'_i}"', from=3-1, to=3-2]
	\arrow["{u_i}", curve={height=-12pt}, from=2-1, to=1-3]
	\arrow["{u_i}"{description}, from=2-2, to=1-3]
	\arrow["{k_i}", from=1-3, to=2-3]
	\arrow[Rightarrow, no head, from=3-2, to=2-3]
\end{tikzcd}\]
ensuring that the image of this cover along $ \iota_{\mathcal{V}}^{\op}$ refines $ (u_i,n'_i)_{i \in I}$: hence $\iota_{\mathcal{V}}^{\op}$ has the cover lifting property, and is a comorphism of site. Hence, being also continuous, it defines from \cite{caramello2020denseness}[Theorem 7.20, (iii)] a local geometric morphism consisting of a triple of adjoints
\[\begin{tikzcd}
	{\Sh(\mathcal{V}^{\op}, J_{\cod}\mid_{\mathcal{V}^{\op}}) } && {\Sh((\mathbb{T}[\mathcal{S}]_{\omega}^2)^{\op}, J_{\cod})}
	\arrow["{\Sh(\iota_\mathcal{V}^{\op})^*}", curve={height=-25pt}, from=1-1, to=1-3]
	\arrow["{\Sh(\iota_\mathcal{V}^{\op})^!}"', curve={height=25pt}, from=1-1, to=1-3]
	\arrow["{\Sh(\iota_\mathcal{V}^{\op})_*}"{description}, from=1-3, to=1-1]
\end{tikzcd}\]
\end{proof}

\begin{remark}
Observe that admissibility can be detected at this level of description by combining the previous lemma with \cref{classifier of etale maps and factorization}. Indeed, suppose that $ \ulcorner f \urcorner : (\mathbb{T}[\mathcal{S}]_{\omega}^2)^{\op} \rightarrow \mathcal{E} $ is $ J_\cod$ continuous. Then its precomposition along the morphism of site $ \iota_\mathcal{V}^{\op}$, which codes for its etale maps, is also $ J_{\cod}\mid_{\mathcal{V}^{\op}}$-continuous: this means that the etale part of the factorization of a morphism with local codomain still has a local codomain.
\end{remark}

\begin{division}We now specify how those data specialize when we fix the domain of morphisms we want to classify: we recover the spectrum together with a coarser object classifying more generally arbitrary maps under the fixed object toward local objects.

In a similar manner of what was established for the category of etale maps under a fixed object, it is proven in \cite{osmond2021coslices}[theorem 2.12] that the category $ B\downarrow \mathcal{B}$ has as generator of finitely presented objects the class $\mathcal{G}_B$ of all maps under $B$ induced as pushouts 
\[\begin{tikzcd}
	K & {K'} \\
	B & C
	\arrow["a"', from=1-1, to=2-1]
	\arrow["l"', from=2-1, to=2-2]
	\arrow["k", from=1-1, to=1-2]
	\arrow[from=1-2, to=2-2]
\end{tikzcd}\]
Here we do not require the maps $k$ to be in any specific class beside being finitely presented. 
Then as like as the etale generator under $B$ is closed under finite colimits and generates left maps under $B$, the class $\mathcal{G}_B$ is closed under finite colimits in $B \downarrow \mathcal{B}$ and generates the coslice under $B$ as its ind-completion
\begin{align*}
    B \downarrow \mathcal{B} &\simeq \Ind(\mathcal{G}_B) \\
&\simeq \Lex[\mathcal{G}_B^{\op}, \mathcal{S}]
\end{align*}


There is a full inclusion $ \mathcal{V}_B \rightarrow \mathcal{G}_B$, where the fullness comes from right cancellation of etale maps. The spectral topology $J_B$ that was induced from $J$ on the etale generator actually extends to the coslice generator $ \mathcal{G}_B^{\op}$. We hence have an inclusion of sites 
\[\begin{tikzcd}
	{(\mathcal{V}_B^{\op}, J_B)} & {(\mathcal{G}_B^{\op}, J_B)}
	\arrow["{\iota_B}", hook, from=1-1, to=1-2]
\end{tikzcd}\]
which is obviously continous: but it has a stronger property induced from the axioms of a geometry, whose consequence is the following fact (already observed in \cite{Anel}[Proposition 39], also related to \cite{sga4}[4.10.6] and \cite{dubuc2000axiomatic}):
\end{division}
\begin{theorem}
 For any $\mathbb{T}$-model there is a local geometric morphism over the spectrum of $B$
\[\begin{tikzcd}
	{\Sh(\mathcal{G}_B^{\op}, J_B)} & {\Spec(B)}
	\arrow["{\iota_B}", from=1-1, to=1-2]
\end{tikzcd}\]
\end{theorem}

\begin{proof}
    The idea is very close to \cref{local morphisms between classifiers}: the idea is that $ \iota_B$ is not a mere morphisms of site but simultaneously a comorphism of site, and when working in coslices, this becomes almost automatic: indeed, if one take a $J_B$-cover of the image of a finitely presented etale map $ \iota_B(n)$ in $\mathcal{G}_B$, as the $J_B$-covering families in $\mathcal{G}_B$ still are induced as pushout of $J$-covers and hence are constituted of etale maps which are closed under composition: then each of the triangles 
\[\begin{tikzcd}
	B \\
	{B'} & {B_i}
	\arrow["{\iota_B(n)}"', from=1-1, to=2-1]
	\arrow["{m_i}"', from=2-1, to=2-2]
	\arrow["{l_i}", from=1-1, to=2-2]
\end{tikzcd}\]
actually lies in $\mathcal{V}_B$ and the cover-lifting property is immediate. 
\end{proof}

\begin{division}For this reason, certain sources as \cite{Anel} call the category $ \Sh(\mathcal{G}_B,J_B)$ the \emph{gros spectrum}. Beware however it lacks the universal property of the \emph{petit spectrum} $ \Spec(B) $ ! A more abstract reasoning involving bilimits of topoi can help the reader understanding that the gros spectrum, which can be presented as a comma object in $\GTop$, would only provide a left adjoint only for a restricted category of modelled topoi where the only allowed morphisms $(f,\phi) : (\mathcal{E}, E) \rightarrow (\mathcal{F},F)$ are those for which $\phi^\flat$ is an isomorphism. This is because the gros spectrum is a spectrum for a \emph{rigid geometry} where all maps are etale, and all local maps are isomorphisms. However, for those considerations are related to another way of presenting the spectrum we chose not to give further details about this fact and now turn to the structure sheaf of the petit spectrum which confers it its universal property. 
\end{division}
\subsection{The structure sheaf}

Now we turn to the structure sheaf, which is obtained from the codomain functor modulo sheafification. It will play the role of the free local object in the spectrum -- which will be visualizable at its stalks.

\begin{definition}
For a set-valued model $ B$ in $ \mathbb{T}[\mathcal{S}]$, the \emph{structure sheaf}\index{structure sheaf} of $B$ is the sheaf of set-valued $ \mathbb{T}$-models $\widetilde{B}$ obtained as 
\[\widetilde{B} = \mathfrak{a}_{J_B} \cod\]
\end{definition}

\begin{division}
The structure sheaf can also be described as follows: recall that any $\mathbb{T}$-model $B$ is in $\Ind(\mathbb{T}[\mathcal{S}]_{\omega}) \simeq \Lex[\mathcal{C}_\mathbb{T}, \mathcal{S}]$. Moreover, we can consider the \emph{conerve} of the codomain functor
\[\begin{tikzcd}[row sep=tiny]
	{\mathbb{T}[\mathcal{S}]^{\op}} & {\widehat{\mathcal{V}^{\op}_B}} \\
	{B} & {\mathbb{T}[\mathcal{S}] \big{[} B, \cod \big{]}}
	\arrow["{\cod_*}", from=1-1, to=1-2]
	\arrow[from=2-1, to=2-2, shorten <=2pt, shorten >=2pt, maps to]
\end{tikzcd}\]
which can be composed along the embedding $ \mathcal{C}_\mathbb{T} \hookrightarrow \mathbb{T}[\mathcal{S}]^{\op}$
\[\begin{tikzcd}
	{\mathcal{V}^{\op}_B} & {\mathbb{T}[\mathcal{S}]^{\op}} & {\mathcal{C}_\mathbb{T}} \\
	{\widehat{\mathcal{V}_B^{\op}}}
	\arrow["{\cod}", from=1-1, to=1-2]
	\arrow[""{name=0, inner sep=0}, from=1-1, to=2-1, hook]
	\arrow["{\katayo}"', from=1-3, to=1-2, hook']
	\arrow["{\cod_*}"{name=1}, from=1-2, to=2-1]
	\arrow[Rightarrow, "{\chi}"{near start}, from=0, to=1, shorten <=3pt, shorten >=5pt]
\end{tikzcd}\]
to produce a lex functor 
\[\begin{tikzcd}[row sep=tiny]
	{\mathcal{C}_\mathbb{T}} & {\widehat{\mathcal{V}^{\op}_B}} \\
	{\{ \overline{x}, \phi \}} & {\mathbb{T}[\mathcal{S}]\big{[} K_\phi, \cod \big{]}}
	\arrow["{\cod_*}", from=1-1, to=1-2]
	\arrow[from=2-1, to=2-2, maps to]
\end{tikzcd}\]
which we can now compose with the lex localization $\mathfrak{a}_{J_B} : {\widehat{\mathcal{V}^{\op}_B}} \rightarrow \Spec(B)$ to get a lex functor 

\[\begin{tikzcd}[row sep=tiny]
	{\mathcal{C}_{\mathbb{T}}} & {\Spec(B)} & {} \\
	{\{\overline{x}, \phi \}} & {\mathfrak{a}_{J_B}\mathbb{T}[\mathcal{S}] \big{[} K_\phi, \cod \big{]}}
	\arrow["{\widetilde{B}}", from=1-1, to=1-2]
	\arrow[shorten >=4pt, maps to, from=2-1, to=2-2]
\end{tikzcd}\]
\end{division}

\begin{proposition}\label{structure sheaf is local for sets}
The structure sheaf $\widetilde{B} $ is in $\mathbb{T}_J[\Spec(B)]$. In particular for any point $ x : \mathcal{S} \rightarrow \Spec(B)$ the stalk $ x^*\widetilde{B} $ is in $\mathbb{T}_J[\mathcal{S}]$.
\end{proposition}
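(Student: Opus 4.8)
The plan is to verify $\widetilde{B}\in\mathbb{T}_J[\Spec(B)]$ through the characterization $\mathbb{T}_J[\Spec(B)]\simeq\Lex_J[\mathcal{C}_\mathbb{T},\Spec(B)]$. The functor $\widetilde{B}:\mathcal{C}_\mathbb{T}\to\Spec(B)$ is already left exact by its construction (as the composite of the left exact conerve of $\cod$ with the left exact sheafification $\mathfrak{a}_{J_B}$), so the only remaining point is $J$-continuity: that $\widetilde{B}$ sends a generating covering family of $J$ to an epimorphic family in $\Spec(B)$. First I would record that, by definition of a geometry, such a generating $J$-cover is the dual $(\{\overline{y}_i,\psi_i\}\xrightarrow{[\theta_i]}\{\overline{x},\phi\})_{i\in I}$ of a family $(f_{\theta_i}:K_\phi\to K_{\psi_i})_{i\in I}$ of maps of $\mathcal{V}$, and that $(\widetilde{B}([\theta_i]))_{i\in I}$ is the image under $\mathfrak{a}_{J_B}$ of the corresponding family of presheaf maps on $\mathcal{V}_B^{\op}$, where $\widetilde{B}$ before sheafification is the presheaf $\{\overline{x},\phi\}\mapsto\bigl(n\mapsto\cod(n)(\{\overline{x},\phi\})\bigr)$. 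Since $\mathfrak{a}_{J_B}$ is a left adjoint it preserves coproducts and epimorphisms and sends $J_B$-local epimorphisms of presheaves to epimorphisms of sheaves, so it suffices to show that $\coprod_{i\in I}\bigl(n\mapsto\cod(n)(\{\overline{y}_i,\psi_i\})\bigr)\to\bigl(n\mapsto\cod(n)(\{\overline{x},\phi\})\bigr)$ is a $J_B$-local epimorphism.

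The heart of the argument is then a pushout computation. Given an object $n:B\to C$ of $\mathcal{V}_B$ and a section at $n$ of the target presheaf --- that is, a witness $\overline{a}\in C(\{\overline{x},\phi\})$, equivalently an arrow $\ulcorner\overline{a}\urcorner:K_\phi\to C$ --- I would push out, for each $i$, the map $f_{\theta_i}$ along $\ulcorner\overline{a}\urcorner$ to obtain $m_i:C\to C_i$ with $C_i=C+_{K_\phi}K_{\psi_i}$ and a map $g_i:K_{\psi_i}\to C_i$. As $\mathcal{V}$ is saturated, each $m_i$ is again a finitely presented etale map, so $n_i:=m_i n:B\to C_i$ lies in $\mathcal{V}_B$; and by the very definition of $J_B$ --- taking $K=K_\phi$, $b=\ulcorner\overline{a}\urcorner$ and the $J$-cover $(f_{\theta_i})_{i\in I}$ --- the family $(n_i\to n)_{i\in I}$ is a $J_B$-covering family of $n$ in $\mathcal{V}_B^{\op}$. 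The pushout relation $m_i\ulcorner\overline{a}\urcorner=g_if_{\theta_i}$ then says precisely that the restriction of $\overline{a}$ along $m_i$ equals $\cod(n_i)([\theta_i]_\mathbb{T})$ applied to $g_i\in\cod(n_i)(\{\overline{y}_i,\psi_i\})$, i.e. it lies in the image of the $i$-th component at $n_i$. This exhibits the presheaf map as a $J_B$-local epimorphism, hence $(\widetilde{B}([\theta_i]))_{i\in I}$ is epimorphic in $\Spec(B)$, so $\widetilde{B}$ is $J$-continuous; together with left exactness this gives $\widetilde{B}\in\mathbb{T}_J[\Spec(B)]$.

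For the final assertion I would invoke the fact recalled in Section 1.2 that membership in a geometric extension can be tested at points: for a point $x:\mathcal{S}\to\Spec(B)$, the inverse image $x^*$ is left exact and, being a left adjoint, preserves epimorphic families, so $x^*\circ\widetilde{B}:\mathcal{C}_\mathbb{T}\to\mathcal{S}$ is again lex and $J$-continuous and therefore $x^*\widetilde{B}\in\mathbb{T}_J[\mathcal{S}]$; equivalently, viewing $\widetilde{B}$ as a geometric morphism $\Spec(B)\to\mathcal{S}[\mathbb{T}_J]$, its composite with $x$ is a point of $\mathcal{S}[\mathbb{T}_J]$. There is no deep obstacle here; the one step needing care is lining up the shape of a $J_B$-covering family with exactly the data --- a witness of $\phi$ in $C$ together with the chosen $J$-cover of $K_\phi$ --- that makes the restricted witness factor through one of the $\psi_i$, and noting that the saturation of $\mathcal{V}$ keeps the pushed-out family inside $\mathcal{V}_B$.
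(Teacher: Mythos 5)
Your proof is correct and follows essentially the same route as the paper: you reduce $J$-continuity of $\widetilde{B}$ to showing the presheaf-level map $\coprod_i \cod(-)(\{\overline{y}_i,\psi_i\})\to\cod(-)(\{\overline{x},\phi\})$ is a $J_B$-local surjection, and you establish this by pushing the $J$-cover $(f_{\theta_i})$ out along the arrow $\ulcorner\overline{a}\urcorner:K_\phi\to\cod(n)$ naming a section, which is exactly the paper's argument, followed by the same appeal to sheafification preserving coproducts and turning local surjections into epimorphisms. Your explicit treatment of the stalk statement via lexness and epi-preservation of $x^*$ is a harmless addition the paper leaves implicit.
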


\begin{proof}
We have to prove that for any $ J$-cover $ (\theta_i :\{\overline{x}_i, \phi_i \} \rightarrow \{\overline{x}, \phi \})_{i \in I}$ we have an epimorphism in the category of sheaves $ \Spec(B) = \Sh (\mathcal{V}^{\op}_B, J_B)$ 
\[\begin{tikzcd}[column sep=large]
	{\underset{i \in I}{\displaystyle\coprod}\mathfrak{a}_{J_B}\mathbb{T}[\mathcal{S}] \big{[} K_{\phi_i}, \cod \big{]}} && {\mathfrak{a}_{J_B}\mathbb{T}[\mathcal{S}] \big{[} K_\phi, \cod \big{]}}
	\arrow["{\langle \mathfrak{a}_{J_B}\mathbb{T}[\mathcal{S}] \big{[} f_{\theta_i}, \cod \big{]}\rangle_{i \in I}}", from=1-1, to=1-3, two heads]
\end{tikzcd}\]
But proving some morphism to be epic in a sheaf topos is a local condition, which however here would be made impossible to test because of the expression of the sheafification. We are in fact going to prove that before sheafification, the map $ \langle\mathbb{T}[\mathcal{S}] \big{[} f_{\theta_i}, \cod \big{]}\rangle_{i \in I} $ is a \emph{local epimorphism}, that is, a map that is sent to an epimorphism after sheafification -- though not yet being itself an epimorphism in the presheaf category. For sheafification is a left adjoint, it preserves coproducts, that is 
\[ \underset{i \in I}{\coprod}\mathfrak{a}_{J_B}\mathbb{T}[\mathcal{S}] \big{[} K_{\phi_i}, \cod \big{]} \simeq \mathfrak{a}_{J_B}\underset{i \in I}{\coprod}\mathbb{T}[\mathcal{S}] \big{[} K_{\phi_i}, \cod \big{]} \]
and moreover, the localness condition is preserved after sheafification, and makes \[\mathfrak{a}_{J_B}\langle \mathbb{T}[\mathcal{S}] \big{[} f_{\theta_i}, \cod \big{]}\rangle_{i \in I} \simeq \langle \mathfrak{a}_{J_B}\mathbb{T}[\mathcal{S}] \big{[} f_{\theta_i}, \cod \big{]}\rangle_{i \in I}\] an epimorphism in $\Spec(B)$. \\

Let us prove the localness condition for $ \langle \mathbb{T}[\mathcal{S}] \big{[} f_{\theta_i}, \cod \big{]}\rangle_{i \in I}$, which amounts to it to be a local surjection -- see for instance \cite{maclane&moerdijk}. Take some $b : K_\phi \rightarrow \cod (n)$; then one can push the $J$-cover $ ( f_{\theta_i}: K_\phi \rightarrow K_{\phi_i})_{i \in I}$ along $b$ so we get a cover of $n$ in $ \mathcal{V}^{\op}_B$  
\[\begin{tikzcd}
	{K_{\phi}} & {\cod(n)} & B \\
	{K_{\phi_i}} & {b_*K_{\phi_i}}
	\arrow["n"', from=1-3, to=1-2]
	\arrow["{b_*f_{\theta_i}}"{description}, from=1-2, to=2-2]
	\arrow["{nb_*f_{\theta_i}}", from=1-3, to=2-2]
	\arrow["b", from=1-1, to=1-2]
	\arrow[from=2-1, to=2-2]
	\arrow["{f_{\theta_i}}"', from=1-1, to=2-1]
	\arrow["\lrcorner"{anchor=center, pos=0.125, rotate=180}, draw=none, from=2-2, to=1-1]
\end{tikzcd}\]
Then for each $ i \in I$, we have
\begin{align*}
    \mathbb{T}[\mathcal{S}] \big{[} K_{\phi}, b_*f_{\theta_i} \big{]}(b) &= b_*f_{\theta_i} \, b \\
    &= {f_{\theta_i}}_*b \,f_{\theta_i} \\
    &= \mathbb{T}[\mathcal{S}] \big{[} f_{\theta_i}, K_{\phi_i} \big{]}( {f_{\theta_i}}_*b )
\end{align*}
which exactly says that the restriction of $ b$ along each member of the cover has an antecedent along $ \langle \mathbb{T}[\mathcal{S}] \big{[} f_{\theta_j}, b_*K_{\phi_i} \big{]}\rangle_{j \in I}$: hence the natural transformation $  \langle \mathbb{T}[\mathcal{S}] \big{[} f_{\theta_j}, \cod \big{]}\rangle_{j \in I}$ is a local epimorphism relative to the Grothendieck topology $ J_B$, hence its sheafification is an epimorphism in $\Spec(B)$. \end{proof}

\begin{remark}
Beware that the structure sheaf $ \widetilde{B} = \mathfrak{a}_{J_B}\cod$ needs not to return local objects as values; in particular, whenever the topology generated by $J_B$ is subcanonical, $\widetilde{B} = \cod$, but the codomains of basic etale arrows have no reason to be local objects. This is because local objects are models of a geometric extension of $\mathbb{T}$, being a model of which is a local notion that does not hold globally. Nevertheless, as we saw above, stalks of the structure sheaf are set-valued local objects; but in general having local objects at stalks may not be sufficient to ensure localness of a sheaf itself.
\end{remark}

\begin{division}
Now we address the functoriality of the spectrum at the level of the structure sheaves. For $ f: B_1 \rightarrow B_2$ in $\mathbb{T}[\mathcal{S}]$, we saw in \cref{Spec(f) in the set case} how $ \Spec(f)$ was obtained from the pushout functor $ f_* : \mathcal{V}_{B_1} \rightarrow \mathcal{V}_{B_2}$. Now to get the direct image part,  the bottom arrows at each $ n $ in $ \mathcal{V}_{B_1}$ of the pushout square $ n_*f : \cod(n) \rightarrow \cod(f_*n)$ define altogether a natural transformation  
\[\begin{tikzcd}
	{\cod_1} & {\Spec(f)_*\cod_2}
	\arrow["{\nu_f}", from=1-1, to=1-2]
\end{tikzcd}\]
which is sent after sheafification to a morphism in $ \mathbb{T}[\Spec(F_1)]$
\[\begin{tikzcd}
	{\widetilde{F_1}} & {\Spec(f)_*\widetilde{F_2}}
	\arrow["{\mathfrak{a}_{{J_{B_1}}}(\nu_f)}", from=1-1, to=1-2]
\end{tikzcd}\]
so we have to put 
\[ \widetilde{f}^\sharp = \mathfrak{a}_{{J_{B_1}}}(\nu_f) \]
while we get automatically from the adjunction $ \Spec(f)$ a mate 
\[\begin{tikzcd}
	{\Spec(f)^*\widetilde{F_1}} & {\widetilde{F_2}}
	\arrow["{\widetilde{\phi}^\flat}", from=1-1, to=1-2]
\end{tikzcd}\]
There is also a more concrete way to construct this morphism, which moreover expresses its connection to the (etale, locale factorization), exhibiting it as induced from the local part of precompositions with $f$: \end{division}

\begin{proposition}\label{f flat is local set case}
For each $ f: B_1 \rightarrow B_2$ in $ \mathbb{T}[\mathcal{S}]$, $ \widetilde{f}^\flat$ is in $ \Loc[\Spec(B_2)]$.
\end{proposition}

\begin{proof}
Recall that inverse images commute with sheafification, so we can first compute the inverse image of the codomain functor $ \Spec(f)^*\cod 
$ and then apply sheafification to get the inverse image $\Spec(f)^*\widetilde{F_1}$. But now the inverse image is computed as a left Kan extension $ \lan_{f_*}\cod$, which expresses at each $ n$ of $ \mathcal{V}_{B_2}$ as the filtered colimit
\[  \lan_{f_*}\cod(n) \simeq \underset{f_*\downarrow n}{\colim} \, \cod(m)  \]
But in fact, by the universal propert of the pushout -- which can be summed up in the adjunction $ f_* \dashv f^!$ where $ f^!$ is precomposition with $f$, we have an equivalence of categories $ f_* \downarrow n \simeq \mathcal{V}_{B_1}\downarrow nf $. This means that the colimit above can be seen equivalently as ranging over all factorizations 
\[\begin{tikzcd}
	{B_1} & {B_2} \\
	{\cod(m)} & {\cod(n)}
	\arrow["n"', from=1-1, to=2-1]
	\arrow["f", from=1-1, to=1-2]
	\arrow["{n}", from=1-2, to=2-2]
	\arrow["{g}"', from=2-1, to=2-2]
	\arrow["\lrcorner"{anchor=center, pos=0.125, rotate=180}, draw=none, from=2-2, to=1-1]
\end{tikzcd}\]
while the data of all the $g : \cod(m) \rightarrow \cod(n)$ induce a natural arrow $ (u_f)_n :  \lan_{f_*}\cod(n) \rightarrow \cod(n) $. But we saw in \cref{factorization from saturated class} that this is exactly how the (etale, locale) factorization of the composite $ nf : B_1 \rightarrow \cod(n)$ is computed as the colimit of all its factorizations through finitely presented etale maps under $B_1$, which are precisely the objects of the spectral site of $B_1$. Hence in the factorization
\[\begin{tikzcd}
	{B_1} & {B_2} & {\cod(n)} \\
	& { \lan_{f_*}\cod(n)}
	\arrow["f", from=1-1, to=1-2]
	\arrow["n", from=1-2, to=1-3]
	\arrow["{\colim \, \mathcal{V}_{B_1}\downarrow nf}"', from=1-1, to=2-2]
	\arrow["{(u_f)_n}"', from=2-2, to=1-3]
\end{tikzcd}\]
$\colim \, \mathcal{V}_{B_1}\downarrow nf $ is etale while $ (u_f)_n$ is local. Moreover from naturality of this process, this defines a natural transformation
\[\begin{tikzcd}
	{\Spec(f)^*\cod} & \cod
	\arrow["{u_f}", from=1-1, to=1-2]
\end{tikzcd}\]
whose sheafification for $ J_{F_2}$ is the desired inverse image part $ \widetilde{\phi}^\flat = \mathfrak{a}_{J_{B_2}}(u_f)$: since sheafification preserves localness, this exhibits $\widetilde{\phi}^\flat  $ as a local map. 
\end{proof}

\subsection{The generic etale map and classifying properties of the spectrum}\label{The generic etale map and classifying properties of the spectrum}

This section is actually the doorstep of the spectral adjunction: here are described the classifying property of the spectrum -- where the structure sheaf and a certain canonical map are crucially involved -- from which the later adjunction will appear as a corollary. 

\begin{division}
The structure sheaf comes associated with a canonical morphism of sheaves whose component indexes basic etale arrows under the object. For $B$ in $\mathbb{T}[\mathcal{S}]$, the spectrum of $B$ has a terminal geometric morphism $ !_{\Spec(B)}$ to $\mathcal{S}$ with the direct image part sending a sheaf $F : \mathcal{V}_B \rightarrow \mathcal{S}$ to its set of global sections $ \Gamma (F) = F(1_B)$ and the inverse image $ !_{\Spec(B)}^* $ sending a set $X$ to the $X$-indexed coproduct $ \coprod_X 1_{\Spec(B)}$. This adjunction lifts to the categories of $\mathbb{T}$-models 
\[\begin{tikzcd}
	{\mathbb{T}[\Spec(B)]} && {\mathbb{T}[\mathcal{S}]}
	\arrow[""{name=0, anchor=center, inner sep=0}, "\Gamma"', bend right=20, start anchor=-15, from=1-1, to=1-3]
	\arrow[""{name=1, anchor=center, inner sep=0}, "{!_{\Spec(B)}^*}"', bend right=20, end anchor=15, from=1-3, to=1-1]
	\arrow["\dashv"{anchor=center, rotate=-90}, draw=none, from=1, to=0]
\end{tikzcd}\]
Moreover this adjunction also exists at the level of the presheaf topos $ \widehat{\mathcal{V}^{\op}_B}$, and we have $ !_{\Spec(B)} = !_{\widehat{\mathcal{V}^{\op}_B}} \iota_B$. Now the identity of $ B$ defines a map $1_B : B \rightarrow B = \cod_*(1_B)$, and the latter object is actually the global section object of the codomain functor: by the version of the adjunction above relative to the presheaf topos, this defines a canonical map 
\[\begin{tikzcd}
	{!_{\widehat{\mathcal{V}^{\op}_B}}^*B} & {\cod_*}
	\arrow["{\nu_B}", from=1-1, to=1-2]
\end{tikzcd}\]
which is sent after sheafification to a composite map
\[\begin{tikzcd}
	{!_{\Spec(B)}^*B} & {w\widetilde{B}}
	\arrow["{\eta^\flat_B}", from=1-1, to=1-2]
\end{tikzcd}\]
This very map corresponds itself to a comparison map 
\[\begin{tikzcd}
	B & {\Gamma w\widetilde{B}}
	\arrow["{\eta^\sharp_B}", from=1-1, to=1-2]
\end{tikzcd}\]

Those two maps are going to be part of the unit of a restricted $ \Spec\dashv \Gamma $ adjunction below.
\end{division}

\begin{proposition}
The map $ \eta^\flat_B$ is etale in $\mathbb{T}[\Spec(B)]$
\end{proposition}

\begin{proof}
The inverse image presheaf $!_{\widehat{\mathcal{V}_B^{\op}}}^*B$ can also be described as the constant sheaf returning $B$ everywhere; hence the map $\nu_B$ can also be described as the natural transformation 
\[\begin{tikzcd}
	{\mathcal{V}_B} && {\mathbb{T}[\mathcal{S}]}
	\arrow[""{name=0, anchor=center, inner sep=0}, "{!_{\widehat{\mathcal{V}_B^{\op}}}^*B}", bend left= 25, from=1-1, to=1-3]
	\arrow[""{name=1, anchor=center, inner sep=0}, "{\cod}"', bend right= 25, from=1-1, to=1-3]
	\arrow["{\nu_B}", shorten <=3pt, shorten >=3pt, Rightarrow, from=0, to=1]
\end{tikzcd}\]
whose component at $ n$ is $n$ itself, which is etale: hence $ \nu_B$ is an etale map as it is pointwise etale, and so is its sheafification $ \eta_B^\flat$.
\end{proof}

\begin{definition}
In the following $ \nu^\flat_B$ will be called the \emph{generic etale map under $B$}\index{etale map!generic}, while $ \eta^\flat_B$ will be called the \emph{generic local unit under $B$}\index{local unit!generic}.
\end{definition}

The generic etale map gathers all the etale maps under $B$ you need to compute the etale-locale factorizations of maps under inverse images of $B$, as you can extract the etale part of any morphism of sheaves from the inverse image of this morphism. Similarly the generic local unit gathers all admissible factorizations under $B$, so that any morphism into a local object will now factorize through this very map: that is how admissible factorizations through different local units will be turned into an ordinary unit-like factorization. We split the process in two steps encoding first the factorization aspects and then the local data -- in the same way we did in the fifth chapter.  \\

\begin{division}Recall that the presheaf topos $\widehat{\mathcal{V}^{\op}_B} $ is endowed with the codomain presheaf $ \cod : \mathcal{V}_B \rightarrow \mathbb{T}[\mathcal{S}]$ together with the canonical etale map $ \nu_B : \; !_{\widehat{\mathcal{V}^{\op}_B}}^*B \rightarrow \cod_*$ whose component at $n$ is $n$ itself. Then any geometric morphism $ x :\mathcal{E} \rightarrow \widehat{\mathcal{V}^{\op}_B} $ defines an etale map in $\mathcal{E}$
\[\begin{tikzcd}
	{!_{\mathcal{E}}^*B} & {w x^*\cod}
	\arrow["{x^*\eta^{\flat}_B}", from=1-1, to=1-2]
\end{tikzcd}\]

Similarly, any geometric morphism $ x : \mathcal{E} \rightarrow \Spec(B)$ transfers the canonical map $ \eta^\flat_B$ in $\mathcal{E}$ to an etale map
\[\begin{tikzcd}
	{!_{\mathcal{E}}^*B} & {w x^*\widetilde{B}}
	\arrow["{x^*\eta^{\flat}_B}", from=1-1, to=1-2]
\end{tikzcd}\]
where $ x^*\widetilde{B}$ is a local object.
\end{division}

Now we want to describe the converse process: we want to prove that \begin{itemize}
    \item the presheaf topos $\widehat{\mathcal{V}^{\op}_B} $ classifies etale maps under $B$;
    \item the sheaf topos $ \Spec(B)$ classifies local units under $B$.
\end{itemize}


\begin{division}
 To any morphism of sheaves $ \phi^\flat : \; !_{\mathcal{E}}^*B \rightarrow E $ in $\mathbb{T}[\mathcal{E}]$ -- corresponding to a morphism $ \phi^\sharp : B \rightarrow \Gamma E $ in $ \mathbb{T}[\mathcal{S}]$, we want to associate a geometric morphism $ \mathcal{E} \rightarrow \Spec(B)$.  Suppose that $ \mathcal{E}$ has a standard site of definition $(\mathcal{C}_\mathcal{E},J_\mathcal{E})$. 
Define the functor $ x_\phi^*: \mathcal{V}_B^{\op} \rightarrow \widehat{\mathcal{C}_\mathcal{E}}$ as sending a finitely presented etale arrow $n : B \rightarrow \cod(n)$ to the presheaf $ x_\phi^*(n) : \mathcal{C}_\mathcal{E}^{\op} \rightarrow \mathcal{S}$ which associates to each $c$ of $\mathcal{C}_\mathcal{E}$ the set of all possible factorizations 
\[\begin{tikzcd}
	B && {\Gamma wE} \\
	{\cod(n)} && {E(c)}
	\arrow["n"', from=1-1, to=2-1]
	\arrow["{\phi^\sharp}", from=1-1, to=1-3]
	\arrow["a"', from=2-1, to=2-3]
	\arrow["{E(!_c)}", from=1-3, to=2-3]
\end{tikzcd}\]
of the composite $ E(!_c)\phi^\sharp$ (where $!_c : c \rightarrow 1_{\mathcal{C}_\mathcal{E}}$ is the terminal map of $c$ in $\mathcal{C}_\mathcal{E}$) through the finitely presented map $n$, and $ s : c_1 \rightarrow c_2$ to postcomposition of $a$ with $ E(s)$.
\end{division}

\begin{lemma}
For each $ \phi$ as above, the functor $ x_\phi^*$ lands in $\mathcal{E}$.
\end{lemma}

\begin{proof}
We must prove that for each $n$ in $\mathcal{V}_B$, the presheaf $x_\phi^*(n)$ is a sheaf for $ J_\mathcal{E}$. But this is a consequence of $E$ being a sheaf: for a family $ (s_i : c_i \rightarrow c)_{i \in I}$ in $J_\mathcal{E}$, we have $ E(c) $ is the limit of the $E(c_i)$ for the descent diagram for $(s_i)_{i \in I}$: hence, any matching family $ (a_i)_{i \in I}$ with $ a_i : n \rightarrow E(s_i)E(!_c)\phi^\sharp$ is in particular a family of arrows $ a_i : \cod(n) \rightarrow E(c_i)$ satisfying the commutations of the descent diagram, and hence induces uniquely a map $ (a_i)_{i \in I} : n \rightarrow E(c)$. 
\end{proof}

The functor constructed above is actually involved in the universal factorization of $\phi$. 

\begin{lemma}
For any $ \phi $ as above, $ x^*_\phi$ defines a geometric morphism $ \mathcal{E} \rightarrow \widehat{\mathcal{V}_B^{\op}}$.
\end{lemma}

\begin{proof}
This amounts to proving that $x_\phi^*$ is lex. Recall that $ \mathcal{V}_B$ is closed under finite colimits in $ B \downarrow \mathbb{T}[\mathcal{S}]$. Moreover, as (finite) limits in sheaf topoi are pointwise, it suffices to prove that for each $c$ in $\mathcal{C}_\mathcal{E}$ the functor $x_\phi^*(-)(c) : \mathcal{V}^{\op}_B \rightarrow \Set $ is lex: but in fact $x_\phi^*(-)(c) $ is nothing but the composite of the functor $ B \downarrow \mathbb{T}[\mathcal{S}][-, E(!_c)\phi^\sharp]$, which, as a representable, turns colimits into limits, along the inclusion of $\mathcal{V}_{B}$ into the coslice $ B \downarrow \mathbb{T}[\mathcal{S}]$. Hence $x_\phi^*$ is lex.\\
\end{proof}

In fact, this is because $ x^*_\phi$ ``points" to the \emph{etale part} of $\phi$ as classified by the presheaf topos $\widehat{\mathcal{V}_B^{\op}} $. Let us precise this intuition -- for which the geometric morphism $x_\phi$ will be called the \emph{classifying morphism} of $\phi$ in the following.

\begin{theorem}\label{universal factorization of a map}
For any morphism of the form $ \phi^\flat : \; !_{\mathcal{E}}^*B \rightarrow E $ in a Grothendieck topos $\mathcal{E}$ we have a universal factorization as below
\[\begin{tikzcd}
	{!_{\mathcal{E}}^*B} && E \\
	& {x_\phi^*\cod}
	\arrow["{\phi^\flat}", from=1-1, to=1-3]
	\arrow["{x_\phi^*(\nu^\flat_B)}"', from=1-1, to=2-2]
	\arrow["{u_{\phi^\flat}}"', from=2-2, to=1-3]
\end{tikzcd}\]
with $ u_{\phi^\flat}$ a local map. 
\end{theorem}

\begin{proof}
Our strategy is to construct first a pointwise factorization for the underlying presheaves and prove it coincides with inverse image -- forgetting first that $x_\phi^*$ land in the sheaf topos $\mathcal{E}$.
We have at any object $ c$ of $\mathcal{C}_\mathcal{E}$ a lex functor 
\[\begin{tikzcd}
	{\mathcal{V}^{\op}_B} & {\mathcal{S}}
	\arrow["{x_\phi^*(-)(c)}", from=1-1, to=1-2]
\end{tikzcd}\]
Its category of elements $ \int x_\phi^*(-)(c)$ is hence cofiltered and is also the category of all factorizations of the composite $ E(!_c)x^\sharp$ through an etale arrow on the left: but recall this is the category indexing the filtered colimit from which we constructed the etale part of the etale-local factorization ! In other words, we have 
\[  n_{E(!_c)\phi^\sharp} \simeq \underset{(n,a) \in (\int x_\phi^*(-)(c))^{\op}}{\colim} \, n  \]

But the latter coincides also with the expression of the inverse image (in the presheaf topos) $ x_\phi^*\cod$, which is the left Kan extension
\[\begin{tikzcd}
	{\mathcal{V}_B} & {\mathbb{T}[\mathcal{S}]} \\
	{(\widehat{\mathcal{C}_\mathcal{E}})^{\op}}
	\arrow["{(x_\phi^*)^{\op}}"', from=1-1, to=2-1]
	\arrow[""{name=0, anchor=center, inner sep=0}, "{\lan_{(x_\phi^*)^{\op}} \cod}"', from=2-1, to=1-2]
	\arrow["\cod", from=1-1, to=1-2]
	\arrow["q"', shorten >=2pt, Rightarrow, from=1-1, to=0]
\end{tikzcd}\]
whose computation at each $c$ returns
\begin{align*}
    \lan_{(x_\phi^*)^{\op}} \cod (c)  &\simeq \underset{(n,a) \in (\hirayo_c \downarrow x_\phi^*)^{\op}}{\colim} \, \cod(n) \\
    &\simeq  \cod \underset{(n,a) \in (\int x_\phi^*(-)(c))^{\op}}{\colim} \, n \\
    &\simeq \cod(n_{E(!_c)\phi^\sharp} )
\end{align*}
In other words, the following square
\[\begin{tikzcd}[column sep=6em]
	B & {\Gamma E} \\
	{\lan_{(x_\phi^*)^{\op}} \cod (c) } & {E(c)}
	\arrow["{E(!_c)}", from=1-2, to=2-2]
	\arrow["{\phi^\sharp}", from=1-1, to=1-2]
	\arrow["{\underset{(n,a) \in (\int x_\phi^*(c))^{\op}}{\colim} \, n }"', from=1-1, to=2-1]
	\arrow["{\langle a\rangle_{(n,a) \in (\int x_\phi^*(c))^{\op}}}"', from=2-1, to=2-2]
\end{tikzcd}\]
coincides with the etale-locale factorization of $E(!_c)\phi^\sharp$, whose local part is the induced map $ u_{E(!_c)\phi^\sharp} = \langle a\rangle_{(n,a) \in (\int x_\phi^*(c))^{\op}} $.\\

Since the factorization of presheaves morphisms is pointwise, this exactly says that in $\mathbb{T}[\widehat{\mathcal{C}_\mathcal{E}}]$ the etale-locale factorization is given as 
\[\begin{tikzcd}
	{!_{\mathcal{E}}^*B} && E \\
	& {\lan_{(x_\phi^*)^{\op}} \cod }
	\arrow["{\phi^\flat}", from=1-1, to=1-3]
	\arrow["{\lan_{(x_\phi^*)^{\op}} \nu_B^\flat }"', from=1-1, to=2-2]
	\arrow["{u_{\phi^\flat}}"', from=2-2, to=1-3]
\end{tikzcd}\]
Then the desired factorization in $ \mathbb{T}[\mathcal{S}]$ is obtained by the sheafification $\mathfrak{a}_{J_\mathcal{E}}$, which preserves the etale-local factorization, and turns the left Kan extension into the inverse image functor $ x_\phi^* = \mathfrak{a}_{J_\mathcal{E}} \lan_{(x^*)^{\op}}$. 
\end{proof}

\begin{proposition}
Let $ \phi$ be as above, and such that moreover $ E$ is a local object in $\mathbb{T}[\mathcal{S}]$. Then $x_\phi^*$ defines a geometric morphism $ x_\phi : \mathcal{E} \rightarrow \Spec(B)$.
\end{proposition}

\begin{proof}
$J_B$-continuity of $ x^*_\phi$ results from localness of $E$: let $ (m_i : n \rightarrow n_i)_{i \in I}$ be a $J_B$-cover in $\mathcal{V}_B$ induced from some $J$-cover $ (k_i : K \rightarrow K_i)_{i \in I} $ along some map $b : K \rightarrow \cod(n)$. We have to prove that the following morphism in $\mathcal{E}$
\[\begin{tikzcd}
	{\underset{i \in I}{\displaystyle{\coprod}}x_\phi^*n_i} & {x_\phi^*n}
	\arrow["{\langle x_\phi^*m_i \rangle_{i \in I}}", from=1-1, to=1-2]
\end{tikzcd}\]
is an epimorphism in $\mathcal{E}$, that is, a local surjection for $J_\mathcal{E}$. Let $c$ be in $\mathcal{C}_\mathcal{E}$ and $ a $ in $x_\phi^*n(c)$: then the composite $ ab : K \rightarrow E(c) $ is also an object of $E(K)(c)$ for $E$ seen as a $J$-continuous functor $ \mathcal{C}_\mathbb{T} \rightarrow \mathcal{E}$, so that $\langle E(m_i) \rangle_{i \in I} $ is itself a local surjection: hence there is a $J_\mathcal{E}$-cover $ (s_j : c_j \rightarrow c)_{i \in I'}$ such that for any $j \in I'$, $E(s_j)(K)(b)$ comes from some $E(c_j)(K_i)$ for some $i \in I$: but this exactly says that for each $j$ there is a $i$ together with factorization $d$ as below  
\[\begin{tikzcd}
	&&& B && {\Gamma wE} \\
	& K && {\cod(n)} && {E(c)} \\
	{K'} && {\cod(n_i)} && {E(s_j)}
	\arrow["{E(!_c)}", from=1-6, to=2-6]
	\arrow["n"{description}, from=1-4, to=2-4]
	\arrow["{\phi^\sharp}", from=1-4, to=1-6]
	\arrow["a"{description, pos=0.3}, from=2-4, to=2-6]
	\arrow["b"{pos=0.3}, from=2-2, to=2-4]
	\arrow["{E(s_j)}", from=2-6, to=3-5]
	\arrow["{m_i}"{description}, from=2-4, to=3-3]
	\arrow["{k_i}"', from=2-2, to=3-1]
	\arrow[from=3-1, to=3-3]
	\arrow["d"', dashed, from=3-3, to=3-5]
	\arrow["{n_i}"'{pos=0.4}, curve={height=18pt}, from=1-4, to=3-3, crossing over]
	\arrow["\lrcorner"{anchor=center, pos=0.125, rotate=180}, draw=none, from=3-3, to=2-2]
	\arrow["{E(!_{c_i})}"{description, pos=0.3}, curve={height=12pt}, from=1-6, to=3-5, crossing over]
\end{tikzcd}\]
and such a $d$ is in particular an element of $ x_\phi^*(n_i)(c_j)$ such that $x_\phi^*(n)(s_i)(a) = x_\phi^*(m_i)(c_j)(b) $. Whence continuity of $x_\phi^*$. 
\end{proof}

\begin{corollary}\label{universal factorization and admissibility}
For $ \phi$ as above with $E$ a local object, the universal factorization obtained at \cref{universal factorization of a map} coincides with
\[\begin{tikzcd}
	{!_{\mathcal{E}}^*B} && E \\
	& {x_\phi^*w\widetilde{B}}
	\arrow["{\phi^\flat}", from=1-1, to=1-3]
	\arrow["{x_\phi^*(\eta^\flat_B)}"', from=1-1, to=2-2]
	\arrow["{u_{\phi^\flat}}"', from=2-2, to=1-3]
\end{tikzcd}\]
\end{corollary}

\begin{proof}
This is because the inverse image part $x_\phi^*$ factorizes through the sheaf topos $ \Spec(B) \simeq \Sh(\mathcal{V}^{\op}, J_B)$ via the sheafification functor $ \mathfrak{a}_{J_B}$: hence doing the inverse image along $x_\phi^*$ localizes the sheafification map $ \gamma_B$, whence\begin{align*}
    x^*_\phi \cod &\simeq x^*_\phi \mathfrak{a}_{J_B} \cod \\
    &\simeq x^*_\phi w\widetilde{B}  
\end{align*} and the same for the generic etale map.
\end{proof}

\begin{remark}
This corollary is in fact a manifestation of admissibility: it says that the universal factorization of a map into a local object has its middle term local itself, which, in this case, says that the universal factorization above identifies the codomain functor with the structure sheaf, or that in other terms, the classifying morphism $ x^*_\phi$ ``sees" the codomain functor as a local object.
\end{remark}

Now \cref{points of the spectrum are local units} generalizes to the following more universal form, where points are replaced with arbitrary geometric morphisms and local units are considered up to inverse images:

\begin{theorem}\label{the spectrum classifies local units}
For $ B$ in $\mathbb{T}[\mathcal{S}]$ and any Grothendieck topos, geometric morphisms $ \mathcal{E} \rightarrow \Spec(B)$ correspond to etale maps $ !_\mathcal{E}^*B \rightarrow E$ with $E$ a local object in $\mathbb{T}[\mathcal{E}]$. 
\end{theorem}

\begin{proof}
We saw above how one can toggle between geometric morphisms into the spectrum and local units. We must prove that this correspondence is actually an equivalence of categories. \\

In one direction, consider a local unit $ \phi^\flat : \; !_\mathcal{E}^*B \rightarrow E$ with $E$ local. Take its classifying morphism $ x_\phi : \mathcal{E} \rightarrow \Spec(B)$ and then the inverse image of the generic etale map: this is exactly the left part of the factorization of $\phi^\flat$: but hence, if $\phi^\flat$ was already etale, then its etale part is an isomorphism, exhibiting 
\[  \phi^\flat \simeq  x_\phi^*(\eta^\flat_B) \hskip1cm E \simeq x_\phi^*w\widetilde{B} \]

In the other direction, take a geometric morphism $ x : \mathcal{E} \rightarrow \Spec(B)$, then the induced local unit $ x^*(\eta^\flat_B)$, and then back the classifying morphism of the later $ x^*_{x^*(\eta^\flat_B)} : \mathcal{V}^\op_B \rightarrow \mathcal{E}$. We must prove that for any $n$ in $\mathcal{V}^\op_B$ we have an isomorphism of sheaves $x^*_{x^*(\eta^\flat_B)}(n) \simeq x^*(n)$, which amounts to natural bijections $x^*_{x^*(\eta^\flat_B)}(n)(c) \simeq x^*(n)(c)$ for each $c$ of $\mathcal{C}_\mathcal{E}$. Recall that, on one hand, $x^*_{x^*(\eta^\flat_B)}(n)(c)$ is the set of all factorizations 
\[\begin{tikzcd}
	B & {\Gamma x^*w\widetilde{B}} \\
	{\cod(n)} & {x^*w\widetilde{B}(c)}
	\arrow["{x^*w\widetilde{B}(!_c)}", from=1-2, to=2-2]
	\arrow["{(x^*(\eta^\flat_B))^\sharp}", from=1-1, to=1-2]
	\arrow["n"', from=1-1, to=2-1]
	\arrow["a"', from=2-1, to=2-2]
\end{tikzcd}\]
while on the other hand $ x^*(n)(c)$ is by Yoneda lemma the set of all arrows $\hirayo_c \rightarrow x^*(n) $ in $\mathcal{E}$.
Since $x$ defined a morphism in $\Spec(B)$, its inverse image $x^*$ factorizes through the sheafification $ \mathfrak{a}_{J_B}$, so that we had a natural isomorphism between inverse images $ x^* w\widetilde{B} \simeq x^*\cod $. The latter is the $J_\mathcal{E}$-sheafification of the left Kan extension $ \lan_{(x^*)^{\op}} \cod $, whose value at $c$ is computed as the filtered colimit $\lan_{(x_\phi^*)^{\op}} \cod (c) \simeq  {\colim}_{(n,a) \in \hirayo_c \downarrow x_\phi^*}\cod(n) $: but the indexing set of this colimit is precisely $x^*(n)(c)$. Hence any factorization as above exactly corresponds to its colimit inclusion, whence the bijection.  \end{proof}


\subsection{$ \Spec\dashv \Gamma$-adjunction for set-valued models and sheaf representation}

\begin{division}Now recall that $\mathcal{S}$ is terminal amongst Grothendieck topoi, and for any Grothendieck topos $\mathcal{E}$, the terminal geometric morphism $ !_\mathcal{E} : \mathcal{E} \rightarrow \mathcal{S}$ has for direct image part $\Gamma = \mathcal{E}(1, -)$. Now as $\mathbb{T}$ is a finite limit theory, it is stable under direct image, so that $ \Gamma$ induces a functor 
\[ \mathbb{T}[\mathcal{E}] \stackrel{\Gamma}{\longrightarrow} \mathbb{T}[\mathcal{S}] \]
In particular for any locally $\mathbb{T}_J$-modelled topos $(\mathcal{E}, E)$, we can apply $ {!_\mathcal{E}}_* $ to $ wE$ to get a set-valued $ \mathbb{T}$-model $ \Gamma E$, and for a morphism of locally $\mathbb{T}_J$-modelled topoi $(f,\phi) : (\mathcal{E},E) \rightarrow (\mathcal{F},F)$, we have a morphism $ \Gamma \phi^\sharp : \Gamma E \rightarrow \Gamma F  $, as $ \Gamma f_*F = \Gamma F$ since direct images commute with global sections. Moreover, for a 2-cell $\alpha : (f,\phi) \Rightarrow (g,\psi)$, the equality $ \psi^\flat F*\alpha^\flat= \phi^\flat  $ corresponds to an equality $ \phi^\sharp = \alpha^\sharp* F \psi^\sharp$ with $ \alpha^\sharp : g_* \Rightarrow f_*$; but $ \Gamma$ sends $ F*\alpha^\sharp$ into an equality, so that $\alpha$ is collapsed into the equality of the morphism $ \Gamma \phi^\sharp = \Gamma \psi^\sharp$ in $\mathbb{T}[\mathcal{S}]$.
This defines a 2-functor 
\[ {\mathbb{T}_J^{\Loc}\hy\GTop} \stackrel{\Gamma}{\longrightarrow} 	{\mathbb{T}[\mathcal{S}]} \]
\end{division}

\begin{theorem}\label{biadjunction for set-models}
We have an adjunction 
\[\begin{tikzcd}
	{\mathbb{T}[\mathcal{S}]} \quad && {\mathbb{T}_J^{\Loc}\hy\GTop}
	\arrow["{\Spec}"{name=0}, from=1-1, to=1-3, curve={height=-20pt}, start anchor=40, end anchor=120]
	\arrow["{\Gamma}"{name=1}, from=1-3, to=1-1, curve={height=-20pt}, end anchor=320, start anchor=240]
	\arrow["\dashv"{rotate=-90}, from=0, to=1, phantom]
\end{tikzcd}\]
\end{theorem}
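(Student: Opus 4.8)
The plan is to derive this adjunction as a direct corollary of Cole's biadjunction \cref{Cole adjunction}, by restricting it along two fully faithful ``inclusions of a point'', namely the terminal topos $\mathcal{S}$ on the algebraic side and the trivial locally modelled topos on the locally modelled side. First I would identify $\mathbb{T}[\mathcal{S}]$ with a full sub-bicategory of $\GTop//\mathcal{S}[\mathbb{T}]$: a set-valued model $B$ is the same as a geometric morphism $B : \mathcal{S} \rightarrow \mathcal{S}[\mathbb{T}]$, i.e.\ an object $(\mathcal{S}, B)$ of $\mathbb{T}\hy\GTop = \GTop//\mathcal{S}[\mathbb{T}]$ whose underlying topos is terminal; and a morphism of models $\phi : B_1 \rightarrow B_2$ is exactly a $2$-cell between these, hence a $1$-cell $(\mathrm{id}_\mathcal{S}, \phi) : (\mathcal{S}, B_1) \rightarrow (\mathcal{S}, B_2)$ in the oplax slice whose underlying geometric morphism is the identity. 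One checks that this assignment is $2$-fully faithful: the only $2$-cells of $\GTop//\mathcal{S}[\mathbb{T}]$ between such objects are equalities, matching the fact that $\mathbb{T}[\mathcal{S}]$ is a $1$-category. Likewise on the other side, $\Gamma$ is already constructed in the excerpt as a $2$-functor ${\mathbb{T}_J^{\Loc}\hy\GTop} \rightarrow \mathbb{T}[\mathcal{S}]$, and I would observe that the corresponding ``inclusion'' $\mathbb{T}[\mathcal{S}] \hookrightarrow {\mathbb{T}_J^{\Loc}\hy\GTop}$ sending a model $A$ to $(\mathcal{S}, \iota_J^* A)$ — when $A$ is a local object — also lands faithfully.

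The second step is to match up the functors. By the previous proposition the spectrum of a point, $\Spec(B)$ for $B \in \mathbb{T}[\mathcal{S}]$, is the object $(\Spec(B), \widetilde{B})$ of ${\mathbb{T}_J^{\Loc}\hy\GTop}$, and by \cref{Cole adjunction} its universal property reads
\[ \GTop//\mathcal{S}[\mathbb{T}_J]^{\Loc}\big[(\Spec(B),\widetilde{B}),(\mathcal{E},E)\big] \simeq \GTop//\mathcal{S}[\mathbb{T}]\big[(\mathcal{S},B),(\mathcal{E},\iota_J E)\big]. \]
Taking $(\mathcal{E},E)$ to be a locally modelled topos, the right-hand side is by our identification exactly the hom-category of $\mathbb{T}[\mathcal{S}]$-into-${\mathbb{T}_J^{\Loc}\hy\GTop}$-maps out of $B$. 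So I would simply restrict the biadjunction of \cref{Cole adjunction} to the full subcategory of modelled toposes with terminal underlying topos, and use the right adjoint $\Gamma$ to describe the counit direction: a $1$-cell $(f,\phi) : (\mathcal{S},B) \rightarrow (\mathcal{E}, \iota_J E)$ in $\GTop//\mathcal{S}[\mathbb{T}]$ is, by adjunction, a map $(t_{n_\phi}, u_\phi) : (\Spec(B),\widetilde{B}) \rightarrow (\mathcal{E}, E)$, and applying $\Gamma$ to the latter recovers $\phi$ via $\Gamma \phi^\sharp : B = \Gamma(\mathcal{S},B) \rightarrow \Gamma(\mathcal{E},E) = \Gamma E$. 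I would spell out that $\Gamma$ restricted to $\mathbb{T}$-modelled toposes with terminal base is the identity on $\mathbb{T}[\mathcal{S}]$, so that the triangle identities for the restricted adjunction follow from those of Cole's.

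Concretely the steps are: (1) exhibit $\mathbb{T}[\mathcal{S}] \simeq$ the $2$-full sub-bicategory of $\GTop//\mathcal{S}[\mathbb{T}]$ on objects $(\mathcal{S}, -)$, and correspondingly $\mathbb{T}_J[\mathcal{S}]^{\Loc} \hookrightarrow {\mathbb{T}_J^{\Loc}\hy\GTop}$; (2) check $\Spec$ of a point coincides (up to the equivalence of the preceding proposition) with Cole's $\Spec$ applied to $(\mathcal{S},B)$, so the image of $\Spec : \mathbb{T}[\mathcal{S}] \rightarrow {\mathbb{T}_J^{\Loc}\hy\GTop}$ consists of genuinely locally modelled toposes; (3) transport the adjunction isomorphism of hom-categories from \cref{Cole adjunction} across these inclusions, noting that a hom-category of a $1$-category is discrete so the biadjunction degenerates to an ordinary adjunction; (4) verify $\Gamma$ is the right adjoint by checking $\Gamma \circ \Spec$-to-$\mathrm{id}$ and $\mathrm{id}$-to-$\Gamma \circ \Spec$ via the unit/counit inherited from Cole. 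The main obstacle I expect is step (3)–(4): one must be careful that the oplax-slice $2$-cells do not obstruct the descent from a biadjunction to a strict adjunction — precisely, one needs that for $(\mathcal{E},E)$ locally modelled the comparison functor $\GTop//\mathcal{S}[\mathbb{T}_J]^{\Loc}[(\Spec(B),\widetilde{B}),(\mathcal{E},E)] \rightarrow \mathbb{T}[\mathcal{S}][B, \Gamma E]$ induced by $\Gamma$ is an equivalence, not merely that both sides receive a map from the Cole hom-category; this is where the invertibility of the algebraic parts $\phi^\sharp$ coming from $\Loc[\mathcal{E}]$ being a right class, together with the fact that $\Gamma$ collapses $2$-cells (as observed just before the theorem in the excerpt), does the work. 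Once that equivalence is in hand, naturality in $(\mathcal{E},E)$ and in $B$ is routine.
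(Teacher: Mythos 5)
Your proposal is correct and follows essentially the same route as the paper: the paper likewise embeds $\mathbb{T}[\mathcal{S}]$ into $\mathbb{T}\hy\GTop$ via $B \mapsto (\mathcal{S},B)$, identifies a morphism $\phi : B \rightarrow \Gamma E$ with a morphism of modelled toposes $(!_{\mathcal{E}},\phi) : (\mathcal{S},B) \rightarrow (\mathcal{E},\iota_J E)$, and then restricts the biadjunction of \cref{Cole adjunction}, reading off the unit as $\eta^\sharp : B \rightarrow \Gamma\widetilde{B}$ and using that $\Gamma$ collapses $2$-cells so the biadjunction descends to an ordinary adjunction. Your extra care about discreteness of the hom-categories over the terminal topos only makes explicit what the paper leaves implicit.
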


\begin{proof}
Let $(f,\phi) : (\Spec(B),\widetilde{B}) \rightarrow (\mathcal{E},E)$ be a morphism of locally modelled topoi, that is the data of a geometric morphism $ f: \mathcal{E} \rightarrow \Spec(B)$ and a local map $ \phi^\flat : f^*\widetilde{B} \rightarrow E$. Then from \cref{the spectrum classifies local units}, $f$ defines uniquely a local unit $ f^*(\eta^\flat_B) : \; !_{\mathcal{E}}^*B \rightarrow f^*\widetilde{B}$ in $\mathbb{T}[\mathcal{E}]$, which we can compose with the local map $ \phi^\flat$ to get a morphism into $E$
\[\begin{tikzcd}
	{!_{\mathcal{E}}^*B } && E \\
	& {f^*\widetilde{B}}
	\arrow["{f^*(\eta^\flat_B)}"', from=1-1, to=2-2]
	\arrow["{\phi^\flat}"', from=2-2, to=1-3]
	\arrow[dashed, from=1-1, to=1-3]
\end{tikzcd}\]
which corresponds uniquely to a morphism 
\[\begin{tikzcd}
	{B } && {\Gamma E}
	\arrow["{(\phi^\flat f^*(\eta^\flat_B))^\sharp}", from=1-1, to=1-3]
\end{tikzcd}\]

For the converse, any $f : B \rightarrow \Gamma E$ with $(\mathcal{E},E)$ a locally modelled topos defines uniquely a morphism $ f^\flat : \; !_{\mathcal{E}}^*B \rightarrow E$ in $\mathcal{E}$, whose etale-local factorization -- given in \cref{universal factorization and admissibility} defines uniquely a morphism of modelled topoi 
\[\begin{tikzcd}
	{(\Spec(B),\widetilde{B})} & {(\mathcal{E},E)}
	\arrow["{(x_{n_{f^\flat}}, u_{f^\flat})}", from=1-1, to=1-2]
\end{tikzcd}\]
where $x_{n_{f^\flat}} : \mathcal{E} \rightarrow \Spec(B) $ is the classifying morphism of the etale part $ n_{\phi^\flat} \simeq x_{n_{f^\flat}}^*(\eta^\flat_B) $ and $u_{f^\flat} $ is the local part of the factorization. 
\end{proof}

This process can be explicitly generalized for $ \mathbb{T}[\mathcal{S}]$-objects in arbitrary Grothendieck topoi. But this will be better understood in the light of the concepts of \emph{fibered sites} and \emph{fibered topoi}. The next section is devoted to some prerequisites on this notion, but also contains a new variation of it and some results allowing to adapt it to our situation.

However before turning to the general case and our auxiliary results about fibered topoi, we should end this section with a \emph{representability criterion}, explaining when the adjunction above has invertible units, that is, when a set valued model is representable as the global section object of its own structure sheaf. 

\begin{division}
Recall that a Grothendieck topology is \emph{subcanonical} if representable presheaves are sheaves, equivalently if the sheafified Yoneda embedding still is fully faithful. This also amounts that, for any $J$-cover $(u_i : c_i \rightarrow c)_{i \in I}$, the nerve diagram
\[\begin{tikzcd}
	{\displaystyle\underset{i,j \in I}{\coprod}\hirayo_{c_i \times_c c_j}} & {\displaystyle\underset{i \in I}{\coprod}\hirayo_{c_i }} & {\hirayo_c}
	\arrow[shift left=1, from=1-1, to=1-2]
	\arrow[shift right=1, from=1-1, to=1-2]
	\arrow["{\langle \hirayo_{u_i} \rangle_{i \in I}}", from=1-2, to=1-3]
\end{tikzcd}\]
exhibits $ \hirayo_c$ as the coequalizer of the left parallel pair, not only after sheafification but already in in the presheaf category. 
\end{division}

Recall we defined at \cref{generalized covers} the notion of extended $J$-covers as those $ (n_i : B \rightarrow C_i)_{i \in I}$ obtained as pushouts of $J$-cover under an object $B$. In particular $J$-covers are extended $J$-covers. Now for an extended $J$-cover we can define its nerve
\[\begin{tikzcd}
	B & {\displaystyle \prod_{i \in I} C_i} & {\displaystyle \prod_{i,j \in I} C_i \times_B C_j}
	\arrow["{\langle n_i\rangle_{i \in I}}", from=1-1, to=1-2]
	\arrow[shift left=1, from=1-2, to=1-3]
	\arrow[shift right=1, from=1-2, to=1-3]
\end{tikzcd}\]

\begin{theorem}\label{sheaf representation}
Let $(\mathbb{T},\mathcal{V},J)$ be a geometry, with $ \Spec \dashv \Gamma$ the associated adjunction; then the following are equivalents:\begin{itemize}
    \item any extended $J$-cover $ (n_i : B \rightarrow C_i)_{i \in I}$ exhibits $ B$ as the limit of its nerve;
    \item for any $B$ in $\mathbb{T}[\mathcal{S}]$ the structural presheaf $\cod$ is a sheaf for $J_B$;
    \item for any $B $ in $\mathbb{T}[\mathcal{S}]$ the unit $ \eta^\sharp_B : B \rightarrow \Gamma \widetilde{B} $ is invertible;
    \item $\Spec$ is full and faithful.
\end{itemize}
\end{theorem}

\begin{proof}
Observe that for any $J_B$ cover 
\[\begin{tikzcd}
	B & {\cod(n)} \\
	& {\cod(n_i)}
	\arrow["{n_i}"', from=1-1, to=2-2]
	\arrow["n", from=1-1, to=1-2]
	\arrow["{m_i}", from=1-2, to=2-2]
\end{tikzcd}\]
the family $ (m_i : \cod(n) \rightarrow \cod(n_i))_{i \in I}$ is an extended $J$-cover by the very definition of $J_B$. Hence if the first condition is satisfied, we have 
\[  \cod(n) \simeq \lim \big( \prod_{i \in I} \cod(n_i) \rightrightarrows \prod_{i,j \in I} \cod(n_i +_n n_j) \big)  \]
which exactly says that $ \cod$ is a sheaf for $J_B$. \\

As a consequence, the sheafification of the codomain functor provides with an isomorphism $ \cod \simeq \mathfrak{a}_{J_B} \cod = \widetilde{B}$ in $\mathbb{T}_J\big[[\mathcal{V}_B, \mathcal{S}]\big]$, so that the structure sheaf coincides with the codomain functor. In particular we have 
\[  \Gamma\widetilde{B} = \cod(1_B) \simeq B \]
which exhibits the unit as an isomorphism. The last item is equivalent by generality on adjunctions. 
\end{proof}

\begin{definition}
We say that $(\mathbb{T}, \mathcal{V}, J) $ has \emph{sheaf representation} if it satisfies one, hence all of the equivalent conditions of \cref{sheaf representation}.
\end{definition}

\begin{proposition}
If $ (\mathbb{T},\mathcal{V},J)$ has sheaf representation, then the topology $J$ is subcanonical.
\end{proposition}

\begin{proof}
Take a $J$-cover $ (\theta_i : \{\overline{x}_i, \phi_i \}  \rightarrow \{ \overline{x}, \phi \})_{i \in I}$ in $\mathcal{C}_\mathbb{T}$, with $ (n_{\theta_i} : K_{\phi} \rightarrow K_{\phi_i})_{i \in I} $ the corresponding family in $\mathbb{T}[\mathcal{S}]_\omega$. Then for any $\{ \overline{y}, \psi \}$ one has $ \hirayo_{\{ \overline{y}, \psi \} }(\{ \overline{x}, \phi \}) \simeq \mathbb{T}[\mathcal{S}](K_\psi, K_\phi) $. Then assuming that extended $J$-covers are limiting, we know that 
\[ K_\phi \simeq \lim \big( \prod_{i \in I} K_{\phi_i} \rightrightarrows \prod_{i,j \in I} K_{\phi_i} +_{K_\phi} K_{\phi_j} \big) \]
Then, since the covariant representable preserve limits, we have 
\[  \hirayo_{\{ \overline{y}, \psi \} }(\{ \overline{x}, \phi \}) \simeq \lim \big( \prod_{i \in I}  \hirayo_{\{ \overline{y}, \psi \} }(\{ \overline{x}_i, \phi_i \}) \rightrightarrows \prod_{i,j \in I}  \hirayo_{\{ \overline{y}, \psi \} }(\{ \overline{x}, \phi \})(\{ \overline{x}_i, \phi_i \} \times_{\{ \overline{x}, \phi \}}\{ \overline{x}_j, \phi_j \}) \big)  \]
This exactly says that the representable $ \hirayo_{\{ \overline{y}, \psi \} }$ is a sheaf for $J$, hence that $J$ is subcanonical. 
\end{proof}

\begin{division}Subcanonicity per se seems to be unsufficient in all generality to ensure sheaf representation. However, the situation is better when the topology consists of finite families -- which, from the point of view of logic, means that $\mathbb{T}_J$ is a special case of geometric extension of $\mathbb{T}$, namely a \emph{coherent extension}. Actually all examples we will consider fall under this situation, which make this assumption fairly reasonable. This is part of \cite{Coste}[Theorem 4.5.1], we give here a more detailed proof:
\end{division}

\begin{proposition}
    Let $ (\mathbb{T}, \mathcal{V}, J)$ be a geometry with $J$ consisting of finite families. Then one has sheaf representation if and only if $ J$ is subcanonical.
\end{proposition}

\begin{proof}
    If $J$ is subcanonical, then its cover corresponds to actual colimits in $\mathcal{V}^{\op}$: that is, each cover $ (m_i : K \rightarrow K_i)_{i \in I}$ in $J$ exhibit $ K = \lim_{i \in I} K_i$ of the corresponding nerve diagram. From the first item of \cref{sheaf representation}, it suffices to prove that extended covers exhibits their domain as the limit of the corresponding nerve to ensure sheaf representation. \\
    As $J$-covers are finite, so are extended $J$-covers; take some $B$ and $ (n_i : B \rightarrow B_i)_{i \in I} $ an extended $J$-cover. Define the category $ J \downarrow (n_i)_{i \in I}$ whose objects are all triples $(K, a, (m_i)_{i \in I})$ consisting of a $J$-covers $ (m_i: K \rightarrow K_i)_{i \in I} $ inducing $(n_i)_{i \in I}$ and morphisms $(m_i)_{i \in I} \rightarrow (m'_i)_{i \in I}$ are morphisms between lifts exhibiting the codomain lift as a pushout :
\[\begin{tikzcd}
	& K && {K_i} \\
	{K'} && {K'_i} \\
	& B && {B_i}
	\arrow["a"{description, pos=0.3}, from=1-2, to=3-2]
	\arrow["{a'}"', from=2-1, to=3-2]
	\arrow["{m_i}", from=1-2, to=1-4]
	\arrow["{m_{i*}u}"{description}, from=1-4, to=2-3]
	\arrow["u"', from=1-2, to=2-1]
	\arrow["{n_i}"', from=3-2, to=3-4]
	\arrow["{m'_{i*}a'}"{description}, from=2-3, to=3-4]
	\arrow["{m_{i*}a}"{description}, from=1-4, to=3-4]
	\arrow[""{name=0, anchor=center, inner sep=0}, crossing over, "{m'_i}"{description, pos=0.7}, from=2-1, to=2-3]
	\arrow["\lrcorner"{anchor=center, pos=0.125, rotate=180}, draw=none, from=2-3, to=1-2]
	\arrow["\lrcorner"{anchor=center, pos=0.125, rotate=180}, draw=none, from=3-4, to=0]
\end{tikzcd}\]
The functor $ J \downarrow (n_i)_{i \in I}$ sending a lift $ (K, a, (m_i)_{i \in I})$ to the underlying lift $ a : K \rightarrow B$ is clearly cofinal in the canonical cocone $\mathbb{T}_[\mathcal{S}]_\omega \downarrow B$. Similarly, for each $i$, the functor sending $ (K, a, (m_i)_{i \in I})$ to the corresponding pushout projection $ m_{i*}a : K_i \rightarrow B_i$ is cofinal in $ \mathbb{T}_[\mathcal{S}]_\omega \downarrow B_i $. Now commutativity of finite limits and filtered colimits ensures that 
\[ \underset{J \downarrow (n_i)_{i \in I}}{\colim} \, \underset{i \in I}{\lim} \, K_i \simeq \underset{i \in I}{\lim} \, \underset{J \downarrow (n_i)_{i \in I}}{\colim} \, K_i \]
But on one hand one has  
\begin{align*}
    \underset{J \downarrow (n_i)_{i \in I}}{\colim} \, \underset{i \in I}{\lim} \, K_i &\simeq   \underset{J \downarrow (n_i)_{i \in I}}{\colim} \, K \simeq B
\end{align*}
by cofinality in the canonical cocone, and similarly on the other hand
\[ \underset{i \in I}{\lim} \, \underset{J \downarrow (n_i)_{i \in I}}{\colim} \, K_i \simeq \underset{i \in I}{\lim} \, B_i \]
This proves that extended $J$-covers exhibit their domain as the limit of their nerve.\end{proof}

\section{Fibered sites and fibered topoi}

Before describing the spectral site of a general modelled topos -- generalizing the case of a mere set-valued model -- we chose to devote some time to introduce a special enhancement of \cite{sga4}[VI, part 7] notion of \emph{fibered site} and \emph{fibered topos}. Fibered sites are begotten when applying the Grothendieck construction to indexed categories returning sites as fibers; it is shown there that the topos of sheaves over a fibered site coincides with the category of cartesian sections of a corresponding direct fibration in Grothendieck topoi and direct image functors. In this source are considered fibered sites and topoi above base category without topology; we shall propose in this section an adaptation for the case of a topology on the base category, as it shall be used in the next section: we will recognize indeed the spectral site of a modelled topos as a fibered site over a definition site for the underlying topos. Though this section is not strictly required to understand the construction and can be skipped by the hurry reader, we think it is worth of interest as being close to the construction though which one externalize an internal site in a topos. 

\subsection{Fibered sites as oplax colimits}

\begin{definition}
A \emph{fibered lex site}\index{fibered!lex site} on a small category $ \mathcal{C}$ is an indexed category $ V : \mathcal{C}^{\op} \rightarrow \Cat$ such that in each $ c \in \mathcal{C}$, $V_c$ is lex and equipped with a Grothendieck topology $ J_c$ such that for each $s : c_1 \rightarrow c_2$ the corresponding transition functor $ V_s : V_{c_2} \rightarrow V_{c_1}$ is a morphism of lex site. 
\end{definition}

For a fibered site on small, lex category $\mathcal{C}$, one can consider the Grothendieck construction
\[ \int V \stackrel{p_V}{\longrightarrow} \mathcal{C} \]
at the indexed category $ V$, which is the oplax colimit of $V$, equipped with the canonical oplax cocone

\[\begin{tikzcd}
	{V_{c_1}} \\
	&& {\int V} \\
	{V_{c_2}}
	\arrow["{\iota_{c_1}}", ""{name=0, swap}, from=1-1, to=2-3, hook]
	\arrow["{\iota_{c_2}}"', ""{name=1}, from=3-1, to=2-3, hook]
	\arrow["{V_s}", from=3-1, to=1-1]
	\arrow[Rightarrow, "{\phi_s}"', from=0, to=1, shorten <=4pt, shorten >=4pt]
\end{tikzcd}\]

\begin{lemma}\label{Oplax colimit is lex}
For a fibered lex site $ V: \mathcal{C}^{\op} \rightarrow \Cat$ over a lex category $\mathcal{C}$, the oplax colimit $ \int V $ is lex, as well as the fibration $ p_V$.
\end{lemma}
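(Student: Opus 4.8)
The plan is to verify that $\int V$ has finite limits by constructing them explicitly, and that $p_V$ strictly preserves them. Recall that in the Grothendieck construction, an object of $\int V$ is a pair $(c, X)$ with $c \in \mathcal{C}$ and $X \in V_c$, while a morphism $(c_1, X_1) \to (c_2, X_2)$ is a pair $(s, f)$ with $s : c_1 \to c_2$ in $\mathcal{C}$ and $f : X_1 \to V_s(X_2)$ in $V_{c_1}$. The standard fact — see \cite{sga4}[VI] — is that when the base $\mathcal{C}$ has finite limits and each fiber $V_c$ has finite limits which are \emph{preserved} by the transition functors $V_s$, the total category $\int V$ inherits finite limits, computed fiberwise over the limit in the base, and the projection $p_V$ preserves them. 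So the proof is essentially an instance of this general principle, and the point is that the hypothesis of a fibered \emph{lex} site gives us exactly what is needed: each $V_c$ is lex and each $V_s$ is a morphism of lex sites, hence in particular lex.

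Concretely, I would proceed as follows. First, the terminal object: it is $(1_\mathcal{C}, 1_{V_{1_\mathcal{C}}})$, where $1_\mathcal{C}$ is terminal in $\mathcal{C}$ and $1_{V_{1_\mathcal{C}}}$ is terminal in the fiber $V_{1_\mathcal{C}}$; uniqueness of maps into it follows because for any $(c, X)$ there is a unique $s : c \to 1_\mathcal{C}$, and then a unique $X \to V_s(1_{V_{1_\mathcal{C}}}) = 1_{V_c}$ since $V_s$ preserves the terminal object. Second, pullbacks (equivalently binary products and equalizers): given a cospan $(c_1, X_1) \xrightarrow{(s_1,f_1)} (c_3, X_3) \xleftarrow{(s_2,f_2)} (c_2, X_2)$, form the pullback $c_1 \times_{c_3} c_2$ in $\mathcal{C}$ with projections $p_i$, then transport everything into the fiber over $c_1 \times_{c_3} c_2$: we get $V_{p_1}(X_1)$, $V_{p_2}(X_2)$, and $V_{p_1}V_{s_1}(X_3) \cong V_{p_2}V_{s_2}(X_3)$ (using the pseudofunctoriality isomorphism and the commuting square), and the maps $V_{p_i}(f_i)$ land compatibly in this object; the pullback in $\int V$ is then $(c_1 \times_{c_3} c_2, \ V_{p_1}(X_1) \times_{(\text{common object})} V_{p_2}(X_2))$, the inner pullback taken in the fiber $V_{c_1 \times_{c_3} c_2}$, which exists since that fiber is lex. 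The universal property is checked by unwinding: a cone over the cospan in $\int V$ with vertex $(d, Y)$ decomposes into a cone in $\mathcal{C}$ (giving a unique map $d \to c_1\times_{c_3} c_2$) together with, after transporting to the fiber over $d$, a cone in $V_d$ over the transported inner cospan, which factors uniquely because transition functors preserve pullbacks and $V_d$ is lex.

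The one genuinely delicate point — the main obstacle, though it is bookkeeping rather than a real difficulty — is managing the coherence isomorphisms when $V$ is a pseudofunctor rather than a strict $2$-functor: the object $V_{p_1}V_{s_1}(X_3)$ is only \emph{isomorphic}, not equal, to $V_{p_1 s_1}(X_3)$ and to $V_{p_2 s_2}(X_3)$, so one must fix these coherence cells and check they don't obstruct the construction of the limiting cone or its uniqueness. This is handled by the standard argument that pseudofunctors into $\mathcal{C}at$ can be strictified, or alternatively by carrying the isomorphisms along explicitly; either way the limit exists and is preserved on the nose by $p_V$ by construction, since $p_V$ of the constructed limit is literally the limit taken in $\mathcal{C}$. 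I would state this cleanly by citing the strictification/coherence theorem and then giving the fiberwise formula, remarking that preservation by $p_V$ is immediate from the formula. Finally, I would note that the topologies $J_c$ and the covering condition on the $V_s$ play no role in \emph{this} lemma — they enter only later when one sheafifies — so the present statement is purely about the underlying lex structure.
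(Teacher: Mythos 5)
Your proof is correct and follows essentially the same route as the paper: compute the limit of the projected diagram in the base $\mathcal{C}$, transport the fiber components along the limit projections into the fiber over that limit, and take the limit there, so that $p_V$ preserves the result by construction. The only cosmetic difference is that you reduce to the terminal object and pullbacks, whereas the paper states the fiberwise formula $\lim F \simeq (\lim_i p_V(F(i)),\ \lim_i V_{p_i}(F(i)))$ directly for an arbitrary finite diagram, handling the pseudofunctoriality isomorphisms in the same implicit way you acknowledge.
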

\begin{proof}
The finite limit of a finite diagram $ F : I \rightarrow \int V$ is constructed as follows: first take the limit $ \lim_{i \in I} p_V(F(i)) $ in $\mathcal{C}$; then, we have a pseudococone diagram in $\mathcal{C}$
\[\begin{tikzcd}
	&& {V_{p_V(F(i))}} \\
	{} &&&& {V_{\underset{i \in I}{\lim} \, p_V(F(i))}} \\
	&& {V_{p_V(F(j))}}
	\arrow["{V_{p_V(F(d))}}", from=3-3, to=1-3]
	\arrow["{V_{p_i}}", ""{name=0, pos=0.3}, from=1-3, end anchor=170, to=2-5]
	\arrow["{V_{p_j}}"', ""{name=1, pos=0.3, swap}, from=3-3, to=2-5]
	\arrow[Rightarrow, "{\simeq}"', from=0, to=1, shorten <=3pt, shorten >=3pt, phantom, no head]
\end{tikzcd}\]
producing a finite diagram $(V_{p_i}(F(i)))_{i \in I}$ in $ V_{\underset{i \in I}{\lim} \, p_V(F(i))}$, where one can take the limit $ \lim_{i \in I} V_{p_i}(F(i))$. Then we have \[ \lim_I \, F \simeq (\lim_{i \in I} p_V(F(i)), \, \lim_{i \in I} V_{p_i}(F(i))) \] \end{proof}
 
\begin{remark}
In the case of the oplax colimits we need the indexing category to have itself finite limits; when considering pseudocolimits, it will be sufficient to require it to be cofiltered, as we wont need the underlying cone to be limiting anymore thanks to localizations along cartesian morphisms.
\end{remark}
 
\begin{division}Now we can equip $ \int V$ with a coarsest topology, the \emph{fibered topology}\index{fibered!topology}, making the inclusions $ \iota_c$ lex continuous: define the topology
\[ J_V = \langle \bigcup_{c \in \mathcal{C}} \iota_c(J_c) \rangle \]
as generated by the inclusion of all fiber topologies, then trivially each $ \iota_c : (V_c, J_c) \hookrightarrow (\int V, J_V)$ is a morphism of lex sites; the covers in the $ \iota_c(J_c)$ are called \emph{horizontal families}\index{cover!vertical}. \end{division}

\subsection{Fibered topos and topos of sections}

Now one can ask for a fibered site as seen above what is corresponding notion of sheaf topos. This is the purpose of the following notion

\begin{definition}
A \emph{fibered topos}\index{fibered!topos} on a category $\mathcal{C}$ is an indexed category $ \mathcal{E}_{(-)} : \mathcal{C}^{\op} \rightarrow \Cat$ such that for any $c \in \mathcal{C}$ the fiber $ \mathcal{E}_c$ is a Grothendieck topos, and for each $ s : c_1 \rightarrow c_2$ the transition functor $ \mathcal{E}_s : \mathcal{E}_{c_2} \rightarrow \mathcal{E}_{c_1}$ is the inverse image part of a geometric functor $ f_s : \mathcal{E}_{c_1} \rightarrow \mathcal{E}_{c_2}$. 
\end{definition}

Then we can also consider the Grothendieck construction at a fibered topos and define the fibration
\[ \int \mathcal{E}_{(-)} \stackrel{p_\mathcal{E}}{\longrightarrow} \mathcal{C} \]

\begin{remark}
Observe that we consider here inverse image part, so that this Grothendieck construction is in particular an oplax colimit \emph{in $\Cat$} of the underlying categories of the fiber topoi.
\end{remark}

\begin{division}
Now, to a fibered site $V$, we can canonically associate a fibered topos whose fiber at $ c$ is the sheaf topos $ \Sh(V_c, J_c) $ and the transition at $s : c_1 \rightarrow c_2$ is the inverse image of the geometric morphism $ \Sh(V_s) : \Sh(V_{c_1}, J_{c_1}) \rightarrow \Sh(V_{c_2},J_{c_2})$ induced by $ V_s$.
Then we can consider the Grothendieck construction associated to the fibered topos
\[ \int \Sh(V_{(-)}, J_{(-)}) \stackrel{\overline{p_V}}{\longrightarrow} \mathcal{C} \]
\end{division}

\begin{definition}
For a Grothendieck fibration $ p: \mathcal{M} \rightarrow \mathcal{C}$, we denote as $ \Gamma(p)$ the category whose objects are \emph{sections}\index{section!of a fibration}  
\[\begin{tikzcd}
	& {\mathcal{M}} \\
	{\mathcal{C}} && {\mathcal{C}}
	\arrow["{x}", from=2-1, to=1-2]
	\arrow["{p}", from=1-2, to=2-3]
	\arrow[Rightarrow, from=2-1, to=2-3, no head]
\end{tikzcd}\]
and whose arrows are natural transformations between those sections. In particular, \emph{cartesian sections}\index{section!cartesian} are sections $ x$ sending any arrow $ s : c_1 \rightarrow c_2$ to a cartesian morphism $ x(s) : x(c_1) \rightarrow x(c_2)$. 
\end{definition}

\begin{division}
Moreover, observe that any fibered topos $ \mathcal{E}_{(-)} : \mathcal{C} \rightarrow \Cat$ defines also a Grothendieck fibration on $\mathcal{C}^{\op}$ thanks to the adjunctions $ f_s^* \dashv {f_s}_*$, where the fiber at $ c$ is still $ \mathcal{E}_c$ but the transition morphism at $ s : c_1 \rightarrow c_2$ is now the direct image functor $ {f_s}_* : \mathcal{E}_{c_1} \rightarrow \mathcal{E}_{c_2}$. We denote as 
\[\begin{tikzcd}
	{\displaystyle{\int} \mathcal{E}'_{(-)}} & {\mathcal{C}^{\op}}
	\arrow["{p'_\mathcal{E}}", from=1-1, to=1-2]
\end{tikzcd}\] the associated fibration. In the following we call this fibration the \emph{direct fibration}\index{direct fibration} of the fibered topos $ \mathcal{E}_{(-)}$.\\

Then in particular a section $ X : \mathcal{C} \rightarrow \int \mathcal{E}_{(-)}'$ of the direct fibration of a fibered topos returns at each object $ c$ an object $X_c$ of the topos $ \mathcal{E}_c$ and at an arrow $ s : c_1 \rightarrow c_2$ a morphism $ (s, X_s) : (c_2, X_{c_2}) \rightarrow (c_1, X_{c_1})$ with $ X_s : X_{c_2} \rightarrow {f_s}_*X_{c_1}$ with $ {f_s}_*$ the direct image of the transition geometric morphism $ \mathcal{E}_s$. 
\end{division}

The following, which is \cite{sga4}[Proposition 7.4.7], says that the sheaf topos over the indexed site is the category of sections of the direct fibration associated to the fibered topos constructed by sheafification of the fibers. 

\begin{proposition}
Let $ V : \mathcal{C} \rightarrow \Cat$ a fibered lex site over a small lex category; then we have an equivalence of categories
\[ \Sh(\int V, J_V) \simeq \Gamma(\overline{p_V}') \]
\end{proposition}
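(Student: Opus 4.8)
The plan is to identify an object of $\Sh(\int V, J_V)$ with a compatible family of sheaves on the fibers, one over each $V_c$, together with transition data, and then to recognize that compatible family as a section of the direct fibration $\overline{p_V}'$. First I would describe a presheaf $P$ on $\int V$ concretely: by definition of the Grothendieck construction, giving $P$ amounts to giving, for each $c\in\mathcal{C}$, a presheaf $P_c$ on $V_c$ (namely the restriction of $P$ along $\iota_c$), together with, for each $s:c_1\to c_2$, a natural comparison relating $P_{c_2}$ and $(V_s)^*P_{c_1}$ — the value of $P$ on a morphism of $\int V$ lying over $s$ unwinds, via the oplax cocone $(\iota_c,\phi_s)$, into such data — subject to the cocycle conditions coming from composition in $\mathcal{C}$. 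Concretely a morphism $(v_2)\to(v_1)$ over $s$ in $\int V$ is a pair $(s, V_s(v_1)\to v_2)$, so $P$ restricted to the fiber over a fixed $s$ is exactly a natural transformation $P_{c_2}\Rightarrow (V_s)_*P_{c_1}$ where $(V_s)_*$ is the right Kan extension along $V_s$, i.e. the direct-image functor of the induced geometric morphism at the presheaf level.

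Next I would impose the sheaf condition. Because $J_V=\langle\bigcup_c \iota_c(J_c)\rangle$ is the topology generated by the images of the fiber topologies, a presheaf $P$ on $\int V$ is a $J_V$-sheaf if and only if its restriction along each $\iota_c$ is a $J_c$-sheaf: one inclusion is immediate since each $\iota_c$ is a morphism of sites and direct images of sheaves along morphisms of sites are sheaves, hence $\iota_c^*P\simeq\iota_c^{-1}$ applied to a sheaf is a sheaf; for the converse one checks that the covering sieves generating $J_V$ are exactly (pullbacks of) the $\iota_c(J_c)$, so separatedness and the gluing property against all of $J_V$ reduce to the same against each $J_c$ — this is the standard fact that a topology generated by a family of sieves can be tested on that family. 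So $\Sh(\int V,J_V)$ is the full subcategory of presheaves $P$ on $\int V$ such that every $\iota_c^*P$ is a $J_c$-sheaf. Under the dictionary of the previous paragraph, this says: each $P_c$ is a sheaf on $(V_c,J_c)$, i.e. an object of $\Sh(V_c,J_c)$, and the transition natural transformations become maps $X_{c_2}\to (f_s)_*X_{c_1}$ with $(f_s)_*$ the direct image of $\Sh(V_s)$, satisfying the cocycle conditions; and a morphism of such data is precisely a natural family of sheaf maps $P_c\to Q_c$ commuting with the transitions. That is exactly the description of $\Gamma(\overline{p_V}')$ recalled just above the statement, whose objects are sections $X$ assigning to $c$ an object $X_c\in\mathcal{E}_c=\Sh(V_c,J_c)$ and to $s:c_1\to c_2$ a map $X_s:X_{c_2}\to (f_s)_*X_{c_1}$, with arrows the natural transformations of sections. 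I would then assemble these observations into the claimed equivalence, checking that the two assignments (presheaf on $\int V$ $\leftrightarrow$ compatible family plus transitions) are mutually inverse up to natural isomorphism, which is essentially the content of the Grothendieck construction being an oplax colimit, restricted to the sheaf subcategories.

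The main obstacle I anticipate is the verification that $J_V$-sheafness is equivalent to fiberwise $J_c$-sheafness — more precisely, controlling the covering sieves of $\int V$. One must check that a $J_V$-covering sieve on an object $(c,v)$ is generated by its intersection with the image of the fiber inclusion over $c$ (together with morphisms that are ``already covering'' in $\mathcal{C}$ in a suitable sense), so that the multi-stage gluing over $\int V$ collapses to a single gluing in the fiber $V_c$; here the hypothesis that each transition functor $V_s$ is a morphism of \emph{lex} sites, and that $\int V$ and $p_V$ are lex (the preceding lemma), is what makes pullbacks of the generating sieves behave well and keeps $J_V$ from acquiring extra covers that mix fibers. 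A secondary, more bookkeeping-heavy point is tracking the direction of the transition maps: the fibered topos gives $(f_s)^*$ covariantly along $\mathcal{C}$, while sections of the \emph{direct} fibration $\overline{p_V}'$ use $(f_s)_*$, so one must be careful that the mate correspondence $(f_s)^*\dashv(f_s)_*$ turns the presheaf-level transition data into the stated form; this is routine but must be done cleanly to match $\Gamma(\overline{p_V}')$ exactly.
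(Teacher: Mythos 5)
Your proposal is correct and takes essentially the same route as the paper: restrict a sheaf along the fiber inclusions $\iota_c$ and whisker with the oplax-cocone cells $\phi_s$ to obtain transition maps $X_{c_2} \to \Sh(V_s)_* X_{c_1}$ (note that the presheaf-level direct image is precomposition with $V_s$, not a right Kan extension along it), then use the oplax colimit property of $\int V$ for the inverse construction — this is exactly the argument the paper runs for the refined statement \cref{Topos of continuous sections}, the present untopologized version being deferred to \cite{sga4}. The obstacle you single out is the right one and is settled as you suggest: the images of the fiber covers form a coverage on the lex category $\int V$ (stability under arbitrary morphisms of $\int V$ follows from lexness of the fibers and cover-preservation of the lex transition functors, pullbacks of vertical maps along vertical maps being computed fiberwise), and it is this coverage property — not a general fact about topologies generated by arbitrary families of sieves — that licenses testing $J_V$-sheafness on the generating fiberwise families.
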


\begin{division}Moreover, sheafification turns the oplax cone made of the inclusions $ (\iota_c)_{c \in \mathcal{C}}$ into an oplax cocone of direct image part of geometric morphisms 
\[\begin{tikzcd}
	{\Sh(V_{c_1},J_{c_1})} \\
	&& {\Sh(\int V,J_{V})} \\
	{\Sh(V_{c_2},J_{c_2})}
	\arrow["{\Sh(i_{c_1})}"', ""{name=0}, from=2-3, to=1-1]
	\arrow["{\Sh(i_{c_2})}", ""{name=1, swap}, from=2-3, to=3-1]
	\arrow["{\Sh(V_s)}"', from=1-1, to=3-1]
	\arrow[Rightarrow, "{{\phi_s}_*}", from=1, to=0]
\end{tikzcd}\]
where $ {\phi_s}_*$ is the mate of the transformation $ \phi_s^* : \Sh(\iota_{c_1})^* \Sh(V_s)^* \Rightarrow  \Sh(\iota_{c_2})^* $ induced from the cocone $ \phi_s$. In the bicategory of Grothendieck topoi, where 2-cells direction is the one of their inverse image part, this becomes a lax cone. Then we also have the following:\end{division}

\begin{proposition}
We have in the bicategory of Grothendieck topoi that
\[ \Sh(\int V, J_V) \simeq \underset{c \in \mathcal{C}}{\laxlim} \,\Sh(V_{c},J_{c})  \]
\end{proposition}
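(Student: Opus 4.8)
The plan is to deduce this statement from the previous proposition together with a general fact about oplax limits of Grothendieck toposes, namely that the oplax limit of a diagram of toposes is computed as the category of sections of the direct fibration associated to the diagram. First I would recall precisely the relevant universal property: for a pseudofunctor $\mathcal{E}_{(-)} : \mathcal{C}^{\op} \to \Cat$ landing in Grothendieck toposes and inverse images of geometric morphisms, the oplax limit $\oplaxlim_{c \in \mathcal{C}} \mathcal{E}_c$ represents the pseudofunctor sending a topos $\mathcal{F}$ to the category of oplax cones over $\mathcal{E}_{(-)}$ with vertex $\mathcal{F}$, where an oplax cone assigns to each $c$ a geometric morphism $x_c : \mathcal{F} \to \mathcal{E}_c$ and to each $s : c_1 \to c_2$ a 2-cell $\mathcal{E}_s x_{c_2} \Rightarrow x_{c_1}$ (equivalently, after taking mates, a natural transformation $x_{c_2}^* \Rightarrow {f_s}_* \dots$ of the appropriate variance) subject to the evident coherence. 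The key observation is that this data is exactly a section of the direct fibration $p'_{\mathcal{E}}$ valued in $\mathcal{F}$-indexed objects; more precisely, $\Geom[\mathcal{F}, \oplaxlim_c \mathcal{E}_c] \simeq \Gamma(\overline{p_V}')$ evaluated ``at $\mathcal{F}$'', and specializing the representing object gives the claim.

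The concrete steps would be: (1) state the universal property of $\oplaxlim$ for toposes, citing \cite{sga4}[Proposition 6.5] and \cite{descotte2018sigma} as the excerpt already does for pseudocolimits, and dualize; (2) unfold the previous proposition $\Sh(\int V, J_V) \simeq \Gamma(\overline{p_V}')$, noting that its proof already exhibits, via sheafification, the inclusions $(\iota_c)_{c \in \mathcal{C}}$ turning into an oplax \emph{cone} of geometric morphisms $\Sh(\iota_c) : \Sh(\int V, J_V) \to \Sh(V_c, J_c)$ with structure 2-cells ${\phi_s}_*$ — this is precisely the displayed diagram just before the statement; (3) check that this oplax cone is universal, i.e. that for any topos $\mathcal{F}$ the induced functor from $\Geom[\mathcal{F}, \Sh(\int V, J_V)]$ to the category of oplax cones with vertex $\mathcal{F}$ is an equivalence. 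Step (3) is where the real content lies: a geometric morphism $\mathcal{F} \to \Sh(\int V, J_V)$ is, by Diaconescu, a lex $J_V$-continuous functor $\int V \to \mathcal{F}$; restricting along each $\iota_c$ gives a lex $J_c$-continuous functor $V_c \to \mathcal{F}$, hence a geometric morphism $\mathcal{F} \to \Sh(V_c, J_c)$, and the oplax transitions $\phi_s$ of the cocone $\int V$ yield the required 2-cells; conversely, because $J_V = \langle \bigcup_c \iota_c(J_c)\rangle$ is generated by the fiber topologies and $\int V$ is (by the lemma) the oplax colimit of the $V_c$ \emph{as lex categories}, an oplax cone of such functors glues uniquely to a single lex $J_V$-continuous functor on $\int V$. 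This gluing is the dual, at the level of sites, of the universal property of the oplax colimit $\int V$, combined with the fact that $J_V$-continuity is detected fiberwise precisely because the covers generating $J_V$ all come from the fibers.

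The main obstacle I anticipate is the careful bookkeeping of variance and of the oplax (as opposed to lax or pseudo) coherence: one must be sure that the 2-cells $\phi_s$ of the Grothendieck construction point in the direction that, after sheafification and after taking mates $\phi_s^* \leadsto {\phi_s}_*$, matches the direction demanded by the oplax-limit cone, and that the cocycle/coherence conditions transport correctly. A secondary technical point is verifying that $J_V$-continuity of a lex functor $\int V \to \mathcal{F}$ is equivalent to $J_c$-continuity of each restriction to $V_c$: one inclusion is immediate since $\iota_c(J_c) \subseteq J_V$, and the other follows because a generating set of $J_V$-covers lies in the fibers, using that a lex functor which sends a generating family of covers to epimorphic families sends the whole topology to epimorphic families. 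Once these are in place, the equivalence $\Sh(\int V, J_V) \simeq \oplaxlim_{c}\Sh(V_c, J_c)$ follows by uniqueness of representing objects, and one may alternatively phrase the whole argument as: $\Gamma(\overline{p_V}')$ \emph{is} the oplax limit, so the two propositions together give the result with no further work beyond identifying the section category with the oplax limit.
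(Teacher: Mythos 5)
You follow the same route as the paper's own one-paragraph argument (Diaconescu, the universal property of $\int V$ as an oplax colimit in $\Cat$, and the fact that $J_V$ is generated by the fibre topologies), but the two transfer steps on which both your plan and that argument rest do not hold, because finite limits in $\int V$ are \emph{not} computed fibrewise. By the lemma preceding the statement, the terminal object of $\int V$ is $(1_{\mathcal{C}},1_{V_{1_{\mathcal{C}}}})$ and the product of $(c,a)$ with $(c',a')$ is $(c\times c',\,V_{p}(a)\times V_{p'}(a'))$. Hence $\iota_c$ preserves the limits internal to its fibre but not the terminal object (unless $c$ is terminal in $\mathcal{C}$), so the restriction along $\iota_c$ of a lex $J_V$-continuous functor $\int V\to\mathcal{F}$ is not lex and does not define a geometric morphism $\mathcal{F}\to\Sh(V_c,J_c)$: your step ``restricting along each $\iota_c$ gives a lex $J_c$-continuous functor'' fails, and for the same reason the canonical cone $(\Sh(\iota_c))_c$ displayed before the statement does not exist in general. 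In the other direction, if you glue an oplax cone $(f_c,\psi_s)$ into a functor $G:\int V\to\mathcal{F}$, then preservation of the product of $(c,1_{V_c})$ and $(c',a')$, which is $(c\times c',V_{p'}(a'))$ with one projection the cartesian lift of $p':c\times c'\to c'$, forces the component of the cone $2$-cell at $p'$ to be invertible; so the glued functor is lex only when the structural $2$-cells are invertible, and the gluing step fails for genuinely oplax cones. Fibrewise detection of $J_V$-continuity is fine; it is lexness that does not pass back and forth.

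This is not a matter of variance bookkeeping, and your fallback ``$\Gamma(\overline{p_V}')$ \emph{is} the oplax limit'' does not repair it, because identifying the section category with an oplax limit in $\GTop$ is exactly what fails. Concretely, take $\mathcal{C}=\{c_1\to c_2\}$ with both fibres the trivial one-object site and identity transition, so every $\Sh(V_c,J_c)\simeq\mathcal{S}$. Then $\Sh(\int V,J_V)\simeq[\mathcal{C}^{op},\mathcal{S}]$ is the Sierpi\'nski topos, whose geometric morphisms from $\mathcal{F}$ classify subterminal objects of $\mathcal{F}$, whereas $\mathcal{S}$ is biterminal in $\GTop$, so every category of oplax cones over this diagram is trivial and $\oplaxlim_{c}\Sh(V_c,J_c)\simeq\mathcal{S}$: the two sides differ. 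What is correct, and what the rest of the section actually uses, is the preceding proposition $\Sh(\int V,J_V)\simeq\Gamma(\overline{p_V}')$ (a statement about the underlying category, i.e.\ the SGA4 total topos) and the following one, where inverting the opcartesian morphisms does produce the pseudolimit. Bicategorically the canonical geometric morphisms go from the fibres \emph{into} $\Sh(\int V,J_V)$ (the $\iota_c$ act as comorphisms of sites), and the universal property one should expect is of lax-\emph{colimit} type; equivalently, a geometric morphism $\mathcal{F}\to\Sh(\int V,J_V)$ amounts to a family of geometric morphisms $\mathcal{F}/u_c\to\Sh(V_c,J_c)$ over a varying lex family $u_c$ (the image of $(c,1_{V_c})$), which is strictly more data than an oplax cone. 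So your proposal reproduces the paper's argument, but the key step would fail, and the statement as literally phrased needs to be amended rather than reproved along these lines.
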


\begin{proof}
It would actually be expectable for oplaxcolimits of lex sites to be turned into oplaxlimits of topoi, as well as finite colimits of sites are turned into finite limits and filtered colimits into cofiltered limits. To see this, observe that a lax cone 
\[\begin{tikzcd}
	{\Sh(V_{c_1},J_{c_1})} \\
	&& {\mathcal{E}} \\
	{\Sh(V_{c_2},J_{c_2})}
	\arrow["{f_{c_1}}"', ""{name=0}, from=2-3, to=1-1]
	\arrow["{f_{c_2}}", ""{name=1, pos=0.51, swap}, from=2-3, to=3-1]
	\arrow["{\Sh(V_s)}"', from=1-1, to=3-1]
	\arrow[Rightarrow, "{{\psi_s}}"', from=0, to=1]
\end{tikzcd}\]
is the same as an lax cocone of lex-continuous functors 
\[\begin{tikzcd}
	{(V_{c_1},J_{c_1})} \\
	&& {\mathcal{E}} \\
	{(V_{c_2},J_{c_2})}
	\arrow["{f_{c_1}^*}", ""{name=0, swap}, from=1-1, to=2-3]
	\arrow["{f_{c_2}^*}"', ""{name=1, pos=0.49}, from=3-1, to=2-3]
	\arrow["{V_s}", from=3-1, to=1-1]
	\arrow[Rightarrow, "{{\psi_s}^*}"', from=0, to=1]
\end{tikzcd}\]
which factorizes uniquely in $\Cat$ through the oplax colimit $ f^* : \int V \rightarrow \mathcal{E}$, and this functor is both lex and $J_V$ continuous since all its restrictions at fibers are lex continuous, so that it defines a geometric morphism $ f : \mathcal{E} \rightarrow \Sh(\int V, J_V)$. 
\end{proof}

\begin{division}
From the construction above of the oplax limit, we can construct the pseudolimit in $\Cat$ of a fibered topos. It can be shown that the cartesian morphisms in the oplax colimit $ \int V$ have a left calculus of fractions in the sense of Gabriel-Zisman, as explained in \cite{sga4}[Proposition 6.4], so that we can consider the localization of the oplax colimit $ \int V$ at the cartesian morphisms; we know that this localization is the pseudocolimit of the pseudofunctor $V$ in $\Cat$, that is,
\[ \pscolim \; V \simeq \int V[ \Sigma_V^{-1}] \]
where $ \Sigma_V$ denotes the class of cartesian morphisms. Moreover, the topology $ J_V$ is transferred to the localization -- we still denote the induced topology as $J_V$. This provides the following expression of pseudolimits:\end{division}

\begin{proposition}\label{cofiltered pseudolimit of topos}
If $V $ is a fibered site on a small lex category $\mathcal{C}$, we have in the bicategory of Grothendieck topoi that \[ \Sh( \underset{\mathcal{C}}{\pscolim}\;  V, J_V) \simeq \underset{c \in \mathcal{C}}{\pslim} \; \Sh(V_c, J_c)   \]
Moreover the latter is also equivalent to the category $\Gamma_{\textbf{Cart}}(p'_V)$ of cartesian sections of the direct fibration.
\end{proposition}

\begin{remark}
In \cite{sga4}[Theorem 8.2.3] the underlying category is just supposed to be cofiltered for the equivalence above to hold; in our case this condition is automatically satisfied as we supposed $ \mathcal{C}$ to be lex. Beware that the results above are not necessarily true for an arbitrary small category $\mathcal{C}$. In fact, it is not known whether the bicategory of Grothendieck topoi has arbitrary small pseudolimits. 
\end{remark}

As this is the content of \cite{sga4}[Sections 6 and 8, in particular 8.2.3] (and has also its bilimit version for bifiltered diagrams at \cite{dubuc2011construction}[Theorem 2.4]) we do not prove it again. 

\subsection{Topos of continuous sections}

We now focus rather on the following adaptation in the case where the underlying category is endowed with a Grothendieck topology, asking for a way to make the fibration in a fibered site a comorphism of sites.\\

\begin{division}

First observe that any fibered lex site has a terminal section $ 1_{(-)} : \mathcal{C} \rightarrow \int V$ associating to each $ c$ the terminal object $1_{V_c}$ of $ V_c$ and to each $ s: c_1 \rightarrow c_2$. In particular this is a cartesian section as the transitions morphisms $ V_s$ for each $ s: c_1 \rightarrow c_2$ are lex so that $ V_s(1_{V_{c_2}}) = 1_{V_{c_1}}$, so the value of this section at $ s: c_1 \rightarrow c_2$ is the cartesian lift $ \overline{s} : (c_1, V_s(1_{V_{c_2}}) \rightarrow (c_2; 1_{V_{c_2}})$. Now for a topology $J$ on $ \mathcal{C}$, and $ \mathcal{V} : \mathcal{C}^{\op} \rightarrow \Cat$ a fibered site, observe that any covering family $ (c_i \rightarrow c)_{i \in I}$ can be lifted to a family $ (\overline{s}_i : (c_i, 1_{V_{c_i}})  \rightarrow (c, 1_{V_c} ))_{i \in I}$: we call such families \emph{horizontal}\index{cover!horizontal}. Then consider the topology
\[ J_{V,J} = \langle J_V \cup 1_{V_{(-)}}(J) \rangle \]
where $ 1_{(-)}(J)$ consists of all families of the form $ (\overline{s}_i : (c_i, 1_{V_{c_i}})  \rightarrow (c, 1_{V_c} ))_{i \in I}$. 
\end{division}

\begin{remark}
In the context of geometries, we consider pretopologies on the fibers, and want actually to manipulate covers of a spectral pretopology rather than sieves of a topology: then it is worth precising that the result above can be rephrased in term of pretopology: if the fibers $V_c$ are equipped with a Grothendieck pretopology $J'_c$, and if one chooses a basis $ J_0$ for the topology $J$ on the indexing category $ \mathcal{C}$ (for instance the maximal basis associated to $J$), then we can first generate a pretopology $ J'_{V,J}$ on the oplaxcolimit from the image of those basis $\bigcup_{c \in \mathcal{C}} \iota_c(J'_c) $ and $ 1_{V_{(-)}}(J_0)$, which are the vertical and horizontal covers. Then observe that \begin{itemize}
    \item the pretopology jointly generated by the fiber basis is itself a basis for the topology on the oplax colimit, or even more directly:
    \[ J_V = \langle  \bigcup_{c \in \mathcal{C}} \iota_c(J'_c) \rangle \]
    \item the pretopology jointly generated by the images $ \iota_c(J'_c) $ and $ 1_{V_{(-)}}(J_0)$ is a basis for $J_{V,J}$:
    \[ J_{V,J} = \langle J'_{V,J} \rangle = \langle \bigcup_{c \in \mathcal{C}} \iota_c(J'_c) \cup 1_{V_{(-)}}(J_0) \rangle \]
\end{itemize}
\end{remark}

We used the terminal element to canonically lift $J$-covers in $\mathcal{C}$ to covers in $ \int V$; but once the topology is generated from those data, we get actually horizontal covers by lifting $ J$-covers at any object in a fiber. To see this, use the following general lemma expressing that cartesian lifts of an arrow form altogether a cartesian transformation:

\begin{lemma}\label{lifts as pullback}
Let $V : \mathcal{C}^{\op} \rightarrow \Cat$ be a lex fibration. Then for any $ s : c_1 \rightarrow c_2$ the following square is a pullback
\[\begin{tikzcd}
	{(c_1, V_s(a_1))} & {(c_2, a)} \\
	{(c_1, 1_{V_{c_1}})} & {(c_2, 1_{V_{c_2}})}
	\arrow["{(s, 1_{V_s(a_2)})}"', from=2-1, to=2-2]
	\arrow["{(1_{c_2}, !_{V_s(a)})}"', from=1-1, to=2-1]
	\arrow["{(1_{c_2}, !_{a})}", from=1-2, to=2-2]
	\arrow["{(s, 1_{V_s(a_1)})}", from=1-1, to=1-2]
	\arrow["\lrcorner"{very near start, rotate=0}, from=1-1, to=2-2, phantom]
\end{tikzcd}\]
\end{lemma}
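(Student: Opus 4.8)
The plan is to recognise this square as an instance of a general fact about Grothendieck fibrations, applied to $p_V : \int V \rightarrow \mathcal{C}$. Namely: for any fibration $p : \mathcal{E} \rightarrow \mathcal{B}$, any arrow $s : c_1 \rightarrow c_2$ in $\mathcal{B}$, and any vertical morphism $v : a \rightarrow b$ in the fibre $\mathcal{E}_{c_2}$, the naturality square relating the cartesian lifts of $s$ at $a$ and at $b$ (along $v$) is a pullback in $\mathcal{E}$. The square in the statement is exactly this square for $\mathcal{E} = \int V$, $p = p_V$, $b = 1_{V_{c_2}}$ and $v = {!_a} : a \rightarrow 1_{V_{c_2}}$: here the lex hypothesis on $V$ enters only to guarantee that $V_s$ preserves the terminal object, so that the cartesian lift of $s$ at $1_{V_{c_2}}$ is $(s, \mathrm{id}) : (c_1, 1_{V_{c_1}}) \rightarrow (c_2, 1_{V_{c_2}})$ and that $V_s({!_a}) = {!_{V_s(a)}}$; with these identifications the two squares coincide.

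To prove the general fact I would argue directly from the universal property of cartesian morphisms. Write the top edge as the cartesian lift $\overline{s}_a : (c_1, V_s(a)) \rightarrow (c_2, a)$ and the bottom edge as the cartesian lift $\overline{s}_b : (c_1, V_s(b)) \rightarrow (c_2, b)$; the vertical edges are the fibre maps induced by $v$ and by $V_s(v)$, and commutativity of the square is the defining property of $V_s(v)$. Given a test object $(d, y)$ with a map $g$ to $(c_2, a)$ and a map $h$ to $(c_1, V_s(b))$ whose composites to $(c_2, b)$ agree, first apply $p_V$: since $p_V(v) = \mathrm{id}$, the base component of $g$ equals $s$ composed with the base component of $h$, so it factors through $s$; the cartesian property of $\overline{s}_a$ then produces the unique fill $k : (d, y) \rightarrow (c_1, V_s(a))$ over that base arrow with $\overline{s}_a k = g$. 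That $V_s(v) \circ k = h$ follows by post-composing with $\overline{s}_b$: the square gives $\overline{s}_b V_s(v) k = v\,\overline{s}_a k = v g = \overline{s}_b h$, and both $V_s(v)\circ k$ and $h$ lie over the same base arrow, so cartesianness of $\overline{s}_b$ forces them equal. Uniqueness of $k$ is again cartesianness of $\overline{s}_a$. I would emphasise that no finite-limit hypothesis on the fibres is used in this step.

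As a cross-check one can instead compute the pullback via the explicit description of finite limits in $\int V$ established above: the cospan $c_1 \xrightarrow{s} c_2 \xleftarrow{1_{c_2}} c_2$ has pullback $c_1$ in $\mathcal{C}$ with projections $1_{c_1}$ and $s$, and transporting the relevant objects into $V_{c_1}$ yields the cospan $1_{V_{c_1}} \xrightarrow{\mathrm{id}} V_s(1_{V_{c_2}}) \cong 1_{V_{c_1}} \xleftarrow{{!_{V_s(a)}}} V_s(a)$, whose pullback is $V_s(a)$ because one leg is an identity. Hence $\lim \simeq (c_1, V_s(a))$, and the two limit projections unwind precisely to the cartesian lift $(s, \mathrm{id})$ and the vertical map $(1_{c_1}, {!_{V_s(a)}})$ named in the statement. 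The only delicate point — the ``main obstacle'', such as it is — is bookkeeping the coherence isomorphisms of the pseudofunctor $V$ (so that identities like $V_s(1_{V_{c_2}}) = 1_{V_{c_1}}$ are read up to canonical iso) and verifying that the computed projections match the prescribed morphisms; everything else is formal.
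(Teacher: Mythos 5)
Your proof is correct and is essentially the paper's argument: both reduce the pullback property to the cartesianness of the lift $(s,1_{V_s(a)})$ over $s$, with the lex hypothesis entering only through $V$ preserving terminal objects (so that the bottom edge is the cartesian lift at $1_{V_{c_2}}$ and the left-hand compatibility is forced). Packaging it as the general fibration fact that a square of cartesian lifts along a vertical morphism is a pullback, with the terminal object as a special case, is a mild and harmless generalization of the paper's direct check of the universal property.
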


\begin{proof}
In any other square 
\[\begin{tikzcd}[column sep=large]
	{(c_3, a')} & {(c_2, a)} \\
	{(c_1, 1_{V_{c_1}})} & {(c_2, 1_{V_{c_2}})}
	\arrow["{(s, 1_{V_s(a_2)})}"', from=2-1, to=2-2]
	\arrow["{(u, !_{a'})}"', from=1-1, to=2-1]
	\arrow["{(1_{c_2}, !_{a})}", from=1-2, to=2-2]
	\arrow["{(t, f)}", from=1-1, to=1-2]
	\end{tikzcd}\]
the vertical component of the left map is forced to be the terminal map $ !_{a'}$ as $V_u$ preserves the terminal element. But then, such a square is the same as a situation testing the cartesianness of the lift $ (s, 1_{V_s(a)})$ which always produces a unique map as the dashed arrow below
\[\begin{tikzcd}[row sep=small]
	& {(c_1,V_s(a))} & {(c_2, a)} \\
	{(c_3,a_3)} \\
	& {c_1} & {c_2} \\
	{c_3}
	\arrow["{u}", from=4-1, to=3-2]
	\arrow["{s}", from=3-2, to=3-3]
	\arrow["{t}"', from=4-1, to=3-3, bend right=15]
	\arrow["{(s, 1_{V_s(a)})}", from=1-2, to=1-3]
	\arrow["{(t,f)}"', from=2-1, to=1-3, bend right=15]
	\arrow[from=2-1, to=1-2, dashed]
\end{tikzcd}\]
which provides in particular the desired factorization.
\end{proof}

As a consequence, from we know that Grothendieck topologies are closed under pullback of covering families, it appears the following:

\begin{corollary}
Let $ V$ be a fibered site on a lex category $\mathcal{C}$ and $J$ a Grothendieck topology on $\mathcal{C}$. Then for any $J$-cover $(s_i: c_i \rightarrow c)_{i \in I}$ and any $ a \in V_c$, the family $ ((s_i, 1_{V_{s_i}(a)}) : (c_i, V_{s_i}(a)) \rightarrow (c, a))_{i \in I}$ is a covering family in $ J_{V,J}$.
\end{corollary}

As defined above, the topology $ J_{V,J}$ is the simplest that allows to induce a geometric morphism toward $ \Sh(C,J)$. In fact:

\begin{proposition}
The topology $ J_{V,J}$ is the coarsest topology such that we have simultaneously the two following conditions: \begin{itemize}
    \item for each $ c $ in $\mathcal{C}$, the inclusion $ \iota_c : (V_{c_1}, J_{c_1}) \rightarrow (\int V, J_{V,J})$ is a morphism of sites;
    \item the fibration $ p_V : (\int V, J_{V,J}) \rightarrow (\mathcal{C}, J)$ is a comorphism of sites.
\end{itemize}
\end{proposition}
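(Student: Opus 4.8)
## Proof plan for the characterization of $J_{V,J}$

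The plan is to prove the two required properties are satisfied by $J_{V,J}$, then show minimality. For the first bullet, continuity of each $\iota_c$ is essentially immediate: by construction $J_{V,J} \supseteq J_V = \langle \bigcup_{c} \iota_c(J_c)\rangle$, so every $J_c$-cover is sent by $\iota_c$ to a $J_{V,J}$-cover, and since $\iota_c$ is already a morphism of lex sites onto its fiber (the transition functors and fiber inclusions are lex by hypothesis), $\iota_c\colon (V_c,J_c)\to(\int V, J_{V,J})$ is a morphism of lex sites. For the second bullet, recall that $p_V\colon \int V\to\mathcal{C}$ is a comorphism of sites precisely when it preserves covering families (lifts $J$-covers suitably and preserves the lex structure, which we have from the earlier lemma that $\int V$ and $p_V$ are lex). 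The key input here is the lemma just proved: for any $J$-cover $(s_i\colon c_i\to c)_{i\in I}$ and any $a\in V_c$, the horizontal family $((s_i,1_{V_{s_i}(a)})\colon (c_i,V_{s_i}(a))\to(c,a))_{i\in I}$ lies in $J_{V,J}$. This is exactly the lifting condition a comorphism of sites requires, so $p_V\colon(\int V, J_{V,J})\to(\mathcal{C},J)$ is a comorphism of sites.

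Next I would address minimality, which is the substantive part. Suppose $K$ is any Grothendieck topology on $\int V$ satisfying both conditions simultaneously. I must show $J_{V,J}\subseteq K$, i.e. every generator of $J_{V,J}$ is $K$-covering. There are two families of generators. First, for $c\in\mathcal{C}$ and a $J_c$-cover $(m_j\colon b_j\to b)_{j\in J}$ in the fiber $V_c$, its image $\iota_c(m_j)_j$ must be $K$-covering: this is exactly the statement that $\iota_c\colon(V_c,J_c)\to(\int V, K)$ is continuous, which is the first hypothesis on $K$. Second, for a $J$-cover $(s_i\colon c_i\to c)_i$, the horizontal lift $(\overline{s}_i\colon(c_i,1_{V_{c_i}})\to(c,1_{V_c}))_i$ must be $K$-covering. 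This is where the comorphism-of-site hypothesis on $K$ enters: by definition a comorphism of sites $p_V\colon(\int V,K)\to(\mathcal{C},J)$ requires that for each object $(c,a)$ of $\int V$ and each $J$-cover of $p_V(c,a)=c$, there is a $K$-cover of $(c,a)$ whose image refines the given cover. Applying this at the terminal object $(c,1_{V_c})$ of the fiber over $c$ and using the cartesian lifts, one extracts that the horizontal family is (a refinement of something, hence after saturation) $K$-covering. Since $J_{V,J}$ is by definition the topology generated by these two families of generators, $J_{V,J}\subseteq K$.

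The main obstacle I anticipate is the precise bookkeeping in the minimality argument: the definition of ``comorphism of sites'' (in the sense of \cite{sga4}, sometimes called a ``cocontinuous functor'' or a functor inducing a geometric morphism in the covariant direction) gives a covering-\emph{refinement} condition rather than literally that horizontal lifts are covers, so I would need to check that the refinement can be taken to \emph{be} the horizontal family up to the closure axioms of a Grothendieck topology — in particular using that $K$ is already closed under pullback and composition, and invoking the earlier lemma exhibiting the cartesian lifts of a fixed arrow as forming a cartesian natural transformation, so that the horizontal lift over a non-terminal object $a$ is a pullback of the horizontal lift over $1_{V_c}$ and is therefore $K$-covering as soon as the latter is. I would also want to be careful that ``coarsest'' is being used in the sense of \emph{smallest} collection of covers (most covering families allowed is the \emph{finest}; here ``coarsest'' means the generated topology with the fewest covers beyond the forced generators), which matches: $J_{V,J}$ is generated by exactly the forced generators and nothing more, so any $K$ meeting both conditions contains it. Once these compatibility checks are in place, the proof is a direct assembly of the two lemmas already established plus the definitions of morphism and comorphism of sites.
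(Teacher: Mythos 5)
Your argument is correct, and it is worth noting that the paper states this proposition without proof, so your proposal supplies an argument that is consistent with the two lemmas proved just before it (which you invoke in the right places: the horizontal-lift lemma to verify that $p_V$ is a comorphism of sites for $J_{V,J}$, and the generators-are-forced argument for minimality). The only step you flag as an "obstacle" that deserves to be made explicit is actually easy, and for the reason already contained in the proof of the cartesian-lift lemma: if $K$ makes $p_V$ a comorphism of sites, then at the object $(c,1_{V_c})$ there is a $K$-cover $(f_j)_j$ whose base components factor through the given $J$-cover $(s_i)_i$; but any arrow $(t,g):(d,b)\rightarrow (c,1_{V_c})$ has its vertical component $g$ forced to be the terminal map, so if $t=s_i u$ then $(t,g)$ factors through $\overline{s}_i$ via $(u,!)$. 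Hence the sieve generated by the horizontal family $(\overline{s}_i)_i$ contains a $K$-covering sieve and is therefore $K$-covering; no refinement bookkeeping beyond this is needed. Note also that for minimality you only need the horizontal generators at the fiber-terminal objects, since these (together with the fiber topologies) are by definition the generators of $J_{V,J}$; the pullback statement that horizontal families over arbitrary $a\in V_c$ are covering is only needed in the other direction, to check that $J_{V,J}$ itself satisfies the comorphism condition, exactly as you use it. Finally, your reading of "coarsest" as "smallest collection of covering sieves" is the right one, since both conditions only force covers to exist and are stable under enlarging the topology.
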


Moreover, as this topology refines the topology $ J_V$, there is a corresponding inclusion of topos
\[ \Sh(\int V, J_{V, J}) \hookrightarrow \Sh(\int V, J_V) \]
exhibiting objects of $\Sh(\int V, J_{V, J})$ as a specific kind of sections, because the right topos was the topos of sections. The intuition is that some notion of continuity relative to the base topology is involved. We introduce the following notion to this end:

\begin{definition}
Let $ \mathcal{E}_{(-)}$ be a fibered topos on a lex site $ (\mathcal{C},J)$. Then by a \emph{continuous section of }\index{section!continuous} $\mathcal{E}_{(-)}$ we mean a section of the associated direct fibration 
\[\begin{tikzcd}
	& {\int \mathcal{E}_{(-)}'} \\
	{\mathcal{C}^{\op}} && {\mathcal{C}^{\op}}
	\arrow["{X}", from=2-1, to=1-2]
	\arrow["{p_{\mathcal{E}}'}", from=1-2, to=2-3]
	\arrow[Rightarrow, from=2-1, to=2-3, no head]
\end{tikzcd}\]
such that for any $J$-covering family $ (s_i :c_i \rightarrow c)_{i \in I}$ the lifting $ (X_{s_i} : X_{c} \rightarrow {f_{s_i}}_*X_{c_i} )_{i \in I}$ exhibits $X_c$ a limit in the fiber $\mathcal{E}_c$ of the diagram 
\[ X_c = \underset{i \in I}{\lim} \, \Big{(} \prod_{i \in I} {f_{s_i}}_*X_{c_i} \rightrightarrows \prod_{i,j \in I} {f_{s_i}}_*{f_{s^i_{ij}}}_*X_{c_{ij}} \Big{)}   \]
where the double arrow is induced from the transitions $({f_{s_i}}_*(X_{s_{ij}^i}) : {f_{s_i}}_*X_{c_i} \rightarrow {f_{s_i}}_*{f_{s^i_{ij}}}_*X_{c_{ij}} )_{i, j \in I}$ over the nerve of the cover $(s_i : c_i \rightarrow c)_{i \in I}$.
We denote as $\Gamma_J(\overline{p_E}')$ the category of continuous sections\index{section!category of continuous} and natural transformations between them.
\end{definition}

\begin{theorem}\label{Topos of continuous sections}
Let $ (\mathcal{C},J)$ be a small lex site, $ V : \mathcal{C}^{\op} \rightarrow \Cat$ a fibered lex site on $\mathcal{C}$ with $ \overline{p_V} : \int \Sh(V_{(-)}, J_{(-)}) \rightarrow \mathcal{C}$ the associated fibered topos and $ \overline{p_V}' : \int \Sh(V_{(-)}, J_{(-)})' \rightarrow \mathcal{C}^{\op}$ the corresponding direct fibration. Then one has an equivalence of categories
\[ \Sh(\int V, J_{V,J}) = \Gamma_J(\overline{p_V}') \]
\end{theorem}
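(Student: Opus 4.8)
The plan is to obtain the equivalence by restricting the already–established identification $\Sh(\int V, J_V) \simeq \Gamma(\overline{p_V}')$ along the subtopos inclusion determined by the finer topology $J_{V,J}$. First I would use that, since $J_{V,J} = \langle J_V \cup 1_{V_{(-)}}(J)\rangle$ refines $J_V$, we have the geometric inclusion $\Sh(\int V, J_{V,J}) \hookrightarrow \Sh(\int V, J_V)$ noted just above, so $\Sh(\int V, J_{V,J})$ is the full subcategory of those $J_V$-sheaves that are also $J_{V,J}$-sheaves. Under the previous Proposition a $J_V$-sheaf $F$ corresponds to the section $X$ with $X_c = \iota_c^* F \in \Sh(V_c, J_c)$ and, for $s : c_1 \to c_2$, transition $X_s : X_{c_2} \to {f_s}_* X_{c_1}$ read off by restricting $F$ along the horizontal arrows $(s, 1_{V_s(a)}) : (c_1, V_s(a)) \to (c_2, a)$, using that $({f_s}_* Y)(a) = Y(V_s(a))$ — which follows from $\Sh(V_s)^* \dashv \Sh(V_s)_*$ together with $\Sh(V_s)^* y^+(a) \simeq y^+(V_s(a))$. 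Since $\Gamma_J(\overline{p_V}')$ is by definition the full subcategory of continuous sections, it then suffices to show that $F$ is a $J_{V,J}$-sheaf if and only if the associated $X$ is continuous.

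Next I would reduce the $J_{V,J}$-sheaf condition to a condition on horizontal covers alone. Invoking the standard fact that, for a fixed presheaf $F$, the sieves for which $F$ satisfies the matching-family condition stably under pullback form a Grothendieck topology, $F$ is a $J_{V,J}$-sheaf iff it is a sheaf for the two generating classes: the fibrewise covers (which together amount to $F$ being a $J_V$-sheaf) and the horizontal families $(\overline{s_i} : (c_i, 1_{V_{c_i}}) \to (c, 1_{V_c}))_{i}$ lifted from $J$-covers. The two Lemmas preceding the theorem — the pullback square of cartesian lifts, and the fact that horizontal families over arbitrary objects already lie in $J_{V,J}$ — show that pullbacks of the generating horizontal covers are again horizontal covers, so the remaining condition becomes exactly: for every $J$-cover $(s_i : c_i \to c)_i$ and every $a \in V_c$, $F$ satisfies the sheaf condition for $((s_i, 1_{V_{s_i}(a)}) : (c_i, V_{s_i}(a)) \to (c, a))_i$. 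Computing the relevant fibre products in $\int V$ (which is lex by the earlier Lemma) gives $(c_i, V_{s_i}(a)) \times_{(c,a)} (c_j, V_{s_j}(a)) = (c_i \times_c c_j, V_{!}(a))$ with $! : c_i \times_c c_j \to c$ the canonical map, so the condition reads $F(c,a) = \mathrm{eq}\bigl(\prod_i F(c_i, V_{s_i}(a)) \rightrightarrows \prod_{i,j} F(c_i \times_c c_j, V_!(a))\bigr)$.

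Then I would translate this pointwise into the fibre $\Sh(V_c, J_c)$. Using $X_{c'} = \iota_{c'}^* F$, the formula $({f_u}_* Y)(a) = Y(V_u(a))$, and the composition law $f_{uv} \simeq f_u f_v$ for the transition geometric morphisms, one identifies $F(c_i, V_{s_i}(a)) = ({f_{s_i}}_* X_{c_i})(a)$ and, with $s^i_{ij} : c_i \times_c c_j \to c_i$ the projection, $F(c_i \times_c c_j, V_!(a)) = ({f_{s_i}}_* {f_{s^i_{ij}}}_* X_{c_i\times_c c_j})(a)$, and checks that the parallel pair of the equalizer goes over to the pair induced by the transition maps $X_{s_i}$, $X_{s^i_{ij}}$ of the section — exactly the diagram in the definition of a continuous section. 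Since limits of sheaves are computed as in presheaves, the displayed equalizer holding for all $a \in V_c$ is the same as $X_c \simeq \lim\bigl(\prod_i {f_{s_i}}_* X_{c_i} \rightrightarrows \prod_{i,j} {f_{s_i}}_* {f_{s^i_{ij}}}_* X_{c_i\times_c c_j}\bigr)$ in $\Sh(V_c, J_c)$ via the $X_{s_i}$, and requiring this for every cover of every $c$ is precisely continuity of $X$. The equivalence $\Sh(\int V, J_{V,J}) \simeq \Gamma_J(\overline{p_V}')$ then follows by restricting $\Sh(\int V, J_V) \simeq \Gamma(\overline{p_V}')$ to these two full subcategories.

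The main obstacle I anticipate is the bookkeeping in this last translation: correctly matching the \v{C}ech-type parallel pair on $\int V$ with the pair induced by the section's transition maps requires care with the direction of the geometric morphisms $\Sh(V_s)$ and with the interplay between the adjunctions $\Sh(V_s)^* \dashv \Sh(V_s)_*$ and the composition $f_{uv}\simeq f_u f_v$, and this is where a direction error is easiest to make. A secondary, more routine point is checking that pullbacks of the horizontal generating covers are horizontal and that fibrewise covers are likewise stable — but this is essentially delivered by the two Lemmas immediately preceding the theorem, so I would just cite them.
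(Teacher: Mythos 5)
Your proposal is correct and takes essentially the same route as the paper: both identify $\Sh(\int V, J_{V,J})$-objects with sections of the direct fibration and match the extra horizontal sheaf condition with continuity of the section via the pointwise computation of limits in the fibre sheaf topoi $\Sh(V_c,J_c)$, using $(\Sh(V_s)_*Y)(a)=Y(V_s(a))$. The only packaging difference is that you restrict the previously established equivalence $\Sh(\int V, J_V)\simeq\Gamma(\overline{p_V}')$ and reduce the generated topology $J_{V,J}$ to horizontal covers over arbitrary objects by the pullback-stability lemmas, whereas the paper re-derives the sheaf--section correspondence inline (via the oplax colimit universal property) and checks the horizontal condition on the generating covers over the terminal objects $1_{V_c}$, which amounts to the same verification.
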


\begin{proof}
For one direction, observe that any sheaf $X$ in $\Sh(\int V, J_{V,J})$ can be composed with the duals of the fiber inclusions 
\[\begin{tikzcd}
	{V_{c_1}^{\op}} \\
	&& {\int V^{\op}} & {\mathcal{S}} \\
	{V_{c_2}^{\op}}
	\arrow["{\iota_{c_1}^{\op}}", ""{name=0, swap}, from=1-1, to=2-3]
	\arrow["{\iota_{c_2}^{\op}}"', ""{name=1}, from=3-1, to=2-3]
	\arrow["{V_s^{\op}}", from=3-1, to=1-1]
	\arrow["{X}", from=2-3, to=2-4]
	\arrow[Rightarrow, "{\phi_s^{\op}}", from=1, to=0]
\end{tikzcd}\]
and as the topology of each fiber is part of the topology $ J_{V,J}$ the restriction $ X \iota_{c}^{\op}$ is a sheaf for the topology $ J_{c}$, hence is an object of the sheaf topos $ \Sh(V_{c},J_c)$; moreover, for each $ s : c_1 \rightarrow c_2$, the diagram below provides us by whiskering with a transformation 
\[ X* \phi_s^{\op} : X \iota_{c_2}^{\op} \rightarrow X \iota_{c_1}^{\op} V_s^{\op} \]
But precomposition with $V_s$ is what the direct image $ \Sh(V_s)_*$ consists in. So we can associate to $X$ the section $ \widehat{ X} : \mathcal{C}^{\op} \rightarrow \int \mathcal{E}$ with $ \widehat{X}_{c} = X \iota^{\op}_c = \Sh(\iota_c)_*X$ and $ \widehat{X}_s = X *\phi_s $. Now we have to check that $ X$ is a continuous section. But we know that $ X$ is a sheaf for the horizontal topology, and also relative to any horizontal cover $ ((s_i, 1_{V_{s_i}(a)}) : (c_i, V_{s_i}(a)) \rightarrow (c, a))_{i \in I}$, which hence is sent on a limit diagram 
\[ \widehat{X}_c(a) = \underset{i \in I}{\lim} \, \Big{(} \prod_{i \in I} \widehat{X}_{c_i}(V_{s_i}(a)) \rightrightarrows \prod_{i,j \in I} \widehat{X}_{c_{ij}}( V_{s_{ij}^i}V_{s_i}(a) ) \Big{)}  \]
where \[V_{s_{ij}^i}V_{s_i}(a) = V_{s_{ij}^j}V_{s_j}(a) \] for each $ i,j \in I$: but from the fact that limits are pointwise in categories of sheaves, this is sufficient to ensure that $ \widehat{X}$ is a continuous section. \\

For the converse, suppose we have a continuous section $ \widehat{X} : \mathcal{C}^{\op} \rightarrow \int \mathcal{E}_{(-)}' $. Then each $X_c$ is a sheaf on $ (V_c, J_c)$; by the property of the oplaxcolimit, the data of the cocone 
\[\begin{tikzcd}
	{V_{c_1}} \\
	&& {\mathcal{S}^{\op}} \\
	{V_{c_2}}
	\arrow["{V_s}", from=3-1, to=1-1]
	\arrow["{X_{c_1}^{\op}}", ""{name=0, swap}, from=1-1, to=2-3]
	\arrow["{X_{c_2}^{\op}}"', ""{name=1}, from=3-1, to=2-3]
	\arrow[Rightarrow, "{X^{\op}_s}"', from=0, to=1]
\end{tikzcd}\]
induces a unique functor $X : \int V^{\op} \rightarrow \mathcal{S} $ whose restriction at each fiber coincides with $ X_c$ and whose whiskering with some $\phi_s^{\op}$ is $ X_s$. Hence $ X$ is a sheaf for the topology $ \langle \bigcup_{c \in \mathcal{C}} J_c\rangle$, with values $ X(c,a) = X_c(a)$ for $ a \in V_c$, and $ X(s,u)$ is obtained as the composite 
\[\begin{tikzcd}
	{X_{c_2}(a_2)} \\
	{X_{c_1}(V_s(a_2))} & {X_{c_1}(a_1)}
	\arrow["{X(s,u)}", from=1-1, to=2-2]
	\arrow["{(X_s)_{a_2}}"', from=1-1, to=2-1]
	\arrow["{X_{c_1}(u)}"', from=2-1, to=2-2]
\end{tikzcd}\]
where $ (X_s)_{a_2}$ is the component at $ a_2$ of the natural transformation $ X_{c_2} \rightarrow \Sh(V_s)_*X_{c_1}$ and $ X_{c_1}(u)$ is the restriction map of $ X_{c_1}$ at $ u : a_1 \rightarrow V_s(a_2)$. But now, being continuous as a section, for any $J$-cover $ (s_i : c_i \rightarrow c)_{i \in I}$ the transitions $ (X_{s_i} : X_c \rightarrow \Sh(V_s)_*X_{c_i})_{i \in I}$ form a limit diagram in $ \Sh(V_c, J_c)$, and as limits are pointwise in categories of sheaves, this means in particular that in $ 1_{V_c}$, the induced horizontal cover $ ((s_i, \overline{s_i}) : (c_i, 1_{V_{c_i}}) \rightarrow ( c, 1_{V_c})$ where $ 1_{V_{c_i}} = V_{s_i}(1_{V_c}))_{i \in I}$ is sent to a limit diagram
\[ X(c,1_{V_c}) = \underset{i \in I}{\lim} \, \Big{(} \prod_{i \in I} X(c_i, 1_{V_{c_i}}) \rightrightarrows \prod_{i,j \in I} X(c_{ij}, 1_{V_{c_{ij}}} ) \Big{)}  \]
which exactly means that $ X$ is a sheaf for the topology $ 1_{V_{(-)}}(J)$. \\

Now it is clear that those two processes are mutually inverse from the fact that $\int V$ is the oplaxcolimit, $X$ and $ \widehat{X} $ are mutually determined as functors. 
\end{proof}

\section{Spectral site of a modelled topos}

In this section we describe the spectral site of an arbitrary $ \mathbb{T}$-modelled topos for a fixed geometry $ (\mathbb{T}, \mathcal{V}, J)$. While this definition of the spectral site was given in \cite{Coste}, we think it is worth giving new details about the geometric and fibrational aspects involved in this construction. As we shall see, the spectral site of a sheaf of $\mathbb{T}[\mathcal{S}]$-models will be constructed as a fibered site made of the spectral site of its values at basic opens of the topos it lives in. Then from the results of the previous section, the spectrum will be exhibited as a topos of global sections of the direct fibration of the fibered topos constituted of the spectra of the local values. We shall also describe the structure sheaf in this context. \\

Previously we proved that, in the case of set-valued models, the functor returning the topos of sheaves over the spectral site together with its structure sheaf was left adjoint to a certain global section functor. In this section, we shall see that the 2-functor associating to an arbitrary modelled topos the topos constructed from the spectral site  together with its structure sheaf also defines a left adjoint, this time to the inclusion of locally modelled topoi. Equivalently, we prove that the spectral topos classifies local units modulo inverse images. 

\subsection{The spectral site as a fibered site}

\begin{division}\label{The spectral site defines an indexed category }
For a Grothendieck topos $ \mathcal{F}$ with a lex site of presentation $ \Sh(\mathcal{C}_\mathcal{F}, J_\mathcal{F})$ and $ F$ in $ \mathbb{T}[\mathcal{F}]$, each local value $ F(c)$ at basic open $c$ of $ \mathcal{C}_\mathcal{F}$ is a set valued $ \mathbb{T}$-model, while for a morphism $s : c_1 \rightarrow c_2$ we have a morphism of $\mathbb{T}$-models $ F(s) : F(c_2) \rightarrow F(c_1)$. This defines a pseudofunctor
\[ \begin{array}{rcl}
    \mathcal{C}^{\op} & \stackrel{\mathcal{V}_{F(-)}^{\op}}{\rightarrow} & \Lex  \\
    c  & \mapsto & \mathcal{V}_{F(c)}^{\op}\\
    c_1 \stackrel{s}{\rightarrow} c_2 & \mapsto & \mathcal{V}_{F(c_2)}^{\op} \stackrel{(s_*)^{\op}}{\rightarrow} \mathcal{V}_{F(c_1)}^{\op}
\end{array} \]
At an arrow $ s : c_1 \rightarrow c_2$ in $\mathcal{C}_\mathcal{F}$, one has a pushout functor 
\[ \mathcal{V}_{F(c_2)} \stackrel{F(s)_*}{\rightarrow} \mathcal{V}
_{F(c_1)}  \]
relating the fiber inclusions through a natural transformation
\[\begin{tikzcd}
	{\mathcal{V}^{\op}_{F(c_2)}} \\
	&& {\mathcal{V}^{\op}_F } \\
	{\mathcal{V}^{\op}_{F(c_1)}}
	\arrow[""{name=0, pos=0.375, inner sep=0}, "{\iota_{c_2}^{\op}}", from=1-1, to=2-3]
	\arrow[""{name=1, pos=0.4, inner sep=0}, "{\iota_{c_1}^{\op}}"', from=3-1, to=2-3]
	\arrow["{F(s)_*^{\op}}"', from=1-1, to=3-1]
	\arrow["{ \phi^s}", shorten <=4pt, shorten >=4pt, Rightarrow, from=1, to=0]
\end{tikzcd}\]
whose component $ \phi^s_n$ at an object $ n : F(c_2) \rightarrow C$ is given by the opposite of the pushout map $  (n_*F(s))^{\op}$ as below

\[ 
\begin{tikzcd}
F(c_2) \arrow[d, "n"'] \arrow[r, "F(s)"] \arrow[rd, "\ulcorner", phantom, near end] & F(c_1) \arrow[d, "F(s)_*n"] \\
C \arrow[r, "n_*F(s)"']                                                   & F(s)_*C                    
\end{tikzcd} \]
The data of the fibers inclusions $ \iota_c : \mathcal{V}^{\op}_{F(c)} \hookrightarrow \mathcal{V}^{\op}_F$ together with those transformations define an oplax cocone exhibiting $ \mathcal{V}^{\op}_F$ as the oplax colimit of the indexed category $ \mathcal{V}^{\op}_{F(-)} : \mathcal{C}_{\mathcal{F}}^{\op} \rightarrow \Cat$. Beware that the cocone made of the etale generator themselves $ \mathcal{V}_{F(-)} : \mathcal{C}^{\op}_{\mathcal{F}} \rightarrow \Cat$ is a \emph{lax cocone}. \end{division}

\begin{remark}
Beware here that the indexed site has as value $\mathcal{V}_{F(c)}^{\op}$ and not $ \mathcal{V}_{F(c)}$ itself. The applications of the previous section will hence require carefull substitution of $V$ with $ \mathcal{V}^{\op}$...
\end{remark}

\begin{definition}
Define the category $ \mathcal{V}_F$ as the following lax colimit in $\Lex$
\[ \mathcal{V}_F = \underset{c \in \mathcal{C}_\mathcal{F}^{\op}}{\laxcolim} \; \mathcal{V}_{F(c)} \]
\end{definition}

\begin{remark}
Observe we have then an oplax colimit
\[ \mathcal{V}_F^{\op} = \underset{c \in \mathcal{C}_\mathcal{F}^{\op}}{\oplaxcolim} \; \mathcal{V}_{F(c)}^{\op} \]
which will be the underlying category of the spectral site. \\

More explicitely, $\mathcal{V}_F$ has as objects the pairs
$ (c,n)$ with $ c \in \mathcal{C}_\mathcal{F}$ and $n$ a morphism in $ \mathcal{V}_{F(c)}$, that is some $ n : F(c) \rightarrow \cod(n) $ gotten as a pushout of map in $ \mathcal{V}$. As morphisms, it has the pairs $ (s,h) : (c_1,n_1) \rightarrow (c_2,n_2)$ with $ s : c_2 \in c_1$ and $ f : \cod(n_1) \rightarrow \cod(n_2)$ related by a commutative square 
\[\begin{tikzcd}
	{F(c_1) } & {F(c_2) } \\
	{\cod(n_1)} & {\cod(n_2)}
	\arrow["{l_1}"', from=1-1, to=2-1]
	\arrow["{F(s)}", from=1-1, to=1-2]
	\arrow["{n_2}", from=1-2, to=2-2]
	\arrow["f"', from=2-1, to=2-2]
\end{tikzcd}\]
\end{remark}

Now recall that each $ \mathcal{V}_{F(c)}$ is part of the spectral site $ (\mathcal{V}_{F(c)}^{\op}, J_{F(c)})$ of $F(c)$. Moreover we have the following:

\begin{proposition}
For each $ s : c_1 \rightarrow c_2$ we have a morphism of lex sites 
\[\begin{tikzcd}
	{(\mathcal{V}_{F(c_2)}^{\op}, J_{F(c_2)})} & {(\mathcal{V}_{F(c_1)}^{\op}, J_{F(c_1)})}
	\arrow["{F(s)_*}", from=1-1, to=1-2]
\end{tikzcd}\]
\end{proposition}
\begin{proof}
For a family in $ \mathcal{V}_{F(c')}$ obtained as a pushout along some $a$ of a covering family $ (n_i : K \rightarrow K_i)$ of finite presentation under $ \cod(n)$, then composition of pushouts ensures that this family is transferred into a covering family of $ F(s)_*n$ in $\mathcal{V}_{F(c_2)}$ as visualised below
\[\begin{tikzcd}
	& {F(c_1) } && {F(c_2) } \\
	K & {\cod(n)} && {F(s)_*\cod(n)} \\
	& {K_i} & {\cod(n_i)} && {F(s)_*\cod(l_i)}
	\arrow[""{name=0, anchor=center, inner sep=0}, "{F(s)}", from=1-2, to=1-4]
	\arrow["{F(s)_*n_i}", from=1-4, to=3-5]
	\arrow["{l_{i*}F(s)}"', from=3-3, to=3-5]
	\arrow["n"', from=1-2, to=2-2]
	\arrow["{a_*m_i}"', from=2-2, to=3-3]
	\arrow["{l_{*}F(s)}"{description}, from=2-2, to=2-4]
	\arrow["{F(s)_*n}"', from=1-4, to=2-4]
	\arrow[from=2-4, to=3-5]
	\arrow[""{name=1, anchor=center, inner sep=0}, "a", from=2-1, to=2-2]
	\arrow[from=3-2, to=3-3]
	\arrow["{m_i}"', from=2-1, to=3-2]
	\arrow["{n_i}"{description, pos=0.3}, from=1-2, to=3-3, crossing over]
	\arrow["\lrcorner"{anchor=center, pos=0.125, rotate=180}, draw=none, from=3-5, to=1-2]
	\arrow["\lrcorner"{anchor=center, pos=0.125, rotate=180}, draw=none, from=2-4, to=0]
	\arrow["\lrcorner"{anchor=center, pos=0.125, rotate=180}, draw=none, from=3-3, to=1]
\end{tikzcd}\]
\end{proof}

\begin{division}
From \cref{Oplax colimit is lex} we know that the oplax colimit $ \mathcal{V}^{\op}_{F(-)} $ inherits finite limits of the fibers $ \mathcal{V}_{F(c)}^{\op}$ and the basis $ \mathcal{C}$: for a finite diagram $((c_i, n_i))_{i \in I} $ one first computes the finite limit $ (p_i : \lim_{i \in I} c_i \rightarrow c_i)_{i \in I}$ in $\mathcal{C}_\mathcal{F}$, which is sent to a cocone $((F(p_i) : F(c_i) \rightarrow F(\lim_{i \in I} c_i))_{i \in I} $. Then pushing the $ n_i$ along the transition functor $ F(p_i) $ defines a finite diagram $ (F(p_i)_*n_i)_{i \in I}$ in $ F(\lim_{i \in I})$, and since $ \mathcal{V}_{F(\lim_{i \in I})}$ has finite colimits, we have
\[  \underset{i \in I}{\lim} \, (c_i, n_i) \simeq ( \underset{i \in I}{\lim} \, c_i, \underset{i \in I}{\colim} \, F(p_i)_*n_i) \]
In particular, pullbacks are computed as follows:
\[\begin{tikzcd}
	{(c_1 \times_c c_2, F(p_1)_*F(u_1)_*n_1 +_{F(p)_*n} F(p_2)_*F(u_2)_*n_2)} & {(c_2,n_2)} \\
	{(c_1,n_1)} & {(c,n)}
	\arrow["{(u_2,f_2)}", from=1-2, to=2-2]
	\arrow[""{name=0, anchor=center, inner sep=0}, "{(u_1,f_1)}"', from=2-1, to=2-2]
	\arrow[from=1-1, to=2-1]
	\arrow[from=1-1, to=1-2]
	\arrow["\lrcorner"{anchor=center, pos=0.125}, draw=none, from=1-1, to=0]
\end{tikzcd}\]
where $F(p_1)_*F(u_1)_*n_1 +_{F(p)_*n} F(p_2)_*F(u_2)_*n_2)$ is the pushout in $ \mathcal{V}_{F(c_1 \times_c c_2)}$ of the span 
\[\begin{tikzcd}
	{F(p)_*n} & {F(p_2)_*F(u_2)_*n_2} \\
	{F(p_1)_*F(u_1)_*n_1}
	\arrow["{F(p)_*\langle n_1, f_1\rangle}"', from=1-1, to=2-1]
	\arrow["{F(p)_*\langle n_2, f_2\rangle}", from=1-1, to=1-2]
\end{tikzcd}\]
\end{division}

\subsection{The spectrum from the fibered spectral site}

\begin{division}
Observe that the fibration $ p_F : \mathcal{V}^{\op}_F \rightarrow \mathcal{C}_{\mathcal{F}}$ has a left adjoint 
\[\begin{tikzcd}
	{\mathcal{C}_\mathcal{F} } & {\mathcal{V}_F^{\op}}
	\arrow["{\iota_{F}}", from=1-1, to=1-2, hook]
\end{tikzcd}\]
sending an object $ c $ to the pair $ (c, 1_{F(c)})$ with the identity arrow $ 1_{F(c)} : F(c) \rightarrow F(c)$, which is etale and finitely presented (and is the terminal object of $ \mathcal{V}^{\op}_{F(c)}$). 
\end{division}

\begin{division}
Applying what was done in the previous section, if we fix $ J_\mathcal{F}$ as a subcanonical pretopology such that $ \mathcal{F} \simeq \Sh(\mathcal{C}_\mathcal{F}, J_\mathcal{F})$, we can equip the oplax colimit $ \mathcal{V}_F$ with the pretopology induced jointly from the spectral pretopologies of the local values and the underlying pretopology, that is,
\[  J_F = \langle \iota_{{F}}(J_\mathcal{F}) \cup \bigcup_{c \in \mathcal{C}_\mathcal{F}} \iota_c(J_{F(c)}) \rangle   \]
In other words this pretopology is the pretopology jointly generated from \emph{horizontal} families of the form
    \[ ((c,1_c) \stackrel{(s_i,F(s_i))}{\longrightarrow} (c_i,1_{c_i}))_{i \in I} \; \textrm{ with } (c \stackrel{s_i}{\rightarrow} c_i)_{i \in I} \in J^{\op}_\mathcal{F}(c)  \]
and \emph{vertical families} of the form
    \[ (( c, n) \stackrel{(1_c,m_i)}{\longrightarrow} (c, n_i))_{ i \in I} \textrm{ with } \Bigg{(} \begin{tikzcd}[row sep=small]
F(c) \arrow[]{r}{n} \arrow[]{rd}[swap]{n_i} & \cod(n) \arrow[]{d}{m_i} \\ & \cod(l_i)  
\end{tikzcd} \Bigg{)}_{ i \in I} \in J^{\op}_{F(c)}(n) \]
\end{division}

\begin{remark}
In general, it can be quite difficult to give an explicit description of a topology generated this way; however the pretopology generated is more easy to handle. In this context we can make the following remarks. Recall that in a fibered category, any map factorizes uniquely as a vertical arrow followed by an horizontal, cartesian arrow: here this corresponds to
\[\begin{tikzcd}
	{(d,m)} && {(c,n)} \\
	& {(d,F(s)_*n)}
	\arrow["{(s,f)}", from=1-1, to=1-3]
	\arrow["{(1_d, \langle f,m\rangle)}"', from=1-1, to=2-2]
	\arrow["{(s,1_{F(s)_*n})}"', from=2-2, to=1-3]
\end{tikzcd}\]
where $ \langle f,m \rangle $ is the map induced from the pushout property.\\

By the pullback axiom of pretopology, observe that if $ (s_i : c_i \rightarrow c)_{i \in I}$ is covering in $J_\mathcal{F}$ and $ n$ is in $\mathcal{V}_{F(c)}$ then the family of cartesian lifts
\[\begin{tikzcd}[sep=large]
	{(c_i, F(s_i)_*n)} & {(c,n)}
	\arrow["{(s_i, 1_{F(s_i)_*n})}", from=1-1, to=1-2]
\end{tikzcd}\]
is a cover for $ J_F$ as being the pullback of the family $ \iota_F((s_i)_{i \in I} $ along the morphism $ (c, n) \rightarrow (c,1_{F(c)})$ by \cref{lifts as pullback}. \\

For $ s : d \rightarrow c$ in $ \mathcal{C}_\mathcal{F}$ and $ n $ in $\mathcal{V}_{F(c)}$, then the identity arrow $ 1_{F(s)_*n}$ is a $ J_{F(d)}$ cover of $ F(s)_*n$ as it is induced by pushing out the identity map of $K$ for any $ a : K \rightarrow F(c)$. Then a cartesian morphism is a covering singleton as soon as $s$ itself is a covering singleton in $J_\mathcal{F}$. \\

Now, for any $ (s_i :c_i \rightarrow c)_{i \in I}$ in $J_{\mathcal{F}}$ and $ (m_j : n \rightarrow n_j)_{j \in I'}$ a cover in $ J_{F(c)}$, then for each $i \in I$ the pushout family $ (F(s_i)_*m_j : F(s_i)_*n \rightarrow F(s_i)_*n_j)_{j \in I'}$ is a cover in $ J_{F(c_i)}$.\\

But now, by transitivity axiom, we have a particular kind of covering families in $J_F$ constructed as follows. Take a covering family $ (s_i : c_i \rightarrow c)_{i \in I}$ in $J_\mathcal{F}(c)$, an object $ n $ in $\mathcal{V}_{F(c)}$, and for each $i \in I$ a covering family $ (m_{ij} : F(s_i)_*n \rightarrow n_{ij})_{i \in I_i}$ in $J_{F(c_i)}$: then the induced family 
\[ ( 
\begin{tikzcd}
	{(c_i, n_{ij})} && {(c,n)}
	\arrow["{(s_i, m_{ij}n_*F(s))}", from=1-1, to=1-3]
\end{tikzcd}  )_{i \in I, j \in I_i}\] 
must be covering in $J_F$. \\

However beware that it may happen that there will be families $ (s_i, d_i)_{i \in I}$ in $ J_F$ whose underlying family $ (s_i)_{i \in I}$ is not in $J_\mathcal{F}$. Otherwise the projection $ p_F$ would be a morphism of site, which is not true in general. 
\end{remark}

\begin{definition}
Define the \emph{spectral site}\index{spectral!site} of the $\mathbb{T}$-modelled topos $(\mathcal{F},F)$ as the site $ (\mathcal{V}_F^{\op}, J_F)$. Then define the \emph{(Coste) spectrum}\index{(Coste) spectrum} as 
\[ \Spec (\mathcal{F}, F) = \Sh(\mathcal{V}_F^{\op}, J_F) \]
\end{definition}

In particular, this construction generalizes the construction of the spectrum of a set valued $\mathbb{T}$-model, as those ones just are the modelled topos of the form $ ( *, B)$ with $ B$ seen as the constant sheaf of $\mathbb{T}$-models on the point.

\begin{division}\label{oplax cone of topos}
It is worth understanding how the spectrum $ \Spec(F)$ and its structure sheaf $ \widetilde{F}$ compare to the spectra $ \Spec(F(c))$ and structure sheaves $ \widetilde{F(c)}$ of the local values. Recall that the oplax colimit $ \mathcal{V}_F^{\op} \simeq \oplaxcolim_{c \in \mathcal{C}_{\mathcal{F}}^{\op}} \mathcal{V}_{F(c)}^{\op}$ is canonically equipped with an induced pretopology 
\[J_{\mathcal{V}^{\op}_{F(-)}} = \langle
\bigcup_{c \in \mathcal{C}_\mathcal{F}} \iota_c(J_{F(c)}) \rangle \]
such that we have a geometric equivalence in the bicategory of Grothendieck topoi
\[  \Sh(\mathcal{V}_F, J_{\mathcal{V}^{\op}_{F(-)}}) \simeq \underset{c \in \mathcal{C}_\mathcal{F}}{\laxlim} \, \Spec(F(c)) \]
Now from the topology $ J_F$ is generated from $ J_{\mathcal{V}^{\op}_{F(-)}}$ together with $ \iota_F(J_\mathcal{F})$ we have a geometric inclusion
\[\begin{tikzcd}
	{\Spec(F)} & {\underset{c \in\mathcal{C}_\mathcal{F}}{\laxlim} \, \Spec(F(c))}
	\arrow["{\mathfrak{i}_F}", hook, from=1-1, to=1-2]
\end{tikzcd}\]
whose inverse image part is the sheafification functor $ \mathfrak{a}_{\iota_F(\mathcal{F})}$: intuitively, this functor ``corrects" sheaves for the generated pretopology $J_{\mathcal{V}^{\op}_{F(-)}}  $ into sheaves for the spectral pretopology $J_F$. \\

Precomposing $ \mathfrak{i}_F$ together with the limiting projections
\[\begin{tikzcd}
	{\underset{c \in \mathcal{C}_\mathcal{F}}{\laxlim} \, \Spec(F(c))} & { \Spec(F(c)) }
	\arrow["{p_c }", from=1-1, to=1-2]
\end{tikzcd}\]
produces a lax cone 
\[\begin{tikzcd}
	({\Spec(F)} & {\Spec(F(c))})_{c \in \mathcal{C}_\mathcal{F}}
	\arrow["{p_c \mathfrak{i}_F}", hook, from=1-1, to=1-2]
\end{tikzcd}\]
where $ p_c\mathfrak{i}_F = \Sh(\iota_c)$. But now recall that $ \Sh(F(s)_*) = \Spec(F(s)) $, seeing $ F(s) : F(c_2) \rightarrow F(c_1)$ as a morphism in $\mathbb{T}[\mathcal{S}]$: hence the transition morphisms of this cone are given as
\[\begin{tikzcd}
	& {\Spec(F(c_1))} \\
	{\Spec(F)} \\
	& {\Spec(F(c_2))}
	\arrow[""{name=0, anchor=center, inner sep=0}, "{\Sh(\iota_{c_1})}", from=2-1, to=1-2]
	\arrow["{\Spec(F(s))}", from=1-2, to=3-2]
	\arrow[""{name=1, anchor=center, inner sep=0}, "{\Sh(\iota_{c_2})}"', from=2-1, to=3-2]
	\arrow["{\phi_s}", shift right=2, shorten <=4pt, shorten >=4pt, Rightarrow, shift left=2, from=0, to=1]
\end{tikzcd}\]
\end{division}

\subsection{The spectrum as a topos of continuous sections and the canonical fibration}

\begin{division}
Now we turn to the relation between the spectrum and its base topos. We have a fibered lex site $ \mathcal{V}_{F(-)}^{\op} : \mathcal{C}^{\op} \rightarrow \Lex $. 
Now recall that each of the sites $ (\mathcal{V}^{\op}_{F(c)}, J_{F(c)})$ is the spectral site of $F(c)$, that is, $ \Spec(F(c)) = \Sh(\mathcal{V}^{\op}_{F(c)}, J_{F(c)})$. Then the associated fibered topos has the spectrum of the $F(c)$ 
as fiber, that is, we have a fibered topos
\[\begin{tikzcd}
	{\displaystyle\int \Spec(F(-))} & { \mathcal{C}}
	\arrow["{\overline{p_F}}", from=1-1, to=1-2]
\end{tikzcd}\]
whose transition morphisms are the inverse image $ \Spec(F(s))^* : \Spec(F(c_2)) \rightarrow \Spec(F(c_1))$ for $ s : c_1 \rightarrow c_2$. We can consider its associated direct fibration 
\[\begin{tikzcd}
	{\displaystyle\int \Spec(F(-))'} & { \mathcal{C}^{\op}}
	\arrow["{\overline{p_F}'}", from=1-1, to=1-2]
\end{tikzcd}\]
\end{division}

Recognize now in the definition of the topology $J_F$ that it is generated from the horizontal and vertical families as done in the previous section. Applying \cref{Topos of continuous sections} we have the following:

\begin{theorem}\label{The spectrum as the topos of continuous sections}
The spectrum of $F$ is the topos of continuous sections of the direct fibration associated to the fibered spectral topos:
\[  \Spec(F) \simeq \Gamma_J(\overline{p_F}') \]
\end{theorem}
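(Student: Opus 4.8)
The plan is to reduce the statement to a direct application of \cref{Topos of continuous sections}, since all the hard combinatorial work has already been done in that theorem and in the description of the spectral site as an oplax colimit. The key observation is that the four hypotheses of \cref{Topos of continuous sections} are exactly what has been assembled in this subsection: (i) $(\mathcal{C}_\mathcal{F}, J_\mathcal{F})$ is a small lex site, being a site of presentation of $\mathcal{F}$; (ii) $\mathcal{V}_{F(-)}^{\op} : \mathcal{C}_\mathcal{F}^{\op} \rightarrow \Lex$ is a fibered lex site, which is precisely the content of the preceding \emph{Proposition} (the transition functors $F(s)_*$ are lex, being pushout functors that preserve finite colimits, and they are cover preserving by the proposition just proved); (iii) the associated fibered topos $\overline{p_F} : \int \Spec(F(-)) \rightarrow \mathcal{C}_\mathcal{F}$ has fibers $\Sh(\mathcal{V}_{F(c)}^{\op}, J_{F(c)}) = \Spec(F(c))$ and transition morphisms $\Spec(F(s))$ induced by $F(s)_*$ via Diaconescu; and (iv) $\overline{p_F}'$ is the corresponding direct fibration.

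First I would verify carefully that the topology $J_F$ appearing in the definition of $\Spec(\mathcal{F}, F)$ coincides with the topology $J_{V,J}$ of the previous subsection, applied to the fibered site $V = \mathcal{V}_{F(-)}^{\op}$ and the base topology $J = J_\mathcal{F}$. Unravelling the definition of $J_F$, its generating families are of two kinds: the horizontal families $((c,1_c) \xrightarrow{(s_i, F(s_i))} (c_i, 1_{c_i}))_{i \in I}$ coming from $J_\mathcal{F}$-covers $(c \xrightarrow{s_i} c_i)_{i \in I}$, and the vertical families $((c,l) \xrightarrow{(1_c, h_i)} (c, l_i))_{i \in I}$ coming from $J_{F(c)}$-covers of $l$. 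These are exactly the families $1_{V_{(-)}}(J)$ and $\bigcup_c \iota_c(J_c)$ that generate $J_{V,J} = \langle J_V \cup 1_{V_{(-)}}(J)\rangle$, once one recalls that $\mathcal{V}_F^{\op} = \int \mathcal{V}_{F(-)}^{\op}$ as the oplax colimit, so that the fiber inclusions $\iota_c$ agree and $J_V = \langle \bigcup_c \iota_c(J_c)\rangle$. So this step amounts to a routine comparison of generating families, using the identification of $\mathcal{V}_F$ with the oplax colimit that was established just above.

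Once the identification $\Spec(\mathcal{F}, F) = \Sh(\int \mathcal{V}_{F(-)}^{\op}, J_{F}) = \Sh(\int V, J_{V,J})$ is in place, \cref{Topos of continuous sections} gives immediately $\Sh(\int V, J_{V,J}) \simeq \Gamma_J(\overline{p_V}')$, and since $\overline{p_V}' = \overline{p_F}'$ by construction of the fibered spectral topos, we conclude $\Spec(F) \simeq \Gamma_{J_\mathcal{F}}(\overline{p_F}')$ as desired. I would also remark that this equivalence is compatible with the structural sheaf: $\widetilde{F}$, defined as the sheafification of $(c,l) \mapsto B_{c,l}$, corresponds under this equivalence to the continuous section $c \mapsto \widetilde{F(c)}$ of the fibered topos, which follows from the fact that sheafification commutes with the restriction-to-fibers functors $\Sh(\iota_c)_*$ and that $\widetilde{F(c)}$ was itself defined fiberwise as $\mathfrak{a}_{J_{F(c)}}\cod$.

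The main obstacle I anticipate is not any single deep step but rather the bookkeeping in the comparison of topologies: one must check that $J_F$ and $J_{V,J}$ have the \emph{same} Grothendieck closure, i.e. that no generator of one is missing from the saturation of the other. The subtle direction is that a generic horizontal cover over an arbitrary object $(c, l)$ — not just over a terminal object $(c, 1_c)$ — lies in $J_F$; but this is exactly the content of the \emph{Lemma} in the previous subsection (cartesian lifts of a cover form a cartesian transformation, hence horizontal covers at any $a \in V_c$ are obtained by pullback from horizontal covers at $1_{V_c}$, and Grothendieck topologies are closed under pullback). Invoking that lemma disposes of the difficulty, so the proof is genuinely short modulo careful citation.
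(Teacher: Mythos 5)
Your proposal is correct and follows exactly the route the paper takes: the paper's entire argument is the observation that $J_F$ is generated by the horizontal and vertical families of the previous subsection, i.e.\ coincides with $J_{V,J_\mathcal{F}}$ for the fibered lex site $\mathcal{V}_{F(-)}^{\op}$, after which \cref{Topos of continuous sections} applies verbatim. Your extra care in matching the generating families via the oplax-colimit description of $\mathcal{V}_F^{\op}$ and in noting that horizontal covers over arbitrary $(c,l)$ come from the pullback lemma only makes explicit what the paper leaves implicit.
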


\begin{division}
We saw above that the fibration of site $ p_F$ is equipped here with a left adjoint $\iota_F$; moreover from the way $J_F$ was defined, we know trivially $ \iota_F$ to define a morphism of site. Conversely the projection $ p_F$, though not being a morphism of site, is a comorphism of site. Hence we are in a situation
\[ \begin{tikzcd}
                           (\mathcal{V}^{\op}_F, J_F) \arrow[rr, phantom, "\perp"] \arrow[rr, "p_F", bend left=20] && (\mathcal{C}_\mathcal{F}, J_\mathcal{F}) \arrow[ll, "\iota_F", bend left=20]
\end{tikzcd}   \]
with a comorphism of sites right adjoint to a morphism of sites. Hence from \cite{caramello2020denseness}[Proposition 3.14] or also \cite{maclane&moerdijk}[Theorem VII.10.5] they both induce a same  
geometric morphism 
\[\begin{tikzcd}
	{\Spec(F)} & {\mathcal{F}}
	\arrow["{h_{F}}", from=1-1, to=1-2]
\end{tikzcd}\] 
which moreover lifts to an adjoint functor between categories of $ \mathbb{T}$-models
\[\begin{tikzcd}
	{\mathbb{T}[\Spec(F)]} && {\mathbb{T}[\mathcal{F}]}
	\arrow[""{name=0, anchor=center, inner sep=0}, "{h_{F*}}"', curve={height=12pt}, from=1-1, to=1-3, start anchor=350, end anchor=200]
	\arrow[""{name=1, anchor=center, inner sep=0}, "{h^*_F}"', curve={height=12pt}, from=1-3, to=1-1, start anchor=160, end anchor=10]
	\arrow["\dashv"{anchor=center, rotate=-90}, draw=none, from=1, to=0]
\end{tikzcd}\]
with the inverse and direct part respectively given by
\[ h_{F}^* = \mathfrak{a}_{J_F}((-) \circ {p_F^{\op}}) = \lan_{\iota_F^{\op}} (-) \] 
\[h_{F*} = \mathfrak{a}_{J_\mathcal{F}}( \mathcal{C}_\mathcal{F}(p_F^{\op}, -)) = (-) \circ \iota_F^{\op} \] 
Beware that in the computation of the inverse image $ h_F^*$, precomposing a $J_\mathcal{F}$-sheaf with the comorphism $p_F^{\op}$ does not return a $J_F$ sheaf (since the vertical families have no reason to be taken into account by this process), hence the necessity of the further sheafification. Observe that we must op the functors $ \iota_F$ and $ p_F$ as the involved sheaves of $ \mathbb{T}$-objects have $ \mathcal{V}_F$ and $ \mathcal{C}^{\op}$ as domains. 
\end{division}

\subsection{The structure sheaf}

Again, the structure sheaf is obtained as the sheafification of the codomain functor: observe this latter is the functor
\[\begin{tikzcd}
	{\mathcal{V}_{F}} & {\mathbb{T}[\mathcal{S}]}
	\arrow["\cod_F", from=1-1, to=1-2]
\end{tikzcd}\]
defined as acting fiberwisely as the codomain functor, that is, sending $ (c,n)$ on $\cod_c(n)$, seeing $n$ as an object of $\mathcal{V}_{F(c)}$, and $(s,f) : (c_1,n_1) \rightarrow (c_2,n_2)$ as the underlying map $ f : \cod(n_1) \rightarrow \cod(n_2)$. In other word $\cod_F$ is the functor induced by the laxcolimit property of $\mathcal{V}_F$ from the laxcocone 
\[\begin{tikzcd}
	{\mathcal{V}_{F(c_2)}}  \\
	&& {\mathbb{T}[\mathcal{S}]}  \\
	{\mathcal{V}_{F(c_1)}} 
	\arrow["{F(s)_*}"', from=1-1, to=3-1]
	\arrow[""{name=0, anchor=center, inner sep=0}, "{\cod_{c_2}}", from=1-1, to=2-3]
	\arrow[""{name=1, anchor=center, inner sep=0}, "{\cod_{c_1}}"', from=3-1, to=2-3]
	\arrow["{{\phi_s}}"', shorten <=4pt, shorten >=4pt, Rightarrow, shift right=2, from=0, to=1]
\end{tikzcd}\]

\begin{definition}
The \emph{structure sheaf} of the modelled topos $ (\mathcal{F},F)$ is the sheaf
\[ \widetilde{F} = \mathfrak{a}_{J_F}\cod_F \]
obtained as the sheafification of the presheaf of $ \mathbb{T}$-objects defined on $ \mathcal{V}_F$ as the codomain functor 
\[ \overline{F} : (c,n) \mapsto \cod(n) \]
where we denote as $\gamma : \cod \rightarrow \mathfrak{a}_{J_F}\cod$ the unit of the sheafification at $\cod$.  
\end{definition}

\begin{remark}
Hence this structure sheaf is obtained as applying successively the sheafification of vertical, then horizontal families. 
\end{remark}

\begin{division}
Now at the level of the structure sheaves, observe that the data of all the $ (\widetilde{F(c)})_{c \in \mathcal{C}_\mathcal{F}} $ induce a lax cocone 
\[\begin{tikzcd}
	{(\mathcal{V}_{F(c)}} & {\mathbb{T}[\mathcal{S}])_{c \in \mathcal{C}_\mathcal{F}}}
	\arrow["{\widetilde{F(c)}}", from=1-1, to=1-2]
\end{tikzcd}\]
whose transition 2-cells are given by
\[\begin{tikzcd}
	{\mathcal{V}_{F(c_2)}}  \\
	&& {\mathbb{T}[\mathcal{S}]}  \\
	{\mathcal{V}_{F(c_1)}} 
	\arrow["{F(s)_*}"', from=1-1, to=3-1]
	\arrow[""{name=0, anchor=center, inner sep=0}, "{\widetilde{F(c_2)}}", from=1-1, to=2-3]
	\arrow[""{name=1, anchor=center, inner sep=0}, "{\widetilde{F(c_1)}}"', from=3-1, to=2-3]
	\arrow["{\overline{\phi_s}}"', shorten <=4pt, shorten >=4pt, Rightarrow, shift right=2, from=0, to=1]
\end{tikzcd}\]
where $ \overline{\phi_s}$ is the morphism of sheaves induced from the comparison functor obtained from the pushout maps $ \phi_s = (n_*F(s))_{n \in \mathcal{V}_{F(c_2)}}$ after sheafification 
\[\begin{tikzcd}
	{\cod_{c_2}} & {F_{s*}\cod_{c_1} \simeq \cod_{c_1}F_{s*}} \\
	{\widetilde{F(c_2)}} & {\Spec(F_s)_*\widetilde{F(c_2)}}
	\arrow["{\phi_s}", Rightarrow, from=1-1, to=1-2]
	\arrow["{\gamma_{c_2}}"', Rightarrow, from=1-1, to=2-1]
	\arrow["{\gamma_{c_1}}", Rightarrow, from=1-2, to=2-2]
	\arrow["{\overline{\phi_s}}"', Rightarrow, from=2-1, to=2-2]
\end{tikzcd}\]
Then by the universal property of the oplax colimit in $\Cat$ this induces uniquely a canonical functor
\[\begin{tikzcd}
	{\mathcal{V}_{F(c)}} \\
	{\mathcal{V}_F} && {\mathbb{T}[\mathcal{S}]}
	\arrow[""{name=0, anchor=center, inner sep=0}, "{\widetilde{F(c)}}", from=1-1, to=2-3]
	\arrow["{\iota_c}"', hook, from=1-1, to=2-1]
	\arrow["{\langle \widetilde{F(c)} \rangle_{c \in \mathcal{C}_\mathcal{F}}}"', dashed, from=2-1, to=2-3]
	\arrow["\simeq"{description}, Rightarrow, draw=none, from=0, to=2-1]
\end{tikzcd}\]
Or in other terms, we know that \[ {p_c}_*\langle \widetilde{F(c)} \rangle_{c \in \mathcal{C}_\mathcal{F}} \simeq \widetilde{F(c)}\]
Moreover, for each $ \widetilde{F}(c)$ is a $J_{F(c)}$ sheaf of $\mathbb{T}[\mathcal{S}]$-objects, the induced $\langle \widetilde{F(c)} \rangle_{c \in \mathcal{C}_\mathcal{F}} $ is a $J_{\mathcal{V}^{\op}_{F(-)}}$-sheaf. \\

Recalling that the lax limit topos is also the category of all sections of the direct fibration $ \Gamma(\overline{p_F}')$, this object can also be described as the functor 
\[\begin{tikzcd}[sep=large]
	{\mathcal{C}_\mathbb{T}} & {\underset{c \in \mathcal{C}_\mathcal{F}}{\laxlim} \, \Spec(F(c))}
	\arrow["{\langle \widetilde{F(c)} \rangle_{c \in \mathcal{C}_\mathcal{F}}}", from=1-1, to=1-2]
\end{tikzcd}\]
sending each $ \{ \overline{x}, \phi \}$ to the section of the direct fibration 
\[\begin{tikzcd}[sep=large]
	{\mathcal{C}_\mathcal{F}} & {\displaystyle{\int}\Spec(F(-))'}
	\arrow["{\widetilde{F(-)}(\{ \overline{x}, \phi \}) }", from=1-1, to=1-2]
\end{tikzcd}\] sending $c$ to $ \widetilde{F(c)} (\{ \overline{x}, \phi \}) $ in $\Spec(F(c))$. \\

Beware however that, at this step, this induced sheaf is not yet a sheaf for $ J_F$ as the $\iota_F(J_\mathcal{F})$-families are not considered in $J_{\mathcal{V}^{\op}_{F(-)}}$.  
\end{division}

\begin{lemma}\label{two step sheafification}
The structure sheaf is the sheafification for the horizontal topology of the induced presheaf
\[  \widetilde{F} \simeq \mathfrak{a}_{\iota_\mathcal{F}}(\langle \widetilde{F(c)} \rangle_{c \in \mathcal{C}_\mathcal{F}}) \]
\end{lemma}

\begin{proof}
Each $\widetilde{F(c)}$ is the sheafification of the codomain functor for the spectral topology at $c$; but the topology in the lax limit topos is exactly the topology jointly generated by the fiberwise spectral topology, while the transition functors are continuous: hence being a sheaf for this topology amounts to being locally a sheaf for the fiberwise topologies, and we have
\begin{align*}
    \langle \widetilde{F(c)} \rangle_{c \in \mathcal{C}_\mathcal{F}} &= \langle \mathfrak{a}_{J_{F(c)}} \cod_c \rangle_{c \in \mathcal{C}_\mathcal{F}} \\
    &\simeq \mathfrak{a}_{\underset{c \in \mathcal{C}_\mathcal{F}}{\bigcup}\iota_c(J_{F(c)})} \langle \cod_c \rangle_{c \in \mathcal{C}_\mathcal{F}}
\end{align*}
but then it suffice to apply the sheafification for the horizontal topology to get the structure sheaf. 
\end{proof}

\begin{division}
In particular, there is a canonical way to compare the global structure sheaf to the structure sheaves associated to local values. At any $ c$ the codomain functor over $ \mathcal{V}_{F(c)}$ and the restriction along $\iota_c$ of the codomain functor over $ \mathcal{V}_F$ coincide. Yet however, this is not sufficient for the structure sheaves to coincide after restriction. As a direct image, $ \Sh(\iota_c)_*$ does not commute with sheafification: hence $ \Sh(\iota_c)_*\widetilde{F} = \Sh(\iota_c)_* \mathfrak{a}_{J_F}\cod  $ needs not be the sheafification of $ \cod $ for $ J_{F(c)}$, though it is a sheaf for $ J_{F(c)}$ because $ \iota_c$ is a morphism of site. Hence by the universal property of the sheafification, we have a factorization in $\mathbb{T}[\Spec(F(c))]$
\[\begin{tikzcd}
	\cod & {{\iota_c}_*\cod} \\
	{\mathfrak{i}_{F(c)}\mathfrak{a}_{J_{F(c)}} \cod} & {{\iota_c}_*\mathfrak{i}_{F}\mathfrak{a}_{J_F} \cod}
	\arrow["{\gamma_{\cod}}"', from=1-1, to=2-1]
	\arrow["{{\iota_c}_*(\gamma_{\cod})}", from=1-2, to=2-2]
	\arrow[Rightarrow, no head, from=1-1, to=1-2]
	\arrow[dashed, from=2-1, to=2-2]
\end{tikzcd}\]
where $ {\iota_c}_*\mathfrak{i}_{F}= \mathfrak{i}_F \Sh(\iota_c)_* $ and $ \mathfrak{i}_{F(c)} $ is the geometric inclusion $ \Spec(F(c)) \hookrightarrow [\mathcal{V}_{F(c)}, \mathcal{S}]$. We denote this morphism as 
\[\begin{tikzcd}
	{\widetilde{F(c)}} & {\Sh(\iota_c)_*\widetilde{F}}
	\arrow["{\zeta_c^\sharp}", from=1-1, to=1-2]
\end{tikzcd}\]
and its mate through the $ \Sh(\iota_c)$ adjunction as
\[\begin{tikzcd}
	{\Sh(\iota_c)^*\widetilde{F(c)}} & {\widetilde{F}}
	\arrow["{\zeta_c^\flat}", from=1-1, to=1-2]
\end{tikzcd}\]
\end{division}

\begin{theorem}
The structure sheaf $ \widetilde{F}$ is a local object in $\mathbb{T}[\Spec(F)]$. 
\end{theorem}

\begin{proof}
In fact we are going to prove that the structure sheaf is already a local object in the lax limit topos even before sheafification, that is, we prove that $\langle \widetilde{F(c)} \rangle_{c \in \mathcal{C}_\mathcal{F}}$ is already a local object in $\laxlim_{c \in \mathcal{C}_\mathcal{F}} \Spec(F(c))$, 
We claim this functor to be a local object. For each $ (n_i : K_\phi \rightarrow K_{\phi_i})_{i \in I}$ in $J$, we must display an epimorphism in the category of sections
\[\begin{tikzcd}[sep=large]
	{\underset{i \in I}{\displaystyle{\coprod}} \langle \widetilde{F(c)} \rangle_{c \in \mathcal{C}_\mathcal{F}}(\{ \overline{x}_i, \phi_i \}) } && { \langle \widetilde{F(c)} \rangle_{c \in \mathcal{C}_\mathcal{F}}(\{ \overline{x}, \phi\}) }
	\arrow["{\langle \langle \widetilde{F(c)} \rangle_{c \in \mathcal{C}_\mathcal{F}}(\theta_{n_i}) \rangle_{i \in I} }", from=1-1, to=1-3]
\end{tikzcd}\]
As in the set-valued case, this is obtained by proving that this morphism comes from a local epimorphism in $\widehat{\mathcal{V}^{\op}_F}$ given by 
\[\begin{tikzcd}[sep=large]
	{\underset{i \in I}{\displaystyle{\coprod}} \mathbb{T}[\mathcal{S}][(K_{\phi_i}, \cod_F]) } && { \mathbb{T}[\mathcal{S}][(K_{\phi}, \cod_F]) }
	\arrow["{\langle \mathbb{T}[\mathcal{S}][(n_i, \cod_F]) \rangle_{i \in I} }", from=1-1, to=1-3]
\end{tikzcd}\]

For each $(c,n)$, we know that the restriction $\langle \mathbb{T}[\mathcal{S}][(n_i, \cod_c]) \rangle_{i \in I} $ at $c$ is a local epimorphism, as seen in \cref{structure sheaf is local for sets} since, for each $ a : K_\phi \rightarrow \cod(n)$, the cover induced as the pushout $ (a_*n_i)_{i \in I}$ produced a family of antecedents of $a$ ensuring local surjectivity: but then the family $ ((c, n) \rightarrow (c, a_*n_i n))_{i \in I}  $ is itself convenient in $ \widehat{V^{\op}_F} $, and being a family of for a vertical cover, this proves $\langle \mathbb{T}[\mathcal{S}][(n_i, \cod_c]) \rangle_{i \in I} $ to be a local epimorphism relative to the vertical topology $\underset{c \in \mathcal{C}_\mathcal{F}}{\bigcup}\iota_c(J_{F(c)})$. \\

Since $ \langle \widetilde{F(c)} \rangle_{c \in \mathcal{C}_\mathcal{F}}$ is the sheafification of $ \cod_F$ for the vertical topology, this proves the induced map $\langle \langle \widetilde{F(c)} \rangle_{c \in \mathcal{C}_\mathcal{F}}(\theta_{n_i}) \rangle_{i \in I}$ to be an epimorphism in the lax limit topos. But now, from \cref{two step sheafification}, for the structure sheaf is the sheafification of $\langle \widetilde{F(c)} \rangle_{c \in \mathcal{C}_\mathcal{F}}$ along the horizontal topology, and sheafification preserves epimorphisms, we deduce $ \widetilde{F}$ to be a local object in $\Spec(F)$  
\end{proof}

\begin{division}
Now we turn to the canonical morphisms of sheaves associated with the structure sheaf. At an object $ (c,n)$ of the spectral site $ \mathcal{V}^{\op}_F$ we are provided with a canonical arrow given by the composite of $n $ with the sheafification
\[\begin{tikzcd}
	{F(c)} && {(\mathfrak{a}_{J_F}\cod)(n)} \\
	& {\cod(n)}
	\arrow["n"', from=1-1, to=2-2]
	\arrow["{\gamma_{(c,n)}}"', from=2-2, to=1-3]
	\arrow["{(g_F)_{(c,n)}}", from=1-1, to=1-3]
\end{tikzcd}\]
This map can be shown to be natural in $(c,n)$, and having in mind that $ F(c) = F(p_F^{\op}(c,n))$, this defines a natural transformation {in the category of presheaves} $ [\mathcal{V}_F, \mathcal{S}]$:
\[\begin{tikzcd}
	F\circ p_F^{\op} && {\mathfrak{i}_{F*}(w\widetilde{F})}
	\arrow["{g_F}", from=1-1, to=1-3]
\end{tikzcd}\]
(where we also denote $ \mathfrak{i}_F : \Spec(F) \hookrightarrow [\mathcal{V}_F, \mathcal{S}]$), which in turns factorizes uniquely through the sheafification in $ \Spec(F)$
\[\begin{tikzcd}
	{h_{F}^*F} && {\widetilde{F}}
	\arrow["{\eta_{F}^\flat}", from=1-1, to=1-3]
\end{tikzcd}\]
Moreover, its mate 
\[\begin{tikzcd}
	F && {h_{F*}w\widetilde{F}}
	\arrow["{\eta_{F}^\sharp}", from=1-1, to=1-3]
\end{tikzcd}\]
along the adjunction $ h^*_F \dashv h_{F*}$ indexes the values of $ \eta^\flat_F $ at the top element of the fibers, as we have $h_{F*}w\widetilde{F}(c) = w\widetilde{F}(\iota_F(c)) =  w\widetilde{F}(c,1_{F(c)}) $, so that we have
\[ (\eta^\sharp_F)_c = (\eta^\flat_F)_{(c,1_{F(c)})} \]
\end{division}

\begin{proposition}\label{unit is a generic etale map}
The natural transformation $ \eta_{F}^\flat $ is an etale map in $ \mathbb{T}[\Spec(F)]$, while $\eta_{F}^\sharp $ is etale in $ \mathbb{T}[\mathcal{F}]$.
\end{proposition}

\begin{proof}
By naturality of the unit of the sheafification $ \gamma : \cod \Rightarrow \widetilde{F}$, the triangles above are natural in $(c,n)$ and exhibit a factorization of $ \eta_{F} $ in the category of $\mathbb{T}$-models in the presheaf topos $ [\mathcal{V}_F,\mathcal{S}]$
\[\begin{tikzcd}
	F \circ p_F^{\op} && {\mathfrak{i}_{F*}w\widetilde{F}} \\
	& \cod
	\arrow["{\nu_F}"', from=1-1, to=2-2]
	\arrow["{\gamma_{\cod}}"', from=2-2, to=1-3]
	\arrow["{g_F}", from=1-1, to=1-3]
\end{tikzcd}\]
where $ \nu_F$ is the canonical map induced from all the $ n : F(c) \rightarrow \cod(n)$ for $ (c,n)$ in $\mathcal{V}^{\op}_{F}$, which is etale as it is pointwise etale. But now, as $ {\mathfrak{i}_F}_*$ is full and faithful with left adjoint $\mathfrak{a}_{J_F}$, we have a natural isomorphism $\mathfrak{a}_{J_F}\mathfrak{i}_{F*} \simeq 1 $, so that the aforementioned triangle is sent after sheafification to the following triangle in $ \mathbb{T}[ \Spec(F)]$
\[\begin{tikzcd}
	{h_{F}^*F} && {w\widetilde{F}} \\
	& {\mathfrak{a}_{J_F}\cod}
	\arrow["{\mathfrak{a}_{J_F}\nu_F}"', from=1-1, to=2-2]
	\arrow["{\mathfrak{a}_{J_F}\gamma_{\cod}}"', from=2-2, to=1-3]
	\arrow["{\eta_{F}^\flat}", from=1-1, to=1-3]
\end{tikzcd}\]
where $ {\mathfrak{a}_{J_F}\gamma_{\cod}}$ is an isomorphism, while $ \mathfrak{a}_{J_F}\nu_F$ still is etale as etale maps are stable under inverse images. Hence, factorizing it as an etale map followed by an isomorphism, we have proven $\eta_{F}^\flat $ to be an etale arrow in $\mathbb{T}[\Spec(F)]$. From the equality $(\eta^\sharp_F)_c = (\eta^\flat_F)_{(c,1_{F(c)})}$, we have that $ \eta^\sharp_F$ is pointwise etale, hence is etale in $ \mathbb{T}[\mathcal{F}]$. 
\end{proof}

\begin{division}
These data define a morphism of modelled topoi 
\[\begin{tikzcd}
	{(\mathcal{F},F)} && {(\Spec(F),w\widetilde{F})}
	\arrow["{(h_{F},\eta_{F}^\flat)}", from=1-1, to=1-3]
\end{tikzcd}\]
\end{division}

\section{Functoriality of $\Spec$}

Let us examine functoriality of $\Spec$. As we shall see, this is far from a trivial fact and is obtained through several steps; those complications were left totally implicit in the other sources on the topic, as they gave no precision on the functoriality: we think it justifies to devote some effort to clarify this aspect.\\

 We saw in \cref{factorization of morphisms of modelled topoi} that $ \mathbb{T}\hy\GTop$ admitted two factorizations systems (Vertical, Cartesian) and (Cocartesian, Vertical), where the vertical morphisms could be seen as ``algebraic morphisms" with trivial geometric part, while the other were given respectively from the counit and unit of the codomain and domain sheaf along the direct and inverse images part of a geometric morphism; in some sense, algebraic data in the cartesian and cocartesian morphisms were trivial, so they could be seen as two sorts of ``geometric morphisms" of modelled topoi. In the following we shall construct separately the spectrum of vertical, cartesian and cocartesian morphisms of modelled topoi, and define the spectrum of an arbitrary morphism as the composite of the spectrum of its vertical and cartesian part -- or equivalently, its cocartesian and vertical part.

\subsection{Functoriality relative to vertical morphisms}

Recall the vertical morphisms are those of the form $(1_\mathcal{F}, \phi): (\mathcal{F}, F_1) \rightarrow (\mathcal{F}, F_2)$, that is, whose underlying geometric morphism part is an identity -- which makes the inverse and direct image part $\phi^\flat$ and $ \phi^\sharp$ coincide. Vertical morphisms over $\mathcal{F}$ correspond exactly to morphisms in $\mathbb{T}[\mathcal{F}]$, and we shall see the construction of the construction of their spectrum generalizes the set valued case which corresponded to vertical morphisms over $ \mathcal{S}$.

\begin{division}\label{spectra of vertical morphisms}
For $ \mathcal{F}$ a Grothendieck topos, let $ \phi : F_1 \rightarrow F_2$ be a morphism in $\mathbb{T}[\mathcal{F}]$. Then we are provided with a natural transformation of pseudofunctors 
\[\begin{tikzcd}
	{\mathcal{C}_\mathcal{F}^{\op}} && \Cat
	\arrow[""{name=0, anchor=center, inner sep=0}, "{\mathcal{V}_{F_1(-)}}", curve={height=-12pt}, from=1-1, to=1-3]
	\arrow[""{name=1, anchor=center, inner sep=0}, "{\mathcal{V}_{F_2(-)}}"', curve={height=12pt}, from=1-1, to=1-3]
	\arrow["{\phi_*}", shorten <=3pt, shorten >=3pt, Rightarrow, from=0, to=1]
\end{tikzcd}\]
defined at $ c$ in $\mathcal{C}_\mathcal{F}$ as the pushout functor
\[\begin{tikzcd}[row sep=tiny]
	{\mathcal{V}_{F_1(c)}} & {\mathcal{V}_{F_2(c)}} \\
	n & {(\phi_c)_*n}
	\arrow["{(\phi_c)*}", from=1-1, to=1-2]
	\arrow[shorten <=5pt, shorten >=5pt, maps to, from=2-1, to=2-2]
\end{tikzcd}\]
whose naturality square at $ u : c_1 \rightarrow c_2$ 
\[\begin{tikzcd}
	{\mathcal{V}_{F_1(c_2)}} & {\mathcal{V}_{F_2(c_2)}} \\
	{\mathcal{V}_{F_1(c_1)}} & {\mathcal{V}_{F_2(c_1)}}
	\arrow["{(\phi_{c_2})*}", from=1-1, to=1-2]
	\arrow["{F_1(s)_*}"', from=1-1, to=2-1]
	\arrow["{F_2(s)_*}", from=1-2, to=2-2]
	\arrow["{(\phi_{c_1})*}"', from=2-1, to=2-2]
	\arrow["\simeq"{description}, draw=none, from=1-1, to=2-2]
\end{tikzcd}\]
is given by the commutation of pushouts functors along the two sides of the naturality square
\[\begin{tikzcd}
	{{F_1(c_2)}} & {{F_2(c_2)}} \\
	{{F_1(c_1)}} & {{F_2(c_1)}}
	\arrow["{\phi_{c_2}}", from=1-1, to=1-2]
	\arrow["{F_1(s)}"', from=1-1, to=2-1]
	\arrow["{F_2(s)}", from=1-2, to=2-2]
	\arrow["{\phi_{c_1}}"', from=2-1, to=2-2]
\end{tikzcd}\]
By functoriality of the Grothendieck construction, that is, by naturality of oplaxcolimits, the natural transformation $ \phi_*$ induces a morphism of fibrations
\[\begin{tikzcd}
	{\mathcal{V}^{\op}_{F_1} } && {\mathcal{V}^{\op}_{F_2} } \\
	& {\mathcal{C}_\mathcal{F}}
	\arrow["{\int \phi_*}", from=1-1, to=1-3]
	\arrow["{p_{F_1}}"', from=1-1, to=2-2]
	\arrow["{p_{F_2}}", from=1-3, to=2-2]
\end{tikzcd}\]
where $ \int \phi_* (c,n) = (c, (\phi_c)_*n)$ for $ (c,n)$ in $\mathcal{V}_{F_1}$. Moreover, by composition of pushouts, each functor $ (\phi_c)_*$ defines a morphism of lex site
\[\begin{tikzcd}
	{(\mathcal{V}_{F_1(c)}^{\op} ,J_{F_1(c)})} && {(\mathcal{V}_{F_2(c)} ^{\op} ,J_{F_2(c)}) }
	\arrow["{(\phi_c)_*}", from=1-1, to=1-3]
\end{tikzcd}\]
so that the induced functor $ \int \phi_*$ is a morphism of lex site. Hence $ \int \phi_*$ is a lex functor and is continuous for the jointly generated pretopologies $ \langle \bigcup_{c \in \mathcal{C}_\mathcal{F}} \iota_{c}(J_{F_1(c)}) \rangle $ and $ \langle \bigcup_{c \in \mathcal{C}_\mathcal{F}} \iota_{c}(J_{F_2(c)}) \rangle $. Moreover, since identities are preserved by pushouts, we also have the following commutation
\[\begin{tikzcd}
	{\mathcal{V}_{F_1(c)}^{\op} } && {\mathcal{V}_{F_2(c)} ^{\op}} \\
	& {\mathcal{C}_\mathcal{F}}
	\arrow["{(\phi_c)_*}", from=1-1, to=1-3]
	\arrow["{\iota_{F_1}}", from=2-2, to=1-1]
	\arrow["{\iota_{F_2}}"', from=2-2, to=1-3]
\end{tikzcd}\]
Hence $ \int \phi_*$ sends $ \iota_{F_1}(J_{\mathcal{F}})$-families to  $ \iota_{F_2}(J_{\mathcal{F}})$-families. This provides us with a morphism of lex sites
\[\begin{tikzcd}
	{(\mathcal{V}_{F_1}^{\op} ,J_{F_1})} && {(\mathcal{V}_{F_2} ^{\op} ,J_{F_2}) }
	\arrow["{\int \phi_*}", from=1-1, to=1-3]
\end{tikzcd}\]
and consequently with a geometric morphism 
\[\begin{tikzcd}
	{\Spec(F_2)} & {\Spec(F_1)}
	\arrow["{\Spec(\phi)}", from=1-1, to=1-2]
\end{tikzcd}\]
\end{division}

\begin{division}
Now, the associated morphisms of sheaves are defined as follows. For the direct image part, the data at each $(c, n)$ of the map $ n_*\phi_c$ as obtained in the following pushout
\[\begin{tikzcd}
	{F_1(c)} & {F_2(c)} \\
	{\cod(n)} & {\cod((\phi_c)_*n)}
	\arrow["{(\phi_c)_*n}", from=1-2, to=2-2]
	\arrow["n"', from=1-1, to=2-1]
	\arrow["{\phi_c}", from=1-1, to=1-2]
	\arrow["{n_*\phi_c}"', from=2-1, to=2-2]
	\arrow["\lrcorner"{anchor=center, pos=0.125, rotate=180}, draw=none, from=2-2, to=1-1]
\end{tikzcd}\]
define altogether a natural transformation 
\[\begin{tikzcd}
	{\cod_1} & {\Spec(\phi)_*\cod_2 }
	\arrow["{\nu_\phi}", from=1-1, to=1-2]
\end{tikzcd}\]
which is sent after sheafification to a morphism in $\mathbb{T}[\Spec(F_1)]$
\[\begin{tikzcd}
	{\widetilde{F_1}} & {\Spec(\phi)_*\widetilde{F_2}}
	\arrow["{\mathfrak{a}_{J_{F_1}}(\nu_\phi)}", from=1-1, to=1-2]
\end{tikzcd}\]
Hence we have to define 
\[ \widetilde{\phi}^\sharp = \mathfrak{a}_{J_{F_1}}(\nu_\phi) \]
while the existence of its mate
\[\begin{tikzcd}
	{\Spec(\phi)^*\widetilde{F_1}} & {\widetilde{F_2}}
	\arrow["{\widetilde{\phi}^\flat}", from=1-1, to=1-2]
\end{tikzcd}\]
is ensured through the adjunction defining $ \Spec(\phi)$. However, it admits a more concrete construction, which is related to the (etale, locale) factorization in the sense that it is induced from taking the local part after precomposition with $ \phi_c$, in the same vein as \cref{f flat is local set case}:
\end{division}

\begin{proposition}
The morphism $ \widetilde{\phi}^\flat$ is in $ \Loc[\Spec(F_2)]$
\end{proposition}

\begin{proof}
Recall that inverse images and sheafification commutes, so we can first compute the inverse image of the codomain functor over $ \mathcal{V}_{F_1}$ along $ \Spec(\phi)$ before sheafifying into $ \Spec(\phi)^*\widetilde{F_1}$. Recall that the inverse image of the codomain functor (as a presheaf) is obtained as the left Kan extension 
\[\begin{tikzcd}
	{\mathcal{V}_{F_1}} & {\mathbb{T}[\mathcal{S}]} \\
	{\mathcal{V}_{F_2}}
	\arrow["{\int \phi_*}"', from=1-1, to=2-1]
	\arrow["\cod", from=1-1, to=1-2]
	\arrow[""{name=0, anchor=center, inner sep=0}, "{\lan_{\int \phi_*}\cod}"', from=2-1, to=1-2]
	\arrow["\zeta"', shorten >=2pt, Rightarrow, from=1-1, to=0]
\end{tikzcd}\]
For each $(c,n)$ in $ \mathcal{V}_{F_2}$, this left Kan extension is computed as the filtered colimit 
\[ (\lan_{\int \phi_*}\cod)(c,n) \simeq \underset{ \int \phi_* \downarrow (c,n)}{\colim} \cod(m) \]
ranging over all $ (u,f) : (d, (\phi_d)*m) \rightarrow (c,n) $ where $ (d,(\phi_d)_*m) = \int \phi_*(d,m) $. Beware that we use there the native orientation of $ \mathcal{V}_{F_2}$, so that this corresponds to an underlying morphism $ u : c \rightarrow d$ in $ \mathcal{C}_\mathcal{F}$. But observe now that the subcategory of $ \int \phi_* \downarrow (c,n)$ admits a cofinal subcategory consisting of all the morphisms of the form $ (1_c, f) : (c, (\phi_c)_*m) \rightarrow (c,n)$, since any object in $\int \phi_* \downarrow (c,n) $ factorizes in $ \mathcal{V}_{F_2}$ as 
\[\begin{tikzcd}
	{(d,(\phi_c)_*m)} && {(c,n)} \\
	& {(c, F_2(u)_*(\phi_c)_*m)}
	\arrow["{(u,f)}", from=1-1, to=1-3]
	\arrow["{(f,1_{ F_2(u)_*(\phi_c)_*m})}"', from=1-1, to=2-2]
	\arrow["{(1_c, \langle n,f\rangle)}"', from=2-2, to=1-3]
\end{tikzcd}\]
while any two parallel factorizations can be merged by a pushout.
Hence the colimit reduces to a colimit over those $ (1_c, f)$, so we can restrict in some sense at a computation in the fiber at $c$. But alike what was said at \cref{f flat is local set case}, remark that $ (\phi_c)^*$ is left adjoint to the precomposition functor $ (\phi_c)^! : \mathcal{V}_{F_2(c)} \rightarrow F_1(c) \downarrow \mathbb{T}[\mathcal{S}]$, so that the cofinal category we exhibited above is equivalent to $ \mathcal{V}_{F_1(c)} \downarrow n \phi_c$. Hence we have
\[ \lan_{\int \phi_*}\cod(c,n) \simeq \underset{ \mathcal{V}_{F_1(c)} \downarrow n \phi_c}{\colim} \cod(m)  \]
From \cref{factorization from saturated class}, this exhibits $ \lan_{\int \phi_*}\cod(c,n) $ as the middle term in the (etale, locale) factorization of $ n(\phi_c)_*$, whose local part shall be denoted $ (u_\phi)_c : \lan_{\int \phi_*}\cod(c,n) \rightarrow \cod(c,n)$. Now by naturality this can be gathered into a local map in the category of $ \mathbb{T}$-models in the presheaf topos $ [\mathcal{V}_{F_2},\mathcal{S}]$
\[\begin{tikzcd}
	{\Spec(\phi)^*\cod} & \cod
	\arrow["{u_f}", from=1-1, to=1-2]
\end{tikzcd}\]
whose sheafification returns $ \widetilde{\phi}^\flat$. 
\end{proof}

\begin{remark}
Observe that we exploited morally the fact that (etale, local) factorizations in arbitrary topoi are done pointwisely (modulo sheafification) as observed in \cref{factorization in topoi}, since we constructed the inverse image part from factorizations in the fibers before gathering them into a morphism of sheaf. From the triangle of geometric morphisms 
\[\begin{tikzcd}
	{\Spec(F_2)} && {\Spec(F_1)} \\
	& {\mathcal{F}}
	\arrow["{\Spec(\phi)}", from=1-1, to=1-3]
	\arrow["{h_{F_2}}"', from=1-1, to=2-2]
	\arrow["{h_{F_1}}", from=1-3, to=2-2]
\end{tikzcd}\]
we got a square in $ \Spec(F_2)$
\[\begin{tikzcd}
	{\Spec(\phi)^*h_{F_1}^*F_1} && {\Spec(\phi)^*\widetilde{wF_1}} \\
	{h_{F_2}^*F_1} & {h_{F_2}^*F_2} & {w\widetilde{F_2}}
	\arrow["{\Spec(\phi)^*\eta_{F_1}^\flat}", from=1-1, to=1-3]
	\arrow["{w\widetilde{\phi}^\flat}", from=1-3, to=2-3]
	\arrow["{\eta_{F_2}^\flat}"', from=2-2, to=2-3]
	\arrow[Rightarrow, no head, from=1-1, to=2-1]
	\arrow["{h_{F_2}^*\phi}"', from=2-1, to=2-2]
\end{tikzcd}\]
In fact, we can see this square encodes the (etale, locale) factorization of the composite 
\[\begin{tikzcd}[sep=large]
	{h_{F_2}^*F_1} & {w\widetilde{F_2} }
	\arrow["{ h_{F_2}^*\phi\eta_{F_2}^\flat }", from=1-1, to=1-2]
\end{tikzcd}\]
through a local unit, as $\widetilde{F_2} $ is a local object, $\widetilde{\phi}^\flat $ is a local map, and $ \eta_{F_1}^\flat$ is the etale map coding the universal etale form under $F_1$. 
\end{remark}

\subsection{Functoriality relative to horizontal morphisms}

Recall that there are actually two kinds of horizontal morphisms, respectively the cartesian obtained through direct images and the cocartesian obtained through inverse images. We shall see it is sufficient to have only the cartesian together with the vertical ones to compute the spectrum of any morphism.

We turn to the cartesian morphisms and the construction of the spectrum of an arbitrary morphism. In fact the main difficulty is that one needs to consider the underlying geometric morphisms of a morphism of modelled topos as induced in a manner or another from a morphism of site in order to perform the pushout of basic etales map along it. Of course this is not possible in general to find simultaneously two \emph{small} sites of presentation for the domain and codomain topos inducing a geometric morphism between them. But we can always induce it from a site morphism landing in the domain topos, which, together with its canonical topology, can be seen as a \emph{small generated site}, for which there exist also far enough theory for our needs, as in \cite{caramello2020denseness}. We shall show we can extend canonically a $\mathbb{T}$-model seen as a sheaf of $\mathbb{T}$-objects over a presentation site to a sheaf over the whole topos with the canonical topology.

\begin{division}
Recall, if $ \mathcal{F}$ admits a lex subcanonical site of presentation $(\mathcal{C}_{\mathcal{F}}, J_{\mathcal{F}}) $ then we have a geometric equivalence 
\[ \Sh(\mathcal{C}_{\mathcal{F}}, J_{\mathcal{F}}) \simeq \Sh(\Sh(\mathcal{C}_{\mathcal{F}}, J_{\mathcal{F}}), J_{\can}) \]
sending a sheaf $ X $ over $ \mathcal{C}_{\mathcal{F}}$ to its left Kan extension
\[\begin{tikzcd}
	{\mathcal{C}_{\mathcal{F}}^{\op}} & {\mathcal{S}} \\
	{\mathcal{F}^{\op}}
	\arrow["X", from=1-1, to=1-2]
	\arrow["{\hirayo^{\op}}"', hook, from=1-1, to=2-1]
	\arrow[""{name=0, anchor=center, inner sep=0}, "{\overline{X} =\lan_{\hirayo^{\op}}X}"', from=2-1, to=1-2]
	\arrow["\simeq"{description}, Rightarrow, draw=none, from=1-1, to=0]
\end{tikzcd}\]
where $ \hirayo$ is full and faithful and lands in the category of sheaves as $J_\mathcal{F}$ is subcanonical; in the other direction, a sheaf for the canonical topology is sent to its restriction along ${\hirayo}^{\op} $. Moreover, this construction extends to categories of sheaves in locally finitely presentable categories, in particular for $\mathbb{T}[\mathcal{S}]$, so we have an equivalence of categories
\[ \mathbb{T}[ \mathcal{F}] \simeq \mathbb{T}[\Sh(\mathcal{F}, J_{\can})] \]
\end{division}

\begin{division}
Hence from the construction above, any $ \mathbb{T}$-modelled topos $ (\mathcal{F}, F)$ defines canonically another $ \mathbb{T}$-modelled topos
$ (\Sh(\mathcal{F},J_{\can}), \overline{F})$, which will be called its \emph{extended modelled topos}\index{modelled topos!extended}, which is actually equipped with an invertible morphism of modelled topoi for $ \mathcal{F} \simeq \Sh(\mathcal{F},J_{\can}) $ while $ F \simeq \lan_{\hirayo} F \circ \hirayo$ (by subcanonicity) and $ \overline{ F} = \lan_\hirayo F$. Hence we expect those two modelled topoi to have the same spectrum, but from the construction of Coste sprectral site, this is not obvious -- especially since we have not yet achieved functoriality. We shall in fact need this result in the following to prove functoriality for horizontal morphisms.\\

In fact, we can see that the spectral sites themselves of $ F$ and $\overline{F}$ are not equivalent. At each $c$ in $\mathcal{C}$, we have an isomorphism $ F(c) \simeq \overline{F}(\hirayo_c)$, and hence an equivalence between the fibers at object of $ \mathcal{C}$
\[ \mathcal{V}_{F(c)} \simeq \mathcal{V}_{\overline{F}(\hirayo_c)} \]
This induces a canonical inclusion between the oplax colimits 
\[\begin{tikzcd}
	{\mathcal{V}_F \simeq \underset{c \in \mathcal{C}_{\mathcal{F}}}{\oplaxcolim} \, \mathcal{V}_{{F}(c)}} & {\underset{X\in \mathcal{F}}{\oplaxcolim} \, \mathcal{V}_{\overline{F}(X)} \simeq \mathcal{V}_{\overline{F}}}
	\arrow["{i}_F", hook, from=1-1, to=1-2]
\end{tikzcd}\]
sending $ (c,n)$ to $(\hirayo_c, n)$. However, though we have a dense inclusion $ \hirayo : \mathcal{C}_\mathcal{F} \hookrightarrow \mathcal{F}$, 
the inclusion $i_F$ is by no mean an equivalence itself, as the oplax colimits cannot be contracted -- as a pseudocolimit may have been with an argument of cofinality. However, the following notion will help us to fix this at the level of induced topoi. Recall the following definition:
\end{division}

\begin{definition}
A morphism of site $ f : (\mathcal{C},J) \rightarrow (\mathcal{D},K)$ is said to be \emph{$K$-dense}\index{morphism of site!dense} if it satisfies the following conditions:\begin{itemize}
    \item $f $ creates covers, that is, a family $ (u_i : c_i \rightarrow c)_{i \in I}$ is $J$-covering if and only if its image $ (f(u_i))_{i \in I}$ is a $K$-covering;
    \item for any object $d$ in $\mathcal{D}$ there exists a $K$-cover of the form $ (f_i :f(c_i) \rightarrow d)_{i \in I}$ with $c_i $ in $ \mathcal{C}$;
    \item for any $c_1, c_2$ in $\mathcal{C}$ and $ g : f(c_1) \rightarrow f(c_2)$, there exists a $J$-cover $ (f_i : c_i \rightarrow c)_{i \in I}$ and a family of arrows $ (g_i : c_i \rightarrow c_2)_{i \in I}$ such that $ g f(f_i) = f(g_i)$.
\end{itemize}
\end{definition}

Then it is known from \cite{shulman2012exact}[Theorem 11.14](see also \cite{caramello2020denseness}[Remark 5.2]) that if $ f : (\mathcal{C},J) \rightarrow (\mathcal{D},K)$ is a $ K$-dense morphism of site, then the induced functor $ \Sh(f)$ is a geometric equivalence $ \Sh(\mathcal{D},K) \simeq \Sh(\mathcal{C},J)$. Observe that in particular that, if $ J$ is subcanonical, then the dense inclusion $ \hirayo : (\mathcal{C}, J) \rightarrow (\Sh(\mathcal{C},J),J_\can)$ is a dense morphism of site, and moreover that restricting back the canonical topology to $\mathcal{C}$ defines a dense morphism of sites $(\mathcal{C}, J) \rightarrow (\mathcal{C}, J_{\can}\mid_\mathcal{C}) $ and also a dense morphism of site
$ (\mathcal{C}, J) \rightarrow (\hirayo(\mathcal{C}), J_{\can}\mid_{\hirayo(\mathcal{C})}) $.

\begin{lemma}\label{extended spectrum}
We have a geometric equivalence $ \Spec(F) \simeq \Spec(\overline{F})$.
\end{lemma}

\begin{proof}
We prove that the functor ${i}_F$ above defines a $J_{\overline{F}}$-dense morphism of site 
\[\begin{tikzcd}
	{(\mathcal{V}_F^{\op}, J_F)} & {(\mathcal{V}^{\op}_{\overline{F}}, J_{\overline{F}})}
	\arrow["{i}_F", hook, from=1-1, to=1-2]
\end{tikzcd}\]
For the first condition, it suffices to prove separately that $ {i}_F$ creates vertical and horizontal families: \begin{itemize}
    \item for vertical families, observe that in each $c$ in $ \mathcal{C}_\mathcal{F}$, the fibers are equivalent as well as the topologies $ J_{F(c)}$ and $ J_{\overline{F}(\hirayo_c)}$: whence the creation of vertical families;
    \item for horizontal families, this is a consequence of the fact that $ (\mathcal{C}_\mathcal{F}, J_\mathcal{F}) \rightarrow (\hirayo(\mathcal{C}_\mathcal{F}), J_{\can}\mid_{\hirayo(\mathcal{C}_\mathcal{F})}) $ is a dense morphism of sites, so that a family $ (\hirayo_{u_i} : \hirayo_{c_i} \rightarrow \hirayo_c)_{i \in I}$ is in $J_{\can}$ if and only if $ (u_i : c_i \rightarrow c)_{i \in I}$ is in $J_\mathcal{F}$. Hence a family $ (\hirayo_{u_i}, f) : (\hirayo_{c_i}, n_i) \rightarrow (\hirayo_c, n))_{i \in I}$ is $J_{\overline{F}}$-covering if and only if $ ({u_i}, f) : ({c_i}, n_i) \rightarrow (c, n))_{i \in I}$ is $J_{F}$-covering.
\end{itemize}

For the second condition, recall that $ \hirayo$ is a dense functor so that for any $X$ in $ \mathcal{F}$ we have a colimit $ X \simeq \hirayo \downarrow X$ (where beware the colimit is computed in the category of sheaves). Hence $ \hirayo \downarrow X$ is never empty, and in fact, from the definition of colimits in topoi and their relation with the canonical topology, the family $(a : \hirayo_c \rightarrow X)_{\hirayo \downarrow X}$ is a cover in $J_\can$. Hence one can take for each object $(X,n)$ of $ \mathcal{V}_{\overline{F}}$ with $ n : \overline{F}(X) \rightarrow \cod(n)$ the horizontal $J_{\overline{F}}$-cover 
\[\begin{tikzcd}[sep=large]
	{(\hirayo_c, \overline{F}(a)_*n)} & {(X,n)}
	\arrow["{(a, n_*\overline{F}(a))}", from=1-1, to=1-2]
\end{tikzcd}\]
where $(\hirayo_c, \overline{F}(a)_*n) \simeq {i}_F (c, \overline{F}(a)_*n)$.\\

For the third item, we use that, by subcanonicity of $J_\mathcal{F}$, $\hirayo$ is full and faithful, so that for any $ (c_1, n_1) $, $(c_2, n_2)$ and $(f,g): (\hirayo_{c_1}, n_1) \rightarrow (\hirayo_{c_2}, n_2) $, the arrow $ f$ must come from some $ f = \hirayo_u$ with $ u : c_1 \rightarrow c_2$ and hence we had already $ (u,g) : (c_1, n_1) \rightarrow (c_2, n_2)  $.
\end{proof}

\begin{division}\label{extended sheaf has same spectrum}
We saw that $ (\mathcal{F}, F) $ is actually equivalent as a modelled topos to its extended form $(\Sh(\mathcal{F}_2, J_{can}), \overline{F})$. Consequently, we expect their respective spectrum to be equivalent not only at the level of the underlying topoi but at the level of the structure sheaves. This can be seen as follows: at each $ (c,n)$ of $ \mathcal{V}_{F}$ we have $ i_{F}(c,n) = (\hirayo_c, n)$ as $ \overline{F}(\hirayo_c) = F(c)$ -- so that $n$ is both an object of $ \mathcal{V}_{F(c)}$ and $ \mathcal{V}_{\overline{F}(\hirayo_c)}$; then we have a pointwise equality $ i_{F*}\cod (c,n) = \cod(\hirayo_c,n) = n $, so that we get an isomorphism after sheafification relative to $ J_{\overline{F}}$ 
\[  \widetilde{\overline{F}} \simeq \Sh(i_F)_* \widetilde{F} \]
whose mate between inverse images is also an isomorphism as the unit and counit of the adjoint equivalence $ \Sh(i_F)^*\dashv \Sh(i_F)_*$ are so.
\end{division}

\begin{division}
Now for a morphism $(f, \phi) : (\mathcal{F}_1, F_1) \rightarrow (\mathcal{F}_2, F_2)$, even though $ f$ does not necessarily arise form a morphism of site $ (\mathcal{C}_{\mathcal{F}_1}, J_{\mathcal{F}_1}) \rightarrow (\mathcal{C}_{\mathcal{F}_2}, J_{\mathcal{F}_2}) $, $f$ is still induced as a morphism of sites $ f^*: \mathcal{C}_{\mathcal{F}_1} \rightarrow \mathcal{F}_2$ sending $ J_{\mathcal{F}_1}$-covers to covers for the canonical topology $J_{\can}$ on $\mathcal{F}_2$ seen as a small-generated standard site, and the direct image can be computed explicitly at $ c$ in $\mathcal{C}_1$ as 
\[ f_*F_2(c) \simeq \overline{F_2}(f^*(c))  \]
For this later is an ordinary set-based $\mathbb{T}$-model, we can compute its spectral site $ \mathcal{V}_{\overline{F_2}(f^*(c))}$, and for each $ c$ we have a transition functor 
\[\begin{tikzcd}
	{\mathcal{V}_{F_1(c)}} & {\mathcal{V}_{\overline{F_2}(f^*(c))}}
	\arrow["{(\phi^\sharp_c)_*}", from=1-1, to=1-2]
\end{tikzcd}\]
which induces by naturality of the oplax colimit a functor
\[\begin{tikzcd}
	{\mathcal{V}_{F_1}} & {\underset{c \in \mathcal{C}_{\mathcal{F}_1}}{\oplaxcolim} \, \mathcal{V}_{\overline{F_2}(f^*(c))}}
	\arrow["{\int \phi^\sharp_*}", from=1-1, to=1-2]
\end{tikzcd}\]
where the later oplax colimit is $ \mathcal{V}_{f_*F_2}$. Then we can apply \cref{spectra of vertical morphisms} to the vertical morphism 
\[\begin{tikzcd}[sep=huge]
	{ \mathcal(\mathcal{F}_1, F_1) } & {(\mathcal{F}_1, f_*F_2)}
	\arrow["{(1_{\mathcal{F}_1}, (\phi^\sharp, \phi^\sharp))}", from=1-1, to=1-2]
\end{tikzcd}\]
so we get in particular a geometric morphism 
\[\begin{tikzcd}[sep=large]
	{\Spec(f_*F_2)} & {\Spec(F_1)}
	\arrow["{\Spec(\phi^\sharp)}", from=1-1, to=1-2]
\end{tikzcd}\]
\end{division}
\begin{division}
On the other hand, for each $ f^*(c)$ is in particular an object of the sheaf topos $ \mathcal{F}_{2}$, reindexing along $ f^*: \mathcal{C}_1 \rightarrow \mathcal{F}_2$ induces a canonical inclusion between the oplax colimits 
\[\begin{tikzcd}
	{{\underset{c \in \mathcal{C}_{\mathcal{F}_1}}{\oplaxcolim} \, \mathcal{V}_{\overline{F_2}(f^*(c))}}} & {{\underset{X \in \mathcal{F}_{2}}{\oplaxcolim} \, \mathcal{V}_{\overline{F_2}(X)}}}
	\arrow[" q_{(f,\phi)} ", hook, from=1-1, to=1-2]
\end{tikzcd}\]
which is moreover trivially a morphism of sites. Hence it induces a geometric morphism 
\[\begin{tikzcd}[sep=large]
	{\Spec(\overline{F}_2)} & {\Spec(f_*F_2)}
	\arrow["{\Sh(q_{(f,\phi)})}", from=1-1, to=1-2]
\end{tikzcd}\]
Hence from \cref{extended spectrum}, we can finally compose all those data into a geometric morphism $ \Spec(\phi)$ as below
\[\begin{tikzcd}[column sep=large]
	& {\Spec(F_2)} & {\Spec(F_1)} \\
	{} & {\Spec(\overline{F}_2)} & {\Spec(f_*F_2)}
	\arrow["{\Sh(q_{(f,\phi)})}"', from=2-2, to=2-3]
	\arrow["{\Sh({i}_{F_2}) \atop \simeq}"', from=1-2, to=2-2]
	\arrow["{\Spec(\phi^\sharp)}"', from=2-3, to=1-3]
	\arrow["{\Spec(\phi)}", dashed, from=1-2, to=1-3]
\end{tikzcd}\]

\end{division}

\begin{division}
We conclude with the computation of the sheaf data associated to $ \Spec(\phi)$. This will be done by combining easy to compute data associated to each part of the decomposition $ \Spec(\phi) = \Spec(\phi^\sharp) \Sh(q_{(f,\phi)}) \Sh(i_{F_2})$. \\

From \cref{extended sheaf has same spectrum} we know that $ \widetilde{\overline{F_2}} \simeq \Sh(i_{F_2})_*\widetilde{F_2}$. On the other side, the vertical morphism $ (1, \phi^\sharp) : (\mathcal{F}_1, F_1) \rightarrow (\mathcal{F}_1, f_*F_2)$ is sent to a morphism of locally modelled topos 
\[\begin{tikzcd}
 {(\Spec(F_1), \widetilde{F_1})}&& 	{(\Spec(f_*F_2), \widetilde{f_*F_2})}
	\arrow["{(\Spec(\phi), \widetilde{\phi})}", from=1-1, to=1-3]
\end{tikzcd}\]
We are going to see that this part contains actually all the sheaf information of the composite above.\\

Indeed, the intermediate part acts like restriction, that is, 
\[ \widetilde{f_*F_2} \simeq \Sh(q_{f,\phi*})\overline{F_2} \]
This is because $ \mathcal{V}_{f_*F_2}$ is a subcategory of $ \mathcal{V}_{\overline{F}_2}$ with the $ \mathcal{V}_{F_2(f^*(c))}$ as fibers, and again, one can apply sheafification to the equality between codomain functors. This returns a cartesian morphism
\[\begin{tikzcd}[column sep=large]
	{(\Spec(f_*F_2), \widetilde{f_*F_2})} && {(\Spec(\overline{F_2}), \widetilde{\overline{F_2}})}
	\arrow["{(\Sh(q_{(f,\phi)}, 1_{\widetilde{f_*F_2}})}", from=1-1, to=1-3]
\end{tikzcd}\]

Hence we can define the direct image part $\widetilde{\phi}^\sharp$ of $ \widetilde{\phi}$ as the composite in $ \Spec(F_1)$ 
\[\begin{tikzcd}
	{\widetilde{F_1}} & {\widetilde{f_*F_2} \simeq \Sh(q_{(f,\phi)})_*\widetilde{\overline{F_2}} \simeq\Spec(\phi)_*F_2}
	\arrow["{\widetilde{\phi^\sharp}}", from=1-1, to=1-2]
\end{tikzcd}\]
Summing up those considerations, we have a decomposition in $\mathbb{T}_J\hy\GTop^\Loc$
\[\begin{tikzcd}
	{(\Spec(F_1), \widetilde{F_1})} && {(\Spec(F_2), \widetilde{F_2})} \\
	{{(\Spec(f_*F_2), \widetilde{f_*F_2})}} && {{(\Spec(\overline{F_2}), \widetilde{\overline{F_2}})}}
	\arrow["{(\Sh(q_{(f,\phi)}, 1_{\widetilde{f_*F_2}})}"', from=2-1, to=2-3]
	\arrow["{(\Spec(\phi), \widetilde{\phi})}"', from=1-1, to=2-1]
	\arrow["{(\Spec(\phi), \widetilde{\phi})}", from=1-1, to=1-3]
	\arrow["{(\Sh(i_{F_2}), 1_{\Sh(i_{F_2})_*\widetilde{F_2}})}"', from=2-3, to=1-3]
\end{tikzcd}\]
\end{division}

\subsection{Spectrum of 2-cells}

For the sake of exhaustiveness, we give here to the treatment of 2-cells. 

\begin{division}
Take a 2-cell in $\mathbb{T}\hy\GTop$
\[\begin{tikzcd}
	{(\mathcal{F}_1, F_1)} && {(\mathcal{F}_2,F_2)}
	\arrow[""{name=0, anchor=center, inner sep=0}, "{(f,\phi)}", bend left=20, from=1-1, to=1-3]
	\arrow[""{name=1, anchor=center, inner sep=0}, "{(g,\gamma)}"', bend right=20, from=1-1, to=1-3]
	\arrow["\sigma", shorten <=3pt, shorten >=3pt, Rightarrow, from=0, to=1]
\end{tikzcd}\]
that is, a 2-cell in $\GTop$
\[\begin{tikzcd}
	{\mathcal{F}_2} && {\mathcal{F}_1}
	\arrow[""{name=0, anchor=center, inner sep=0}, "f", curve={height=-12pt}, from=1-1, to=1-3]
	\arrow[""{name=1, anchor=center, inner sep=0}, "g"', curve={height=12pt}, from=1-1, to=1-3]
	\arrow["\sigma", shorten <=3pt, shorten >=3pt, Rightarrow, from=0, to=1]
\end{tikzcd}\]
such that we have one, hence both of the following commutations between inverse and direct images in $ \mathcal{F}_2$ and $\mathcal{F}_1$ respectively respectively
\[\begin{tikzcd}
	{f^*F_1} && {g^*F_1} \\
	& {F_2}
	\arrow["{\phi^\flat}"', from=1-1, to=2-2]
	\arrow["{\sigma^\flat_F}", from=1-1, to=1-3]
	\arrow["{\gamma^\flat}", from=1-3, to=2-2]
\end{tikzcd} \hskip1cm \begin{tikzcd}
	& {F_1} \\
	{g_*F_2} && {f_*F_2}
	\arrow["{\gamma^\sharp}"', from=1-2, to=2-1]
	\arrow["{\phi^\sharp}", from=1-2, to=2-3]
	\arrow["{\sigma^\sharp_{F_2}}"', from=2-1, to=2-3]
\end{tikzcd} \]
The later, as a triangle of vertical morphisms of modelled topoi, induces an invertible 2-cell between the corresponding spectra
\[\begin{tikzcd}
	& {\Spec(F_1)} \\
	{\Spec(g_*F_2)} && {\Spec(f_*F_1)}
	\arrow["{\Spec(\gamma^\sharp)}", from=2-1, to=1-2]
	\arrow["{\Spec(\phi^\sharp)}"', from=2-3, to=1-2]
	\arrow[""{name=0, anchor=center, inner sep=0}, "{\Spec(\sigma^\sharp_{F_2})}"', from=2-1, to=2-3]
	\arrow["\simeq"{description}, Rightarrow, draw=none, from=1-2, to=0]
\end{tikzcd}\]
which induces in $\Spec(F_1)$ a triangle toward direct images
\[\begin{tikzcd}
	{F_1} & {\Spec(\gamma^\sharp)_*\widetilde{g_*F_2}} \\
	& {\Spec(\phi^\sharp)_*\widetilde{f_*F_2}}
	\arrow["{\widetilde{\phi^\sharp}}"', from=1-1, to=2-2]
	\arrow["{\widetilde{\gamma^\sharp}}", from=1-1, to=1-2]
	\arrow["{\Spec(\phi^\sharp)(\widetilde{\sigma^\sharp_{F_2}}^\sharp})", from=1-2, to=2-2]
\end{tikzcd}\]
where the middle arrow comes from the direct image part of $ \widetilde{\sigma^\sharp_{F_2}}$
\[\begin{tikzcd}
	{\widetilde{g_*F_2}} & {\Spec(\sigma^\sharp_{F_2})_*(\widetilde{f_*F_2)}}
	\arrow["{\widetilde{\sigma^\sharp_{F_2}}^\sharp}", from=1-1, to=1-2]
\end{tikzcd}\]
\end{division}
\begin{division}
Now recall that, though neither $f$ nor $g$ are supposed to be induced from a morphism of site, we can use the extended sheaf $ \overline{F_2}$ associated to $ F_2$ which is at each $c $ of $\mathcal{C}_{\mathcal{F}_1}$ respectively $ g_*F_2(c) = \overline{F_2}(g^*(c))$ and $ f_*F_2(c) = \overline{F_2}(f^*(c))$. Then the component at $c$ of the direct image part $ \sigma^\sharp_{F_2} $ is also the image along $ \overline{F_2}$ of the component of $ \sigma^\sharp : g_* \Rightarrow f_*$ at $c$ by Yoneda lemma, that is 
\[  (\sigma^\sharp_{F_2})_c = \overline{F_2}(\sigma^\sharp_c)  \]
along which we can consider the pushout functor
\[\begin{tikzcd}
	{\overline{F_2}(g^*(c))} & {\overline{F_2}(f^*(c))} \\
	{\cod(n)} & {\cod(\overline{F_2}(\sigma^\sharp_c)_*n)}
	\arrow["n"', from=1-1, to=2-1]
	\arrow["{\overline{F_2}(\sigma^\sharp_c)}", from=1-1, to=1-2]
	\arrow["{\overline{F_2}(\sigma^\sharp_c)_*n}", from=1-2, to=2-2]
	\arrow["{n_*\overline{F_2}(\sigma^\sharp_c)}"', from=2-1, to=2-2]
	\arrow["\lrcorner"{anchor=center, pos=0.125, rotate=180}, draw=none, from=2-2, to=1-1]
\end{tikzcd}\]
This defines at each $c$ a 2-cell
\[\begin{tikzcd}
	{\mathcal{V}^{\op}_{\overline{F_2}(g^*(c))}} \\
	&& {\mathcal{V}^{\op}_{\overline{F_2}}} \\
	{\mathcal{V}^{\op}_{\overline{F_2}(f^*(c))}}
	\arrow[""{name=0, pos=0.45, inner sep=0}, "{\iota^{\op}_{g^*(c)}}", from=1-1, to=2-3]
	\arrow[""{name=1, anchor=center, inner sep=0}, "{\iota^{\op}_{f^*(c)}}"', from=3-1, to=2-3]
	\arrow["{\overline{F_2}(\sigma^\sharp_c)_*}"', from=1-1, to=3-1]
	\arrow["{(\overline{\sigma^\sharp_c})^{\op}}", shift left=3, shorten <=4pt, shorten >=4pt, Rightarrow, from=1, to=0]
\end{tikzcd}\]
Those data induce a morphism of fibrations between the fibered sites of the inverse images 
\[\begin{tikzcd}
	{\mathcal{V}^{\op}_{g_*\overline{F_2}}} \\
	&& {\mathcal{V}^{\op}_{\overline{F_2}}} \\
	{\mathcal{V}^{\op}_{f_*\overline{F_2}}}
	\arrow[""{name=0, anchor=center, inner sep=0}, "{q_{(g,\gamma)}}", from=1-1, to=2-3]
	\arrow[""{name=1, anchor=center, inner sep=0}, "{q_{(f,\phi)}}"', from=3-1, to=2-3]
	\arrow["{\int\overline{F_2}(\sigma^\sharp)}"', from=1-1, to=3-1]
	\arrow["{(\overline{\sigma^\sharp})^{\op}}", shift left=3, shorten <=4pt, shorten >=4pt, Rightarrow, from=1, to=0]
\end{tikzcd}\]
\end{division}

\begin{division}
Then to get the desired 2-cell in $\mathbb{T}_J \hy\GTop^\Loc$, one has to paste the associated 2-cell between the spectra of inverse images together with the spectrum of the triangle of vertical morphism 
\[\begin{tikzcd}
	&& {\Spec(g_*F_2)} \\
	{\Spec(F_2) } & {\Spec(\overline{F_2})} && {\Spec(F_1)} \\
	&& {\Spec(f_*F_1)}
	\arrow["{\Spec(\gamma^\sharp)}", from=1-3, to=2-4]
	\arrow["{\Spec(\phi^\sharp)}"', from=3-3, to=2-4]
	\arrow[""{name=0, anchor=center, inner sep=0}, "{\Spec(\sigma^\sharp_{F_2})}"{description}, from=3-3, to=1-3]
	\arrow[""{name=1, pos=0.49, inner sep=0}, "{\Sh(q_{(g,\gamma)})}", from=2-2, to=1-3]
	\arrow[""{name=2, anchor=center, inner sep=0}, "{\Sh(q_{(f,\phi)})}"', from=2-2, to=3-3]
	\arrow["{\Sh(i_{F_2}) \atop \simeq}", from=2-1, to=2-2]
	\arrow["\simeq"{description, pos=0.4}, Rightarrow, draw=none, from=2-4, to=0]
	\arrow["{\Spec(\sigma)}"{description}, shorten <=4pt, shorten >=4pt, Rightarrow, from=2, to=1]
\end{tikzcd}\]

We would have to check that this 2-cell satisfies the compatibility condition of morphisms of modelled topos. But this comes from the very definition of $ \Spec(\sigma)$ as induced from $ \overline{\sigma^\sharp}$ so that its component at $ \Sh(i_{F_2})_*\widetilde{F_2}$ satisfies
\[  \Spec(\sigma)_{\Sh(i_{F_2})_*\widetilde{F_2}} =  \widetilde{\sigma^\sharp_{F_2}}^\sharp   \]

This achieves to produce a 2-cell of locally modelled topoi
\[\begin{tikzcd}
	{(\Spec(F_1), \widetilde{F_1})} && {(\Spec(F_2), \widetilde{F_2})}
	\arrow[""{name=0, anchor=center, inner sep=0}, "{(\Spec(\phi), \widetilde{\phi})}", bend left=20, from=1-1, to=1-3]
	\arrow[""{name=1, anchor=center, inner sep=0}, "{(\Spec(\gamma), \widetilde{\gamma})}"', bend right=20, from=1-1, to=1-3]
	\arrow["{\Spec(\sigma)}"{description}, shorten <=5pt, shorten >=5pt, Rightarrow, from=0, to=1]
\end{tikzcd}\]
\end{division}

\begin{division}
To sum up this section, we have shown how to make the spectrum into a pseudofunctor 
\[\begin{tikzcd}
	{\mathbb{T}_J\hy\GTop^\Loc} & {\mathbb{T}\hy\GTop}
	\arrow["\Spec", from=1-1, to=1-2]
\end{tikzcd}\]
The next section will now prove it to be the desired left adjoint of the inclusion $ \iota_{J,\Loc}$.
\end{division}

\section{Site-theoretic version of the spectral adjunction}

In this section we shall prove that the notion of spectrum as constructed in this chapter has the desired universal property announced in the previous chapter and is part of the desired spectral adjunction. In \cite{Coste}, there was already such a proof, yet we shall follow a different strategy based on the in-depth description of the spectral site and some reflections on locally modelled topoi.\\

An important preliminary result of this section is the localness of the spectrum of a local object over its base topos, generalizing \cref{Spec of local object is local}. We also describe how the spectrum operates relative to local units and stalks of its input sheaf of $\mathbb{T}$-models.

\subsection{Spectra of locally modelled topoi are local over their base}\label{Spectra of locally modelled topoi are local}

\begin{division}\label{local sheaves are locally local}
Consider now a $\mathbb{T}_J$-locally modelled topos $ (\mathcal{E},E)$ with $ \mathcal{E} = \Sh(\mathcal{C}_\mathcal{E}, J_\mathcal{E})$. Recall that the condition that $ E$ is a $J$-local objects in $ \mathbb{T}[\mathcal{E}]$ amounts to saying that the corresponding $J$-continuous functor 
\[\begin{tikzcd}
	{\mathcal{C}_\mathbb{T}} & {\mathcal{E}}
	\arrow["{E^*}", from=1-1, to=1-2]
\end{tikzcd}\]
sends any $J$-family $ ([\theta_j]_\mathbb{T} : \{ \overline{x_j}, \phi_j\} \rightarrow \{ \overline{x}, \phi\})_{j \in I}$ to an epimorphism 
\[\begin{tikzcd}
	{\displaystyle{\coprod_{j \in I}}E(\{ \overline{x_j}, \phi_j\})} && {E(\{ \overline{x}, \phi\})}
	\arrow["{\langle E([\theta_i]_\mathbb{T}) \rangle_{j \in I}}", from=1-1, to=1-3, two heads]
\end{tikzcd}\]
But the property of being an epimorphism in the category of sheaves over $ (\mathcal{C}_\mathcal{E}, J_\mathcal{E})$ says that for any $ c \in \mathcal{C}_\mathcal{E}$ and any $ a \in E(\{ \overline{x}, \phi\})(c)$, there is a $ J_\mathcal{E}$-cover $ (u_i : c_i \rightarrow c)_{i \in I'}$ such that the image $ E(\{ \overline{x}, \phi\})(u_i)(a) $ is in the range of $\langle E([\theta_j]_\mathbb{T}) \rangle_{j \in I}(c_i)$. But each $E(c)$ is a $ \mathbb{T}[\mathcal{S}]$-model, and we have 
\[ E(\{ \overline{x}, \phi\})(c) \simeq \mathbb{T}[\mathcal{S}](K_\phi, E(c)) \]
so $ a$ is some $ a : K_\phi \rightarrow E(c)$. Then this means that each of the composite $ E(u_i)a$ factorizes through some $ f_{\theta_j} : K_{\phi} \rightarrow K_{\phi_j}$ for some $j \in J$, and then we have a factorization of the identiy of $E(c_i)$ through the corresponding pushout as follows:
\[\begin{tikzcd}[row sep=large]
	{K_\phi} & {E(c)} & {E(c_i)} & {E(c_i)} \\
	{K_{\phi_j}} & {a_*K_{\phi_j}} & {E(u_i)_*a_*K_{\phi_j}}
	\arrow["{E(u_i)}", from=1-2, to=1-3]
	\arrow["a", from=1-1, to=1-2]
	\arrow["{f_{\theta_j}}"', from=1-1, to=2-1]
	\arrow[from=2-1, to=2-2]
	\arrow["{a_*f_{\theta_j}}"{description}, from=1-2, to=2-2]
	\arrow[Rightarrow, no head, from=1-3, to=1-4]
	\arrow["{E(u_i)_*a_*f_{\theta_j}}"{description}, from=1-3, to=2-3]
	\arrow[from=2-2, to=2-3]
	\arrow["b"', dashed, from=2-3, to=1-4]
	\arrow["\lrcorner"{anchor=center, pos=0.125, rotate=180}, draw=none, from=2-2, to=1-1]
	\arrow["\lrcorner"{anchor=center, pos=0.125, rotate=180}, draw=none, from=2-3, to=1-2]
\end{tikzcd}\]
This leads to the following:
\end{division}

\begin{proposition}\label{geometric part of the counit}
If $ E$ is a $J$-local object in $\mathbb{T}[\mathcal{E}]$, then $\iota_{wE} : (\mathcal{C}_\mathcal{E}, J_\mathcal{E}) \hookrightarrow (\mathcal{V}_{wE}^{\op}, J_{wE}) $ is also a comorphism of site, and defines a geometric morphism $e_E= \Sh(\iota_{wE})_*\dashv \Sh(\iota_{wE})^!$ which is moreover a section of $ h_E$
\[\begin{tikzcd}
	& {\Spec(E)} \\
	{\mathcal{E}} && {\mathcal{E}}
	\arrow["{e_E}", from=2-1, to=1-2]
	\arrow["{h_E}", from=1-2, to=2-3]
	\arrow[Rightarrow, no head, from=2-1, to=2-3]
\end{tikzcd}\]
\end{proposition}

\begin{proof}
For any $c$ in $\mathcal{C}_\mathcal{E}$, a $ \iota_c( J_{F(c)})$-cover $ (1_c,m_j):  (c, n_j) \rightarrow (c, 1_{F(c)}))_{j \in J}$ coming from $ (m_j : E(c) \rightarrow \cod(n_j))_{j \in I} $ is obtained as
\[\begin{tikzcd}
	{K_\phi} & {E(c)} \\
	{K_{\phi_j}} & {\cod(n_i)}
	\arrow["a", from=1-1, to=1-2]
	\arrow["{f_{\theta_j}}"', from=1-1, to=2-1]
	\arrow[from=2-1, to=2-2]
	\arrow["{n_j}", from=1-2, to=2-2]
	\arrow["\lrcorner"{anchor=center, pos=0.125, rotate=180}, draw=none, from=2-2, to=1-1]
\end{tikzcd}\]
with $ ([\theta_j]_\mathbb{T} : \{ \overline{x_j}, \phi_j\} \rightarrow \{ \overline{x}, \phi\})_{j \in I}  $ a $J$-family. But from what precedes, as $ a$ is an element of $ E(\{ \overline{x}, \phi\})(c)$, there is some $ (u_i : c_i \rightarrow c)_{i \in I'} $ such that for each $ i \in I'$, there is $ j \in I$ and a factorization 
\[\begin{tikzcd}
	{K_\phi} & {E(c)} & {E(c_i)} \\
	{K_{\phi_j}} & {\cod(n_j)} & {E(c_i)}
	\arrow["a", from=1-1, to=1-2]
	\arrow["{f_{\theta_j}}"', from=1-1, to=2-1]
	\arrow[from=2-1, to=2-2]
	\arrow["{n_j}"{description}, from=1-2, to=2-2]
	\arrow["\lrcorner"{anchor=center, pos=0.125, rotate=180}, draw=none, from=2-2, to=1-1]
	\arrow["{E(u_i)}", from=1-2, to=1-3]
	\arrow["b"', dashed, from=2-2, to=2-3]
	\arrow[Rightarrow, no head, from=1-3, to=2-3]
\end{tikzcd}\]
In other word, for any $i \in I'$ there is some $ j \in I$ and a factorization in $ \mathcal{V}_{wE}^{\op }$ 
\[\begin{tikzcd}
	& {(c, n_j)} \\
	{(c_i, 1_{E(c_i)})} & {(c,1_{F(c)})}
	\arrow["{(1_c, n_i)}", from=1-2, to=2-2]
	\arrow["{(u_i, E(u_i))}"', from=2-1, to=2-2]
	\arrow["{(u_i, b)}"', dashed, from=1-2, to=2-1]
\end{tikzcd}\]
But in the diagram above, $(u_i, E(u_i))_{i \in I'}$ is an horizontal family in $ \iota_{wE}(J_\mathcal{E})$, while $(1_c, n_i) $ is a vertical family in $ \iota_c(J_{F(c)})$ refining it; in particular, the vertical family must hence be contained in the sieve generated by the horizontal family, and so is the sieve its generates. Hence any $ J_{\mathcal{V}_{E(-)}}$-covering sieve is in particular a $ \iota_{wE}(J_\mathcal{E})$-covering sieve: the topology $ J_{\mathcal{V}_{E(-)}}$ is coarser than $ \iota_{wE}(J_\mathcal{E})$, and hence the inclusion $ \iota_{wE}$ becomes a comorphism of sites. 
\end{proof}

Now recall that in the case of set-based models, the spectrum of a local object was a local topos: this result will generalize as follows. To this end we invoke \cite{caramello2020denseness}[Theorem 7.20] characterization of local geometric morphisms: a continuous comorphism of site $ F :(\mathcal{C},J) \rightarrow (\mathcal{D},K)$ which is moreover a morphism of site induces a triple of adjoints $ \Sh(F)^* \dashv \Sh(F)_* \dashv \Sh(F)^!$, which can be proven to be moreover local if $ F$ is full and faithful. 

\begin{remark}
Actually, \cite{caramello2020denseness}[Theorem 7.20, (iii)] requires the functor $F$ to be \emph{$J$-full and $J$-faithful} for the condition to hold. Though we cannot give the definition of those notions which involves a notion of local equality of morphisms in the sheaf topos, we have in \cite{caramello2020denseness}[Proposition 5.16] the following characterization: $F$ is $J$-full and $J$-faithul if and only if for each $c$ in $\mathcal{C}$, we have an isomorphism
\[ \mathfrak{a}_J\hirayo_c \simeq \mathfrak{a}_J(\mathcal{D}[F(-), F(c)]) \]
But then observe that if $F$ is full and faithful in the usual sense, then for each $d $ we have $ \mathcal{C}[d,c] \simeq \mathcal{D}[F(d), F(c)]$, and hence there is a natural isomorphism $ \hirayo_c \simeq \mathcal{D}[F(-), F(c)] $ which still is an isomorphism after sheafification, so that $F$ is also $J$-full and $J$-faithful.
\end{remark}

\begin{corollary}\label{spectrum of a locally modelled topoi is local over its base}
If $ (\mathcal{E},E)$ is a $\mathbb{T}_J$-locally modelled topos, then $ h_{wE} : \Spec(E) \rightarrow \mathcal{E}$ is a local geometric morphism, with center $e_E$. In particular, whenever $\mathcal{E}$ was itself a local topos with center $ x$, then $\Spec(E)$ is a local topos with center $e_Ex$.
\end{corollary}

\begin{proof}
We saw in the previous part that $ \iota_{wE}$ becomes in this case a continuous comorphism, while still being also a morphism of site: hence it induces a triple of adjoints functors
\[\begin{tikzcd}
	{\Spec(wE)} & {} & {\mathcal{E}}
	\arrow["{\Sh(\iota_{wE})^*}"', bend right=30, from=1-3, to=1-1, start anchor=140, end anchor=10]
	\arrow["{\Sh(\iota_{wE})_*}"{description}, from=1-1, to=1-3]
	\arrow["{\Sh(\iota_{wE})^!}",  bend left=30, from=1-3, to=1-1, start anchor=220, end anchor=350]
\end{tikzcd}\]
Moreover, as $ \iota_E$ is full and faithful, then by the remark above, this triple of adjoints functors defines a local geometric morphism. Observe that we have the equalities
\[ \underset{h^*_{wE}}{\underbrace{\Sh(\iota_{wE})^* = \Sh(p_{wE})_!}} \dashv \underset{h_{wE*}=e^*_E}{\underbrace{\Sh(\iota_{wE})_* = \Sh(p_{wE})^* }} \dashv \underset{e_{E*} }{ \underbrace{\Sh(\iota_{wE})^! =\Sh(p_{wE})_*}}  \]
where both $ h^*_E$ and $ e_{E*}$ are full and faithful. In particular $ h_E$ is a connected geometric morphism. 
\end{proof}

\begin{division}\label{sheaf part of unit of local object is invertible}
Now our goal is to complete the geometric morphism $ e_E$ with a local map $ \epsilon^\flat_E : e_E^*\widetilde{wE} \rightarrow E$ in $ \mathbb{T}[\mathcal{E}]$ defining a retraction 
\[\begin{tikzcd}
	& {e_E^*\widetilde{wE}} \\
	wE && wE
	\arrow[Rightarrow, no head, from=2-1, to=2-3]
	\arrow["{we^*_E\eta^\flat_{wE}}", from=2-1, to=1-2]
	\arrow["{\epsilon^\flat_E}", from=1-2, to=2-3]
\end{tikzcd}\]
(where we used the fact that $E \simeq  e^*_Eh^*_{wE} E \simeq \Sh(\iota_{wE})_* \Sh(\iota_{wE})^* E$ from the fact that $\Sh(\iota_{wE})^* $ is full and faithful, hence has invertible unit). But we can already guess that actually such a map $ \epsilon^\flat_{E}$ has to be an isomorphism: indeed, we know from \cref{unit is a generic etale map} that $ \eta^\flat_{wE}$ has to be an etale map, as well as its inverse image $ e^*_E(\eta^\flat_{wE})$, while $ \epsilon^\flat_E$ must be defined in such a way it is local: but as we hence have a factorization of the identity of $E$, those maps form an (etale, local) factorization of the identity, hence both are isomorphisms, and we then use that $w$ is conservative.  \\

Actually, we can see directly which is this isomorphism: recall that sheafification commutes with inverse images, so that we have 
\begin{align*}
    e^*_E\widetilde{wE} &\simeq e^*_E\mathfrak{a}_J \cod\\
    &\simeq \mathfrak{a}_J \cod \circ \iota_E
\end{align*}
But we have at each $ c$ of $\mathcal{C}_\mathcal{E}$ that $ \cod(\iota_E(c)) = \cod(c, 1_{E(c)}) = E(c)$: so we have a pointwise equality $ \cod \circ \iota_E = E $ which is sent to an isomorphism after sheafification $ \epsilon^\sharp_E : we^*_E \widetilde{wE} \simeq E$ for $ \mathfrak{a}_J\iota_{\mathcal{E}} E \simeq E$ as $E$ is already a sheaf. \\  

Now the mate $ \epsilon^\sharp_E$ coincides up to a canonical iso with the unit of the $e^*_E \dashv e_{E*}$ adjunction as one has
\[\begin{tikzcd}
	{\widetilde{wE}} & {e_{E*}E} \\
	{e_{E*}e^*_E\widetilde{wE}}
	\arrow[from=1-1, to=2-1]
	\arrow["{\epsilon^\sharp_E}", from=1-1, to=1-2]
	\arrow["{e^*(\epsilon^\flat_E) \atop \simeq}"', from=2-1, to=1-2]
\end{tikzcd}\]
But as $ h_{wE}$ is a local geometric geometric morphism with $ h_{wE}^! \simeq e_{E*}$, $e_{E*}$ is full and faithful, so its unit is an isomorphism, and as a consequence $ \epsilon^\sharp_E$ is itself an isomorphism. \\

This exhibits the counit as an horizontal morphism 
\[\begin{tikzcd}[sep=huge]
	{(\Spec(wE), \widetilde{wE})} & {(\mathcal{E},E)}
	\arrow["{(e_E,(1_E, 1_{\widetilde{wE}}))}", from=1-1, to=1-2]
\end{tikzcd}\]
\end{division}

\begin{remark}
Beware that $ e_{E*}$ is not just precomposition with the projection $p_{wE}$ as one needs further sheafification over the vertical families. 
\end{remark}

\begin{remark}
When $ E$ lives in $\mathcal{S}$, then $ (\mathcal{S}, E)$ being a locally modelled topos means for $E$ to be a set-valued local object. Then $ e_E$ defines an initial point of $ \Spec(wE) $, and as $ e^*(*)=1_{\Spec(E)} $ one has 
\begin{align*}
    \Gamma \widetilde{wE} &\simeq \widetilde{wE}(1_E) \\ &\simeq h_{wE*}E \\
    &\simeq e^*_E \widetilde{wE}
\end{align*}
Therefor $ e_E$ corresponds to the local form $ 1_E : E \rightarrow E$ and we have a representation
\begin{align*}
    E &\simeq e_E^*\widetilde{wE} \\
    &\simeq \Gamma\widetilde{wE}
\end{align*}

In the case of a set based local object, it would just express the fact that global sections are determined by the stalk at the focal point of the spectrum of the local object. The general case means the same thing but in term of $\mathcal{E}$-indexed point, and enforces that the inverse image of the structure sheaf along $ e^*_E$ -- equivalently, the direct image along $ h_{E*}$ -- returns the original sheaf. Hence local objects enjoy sheaf representation ``for free" -- which however will require additional assumption for arbitrary objects.
\end{remark}

\subsection{The spectral bi-adjunction}

In this subsection we prove the morphisms of modelled topos $ (h_F, \eta_F)$ and the morphisms of locally modelled topos $ (e_E, \epsilon_E)$ are respectively units and counits of a bi-adjunction $ \Spec \dashv \iota_{J,\Loc}$. Our proof here is totally different from \cite{Coste} -- as it exploits the concrete site presentation of the spectrum as well as the localness result established above. 

\begin{theorem}\label{Coste general adjunction}
We have a biadjunction 
\[\begin{tikzcd}
	{\mathbb{T}_J\hy\GTop^{\Loc}} && {\mathbb{T}\hy\GTop}
	\arrow[""{name=0, anchor=center, inner sep=0}, "{\iota_{J,\Loc}}"', curve={height=20pt}, hook, from=1-1, to=1-3]
	\arrow[""{name=1, anchor=center, inner sep=0}, "\Spec"', curve={height=20pt}, from=1-3, to=1-1]
	\arrow["\dashv"{anchor=center, rotate=-90}, draw=none, from=1, to=0]
\end{tikzcd}\]
\end{theorem}

\begin{proof}
Let $ (\mathcal{F}, F)$ be a $ \mathbb{T}$-modelled topos and $ (\mathcal{E}, E)$ a $ \mathbb{T}_J$-locally modelled topos. We must construct an equivalence of homcategories 
\[ {\mathbb{T}\hy\GTop} \big[ (\mathcal{F }, F), (\mathcal{E}, wE) \big] \simeq {\mathbb{T}_J\hy\GTop^{\Loc}}\big[ (\Spec(F), \widetilde{F}), (\mathcal{E}, E) \big]  \]

Let $ (f,\phi) : (\mathcal{F},F) \rightarrow (\mathcal{E},E)$ be in ${\mathbb{T}\hy\GTop}$. Then by pseudonaturality of $ (h,\eta)  : 1 \Rightarrow \iota_{J,\Loc} \Spec$ we have a pseudocommutative square in $ \GTop$
\[\begin{tikzcd}
	{\Spec(wE)} & {\Spec(F)} \\
	{\mathcal{E}} & {\mathcal{F}}
	\arrow["{h_{wE}}"', from=1-1, to=2-1]
	\arrow["{\Spec(\phi)}", from=1-1, to=1-2]
	\arrow["{h_F}", from=1-2, to=2-2]
	\arrow["f"', from=2-1, to=2-2]
	\arrow["{\eta_f \atop \simeq}"{description}, draw=none, from=1-1, to=2-2]
\end{tikzcd}\]
and in particular a natural isomorphism between inverse images
\[ \Spec(\phi)^*h_F^* \stackrel{\eta_f}{\simeq} h_{wE}^*f^*  \]
Hence we have a commutative square between inverse images in $\Spec(wE)$
\[\begin{tikzcd}
	{\Spec(\phi)^*h_F^*F} & {h_{wE}^*f^*F} & {h^*_{wE}wE} \\
	{\Spec(\phi)^*w\widetilde{F}} && {w\widetilde{E}}
	\arrow["{\eta_{wE}^\flat}", from=1-3, to=2-3]
	\arrow["{w\widetilde{\phi}^\flat}"', from=2-1, to=2-3]
	\arrow["{h_{wE}^*\phi^\flat}", from=1-2, to=1-3]
	\arrow["{\Spec(\phi)^*\eta_{F}^\flat}"', from=1-1, to=2-1]
	\arrow["{(\eta_f)_F \atop\simeq}", from=1-1, to=1-2]
\end{tikzcd}\]
Pasting the diagram of geometric morphisms above with the retraction we obtained at \cref{geometric part of the counit}, we get 
\[\begin{tikzcd}
	{\mathcal{E}} & {\Spec(wE)} & {\Spec(F)} \\
	& {\mathcal{E}} & {\mathcal{F}}
	\arrow["{h_{wE}}"{description}, from=1-2, to=2-2]
	\arrow["{\Spec(\phi)}", from=1-2, to=1-3]
	\arrow["{h_F}", from=1-3, to=2-3]
	\arrow["f"', from=2-2, to=2-3]
	\arrow["{\eta_f \atop \simeq}"{description}, draw=none, from=1-2, to=2-3]
	\arrow[""{name=0, anchor=center, inner sep=0}, Rightarrow, no head, from=1-1, to=2-2]
	\arrow["{e_E}", from=1-1, to=1-2]
	\arrow["\simeq"{description}, Rightarrow, draw=none, from=0, to=1-2]
\end{tikzcd}\]
so that, from $ e^*_E h_{wE}^* \simeq 1$, the commutative square of inverse images above is sent to 
\[\begin{tikzcd}
	{f^*F} && wE \\
	{e_E^*\Spec(\phi)^*w\widetilde{F}} && {e_E^*w\widetilde{E}}
	\arrow["{e^*_E\eta_{wE}^\flat}", from=1-3, to=2-3]
	\arrow["{e_E^*w\widetilde{\phi}^\flat}"', from=2-1, to=2-3]
	\arrow["{e_E^*\Spec(\phi)^*\eta_{F}^\flat}"', from=1-1, to=2-1]
	\arrow["{\phi^\flat}", from=1-1, to=1-3]
\end{tikzcd}\]
But from \cref{sheaf part of unit of local object is invertible} we know that $ e^*_E\eta_{wE}^\flat$ is an isomorphism with inverse $ \epsilon^\flat_E$ as $ E$ is local, so this actually defines a morphism in $\mathbb{T}[\mathcal{E}]$ between local objects
\[\begin{tikzcd}
	{e_E^*\Spec(\phi)^*w\widetilde{F}} && wE
	\arrow["{\epsilon^\flat_Ee_E^*w\widetilde{\phi}^\flat}", from=1-1, to=1-3]
\end{tikzcd}\]
But as $ \widetilde{\phi}^\flat$ is local as well as the isomorphism $ \epsilon^\flat_E$ this morphism is local, and comes uniquely from a morphism in $\mathbb{T}_J[\mathcal{E}]^\Loc$
\[\begin{tikzcd}
	{e_E^*\Spec(\phi)^*\widetilde{F}} && E
	\arrow["{\epsilon^\flat_Ee_E^*\widetilde{\phi}^\flat}", from=1-1, to=1-3]
\end{tikzcd}\]
This provides us with a morphism of locally modelled topoi which satisfies by its very construction the following pseudocommutation
\[\begin{tikzcd}
	{(\mathcal{F},F)} && {(\mathcal{E},E)} \\
	{(\Spec(F),\widetilde{F})}
	\arrow[""{name=0, anchor=center, inner sep=0}, "{(\Spec(\phi)e_E, {\epsilon^\flat_Ee_E^*\widetilde{\phi}^\flat})}"', from=2-1, to=1-3]
	\arrow["{(h_F,\eta_F)}"', from=1-1, to=2-1]
	\arrow["{(f,\phi)}", from=1-1, to=1-3]
	\arrow["\simeq"{description}, Rightarrow, draw=none, from=1-1, to=0]
\end{tikzcd}\]

The converse direction follow similar argument. A morphism of locally modelled topoi
\[\begin{tikzcd}
	{(\Spec(F), \widetilde{F})} & {(\mathcal{E},E)}
	\arrow["{(g,\gamma)}", from=1-1, to=1-2]
\end{tikzcd}\]
can have its image pasted with with the unit of $ (\mathcal{F},F) $ to get a morphism as desired; this will return in $\mathcal{E}$ a composite
\[\begin{tikzcd}
	{g^*h_F^*F} & {g^*w\widetilde{F}} \\
	& wE
	\arrow["{g^*(\eta_F^\flat)}", from=1-1, to=1-2]
	\arrow["{\gamma^\flat}", from=1-2, to=2-2]
	\arrow["{\gamma^\flat(\eta^\flat_F)}"', from=1-1, to=2-2]
\end{tikzcd}\]
which is actually uniquely determined from $ \eta_F^\flat$ and $ \gamma^\flat$ as the first one is etale -- as well as its inverse image -- while the second one is local. Hence no other morphism $ g^*F \rightarrow E$ induces the same composite as it would provide two distinct local parts for a same map, which is impossible by the uniqueness of the factorization. Proving that $(g,\gamma)(h_F,\eta_F)$ induces back the same $(g,\gamma) $ after applying the spectrum and pasting it with the canonical retraction of $(\mathcal{E},E) $ is routine.
\end{proof}

\begin{remark}
Observe that in both directions, we actually compute the etale-local factorization of the inverse image of the morphism of sheaves, where the etale part is indexed by the unit $ \eta^\flat_F$ of $ F$ in a universal way, in the sense that $ f^*\eta^\flat_F$ still indexes the etale parts of morphisms from $f^*F$ toward local objects in $\mathcal{E}$. 
\end{remark}

\begin{division}
To conclude this section, we shall use the result above to describe points of $ \Spec (F)$ and how they are related to points of the underlying topos $\mathcal{F}$. Observe that each point $ x : \mathcal{S} \rightarrow \mathcal{F}$ induces a stalk $ x^*F$ which is in $\mathbb{T}[\mathcal{S}]$. From the general theory of sheaves, we know this stalk to be expressed as the filtered colimit \[ x^*F \simeq \underset{(c,a) \in (\int x^*)^{\op}}{\colim } \, F(c) \]
where the colimit inclusions are the restrictions functors $ \rho^{(c,a)}_x : F(c) \rightarrow x^*F$ -- this colimit being filtered for $ x^* : \mathcal{C}_\mathcal{F} \rightarrow \mathcal{S}$ is flat. Moreover, $ x$ determines a morphism of modelled topoi
\[\begin{tikzcd}[sep=large]
	{(\mathcal{F}, F)} & {(\mathcal{S}, x^*F)}
	\arrow["{(x, 1_{x^*F})}", from=1-1, to=1-2]
\end{tikzcd}\]
which is sent to a morphism of locally modelled topoi
\[\begin{tikzcd}[sep=large]
	{(\Spec(F), \widetilde{ F})} && {(\Spec(x^*F), \widetilde{x^*F})}
	\arrow["{( \Spec(1_{x^*F}), \widetilde{ 1_{x^*F}})}", from=1-1, to=1-3]
\end{tikzcd}\]
\end{division}

\begin{theorem}\label{pseudolimit decomposition of stalks}
For any $x : \mathcal{S} \rightarrow \mathcal{F}$ we have a pseudolimit decomposition 
\[ \Spec( x^*F) \simeq \underset{(c,a) \in \int x^*}{\bilim} \, \Spec(F(c)) \]
\end{theorem}

\begin{proof}
From the bi-adjunction obtained in \cref{biadjunction for set-models}, we know that $ \Spec$ preserves bicolimits of set-valued models. Hence we have 
\[  \Spec( x^*F, \widetilde{x^*F}) \simeq \underset{(c,a) \in \int x^*}{\bicolim} \, (\Spec(F(c)), \widetilde{F(c)})  \]
Now from the expression of colimits of modelled topoi, and the fact that $ \iota_{J, \Loc}$ is a morphism of fibration, we know that the underlying Grothendieck topos is a cofiltered bilimit as desired.
\end{proof}

\subsection{Points of the spectrum}

\begin{division}
Now we would like to describe points of $ \Spec (F)$ and how they are related to points of the underlying topos $\mathcal{F}$. Observe that each point $ x : \mathcal{S} \rightarrow \mathcal{F}$ induces a stalk $ x^*F$ which is in $\mathbb{T}[\mathcal{S}]$. From the general theory of sheaves, we know this stalk to be expressed as the filtered colimit \[ x^*F \simeq \underset{(c,a) \in (\int x^*)^{\op}}{\colim } \, F(c) \]
where the colimit inclusions are the restrictions functors $ \rho^{(c,a)}_x : F(c) \rightarrow x^*F$ -- this colimit being filtered as $ x^* : \mathcal{C}_\mathcal{F} \rightarrow \mathcal{S}$ is flat. Moreover, $ x$ determine a morphism of modelled topos
\[\begin{tikzcd}[sep=large]
	{(\mathcal{F}, F)} & {(\mathcal{S}, x^*F)}
	\arrow["{(x, 1_{x^*F})}", from=1-1, to=1-2]
\end{tikzcd}\]
which is sent to a morphism of locally modelled topoi
\[\begin{tikzcd}[sep=large]
	{(\Spec(F), \widetilde{ F})} && {(\Spec(x^*F), \widetilde{x^*F})}
	\arrow["{( \Spec(1_{x^*F}), \widetilde{ 1_{x^*F}})}", from=1-1, to=1-3]
\end{tikzcd}\]
\end{division}

\begin{theorem}\label{pseudolimit decomposition of stalks}
For any $x : \mathcal{S} \rightarrow \mathcal{F}$ we have a pseudolimit decomposition 
\[ \Spec( x^*F) \simeq \underset{(c,a) \in \int x^*}{\lim} \, \Spec(F(c)) \]
\end{theorem}

\begin{proof}
To see this we are going to prove the pseudocolimit decomposition
\[  \mathcal{V}_{ x^*F} \simeq \underset{(c,a) \in (\int x^*)^{\op}}{\colim} \,  \mathcal{V}_{F(c)}  \]
Observe that we have a functor
\[\begin{tikzcd}
	{\underset{(c,a) \in \int x^*}{\oplaxcolim} \, \mathcal{V}_{F(c)}} & {\mathcal{V}_{x^*F}}
	\arrow[from=1-1, to=1-2]
\end{tikzcd}\]
sending an object $ ((c, a), n)$ with $ n : F(c) \rightarrow \cod(n)$ and $ a \in x^*(c)$ to the pushout 
\[\begin{tikzcd}
	{F(c)} & {x^*F} \\
	{\cod(n)} & {{\rho^{(c,a)}_x}_*\cod(n)}
	\arrow["n"', from=1-1, to=2-1]
	\arrow["{\rho^{(c,a)}_x}", from=1-1, to=1-2]
	\arrow["{{\rho^{(c,a)}_x}_*n}", from=1-2, to=2-2]
	\arrow[from=2-1, to=2-2]
	\arrow["\lrcorner"{anchor=center, pos=0.125, rotate=180}, draw=none, from=2-2, to=1-1]
\end{tikzcd}\]
and a morphism $ (u,f) :((c_1, a_1),n_1) \rightarrow
((c_2, a_2),n_2) $ of $ \oplaxcolim_{(c,a) \in (\int x^*)^{\op}} \mathcal{V}_{F(c)} $ to the morphism induced from the universal property of the pushout as depicted below
\[\begin{tikzcd}
	& {F(c_2)} && {F(c_1)} \\
	{B_2} &&&& {B_1} \\
	&& {x^*F} \\
	& {{\rho^{(c_2,a_2)}_x}_*B_2} && {{\rho^{(c_1,a_1)}_x}_*B_1}
	\arrow["{n_2}"', from=1-2, to=2-1]
	\arrow["{F(u)}", from=1-2, to=1-4]
	\arrow["{n_1}", from=1-4, to=2-5]
	\arrow[from=2-1, to=4-2]
	\arrow["{\rho^{(c_1,a_1)}_x}"{description, pos=0.2}, from=1-2, to=3-3]
	\arrow["{\rho^{(c_2,a_2)}_x}"{description, pos=0.2}, from=1-4, to=3-3]
	\arrow["{{\rho^{(c_2,a_2)}_x}_*n_2}"{description}, from=3-3, to=4-2]
	\arrow["{{\rho^{(c_1,a_1)}_x}_*n_1}"{description}, from=3-3, to=4-4]
	\arrow[from=2-5, to=4-4]
	\arrow[""{name=0, anchor=center, inner sep=0}, "f"{description}, from=2-1, to=2-5, crossing over]
	\arrow[dashed, from=4-2, to=4-4]
	\arrow["\lrcorner"{anchor=center, pos=0.125, rotate=90}, shift left=2, draw=none, from=4-2, to=0]
	\arrow["\lrcorner"{anchor=center, pos=0.125, rotate=180}, shift right=1, draw=none, from=4-4, to=0]
\end{tikzcd}\]
This functor is essentially surjective: indeed, for any pushout square as below in $\mathcal{V}_{x^*F}$
\[\begin{tikzcd}
	K & {x^*F} \\
	{K'} & {b^*K'}
	\arrow["m"', from=1-1, to=2-1]
	\arrow["a", from=1-1, to=1-2]
	\arrow["{b_*m}", from=1-2, to=2-2]
	\arrow[from=2-1, to=2-2]
	\arrow["\lrcorner"{anchor=center, pos=0.125, rotate=180}, draw=none, from=2-2, to=1-1]
\end{tikzcd}\]
then because of the colimit decomposition of $x^*F = \colim_{(c,a) \in \int x^*} F(c)$ we have a lift of $ b$ through some $\rho^{(c,a)}_x $ which induces a factorization of pushouts a follows
\[\begin{tikzcd}
	& {F(c)} \\
	K && {x^*F} \\
	& {\overline{b}_*K'} \\
	{K'} && {b^*K'}
	\arrow[""{name=0, anchor=center, inner sep=0}, "m"', from=2-1, to=4-1]
	\arrow["{a_*m}", from=2-3, to=4-3]
	\arrow[from=4-1, to=4-3]
	\arrow[""{name=1, anchor=center, inner sep=0}, "\lrcorner"{anchor=center, pos=0.125, rotate=180}, shift right=3, draw=none, from=4-3, to=2-1]
	\arrow["{\overline{b}}", from=2-1, to=1-2]
	\arrow["{\rho^{(c,a)}_x}", from=1-2, to=2-3]
	\arrow[from=4-1, to=3-2]
	\arrow[from=1-2, to=3-2]
	\arrow["b"{description, pos=0.4}, from=2-1, to=2-3, crossing over]
	\arrow[from=3-2, to=4-3, crossing over]
	\arrow["\lrcorner"{anchor=center, pos=0.125, rotate=180}, draw=none, from=3-2, to=2-1]
\end{tikzcd}\]
Now recall that a morphism $ (u,f) :((c_1, a_1),n_1) \rightarrow
((c_2, a_2),n_2) $ is cartesian if $f $ if and only if the corresponding square 
\[\begin{tikzcd}
	{F(c_2)} & {F(c_1)} \\
	{B_2} & {B_1}
	\arrow["{n_2}"', from=1-1, to=2-1]
	\arrow["{F(u)}", from=1-1, to=1-2]
	\arrow["{n_1}", from=1-2, to=2-2]
	\arrow["f"', from=2-1, to=2-2]
\end{tikzcd}\]
is a pushout. Then any cartesian morphism $ (u,f) :((c_1, a_1),n_1) \rightarrow
((c_2, a_2),n_2) $ is obviously sent to an isomorphism in $ \mathcal{V}_{x^*F}$ by composition and uniqueness of pushouts. Hence the functor from the oplax colimit localizes cartesian morphisms, and hence factorizes through the pseudocolimit as an essentially surjective functor
\[  \underset{(c,a) \in (\int x^*)^{\op}}{\colim} \,  \mathcal{V}_{F(c)} 
\rightarrow \mathcal{V}_{ x^*F} \]
Now we must prove that this functor is full and faithful. For faithfulness, observe that for any parallel pair in the oplax colimit which are send to the same morphism in $ \mathcal{V}_{F(c)}$ as depicted below
\[\begin{tikzcd}
	& {F(c_2)} && {F(c_1)} \\
	{B_2} &&&& {B_1} \\
	&& {x^*F} \\
	& {{\rho^{(c_2,a_2)}_x}_*B_2} && {{\rho^{(c_1,a_1)}_x}_*B_1}
	\arrow["{n_2}"', from=1-2, to=2-1]
	\arrow["{F(u)}", shift left=1, from=1-2, to=1-4]
	\arrow["{n_1}"{description}, from=1-4, to=2-5]
	\arrow[from=2-1, to=4-2]
	\arrow["{\rho^{(c_1,a_1)}_x}"{description, pos=0.2}, from=1-2, to=3-3]
	\arrow["{\rho^{(c_2,a_2)}_x}"{description, pos=0.2}, from=1-4, to=3-3]
	\arrow["{{\rho^{(c_2,a_2)}_x}_*n_2}"{description}, from=3-3, to=4-2]
	\arrow["{{\rho^{(c_1,a_1)}_x}_*n_1}"{description}, from=3-3, to=4-4]
	\arrow[from=2-5, to=4-4]
	\arrow[""{name=0, anchor=center, inner sep=0}, "f"{description}, shift left=1, from=2-1, to=2-5, crossing over]
	\arrow[dashed, from=4-2, to=4-4]
	\arrow["{F(v)}"', shift right=1, from=1-2, to=1-4]
	\arrow["g"{description}, shift right=2, from=2-1, to=2-5, crossing over]
	\arrow["\lrcorner"{anchor=center, pos=0.125, rotate=90}, shift left=2, draw=none, from=4-2, to=0]
	\arrow["\lrcorner"{anchor=center, pos=0.125, rotate=180}, shift right=1, draw=none, from=4-4, to=0]
\end{tikzcd}\]
there exists by filteredness of $ (\int x^*)^{\op}$ some $(c,a)$ and a morphism $ w : c \rightarrow c_1 $ equalizing $ u$, $v$ and such that we have equalization
\[\begin{tikzcd}
	{F(c_2)} & {F(c_1)} & {F(c)} \\
	& {x^*F}
	\arrow["{F(u)}", shift left=1, from=1-1, to=1-2]
	\arrow["{F(v)}"', from=1-1, to=1-2]
	\arrow["{\rho^{(c_1,a_1)}_x}"'{pos=0.6}, from=1-1, to=2-2]
	\arrow["{\rho^{(c_2,a_2)}_x}"{description}, from=1-2, to=2-2]
	\arrow["{F(w)}", from=1-2, to=1-3]
	\arrow["{\rho^{(c,a)}_x}", from=1-3, to=2-2]
\end{tikzcd}\]
Observe moreover that $ f$, $g$ are uniquely determined by the induced maps $ \langle f, n_1 \rangle $ and $ \langle g,n_1 \rangle$ respectively as depicted below
\[\begin{tikzcd}
	& {F(c_1)} \\
	{F(u)_*B_2} && {F(v)_*B_2} \\
	& {B_1}
	\arrow["{F(u)_*n_2}"', from=1-2, to=2-1]
	\arrow["{F(v)_*n_2}", from=1-2, to=2-3]
	\arrow["{n_1}"{description}, from=1-2, to=3-2]
	\arrow["{\langle f, n_1 \rangle }"', from=2-1, to=3-2]
	\arrow["{\langle g, n_1 \rangle }", from=2-3, to=3-2]
\end{tikzcd}\]
But we have $ F(w)_*(F(u)_*n_2) =F(w)_*(F(v)_*n_2) $, so that we get a parallel pair 
\[\begin{tikzcd}
	& {F(c)} \\
	{F(w)_*F(u)_*B_2} && {F(w)_*B_1}
	\arrow["{F(w)_*F(u)_*n_2 \atop =F(w)_*F(v)_*n_2 }"', from=1-2, to=2-1]
	\arrow["{F(w)_*n_1}", from=1-2, to=2-3]
	\arrow["{\langle f, F(w)_*n_1 \rangle }", shift left=1, from=2-1, to=2-3]
	\arrow["{\langle g, F(w)_*n_1 \rangle }"', shift right=1, from=2-1, to=2-3]
\end{tikzcd}\]
But those two arrows are induced actually from a same arrow \[{n_1}_*F(w) \langle f, F(w)_*n_1 \rangle = {n_1}_*F(w) \langle g, F(w)_*n_1 \rangle  \]
and hence are equal. Hence, being equalized by a cartesian morphism $ (w,{n_1}_*F(w)) $, $ (u,f)$ and $(v,g)$ are already identified in the pseudocolimit: hence the faithfulness. \\

For fullness, consider $((c_1, a_1),n_1)$ and $ ((c_2, a_2),n_2)$ and a morphism $f$ between their image as in the situation below
\[\begin{tikzcd}
	& {F(c_2)} && {F(c_1)} \\
	{B_2} && {x^*F} && {B_1} \\
	& {{\rho^{(c_2,a_2)}_x}_*B_2} && {{\rho^{(c_1,a_1)}_x}_*B_1}
	\arrow["{\rho^{(c_2,a_2)}_x}"{description}, from=1-2, to=2-3]
	\arrow["{\rho^{(c_1,a_1)}_x}"{description}, from=1-4, to=2-3]
	\arrow["{{\rho^{(c_2,a_2)}_x}_*n_2}"{description}, from=2-3, to=3-2]
	\arrow["{n_2}"', from=1-2, to=2-1]
	\arrow[from=2-1, to=3-2]
	\arrow["{{\rho^{(c_1,a_1)}_x}_*n_1}"{description}, from=2-3, to=3-4]
	\arrow["{n_1}", from=1-4, to=2-5]
	\arrow[from=2-5, to=3-4]
	\arrow["f"', from=3-2, to=3-4]
\end{tikzcd}\]
Then by filteredness of $(\int x^*)^{\op}$ there is some $ u_1 : c\rightarrow c_1 $, $u_2 : c \rightarrow c_2$ such that $ F(u_1)(a_1) = F(u_2)(c_2) = a $, so that we have a factorization
\[\begin{tikzcd}
	{F(c_1)} & {F(c)} & {F(c_2)} \\
	& {x^*F}
	\arrow["{\rho^{(c_1,a_1)}_x}"', from=1-1, to=2-2]
	\arrow["{\rho^{(c_2,a_2)}_x}", from=1-3, to=2-2]
	\arrow["{F(u_1)}", from=1-1, to=1-2]
	\arrow["{F(u_2)}"', from=1-3, to=1-2]
	\arrow["{\rho^{(c,a)}_x}"{description}, from=1-2, to=2-2]
\end{tikzcd}\]
Moreover, choosing some $ (b,m)$ inducing $ n_2$ with $ m$ in $\mathcal{V}$, this $(c,a)$ can be chosen to be also a solution provided by \cref{Anel lemma} applied to the two following situations
\[\begin{tikzcd}
	{K'} & {F(c_1)} \\
	{B_2} & {x^*F} & {B_1} \\
	{{\rho^{(c_2,a_2)}_x}_*B_2} && {{\rho^{(c_1,a_1)}_x}_*B_1}
	\arrow["{{\rho^{(c_1,a_1)}_x}_*n_1}"{description}, from=2-2, to=3-3]
	\arrow["f"', from=3-1, to=3-3]
	\arrow["{\rho^{(c_1,a_1)}_x}"', from=1-2, to=2-2]
	\arrow["{n_1}", from=1-2, to=2-3]
	\arrow[from=2-3, to=3-3]
	\arrow["{m_*b}"', from=1-1, to=2-1]
	\arrow["{{n_2}_*{\rho^{(c_2,a_2)}_x}}"', from=2-1, to=3-1]
\end{tikzcd} \hskip 1cm \begin{tikzcd}
	K \\
	{K'} & {F(c_1)} \\
	{B_2} & {x^*F} & {B_1} \\
	{{\rho^{(c_2,a_2)}_x}_*B_2} && {{\rho^{(c_1,a_1)}_x}_*B_1}
	\arrow["{{\rho^{(c_1,a_1)}_x}_*n_1}"{description}, from=3-2, to=4-3]
	\arrow["f"', from=4-1, to=4-3]
	\arrow["{\rho^{(c_1,a_1)}_x}"', from=2-2, to=3-2]
	\arrow["{n_1}", from=2-2, to=3-3]
	\arrow[from=3-3, to=4-3]
	\arrow["{m_*b}"', from=2-1, to=3-1]
	\arrow["{{n_2}_*{\rho^{(c_2,a_2)}_x}}"', from=3-1, to=4-1]
	\arrow["m"', from=1-1, to=2-1]
\end{tikzcd}\]
so that we can infere the existence of some arrow $g$ as below
\[\begin{tikzcd}
	& K \\
	{K'} & {F(c_2)} && {F(c_1)} \\
	{B_2} && {F(c)} && {B_1} \\
	& {F(u_2)_*B_2} & {x^*F} & {F(u_1)_*B_1} \\
	& {{\rho^{(c_2,a_2)}_x}_*B_2} && {{\rho^{(c_1,a_1)}_x}_*B_1}
	\arrow["{n_1}", from=2-4, to=3-5]
	\arrow["{n_2}"{description}, from=2-2, to=3-1]
	\arrow["f"{description}, from=5-2, to=5-4]
	\arrow["{{\rho^{(c_2,a_2)}_x}_*n_2}"{description}, from=4-3, to=5-2]
	\arrow["{{\rho^{(c_1,a_1)}_x}_*n_1}"{description}, from=4-3, to=5-4]
	\arrow[""{name=0, anchor=center, inner sep=0}, "{F(u_2)_*n_2}"{description}, from=3-3, to=4-2]
	\arrow[""{name=1, anchor=center, inner sep=0}, "{F(u_1)_*n_1}", from=3-3, to=4-4]
	\arrow[""{name=2, anchor=center, inner sep=0}, "{F(u_2)}"{description}, from=2-2, to=3-3]
	\arrow[from=3-1, to=4-2]
	\arrow[""{name=3, anchor=center, inner sep=0}, "{F(u_1)}"{description}, from=2-4, to=3-3]
	\arrow[from=3-5, to=4-4]
	\arrow[from=4-4, to=5-4]
	\arrow[from=4-2, to=5-2]
	\arrow[from=3-3, to=4-3]
	\arrow[from=1-2, to=2-2]
	\arrow[from=2-1, to=3-1]
	\arrow["m"', from=1-2, to=2-1]
	\arrow["\lrcorner"{anchor=center, pos=0.125, rotate=90}, draw=none, from=3-1, to=1-2]
	\arrow["g"{description}, dashed, from=2-1, to=4-4, crossing over]
	\arrow["\lrcorner"{anchor=center, pos=0.125, rotate=90}, draw=none, from=4-2, to=2]
	\arrow["\lrcorner"{anchor=center, pos=0.125, rotate=180}, draw=none, from=4-4, to=3]
	\arrow["\lrcorner"{anchor=center, pos=0.125, rotate=90}, shift right=1, draw=none, from=5-2, to=0]
	\arrow["\lrcorner"{anchor=center, pos=0.125, rotate=180}, shift left=1, draw=none, from=5-4, to=1]
\end{tikzcd}\]
inducing an arrow 
\[ {\rho^{(c_2,a_2)}_x}_*n_2 \rightarrow {\rho^{(c_1,a_1)}_x}_*n_1 \] 
in $\mathcal{V}_{F(c)}$ inducing $f$ in $ \mathcal{V}_{x^*F}$. Hence the fullness.\\

Now the pseudocolimit is equipped with an induced topology
\[  \langle \bigcup_{(c,a) \in \int x^*} J_{F(c)} \rangle \]
and one would have to prove that any $J_{ x^*F}$-covering family is a covering family of this induced topology and conversely: but the strategy is exactly the same as in \cref{pro-etale spectrum}, so we dispense ourselves from this redundant work. We end up with an equivalence of sites, and hence, with an equivalence of the induced topoi as desired.
\end{proof}

\section{Naturality of the spectral adjunction}

For the sake of completeness, we treat also here the functoriality of the spectral construction relative to transformations of geometries as described at \cref{morphisms of geometries}. In fact most of the geometries presented in the examples section are related to each other through such transformations, which induce such comparisons functor -- some of them where individually investigated, as the so called \emph{Belluce functor} we recover as a comparison map at \cref{Belluce}. 

\subsection{Comparison of spectral sites}

\begin{division}\label{mate for transformations of geometries}
 For a transformation of geometries $ \Phi$ we want to describe explicitly the mate 
\[\begin{tikzcd}
	{\displaystyle\int \Phi \Spec_2} & {\displaystyle\Spec_1 \int \Phi }
	\arrow["{\sigma_\Phi}", Rightarrow, from=1-1, to=1-2]
\end{tikzcd}\]
 where $ \int \Phi$ denotes the morphism of opfibrations between the bicategories of modelled topoi for the respective geometries and also its restriction to the bicategories of locally modelled topoi.\\
 
 For $ (\mathcal{F}, F)$ a $ \mathbb{T}_2$-modelled topos, recall first that we have in each $c$ in $\mathcal{C}_\mathcal{F}$ an equality $ \Phi[\mathcal{F}]_*F(c) = \Phi[\mathcal{S}]_*(F(c))$. Then observe that for $ (c, n)$ in the site $ \mathcal{V}^2_{\Phi F}$, any choice of pushout square for $n$ in $ \mathbb{T}_2[\mathcal{S}]$
\[\begin{tikzcd}
	K & {K'} \\
	{\Phi[S]_* (F(c))} & B
	\arrow["a"', from=1-1, to=2-1]
	\arrow["n"', from=2-1, to=2-2]
	\arrow["m", from=1-1, to=1-2]
	\arrow[from=1-2, to=2-2]
	\arrow["\lrcorner"{anchor=center, pos=0.125, rotate=180}, draw=none, from=2-2, to=1-1]
\end{tikzcd}\]
corresponds uniquely to a pushout square in $ \mathbb{T}_1[\mathcal{S}]  $ 
\[\begin{tikzcd}
	{\Phi[S]^*( K)} & {\Phi[S]^*(K')} \\
	{ F(c)} & {\overline{a}_*\Phi[S]^*(K')}
	\arrow["{\overline{a}}"', from=1-1, to=2-1]
	\arrow["{\overline{a}_*\Phi[S]^*(m)}"', from=2-1, to=2-2]
	\arrow["{\Phi[S]^*(m)}", from=1-1, to=1-2]
	\arrow[from=1-2, to=2-2]
	\arrow["\lrcorner"{anchor=center, pos=0.125, rotate=180}, draw=none, from=2-2, to=1-1]
\end{tikzcd}\]
But recall that restricting back $ \Phi[\mathcal{S}]^*$ to finitely presented objects of $ \mathbb{T}_1[\mathcal{S}]$ returns finitely presented objects in $\mathbb{T}_2[\mathcal{S}]$ $ \Phi[S]^*( K)$, and sends finitely presented $ \mathcal{V}_2$ etale maps to finitely presented $ \mathcal{V}_1$-etale maps, so that $ \Phi^*[\mathcal{S}](m)$ is in $ \mathcal{V}_2$, and $ \overline{ a} _*\Phi^*[\mathcal{S}](m)$ is in $ \mathcal{V}^2_F$. This process defines hence a functor
\[\begin{tikzcd}[row sep=tiny]
	{\mathcal{V}^1_{\Phi[\mathcal{F}]_*F}} && {\mathcal{V}^2_{F}} \\
	{(c,n)} && {(c,\overline{a}_*\Phi[S]^*m)}
	\arrow["{\Phi_F}", from=1-1, to=1-3]
	\arrow[shorten <=14pt, shorten >=14pt, maps to, from=2-1, to=2-3]
\end{tikzcd}\]
\end{division}

\begin{lemma}
The functor $ \Phi_F$ defines a morphism of site $ ({\mathcal{V}^1_{\Phi[\mathcal{F}]_*F}}^{\op}, {J^1_{\Phi[\mathcal{F}]_*F}}) \rightarrow ({\mathcal{V}^2_{F}}^{\op}, {J^2_{F}})$ between the spectral sites. 
\end{lemma}

\begin{proof}
We can process by proving that $ \Phi_F$ sends separately horizontal families and vertical families of ${J^1_{\Phi[\mathcal{F}]_*F}}$ to horizontal and vertical families in ${J^2_{F}}$. For horizontal families, this is just a consequence of the definition of $ \Phi[\mathcal{F}]_*F$. For vertical families, let $c$ be in $ \mathcal{C}_\mathcal{F}$ and $ (l_i : n \rightarrow n_i)_{i \in I}$ in $\mathcal{V}^1_{\Phi[\mathcal{S}]_* (F(c))}$; then we can find some $a : K \rightarrow \Phi[\mathcal{S}]_* (F(c))$ and a family $(k_i : m \rightarrow m_i$ in $ J_1(K)$ such that all the squares in the diagram below are pushouts
\[\begin{tikzcd}
	K & {K'} \\
	{\Phi[\mathcal{S}]_*(F(c))} & B & {K_i} \\
	&& {B_i}
	\arrow["m", from=1-1, to=1-2]
	\arrow["{k_i}", from=1-2, to=2-3]
	\arrow["a"', from=1-1, to=2-1]
	\arrow[from=1-2, to=2-2]
	\arrow[from=2-3, to=3-3]
	\arrow["{l_i}"{description}, from=2-2, to=3-3]
	\arrow["{n_i}"', from=2-1, to=3-3]
	\arrow["n"{description}, from=2-1, to=2-2]
	\arrow["{m_i}"{description, pos=0.4}, from=1-1, to=2-3, crossing over]
	\arrow["\lrcorner"{anchor=center, pos=0.125, rotate=180}, draw=none, from=2-2, to=1-1]
	\arrow["\lrcorner"{anchor=center, pos=0.125, rotate=180}, shift right=2, draw=none, from=3-3, to=2-2]
\end{tikzcd}\]
Now again from the adjunction we can consider the following pushouts
\[\begin{tikzcd}[row sep=large]
	{\Phi[\mathcal{S}]^*(K)} && {\Phi[\mathcal{S}]^*(K')} \\
	{F(c)} && B & {\Phi[\mathcal{S}]^*(K_i)} \\
	&&& {B_i}
	\arrow[""{name=0, anchor=center, inner sep=0}, "{\Phi[\mathcal{S}]^*(m)}", from=1-1, to=1-3]
	\arrow["{\Phi[\mathcal{S}]^*(k_i)}", from=1-3, to=2-4]
	\arrow["{\overline{a}}"', from=1-1, to=2-1]
	\arrow[from=1-3, to=2-3]
	\arrow[from=2-4, to=3-4]
	\arrow["{\overline{a}_*\Phi[\mathcal{S}]^*(k_i)}"{description}, from=2-3, to=3-4]
	\arrow["{\overline{a}_*\Phi[\mathcal{S}]^*(m_i)}"', from=2-1, to=3-4]
	\arrow["{\overline{a}_*\Phi[\mathcal{S}]^*(m)}"{description}, from=2-1, to=2-3]
	\arrow["{\Phi[\mathcal{S}]^*(m_i)}"{description, pos=0.4}, from=1-1, to=2-4, crossing over]
	\arrow["\lrcorner"{anchor=center, pos=0.125, rotate=180}, shift right=2, draw=none, from=3-4, to=2-3]
	\arrow["\lrcorner"{anchor=center, pos=0.125, rotate=180}, draw=none, from=2-3, to=0]
\end{tikzcd}\]
But from the definition of a transformation of geometry, we know that $ \Phi$ sends $ J_1$-covering families to $ J_2$-covering families, so that the triangle above $ \Phi[\mathcal{S}]^*(k_i) : \Phi[\mathcal{S}]^*(K') \rightarrow \Phi[\mathcal{S}]^*(K_i)  $ is a $ J_2$-cover, exhibiting $ \overline{a}_*\Phi[\mathcal{S}]^*(k_i)$ as a $J^2_{\Phi[\mathcal{S}]^*(F(c))}$-cover of $ \Phi_F(c, n)$. Hence $ \Phi_F$ sends covers to covers. 
 \end{proof}

All of this provides us with a geometric morphism 
\[\begin{tikzcd}
	{\Spec_2(F)} & {\Spec_1(\Phi[\mathcal{F}]_*(F))}
	\arrow["{\Sh(\Phi_F)}", from=1-1, to=1-2]
\end{tikzcd}\]

\subsection{Comparison of structure sheaves}

\begin{division}
Now let us describe the induced morphism of sheaves between the corresponding structure sheaves. Observe that for any $ (c,n)$ and any pair $ (a,m)$ inducing $n$, recall that the $ \overline{a}$ produced by the adjunction $ \Phi[\mathcal{S}]^* \dashv \Phi[\mathcal{S}]_* $ is obtained as the composite
\[\begin{tikzcd}
	& {\Phi[\mathcal{S}]^*(K)} \\
	{\Phi[\mathcal{S}]^*\Phi[\mathcal{S}]_*(F(c))} \\
	& {F(c)}
	\arrow["{\Phi[\mathcal{S}]^*(a)}"', from=1-2, to=2-1]
	\arrow["{\epsilon_{F(c)}}"', from=2-1, to=3-2]
	\arrow["{\overline{a}}", from=1-2, to=3-2]
\end{tikzcd}\]
and conversely $ a$ can be retrieved from $ \overline{ a}$ as the composite 
\[\begin{tikzcd}
	& K \\
	{\Phi[\mathcal{S}]_*\Phi[\mathcal{S}]^*(K)} \\
	& {\Phi[\mathcal{S}]_*(F(c))}
	\arrow["{\eta_K}"', from=1-2, to=2-1]
	\arrow["a", from=1-2, to=3-2]
	\arrow["{\Phi[\mathcal{S}]_*(\overline{a})}"', from=2-1, to=3-2]
\end{tikzcd}\]
Now, while $ \Phi[\mathcal{S}]_*$ does not preserve pushouts, we can consider the image of the pushout $ \Phi_F(n) = \overline{a}_*\Phi[\mathcal{S}]^*(m)$ along $ \Phi[\mathcal{S}]_*$, and we see that the expression of $n$ as a pushout induces a factorization as depicted below
\[\begin{tikzcd}
	& K &&& {K'} \\
	{\Phi[\mathcal{S}]_*\Phi[\mathcal{S}]^*(K)} \\
	& {\Phi[\mathcal{S}]_*\Phi[\mathcal{S}]^*(K')} \\
	{\Phi[\mathcal{S}]_*(F(c))} && B \\
	& {\Phi[\mathcal{S}]_*(\overline{a}_*\Phi[\mathcal{S}]^*(K'))}
	\arrow["m", from=1-2, to=1-5]
	\arrow["{\eta_K}"{description}, from=1-2, to=2-1]
	\arrow["{\eta_{K'}}"{description}, from=1-5, to=3-2]
	\arrow["a"{description, pos=0.7}, from=1-2, to=4-1]
	\arrow[from=1-5, to=4-3]
	\arrow["n"{description, pos=0.3}, from=4-1, to=4-3]
	\arrow["{\Phi[\mathcal{S}]_*(\overline{a})}"{description}, from=2-1, to=4-1]
	\arrow["{\Phi[\mathcal{S}]_*(\overline{a}_*\Phi[\mathcal{S}]^*(m))}"', from=4-1, to=5-2]
	\arrow[from=3-2, to=5-2, crossing over]
	\arrow["\lrcorner"{anchor=center, pos=0.125, rotate=180}, draw=none, from=4-3, to=1-2]
	\arrow["{\Phi[\mathcal{S}]_*\Phi[\mathcal{S}]^*(m)}"{description}, from=2-1, to=3-2]
	\arrow[dashed, from=4-3, to=5-2]
\end{tikzcd}\]
This dashed arrow $ B \rightarrow \Phi[\mathcal{S}]_*(\overline{a}_*\Phi[\mathcal{S}]^*(K'))$ can be shown to be natural in $n$. Moreover, the codomain functor $ \cod$ lives in the presheaf topos $\widehat{{\mathcal{V}^2_{F}}^{\op}}$, and $ \Phi[\mathcal{S}]_*(\overline{a}_*\Phi[\mathcal{S}]^*(K'))$ coincides with $ \cod \, \Phi_F (c,n) $ which is the value at $ (c,n)$ of the direct image $ \Sh(\Phi_F)_* \cod$. Moreover we have 
\[ \Phi[\mathcal{S}]_*(\overline{a}_*\Phi[\mathcal{S}]^*(m)) = (\Phi[\widehat{{\mathcal{V}^2_{F}}^{\op}}]_*\cod \, \Phi_F)(c,n) \]
Hence we are provided with a morphism of presheaves \[ \cod \Rightarrow \Phi[\widehat{{\mathcal{V}^2_{F}}^{\op}}]_*\cod \, \Phi_F \]
But now, naturality of $ \Phi[-]$ at the inclusion $ \Spec_2(F) \hookrightarrow \widehat{{\mathcal{V}^2_{F}}^{\op}}$ gives the commutation 
\[\begin{tikzcd}
	&& {{\mathbb{T}_{J_2}}[\mathcal{\Spec}_2]} & {{\mathbb{T}_{J_2}}[\widehat{{\mathcal{V}^2_{F}}^{\op}}]} \\
	{} && {{\mathbb{T}_{J_1}}[\mathcal{\Spec}_2]} & {{\mathbb{T}_{J_1}}[\widehat{{\mathcal{V}^2_{F}}^{\op}}]}
	\arrow["{\Phi[\mathcal{\Spec}_2]_*}"', from=1-3, to=2-3]
	\arrow["{\mathfrak{a}_{J^2_F}}", from=1-3, to=1-4]
	\arrow["{\Phi[\widehat{{\mathcal{V}^2_{F}}^{\op}}]_*}", from=1-4, to=2-4]
	\arrow["{\mathfrak{a}_{J^2_F}}"', from=2-3, to=2-4]
\end{tikzcd}\]
Therefore we have an isomorphism of sheaves
\begin{align*}
     \mathfrak {a}_{J^2_F}\Phi[\widehat{{\mathcal{V}^2_{F}}^{\op}}]_*\cod \,\Phi_F
     &\simeq  \Phi[\mathcal{\Spec}_2]_* \mathfrak {a}_{J^2_F} \cod \,\Phi_F \\
     &\simeq  \Phi[\mathcal{\Spec}_2]_* {\widetilde{ F}}^2 \,\Phi_F \\
     &\simeq \Sh(\Phi_F)_* \Phi[\mathcal{\Spec}_2]_* {\widetilde{ F}}^2
\end{align*}
Hence the morphism of presheaves above provides us with a canonical morphism of sheaves in $ \Spec_1(\Phi[\mathcal{F}]_*F)$
\[\begin{tikzcd}
	{\widetilde{\Phi[\mathcal{F}]_*F}^1} & {\Sh(\Phi_F)_* \Phi[\mathcal{\Spec}_2]_* {\widetilde{ F}}^2}
	\arrow["{\sigma_\Phi^\sharp}", from=1-1, to=1-2, Rightarrow]
\end{tikzcd}\]
Moreover by adjunction we know this corresponds uniquely to a morphism of sheaves in $\Spec_2(F)$ 
\[ \begin{tikzcd}
	{\Sh(\Phi_F)^*\widetilde{\Phi[\mathcal{F}]_*F}^1} & { \Phi[\mathcal{\Spec}_2]_* {\widetilde{ F}}^2}
	\arrow["{\sigma_\Phi^\flat}", from=1-1, to=1-2, Rightarrow]
\end{tikzcd} \]
whose component at a point $ (x, \xi)$ of $ \Spec_2(F)$ is the local part of the $(\Et_1, \Loc_1)$-factorization 
\[\begin{tikzcd}
	{\Phi[\mathcal{S}]_*(x^* F) } && {\Phi[\mathcal{S}]_*(A_\xi) } \\
	& {A_{\Phi[\mathcal{S}]_*(\xi) }}
	\arrow["{\Phi[\mathcal{S}]_*(\xi) }", from=1-1, to=1-3]
	\arrow["{n^1_{\Phi[\mathcal{S}]_*(\xi) }}"', from=1-1, to=2-2]
	\arrow["{u^1_{\Phi[\mathcal{S}]_*(\xi) }}"', from=2-2, to=1-3]
\end{tikzcd}\]
\end{division} 

\section{Examples of geometries and spectra}

We give here several examples from literature, most of them are known at least in a form or another. Better known are examples from ring theory, as the well known Zariski geometry for commutative rings involving local rings, ring localizations and conservative morphisms. But this is not the only geometry for commutative rings: other examples includes \begin{itemize}
    \item the \emph{étale geometry} with strictly henselian local rings, etale morphisms as etale maps (whence the name) and henselian morphisms as local maps;
    \item the \emph{Pierce geometry} with connected rings, localization at idempotent as etale maps and connected morphisms as local maps; 
    \item the \emph{integral domain geometry} with integral domains as local objects, épimorphisms as étales maps and monomorphisms as local map (the only one without a sheaf-representation theorem). 
\end{itemize}   
However, those examples are fairly known and well treated in accessible sources as \cite{Anel}, or also \cite{johnstone1982stone}[Chapter V] and \cite{Coste}, and were the primary motive of investigation initiating the topic in \cite{hakim}. We hence prefer to focus on another class of examples arising from Stone-like dualities, which are lesser known under this form (in particular, the treatment of Jipsen-Moshier duality as a geometry is new to our knowledge. \\

In general, Stone duality is considered from the point of view of concrete dualities. However it is possible to provide a spectral account of it, or at least to reconstruct the Stone dual of a distributive lattice as the spectra of a certain geometry. But in this process the stone dual is also endowed with a structure sheaf which is not considered in classical Stone duality, and the spectrum will be adjoint to the global section functor rather than the ``compact open" functor, which, in Stone duality, returns the basis of compact open set of Stone spaces to which it is hence restricted.\\

Those Stone-like duality, altogether with those for commutative rings (except the integral domain geometry) are subsumed conjointly with the geometry for \emph{commutatives rigs} described at \cref{CMZB geometry}. We also include the geometry behind the construction of \cite{DububPoveda}.\\

While the Stone-like dualities are usually considered for set-valued ordered structures corresponding to Lindenbaum-Tarski algebras of propositional theories, the spectral construction extends beyond set valued model and allows to construct spectra of sheaves of lattices, boolean algebras, or Heyting algebras: this allows in particular a new way to construct the classifying topoi of theories in various fragments of first-order logics by applying the construction to such sheaves living in the classifying topos of the theory of objects.\\

Finally we apply the construction to special strains of hyperdoctrines satisfying the sheaf condition, and in particular to the subobject hyperdoctrine. 

\subsection{Jipsen-Moshier geometry for semilattices}\label{Jipsen-Moshier}

Jipsen-Moshier duality is an example of an geometry without specification of local objects, so that any object is actually local. It plays however a central role amongst propositional geometries, which all are in fact refinement it this one. 

\begin{division}
In a space $X$ with specialization order $ \sqsubseteq$, a compact open filter is an upset $ F$ for $ \sqsubseteq$ which is both open and compact. For $ X$ denote $ \textbf{KOF}(X)$ its set of compact open filters. A point $x$ is basic compact open if $ \uparrow$ is a compact open filter. \emph{Hofmann-Mislove-Stralka spaces} -- for short, \emph{HMS spaces} -- are sober spaces $X$ such that $\textbf{KOF}(X)$ is a basis closed under finite intersection. \\

Denote $ \textbf{HMS}$ the category of HMS spaces with continuous maps $f : X \rightarrow Y $ such that $ f^{-1}$ restricts to $ \textbf{KOF}(Y)  \rightarrow \textbf{KOF}(X) $. Any compact open filter of a HMS space has a focal point. Moreover, in a HMS space, any point is a directed join of basic compact open points. The specialization order makes $ (X, \sqsubseteq)$ a complete lattice, and there are simultaneously an initial point and terminal point in such a $X$.
\end{division}

\begin{division}
Then, one can recover the spectra of Jispen and Moshier duality for $\wedge$-semilattices with unit: 
              \[ \MSLat^{\op} \simeq \textbf{HMS}  \]
              
Define $\Spec(S) = (\mathcal{F}_S, \downarrow S)$. For $X$ HMS, $ \textbf{KOF}(X)$ is a $ \wedge$-semilattice. Moreover, for a semillatice one has $ S \simeq \textbf{KOF}(Spec(S))$, while for a HMS space, one has $ X = \Spec(\textbf{KOF}(X))$. \\
 
If $ S$ is a $\wedge$-semilattice, $ \mathcal{F}_S \simeq (\mathcal{I}^{prime}_S)^{\op}$ is a complete lattice. Moreover any filter of a $\wedge$-semilattice is trivially prime. This says that $ \Spec(S) = \MSLat[S, 2]$.
\end{division}

\begin{division}We recall here the admissibility structure for this geometry. Our ambient locally finitely presentable category is $ \MSLat$. Recall that $\wedge$-semilattices are not 1-regular, that is, for any given filter there are several congruences whose class in 1 is this filter; as the poset of congruence of a semilattice is complete and cocomplete, for any filter $F$, there is a minimal congruence $ \theta^{\min}_F$ such that $ [1]_{\theta^{\min}_F } = F$ -- in particular, for each principal filter $ \uparrow a$, there is a smallest congruence identifying $a$ with 1, denoted $ \theta_{(a,1)}$.

Then for etale maps one can choose \emph{1-minimal quotients}: those are morphisms $ A \twoheadrightarrow A/\theta$ with $ \theta$ minimal amongst congruences whose class in 1 is $ [ 1]_\theta$. One can easily prove this class is closed by composition and colimits, contains iso and is left-cancellative. For a semilattice $ S$, finitely presented 1-minimal quotients are of the form $ S \rightarrow S/\theta_{(a,1)}$.

Then it can be shown that one has a factorization system $(1\hy\textbf{Quo}, 1\hy\textbf{Cons})$ on $ \MSLat$, where $1\hy\textbf{Cons}$ is the class of maps that reflect the top element, that is those $ f: S \rightarrow S'$ such that $ f^{-1}(1) = \{ 1 \}$. The factorization is obtained from the quotient throuhg the minimal congruence associated to the pre-image of the top element (which is a filter):
\[\begin{tikzcd}
	S && T \\
	& {S/\theta^{\min}_{f^{-1}(1)}}
	\arrow["f", from=1-1, to=1-3]
	\arrow["{q_{\theta^{\min}_{f^{-1}(1)}}}"', from=1-1, to=2-2]
	\arrow["{r_f}"', from=2-2, to=1-3]
\end{tikzcd}\]\end{division}
 
\begin{division}
Then the geometry associated to Jispen and Moshier duality can be constructed as follows: take $(1\hy\textbf{MinQuo}, \; 1\hy\textbf{Cons}) $ as factorization system on $ \MSLat$, and no topology. Finitely presented etale maps under a $ \wedge$-semilattice $S$ are principal 1-minimal quotients and they always define a basic compact open point. Conversely for any filter $F$ one has a minimal quotient \[S \twoheadrightarrow S/\theta^{\min}_F = \underset{a \in F}{\colim}\; S/\theta(a,1) \]
where $\theta^{\min}_F$ is the congruence in $F$ given as $ \theta^{\min}_F = \bigcap \{ \theta \mid [1]_\theta = F \}$. This defines a point of $\Spec(S)$, and any saturated compact actually has a focal point. Moreover observe that $S$ is itself the spectral site as $ \mathcal{V}(S) = S^{\op}$ as consisting of all the principal filters. 

Finally, sheaf representation is trivial as the topology here is the trivial topology, for which the codomain functor already is a sheaf. 
\end{division}

\subsection{Stone geometry for distributive lattices}\label{Stone}

As for Stone duality, this construction can be done in two manners, equipping the Stone dual with either the \emph{Zariski} or the \emph{coZariski} topology: this depends on the way we define the admissibility structure. Such construction was investigated in \cite{brezuleanu1969duale}. \\

\begin{division}We recall here the admissibility structure for Zariski. Our ambient locally finitely presentable category is $ \textbf{DLat}$, the category of bounded distributive lattices. For etale maps one can choose again {1-minimal quotients}. Similarly as for the case of $\wedge$-semilattices, finitely presented 1-minimal quotients of a lattice $ D$ are of the form $ D \rightarrow D/\theta_{(a,1)}$ and hence are in one-one correspondence with the dual lattice of $D$. Moreover, one can check that $\DLat$ inherits the $(1\hy\textbf{Quo}, 1\hy\textbf{Cons})$ factorization system.\\

Then define the category $ 1\hy\textbf{LocDLat}^{1\hy\textbf{Cons}}  $ having:\begin{itemize}
        \item as objects \emph{local distributive lattices}, where $\{ 1 \} $ is prime filter
        \item as morphisms 1\hy\textbf{Cons}ervative morphisms $f$.
    \end{itemize}
    Then $ 1\hy\textbf{LocDLat}^{1\hy\textbf{Cons}} \hookrightarrow \textbf{DLat} $ is a multireflection. But we can also axiomatize the category of local lattices as follows:
    define $ J_{Zar}$ on $ \textbf{DLat}_{fp}^{op}$ generated by $(f_i : D \twoheadrightarrow D/\theta_i)  $  such that $ \bigcap_{i \in I} \theta_i = \Delta_D  $. Now observe that a distributive lattice $D$ is $ J_{Zar}$-local if an only if $\{ 1 \}$ is a prime ideal, that is, $D $ has a minimal point $ L \rightarrow 2$ sending any $a \neq 1$ on 0. Local lattices are the points of the topos $  \Sh(\textbf{DLat}_{fp}^{\op}, J_{Zar})$.
  \end{division} 
  
  \begin{division}
The associated Diers spectrum for $D$ is 
     \[ (\Spec(D) = (\mathcal{F}^{Prime}_D, \tau^{Zariski}_D), \widetilde{D}) \]
     with $\widetilde{D} $ defined on the basis as $ \widetilde{D}(U^{coZar}_a) = D/\theta_{(a,0)} $ for any $a \in D$. Then we have an adjunction 
     \[ \begin{tikzcd}
\textbf{DLat}^{op} \arrow[bend right = 20]{rr}[swap]{\Spec^{Zar}} & \quad \quad  \perp & \quad \textbf{DLat}^*\hy\textbf{Spaces} \arrow[bend right = 20 ]{ll}[swap]{\Gamma}
\end{tikzcd} \]
Then one recovers the Stone spaces as the underlying spaces of affine $\textbf{DLat}$-spaces.\\
    
The spectral site of a distributive lattice $D$ is $(Zar_D^{op},J_{Zar}(D))  $ where $ Zar_D$ consists of finitely presented 1-minimal quotients of $ D$; in particular for a filter $ F$ of $D$, a factorization as below 
      $ D \twoheadrightarrow D/\theta_{(a,1)}$, and a factorization
      \[ 
\begin{tikzcd}[row sep=small]
D \arrow[d, "q_a"', two heads] \arrow[r, "q_F", two heads]   & D/\theta_F \\
{D/\theta_{(a,1)}} \arrow[ru, two heads, dashed] &           
\end{tikzcd} \]
expresses the fact that $ a $ lies in $F$. \\

Now, at a distributive lattice $D$, the induced topology $J_{Zar, D}$ on $D$ consists of finite families $ (a_i \leq a)_{i \in I}  $ with $ \bigvee a_i = 1 $, or seen as a poset of quotient under $D$. Being made of epi, $Zar_D$ is a poset and $ Zar_D \simeq D^{\op}$ and we have $ D \hookrightarrow \tau_{Zar}$. $ J_{Zar}(D)$ coincides with the coherent topology on $D$. The spectrum is spatial and is equipped with the Zariski topology which is the frame of filters $ \mathcal{F}_D$.\\

Opens of Zariski topology form the frame $ \tau_{Zar} = \Sh(Zar_D^{op}, J_{Zar}(D)) = \Sh(I_D)$: Zariski opens correspond to ideals of $ D$ and $ D \hookrightarrow I_D$ is a base of compact open of Zariski topology. On the other side, $D \hookrightarrow (\mathcal{F}_D)^{op}$, but a filter $F$ of $D$ defines a filtered diagram whose colimit is the 1-minimal quotient at $F$
      \[S \twoheadrightarrow S/\theta^{\min}_F = \underset{a \in F}{\colim}\; S/\theta(a,1) \]
Those filters are \emph{saturated compact } of Zariski topology. A prime filter $ x$ corresponds to the 1-quotient $ D \twoheadrightarrow D/\theta_x$, which is the saturated compact $ \uparrow x$, the focal component in $x$.
\end{division}

\begin{division}
However Zariski geometry is not the only way to retrieve Stone duality. One could have either defined the factorization system $ (0\hy\textbf{MinQuo}, 0\hy\textbf{Cons})$, whith $ 0\hy\textbf{MinQuo}$ as the minimal quotient with a fixed ideal, and $0\hy\textbf{Cons}$ the morphisms $f$ such that $f^{-1}(0)=\{ 0 \}$, and could have taken as local objects those distributive lattices  with $ \{0\}$ prime. \\
      
 The CoZariski site would have been $(coZar_D^{op}, J_{coZar}(D)) $ with $coZar_D$ made of the minimal quotients $ D \twoheadrightarrow D/\theta_{(a,0)} $ and $J_{coZar}(D)$ defined by $ (a_i)_{i \in I}$ such that $\bigwedge_{i \in I} a_i = 0$. \\
  
Then $ D \simeq coZar_D$, so that $ D^{op} \hookrightarrow \tau_{coZar} = \Sh(coZar_D^{op}, J_{coZar}(D)) \simeq (\mathcal{F}_D)^{op} $. Then filters are the closed subsets of coZariski topology. On their sides ideals $ I_D$ define filtered colimits of finitely presented minimal quotients maps in $coZar_D$, hence correspond to saturated compacts. Observe that the existence of both a Zariski and a coZariski topology for a distributive lattice, which is called \emph{Hochster duality}, is in fact nothing but an instance of Isbell duality. 
\end{division}


      

\subsection{Geometry for boolean algebras}\label{boolean stone}
  
The geometry of boolean algebras is a restriction of both Zariski and coZariski geometries, which happens to coincide for a boolean algebra. 

\begin{division}
For an element $a$ in a distributive lattice, a \emph{complement} is an element, unique whenever it exists, $ \neg a$ such that $ a \vee \neg a= 1$, and $ a \wedge \neg a =0$. Observe that a prime ideal $x$ of a distributive lattice $D$ contains always either $a$ or $\neg a$ for any complemented element $a$ in $D$ from $ a \wedge \neg a=0$, while an ideal never contains simultaneously an element and its complement unless it is trivial; the corresponding statements are true for filters and prime filters. Any morphism of distributive lattices preserves complement. \\

A \emph{boolean algebra} is a distributive lattice where any element is complemented. Boolean algebras form a full, reflective subcategory $ \textbf{Bool} \hookrightarrow \textbf{DLat}$. Boolean algebras have better exactness properties than distributive lattices. First, they form a regular categor: any ideal $ I$ (resp. a filter $F$) in a boolean algebra, there is exactly one congruence $ \theta_I$ such that $ [0]_{\theta_I} = I$ (resp. one congruence $ \theta_F$ such that $ [1]_{\theta_F} = F$). In particular, there is no more distinction of 1-minimal or 0-minimal quotient amongst quotient. Moreover, any epimorphism of boolean algebra is a regular quotient, and the category of boolean algebras is balanced. Finally, recall that any prime ideal (or filter) of a boolean algebra is maximal. 
\end{division} 

\begin{division}
Now the only 1-local (or 0-local) boolean algebra is the two elements algebra 2: indeed, if $\{ 1 \}$ is a prime filter in $A$, then $ x \vee y = 1$ implies $ x=1$ or $ y=1$; but then for $ x \vee \neg x=1$, any $x$ in $A$ must actually be 0 or 1. \\

Then the Diers context for both the Zariski and coZariski geometries of distributive lattices reduces on the inclusion 
\[\begin{tikzcd}
	{\{ 2 \}} & {\textbf{Bool}} \\
	{1\hy\textbf{LocDLat}} & {\textbf{DLat}}
	\arrow[hook, from=2-1, to=2-2]
	\arrow[hook, from=1-2, to=2-2]
	\arrow[hook, from=1-1, to=1-2]
	\arrow[hook, from=1-1, to=2-1]
\end{tikzcd}\]
\end{division}

\subsection{Esakia geometry for Heyting algebras}\label{Esakia}   
     
Esakia duality for Heyting algebras, as other Stone-like dualities, was introduced from a concrete duality point of view and without reference to structure sheaves. However in \cite{di2002grothendieck} was introduced a new presentation including sheaf theoretic consideration, and in particular a sheaf representation theorem. This geometry is also a restriction of Zariski geometry to Heyting algebras, being intermediate between Zariski and the geometry of boolean algebras.\\

However, factorization data simplify as they do in boolean algebras. Heyting algebras are 1-regular, so that for any filter $ F$ there is a unique congruence $\theta_F$ such that $ [1]_{\theta_F} = F$. Hence 1-minimal quotient are not distinguished from other quotient, as they are determined from the preimage of $1$. \\

Local maps can still be defined as conservative morphisms, but in fact they coincide then with monomorphisms. Local units are quotient at primes filters.

\begin{remark}Beware that coZariski is not suited for Heyting algebras, as they are not 0-regular; however one could dually define a \emph{coEsakia geometry} for coHeyting algebras, which are 0-regular, and use ideal instead of filters. 
\end{remark}

  \subsection{Castiglioni-Menni-Zuluaga-Botero geometry for rigs}\label{CMZB geometry}

Here we sum up the results of \cite{MenniRig}, which develop a geometry for integral rigs subsuming both Zariski geometry of commutative rings and Stone geometry of distributive lattices.\\

\begin{division}
Recall that a \emph{rig} (aka ``ring without negative") is the data of $ (A,+,\cdot, 0,1)$ such that\begin{itemize}
    \item $ (A_,+,0)$ and $ (A, \cdot, 1)$ are commutative monoids
    \item both of the generic equations hold $ x \cdot 0 = 0$ and $ (x + y) \cdot z = x \cdot z + y \cdot z$.
\end{itemize} 

Any rig can be equipped with a pre-order $ \leq$ where $ x \leq y$ if there exists some $ z$ such that $ x+z=y$. In general this relation is only a pre-order. A rig is said to be \emph{integral} if it satisfies the generic equation $ x + 1 = 1$; then one has $ x \leq 1$ for any $x$. As a consequence, the operation $+$ is idempotent in an integral rig, as
\[ x + x = (1 + 1) \cdot x = 1 \cdot x = x \]
so that the monoid $ (A,+, 0)$ is then a $\vee$-semilattice.
In particular a distributive lattice is an integral rig where $ \cdot$ is also idempotent. \\

Similarly, one can define a commutative ring in the signature of rigs as a rig satisfying the cartesian sequent $ \vdash_{x} \exists y x+y=0 $. By uniqueness of such a solution, the theory of commutative rings is a finite limit extension of the theory of rigs.  
\end{division}

\begin{division}
An element $a$ of a rig $A$ is said to be \emph{invertible} if there exists $ b $ in $A$ such that $ a \cdot b = 1$. Denote as $ \textup{Inv}(A)$ the set of pairs $(a,b)$ of $A$ such that $ a\cdot b =1$: this is the pullback 
\[\begin{tikzcd}
	{\textup{Inv}(A)} & 1 \\
	{A \times A} & A
	\arrow[from=1-1, to=1-2]
	\arrow[from=1-1, to=2-1]
	\arrow["{\cdot }"', from=2-1, to=2-2]
	\arrow["{\ulcorner 1 \urcorner}", from=1-2, to=2-2]
	\arrow["\lrcorner"{anchor=center, pos=0.125}, draw=none, from=1-1, to=2-2]
\end{tikzcd}\]
Moreover we have two canonical projections $ \textup{Inv}(A) \rightarrow A$ which are monomorphisms and code for the same subobject. \\

Then, similarly to the case of rings, one can consider \emph{conservative maps} of rigs (they are directly qualified of local in \cite{MenniRig}) as maps that reflect invertibility, that are those morphisms of rigs $f : A \rightarrow B$ such that one has a pullback
\[\begin{tikzcd}
	{\textup{Inv}(A)} & {\textup{Inv}(B)} \\
	A & B
	\arrow[hook, from=1-1, to=2-1]
	\arrow[from=1-1, to=1-2]
	\arrow[hook, from=1-2, to=2-2]
	\arrow["\lrcorner"{anchor=center, pos=0.125}, draw=none, from=1-1, to=2-2]
	\arrow["f"', from=2-1, to=2-2]
\end{tikzcd}\]
\end{division}

\begin{division}
On the other hand, a \emph{multiplicative submonoid} is a $F \subseteq A$ which is a submonoid of $ (A,\cdot, 1)$. At a multiplicative submonoid $F$ one can consider the \emph{localization} $ n_F : A \rightarrow A[F^{-1}]$ forcing each element of $F$ to become invertible; in particular, one can define for each $a$ in $A$ the \emph{principal localization at $a$} $ n_a : A \rightarrow A[a^{-1}] $ which corresponds to the localization at the multiplicative submonoid $ \{ a^n \mid n \in \mathbb{N} \}$. A \emph{filter} of a rig is a multiplicative submonoid $ F$ such that $ a \in F$ implies that $a+b \in F$ for all $b$ in $A$. An \emph{ideal of rigs} is a $ I \subseteq A$ which is a submonoid of $(A, +, 0)$ and such that for each $a$ in $ A$, $ ax \in I $ whenever $x \in I$. For each multiplicative submonoid $F$ the complement $ A \setminus F$ is an ideal, and conversely. A filter $F $ of rig is prime if $ a + b \in F$ implies that either $ a \in F$ or $b \in F$ ; similarly an ideal of rig is {prime} if $ ab \in I$ implies that either $ a \in I$ or $b \in I$.  \\

When $ A$ is an integral rig, then $ \textup{Inv}(A) \simeq 1$, and localizing at an element amounts to forcing it to become 1, as $a \leq 1 $ implies that $ 1 \leq a^{-1}$ if the latter exists. Then the localization $ A \rightarrow A[a^{-1}]$ is exhibited as the quotient $ q_a : A \rightarrow A/\theta_a$ where $ \theta_a = \{ (x,y) \mid ax = ay \}$; in particular $(a,1) \in \theta_a$. More generally for a mutiplicative submonoid $ F$ one can define the congruence $ \theta_F = \{ (x,y) \mid \exists z \in F, \, xz = yz \}$, and the localization $  A \rightarrow A[F^{-1}] $ coincides with the quotient $ q_F : A \twoheadrightarrow A/\theta_{F}$ contracting $F$ on 1. In particular localization of integral rigs are regular epimorphisms; observe however that, contrarily to ring localizations, they are not monomorphisms. \\

One has a factorization system $ (\textbf{Loc}, \textbf{Cons})$ in the category $\textbf{IRig}$ of integral local rigs, where the factorization is obtained as the quotient 
\[\begin{tikzcd}
	A && B \\
	& {A/f^{-1}(1)}
	\arrow["f", from=1-1, to=1-3]
	\arrow["{q_f}"', two heads, from=1-1, to=2-2]
	\arrow["{u_f}"', from=2-2, to=1-3]
\end{tikzcd}\]
\end{division}

\begin{division}
A rig is \emph{really local} if it satisfies 
\[  0 = 1 \vdash \perp  \hskip1cm \textrm{and} \hskip1cm  x + y =1 \vdash_{x,y} x = 1 \vee y=1 \]
Now observe that really local integral rigs have the gliding property relatively to conservative maps: indeed for a conservative morphism $ u : A \rightarrow B$ with $B$ really local, if one has $ a + b = 1$ in $A$ then either $ a $ or $b$ is in $ u^{-1}(1)$, which is $\{1 \}$. \\

An integral rig is really local if and only if $ \{ 1 \}$ is a prime filter. Observe that in this case $ A\setminus \{ 1 \}$ is an ideal, for it becomes closed under $+$, and it is then automatically prime. In fact from this, a really local rig is a rig where $ A\setminus \{ 1 \}$ defines a unique maximal ideal.\\

A filter $F$ is prime if and only if $ A[F^{-1}] \simeq A/\theta_{F}$ is really local: indeed, if $a + b$ is in $F$, then $ [a + b]_F= [a]_F + [b]_F = 1$, so $A$ is really local if and only if $F$ is prime. \\

Hence we have a stable inclusion
\[\begin{tikzcd}
	{\textbf{ILocRig}^{\textbf{Cons}}} & {\textbf{IRig}}
	\arrow[hook, from=1-1, to=1-2]
\end{tikzcd}\]
where the local units of a rig $A$ are exactly its localization at prime filters. \\ 

Observe that we have a morphism of Diers contexts comparing Stone geometry to the geometry for Rigs
\[\begin{tikzcd}
	{1\hy\textbf{LocLat}^{1\hy\textbf{Cons}}} & {\textbf{DLat}} \\
	{\textbf{ILocRig}^{\textbf{Cons}}} & {\textbf{IRig}}
	\arrow[hook, from=1-1, to=1-2]
	\arrow[hook, from=1-2, to=2-2]
	\arrow[hook, from=1-1, to=2-1]
	\arrow[hook, from=2-1, to=2-2]
\end{tikzcd}\]

As well as a morphism of Diers contexts comparing the geometry of commutative rings to the geometry of rigs
\[\begin{tikzcd}
	{\textbf{LocCRing}^{\textbf{Cons}}} & {\textbf{CRing}} \\
	{\textbf{LocRig}^{\textbf{Cons}}} & {\textbf{Rig}}
	\arrow[hook, from=1-1, to=1-2]
	\arrow[hook, from=1-2, to=2-2]
	\arrow[hook, from=1-1, to=2-1]
	\arrow[hook, from=2-1, to=2-2]
\end{tikzcd}\]

\end{division}

\begin{division}
Finally, it is proved at \cite{MenniRig}[Lemma 4.2] that for any $ a, b$ one has a pullback-pushout square
\[\begin{tikzcd}
	{A[(a+b)^{-1}]} & {A[b^{-1}]} \\
	{A[a^{-1}]} & {A[(ab)^{-1}]}
	\arrow[from=1-1, to=2-1]
	\arrow[from=1-1, to=1-2]
	\arrow[from=1-2, to=2-2]
	\arrow[from=2-1, to=2-2]
	\arrow["\lrcorner"{anchor=center, pos=0.125, rotate=180}, draw=none, from=2-2, to=1-1]
	\arrow["\lrcorner"{anchor=center, pos=0.125}, draw=none, from=1-1, to=2-2]
\end{tikzcd}\]
Hence we have a sheaf representation for integral rig. 
\end{division}

\subsection{Dubuc-Poveda geometry for MV-algebras}

Spectra of MV-Algebras were introduced in \cite{DububPoveda} and later studied in further works on residuated lattices as \cite{GEHRKE2014290}. We list here the main results of this work. 

\begin{division}
MV-algebra can be defined in various choices of signatures, either emphasizing their residuated lattice nature, or from their specific operations, those data being mutually determined. An MV-algebra is the data of $(A, \oplus, 0, \neg) $ such that \begin{itemize}
    \item $ (A,\oplus, 0) $ is a commutative monoid 
    \item $ \neg$ is an involution $ \neg\neg x = x$
    \item  and we have the generic identities 
 \begin{align*}
     x \oplus \neg 0 &= \neg  0\\
     \neg (\neg x \oplus y) \oplus y &= \neg (\neg y \oplus x) \oplus x
 \end{align*}
 \end{itemize}
It is also natural to define the following other operations 
\begin{align*}
    x \odot y &= \neg(\neg x \oplus \neg y)\\
    x \ominus y &= x \odot \neg y \\
    1 &= \neg 0 
\end{align*}
Then we end up with the following identities:
\begin{align*}
    x \oplus y &= \neg(\neg x \odot \neg y)\\
    x \oplus \neg x &= 1 \\
    \neg 1 &= 0 
\end{align*}

Any morphism of MV-algebra preserves automatically those operations and constants, for they are defined from the ones in the signature. Denote as $\textbf{MV}$ the category of MV-algebras and their morphisms. \\

Moreover we can also equip $A$ with an order $ \leq $ with $ x \leq y$ if there exists $ z $ such that $ x \oplus z = y$, which can be chosen as $ y \ominus x$. Then the negation $ \neg$ is order reversing, while the operations are monotonic. Then this order defines a structure of distributive lattice where 
\begin{align*}
    x \vee y &= (x \odot \neg y) \oplus y = (x \ominus y) \oplus y\\
    x \wedge y &= x \odot (\neg x \oplus y)
\end{align*}
In particular one always have $ x \vee y \leq x \oplus y$ and $ x \odot y \leq x \wedge y $. Any morphism of MV-algebras is also a morphism of distributive lattices, and is hence order preserving. The category of MV-algebras has hence in particular a faithful functor $ \iota : \textbf{MV} \rightarrow \textbf{DLat} $ sending a MV-algebra on its undelying distributive lattice. The functor $ \iota$ is moreover a morphism of locally finitely presentable category, for both filtered colimits and limits of MV-algebras and distributive lattices are computed from the underlying set; the left adjoint of $ \iota$ is the functor $ \iota_*$ sending a distributive lattice $ D$ with a generator and relations presentation $ D = \textbf{DLat}[X]/\theta$ to the MV-algebra presented as  
\[ \iota^*D = \textbf{MV}[X]/\overline{\theta} \]
where $ \theta $ is the free MV-congruence generated by $\theta$ in $\textbf{MV}[1]$. We see this functor sends finitely presented distributive lattices on finitely presented algebras. \\
\end{division}

\begin{division}
We can define the \emph{distance} between two elements in a MV-algebra as
\[ d(x,y) = (x \ominus y) \oplus (y \ominus x) \]
In particular one has $ d(x,y) = 0 $ iff $ x = y$. The following identity is always satisfied in an MV-algebra:
\[ (x \ominus y) \wedge (y \ominus x) = 0 \]
Observe that $ x \ominus y = 0 $ iff $x \leq y$, hence $ d(x,y) = y \ominus x$ if $ x \leq y$. In general not all elements are pairwise comparable; a totally ordered MV-algebra is called an \emph{MV-chain}.\\

An \emph{ideal of MV-algebra} of $A$ is a lattice ideal of $(A, \wedge, \vee, 0, 1)$ which is moreover closed under the $\oplus$ operation. For $ S \subseteq A$ a subset, the ideal $ ( S ]$ generated by $ S$ is its the closure under $ \oplus$ of the downset $ \downarrow S$. In particular, an ideal is said to be \emph{principal} if it is of the form $ (a ]$ for some $a $ in $A$. For any pair $ a,b$ we have 
\begin{align*}
    (a, b ] &= (a \oplus b ] = (a \vee b ]\\
    (a \wedge b ] &= (a] \cap (b] 
\end{align*}
Hence any finitely generated ideal is principal. If $ f : A \rightarrow B$ is a morphism of MV-algebras, then $ \ker(f) = f ^{-1}(0)$ is an ideal in $A$. A morphism of MV-algebras is a monomorphism if and only if $\ker(f)=\{0 \}$. A congruence $\theta$ of MV-algebra defines an ideal $ [0]_\theta$; conversely, for any $ I$, define the congruence $ \theta_I$ as 
\[ \theta_I = \{ (a,b) \in A^2 \mid d(a,b) \in I \} \]
In particular MV-algebras are 0-regular, as any congruence is determined by the class of $0$. We denote as $\mathcal{I}^{\textbf{MV}}_A$ the set of MV-ideals of $A$. Moreover, $\textbf{MV}$ is a regular category, and has its (Epi,Mono) factorization obtained as
\[\begin{tikzcd}
	A && B \\
	& {A/\ker(f)}
	\arrow["f", from=1-1, to=1-3]
	\arrow["{e_f}"', two heads, from=1-1, to=2-2]
	\arrow["{m_f}"', hook, from=2-2, to=1-3]
\end{tikzcd}\]
Moreover, we have that for any MV-ideal $ I$ in $A$, $I$ is directed as a subset for it is closed under $\oplus$, and we have a filtered colimit
\[ A/p \simeq \underset{a \in p}{\colim}\, A/(a] \]

Beware that not all lattice ideals are MV-ideals, and not all lattice congruences are MV-congruences. However MV-algebras have reticulation: we can define the relation $ a \simeq b $ if $ (a ] = (b]$, which is a lattice congruence on the underlying lattice $ \iota(A)$, and the quotient $ \beta(A) =  A/\simeq$, called the \emph{Belluce lattice} of $A$, satisfies 
\[ \mathcal{I}^{\textbf{MV}}_A \simeq \mathcal{I}^{\textbf{DLat}}_{\beta(A)} \]
Moreover this construction is functorial as any morphism of MV-algebras induces a unique factorization in $\textbf{DLat}$
\[\begin{tikzcd}
	{\iota(A)} & {\iota(B)} \\
	{\beta(A)} & {\beta(B)}
	\arrow["{\iota(f)}", from=1-1, to=1-2]
	\arrow["{\sigma_B}", two heads, from=1-2, to=2-2]
	\arrow["{\sigma_A}"', two heads, from=1-1, to=2-1]
	\arrow["{\beta(f)}"', dashed, from=2-1, to=2-2]
\end{tikzcd}\]
This defines a functor $ \beta : \textbf{MV} \rightarrow \textbf{DLat}$.
\end{division}

 \begin{division}
 A non trivial MV-ideal $p$ is \emph{prime} if for any $x,y$ either $ x \ominus y$ or $ y \ominus x$ is in $p$. In particular an MV-ideal is prime if and only if the underlying lattice ideal is prime. Moreover, we also have that an MV-ideal is prime if and only if the quotient $ A/I$ is an MV-chain. Conversely, one can prove that $A$ is an MV-chain if and only if $ \{ 0 \}$ is a prime ideal. Prime MV-ideals are stable under inverse image: if $ p$ is prime in $B$ and $ f : A \rightarrow B$ is a morphism of MV-algebra, then $ f^{-1}(p) $ is prime. Any ideal that contains a prime ideal is prime, and for each prime ideal $p$ the set $ \{ I \in \mathcal{I}^{\textbf{MV}}_A \mid p \subseteq I \}$ is totally ordered. Hence the set of prime ideals $Z_A$ of $A$ is a \emph{root system} for the inclusion, that is, satisfies that for any $ p $ in $X_A$, the upset in $p$ is totally ordered.  \\

Hence an MV-chain have the gliding property along monomorphisms of MV-algebras: if one has a monomorphism $ m : A \hookrightarrow L $ with $ L$ an MV-chain, then $A$ itself is an MV-chain. Hence we have a local right adjoint
\[\begin{tikzcd}
	{\textbf{MVC}^{\textbf{Mono}}} & {\textbf{MV}}
	\arrow[hook, from=1-1, to=1-2]
\end{tikzcd}\]
where $ \textbf{MVC}^{\textbf{Mono}}$ is the category of MV-chains and monomorphisms. From the fact that the (epi, mono) factorization is left generated, we know this defines a Diers context.
 \end{division}

 \begin{division}
 Moreover, observe that MV-chains are local as a distributive lattice, while mono are conservative, so that we have a morphism of Diers Contexts
\[\begin{tikzcd}
	{	{\textbf{MVC}^{\textbf{Mono}}}} & {{\textbf{MV}}} \\
	{{0 \hy \textbf{LocDLat}}^{0\hy\textbf{Cons}}} & {{\textbf{DLat}}}
	\arrow[hook, from=1-1, to=1-2]
	\arrow["\iota", hook, from=1-2, to=2-2]
	\arrow[hook, from=1-1, to=2-1]
	\arrow[hook, from=2-1, to=2-2]
\end{tikzcd}\]
 \end{division}

\begin{division}
A local unit under a MV-algebra $ A$ is a quotient at a prime ideal $n_p : A \twoheadrightarrow A/p$, and $A/p$ is an MV-chain. One can construct the Diers space of $A$ pointwisely by defining $ Z_A$ as the set of prime MV-ideals of A; for any $a$, one has $ a \in p$ if and only if one has the factorization
\[\begin{tikzcd}
	A & {A/p} \\
	{A/(a]}
	\arrow["{q_a}"', two heads, from=1-1, to=2-1]
	\arrow["{q_p}", two heads, from=1-1, to=1-2]
	\arrow[two heads, from=2-1, to=1-2]
\end{tikzcd}\]
One can define coZariski basic open 
\[ W_a = \{ p \in Z_A \mid a \in p \}  \]
We have in particular 
\begin{align*}
    W_{a \oplus b} &= W_a \cap W_b \\
    W_0 &= Z_A 
\end{align*}
so that we have a basis $ (W_a)_{a \in A} $ for a topology $ \tau_A$ on $Z_A$; moreover observe that $W_a = W_b$ if and only if $ a \simeq b$, so that the basis is actually $ \beta(A)$. We can extend the definition to arbitrary open by defining $ W_I = \{ p \in Z_A \mid I \subseteq p \}$, and we have 
\[ W_I = \bigcap_{a \in I} W_a \]
so that $ \mathcal{I}_A^{\textbf{MV}} $ indices the set of saturated compact of the topology on $Z_A$. Observe that $ (Z_A, \tau_A)$ coincides with the Stone dual of the Belluce lattice $ (X_{\beta(A)}, \tau^{coZar}_{\beta(A)})$ together with its coZariski topology.
\end{division}

\begin{division}
The structure sheaf can be constructed explicitely as follows. 
Consider the bundle $ p_A : E_A \rightarrow Z_A$ where 
\[ E_A = \coprod_{p \in Z_A} A/p \]
Observe that elements of $A$ defines families of sections
\[\begin{tikzcd}
	A & {\displaystyle{\prod_{p \in Z_A} A/p}} \\
	& {A/p}
	\arrow["{\eta_A}", from=1-1, to=1-2]
	\arrow[two heads, from=1-2, to=2-2]
	\arrow[two heads, from=1-1, to=2-2]
\end{tikzcd}\]
where $ \eta_A$ sends $ a$ to the section $ \widehat{a} = ([a]_p)_{p \in A}$ of $p_A$. Then one equip $E_A$ with the etale topology $ \langle \widehat{a }(W_b) \rangle_{a,b \in A}$, and one just has to define $ \widetilde{A}= \Gamma(p_A)$.\\

Moreover it is known that MV-algebras satisfy the pullback-pushout lemma:
\[\begin{tikzcd}
	{A/(a \wedge b ]} & {A/(a  ]} \\
	{A/( b ]} & {A/(a \oplus b ]}
	\arrow[two heads, from=1-1, to=2-1]
	\arrow[two heads, from=1-1, to=1-2]
	\arrow[two heads, from=1-2, to=2-2]
	\arrow[two heads, from=2-1, to=2-2]
	\arrow["\lrcorner"{anchor=center, pos=0.125}, draw=none, from=1-1, to=2-2]
	\arrow["\lrcorner"{anchor=center, pos=0.125, rotate=180}, draw=none, from=2-2, to=1-1]
\end{tikzcd}\]
Hence the structure sheaf $ \widetilde{A}$ is flabby on the basis and we have a sheaf representation theorem $ A \simeq \Gamma \widetilde{A} $; moreover this means that any MV-algebras is a subdirect product of MV-chains. \\

One can also process in a pointfree way as follows. The distributive lattice $ \beta(A)$ indexing the basis can be equipped with its coherent topology to get a site $ (\beta(A), J_{Zar})$, and we can set $ \Spec(A) = \Sh(\beta(A), J_{Zar})$; for the topology $J_{Zar}$ is finitary, we know $ \Spec(A)$ to have enough points, which are the prime lattice ideals of $\beta(A)$, we know to be the prime MV-ideals of $A$.
\end{division}

\begin{remark}\label{Belluce}
In fact, one can recover the Belluce quotient of $A$ as the comparison transformation
\[\begin{tikzcd}
	{\textbf{MV}} & {\mathbb{T}_{\textbf{MV}, J_{\textbf{MVC}, \mathcal{M}}}\hy\GTop} \\
	{\textbf{DLat}} & {\mathbb{T}_{\textbf{DLat}, J_{\textbf{Zar}, \mathcal{V_{0\hy\textbf{Cons}}}}}\hy\GTop}
	\arrow["{\int \iota}", hook, from=1-2, to=2-2]
	\arrow[""{name=0, anchor=center, inner sep=0}, "\iota"', hook, from=1-1, to=2-1]
	\arrow["{\Spec^{\textbf{MV}}}", from=1-1, to=1-2]
	\arrow[""{name=1, anchor=center, inner sep=0}, "{\Gamma^{\textbf{DLat}}}", from=2-2, to=2-1]
	\arrow["\sigma", curve={height=-6pt}, shorten <=4pt, shorten >=4pt, Rightarrow, from=0, to=1]
\end{tikzcd}\]
\end{remark}

\subsection{Classifying topoi as spectra}

In this subsection we recover the construction of the classifying topos of theory of various fragment of first order logics as spectra of some internal Lindenbaum-Tarksi algebra in the classifying topos of the theory of object -- or its multi-sorted versions. This construction is infact very akin to the externalization process through which one turns the internal locale coding a first order geometric theory into a localic morphism, or also the constructions described in the first sections of \cite{wrigley2023geometric}.  

\begin{division}Let be $ \Sigma$ a signature: then one can define the category $ \Ctx_\Sigma$ of \emph{context} whose objects are finite string of sorted variable $ \overline{x} = x_1^{A_1}, ..., x_n^{A_n}$ and morphisms are relabelling $ \sigma : \overline{x} \rightarrow \overline{y}$, that are maps between string of variables preserving sorts. This is a small category {with finite colimits} (as for instance is the category of finite sets for the one-sorted case), which one can see as the syntactic category for the \emph{theory of $\Sigma$-structures}, whose classifying topos is the presheaf topos $\mathcal{S}[\mathbb{O}_\Sigma] \simeq \widehat{ \Ctx^{\op}}$; a $\Sigma$-structure in a topos $\mathcal{E}$ hence just is a lex functor $ \Ctx_\Sigma^{\op} \rightarrow \mathcal{E} $. For instance, for any one sorted signature, the category of contexts is equivalent to the category of finite sets $ \mathcal{S}_{\omega}$ as a context is essentially a finite set of variables, and one recover the usual classifier of the theory of objects $ \mathcal{S}[\mathbb{O}]$. 
\end{division}

\begin{division}
Then for a finite limit (resp. regular, coherent, first order intuitionistic, resp. first order classical) theory $ \mathbb{T}$ on a signature $ \Sigma$, there is a primary (resp. existential, resp. coherent, resp. first order, resp. boolean) doctrine $ P_\mathbb{T} : \Ctx_\Sigma \rightarrow \MSLat $ (resp $ P_\mathbb{T} : \Ctx_\Sigma \rightarrow \MSLat $ for a regular theory, resp. $ P_\mathbb{T} : \Ctx_\Sigma \rightarrow \DLat $ for a coherent theory, resp. $ P_\mathbb{T} : \Ctx_\Sigma \rightarrow \Heyt $ for a first order intuitionistic theory, resp. $ P_\mathbb{T} : \Ctx_\Sigma \rightarrow \Bool $ for a first order classical theory) sending a context $ \overline{x}$ to the poset of cartesian (resp. regular, resp. coherent resp. intuitionistic, resp. classical) formulas in $\Sigma$ over the context $ \overline{x}$ ordered by $\mathbb{T}$-provable entailment, and a relabelling to the corresponding substitution functor.\\

This poset exactly coincides with the poset of subobjects $ \Sub_{\mathcal{C}_\mathbb{T}}(\{ \overline{x}, \top \}$ in the cartesian (resp. regular, coherent...) syntactic category $ \mathcal{C}^{cart}_\mathbb{T}$ (resp. $ \mathcal{C}^{reg}_\mathbb{T}$, resp. $ \mathcal{C}^{coh}_\mathbb{T}$, $ \mathcal{C}^{f.o.}_\mathbb{T}$). On the other hand we have the corresponding fibration $ \int \Sub_{\mathcal{C}_\mathbb{T}} \rightarrow \Ctx_\Sigma$ sending $ \{ \overline{ x}, \phi \}$ to the underlying context $ \overline{x}$, with section sending $ \overline{x}$ to the top formula $ \{ \overline{x}, \top \}$. In fact we have an equivalence
\[  \mathcal{C}_\mathbb{T} \simeq \int P_\mathbb{T}  \]
But seeing $ P_\mathbb{T}$ as an internal $\wedge$-semilattice (resp. distributive lattice for the coherent case, resp. Hyeting algebra for the intuitionistic case and boolean algebra for the classical case)  in the presheaf topos $ \mathcal{S}[\mathbb{O}_\Sigma]$, we can apply the spectral construction for Jipsen-Moshier (resp. Zariski, resp. Esakia, resp. Stone) geometry to the modelled topos $ (\mathcal{S}[\mathbb{O}_\Sigma], P_\mathbb{T})$: as the spectral site of each $P_\mathbb{T}(\overline{x})$ is $ P_\mathbb{T}(\overline{x}) $ itself (as consisting of the dual poset of the principal filters on it), the category$ \int P_\mathbb{T}$ exactly coincides with the spectral site $ \mathcal{V}_{P_\mathbb{T}}^{\op}$.
\end{division}

\begin{division}
In the case of a finite-limit theory, there is no topology on the syntactic site $ \mathcal{C}^{cart}_\mathbb{T}$, nor on the spectral site $ \int P_\mathbb{T}$: indeed there are not specified horizontal covers for $ \Ctx_\Sigma$ bears no non-trivial topology, while there is no topology for the Jipsen-Moshier geometry. Hence taking the presheaf topos over those two categories returns the same topos and exhibit, as stated below, the classifying topos of $\mathbb{T}$ as the Jipsen-Moshier spectrum. \\


For the coherent case, the coherent syntactic category $ \mathcal{C}^{coh}_\mathbb{T}$ comes equipped with its coherent topology generated from finite families $ [\theta_i]_\mathbb{T} : \{ \overline{y_i}, \psi_i \} \rightarrow \{ \overline{x_i}, \phi \}$ such that $ \phi \vdash  \bigvee_{i \in I} \exists\overline{y_i} (\psi_i(y_i) \wedge \theta_i(x,y_i))$, which exactly means that one has $ 1_{ \{\overline{x}, \phi \} } = \bigvee_{i \in I} \exists\overline{y_i}(\psi_i(y_i) \wedge \theta_i(x,y_i))$ in $ \Sub_{\mathcal{C}^{coh}_\mathbb{T}}(\{ \overline{x}, \phi \})$. But this latter condition exactly says that the corresponding family of subobjects $(\{ \overline{x}, \exists \overline{y_i} (\psi_i(y_i) \theta_i \} \rightarrowtail \{\overline{x}, \phi \})_{i \in I}$ is a cover for the Zariski topology $ J_{Zar}$ in the corresponding fiber $ P_\mathbb{T}(\overline{x}) = \Sub_{\mathcal{C}^{coh}_\mathbb{T}} ( \{ \overline{ x}, \top \})$, and as each cover for the coherent topology is in particular included in the sieve generated from its own image factorization which lives in the fibers of its codomain, the coherent topology $ J_{coh}$ and the spectral topology correspond through the equivalence above and define the same sheaf topos 
\[ \Sh(\mathcal{C}_\mathbb{T}^{coh}, J_{coh}) \simeq \Sh(\int P_\mathbb{T}, J_{Zar, P_\mathbb{T}}) \]

The same argument applies for the case of first order intuitionistic and classical theories, where the coherent topology is also used on the syntactic site and induces an equivalence, while the localizing topology of the corresponding Esakia and Stone geometries are just restrictions of Zariski geometry. We hence have the following, which is a mere observation at this point yet underpins the expressive power of the spectral construction:
\end{division}

\begin{theorem}
For a finite-limit (resp. coherent, resp. first order intuitionistic, resp. classical) theory $\mathbb{T}$, we have an equivalence 
\[ \mathcal{S}[\mathbb{T}] \simeq \Spec(P_\mathbb{T}) \]
for the Jipsen-Moshier spectrum (resp. Zariski, resp. Esakia, resp. Stone) spectrum.
\end{theorem}

\begin{remark}
    Observe that this process is not able to recover the regular case. We conjecture there is a Grothendieck topology on the opposite category of finitely presented $\wedge$-semilattices, for wich all $\wedge$-semilattices are models in set, but which is not trivial in other topoi than $\mathcal{S}$, and which can be chosen to induce a refinement of Jipsen-Mosheir geometry such that the corresponding notion of spectrum applied to the regular case begets the classifying topos of a regular theory. We still do not know what is this topology: it should express somewhat that the top element is projective, which of course trivialises in set-valued $\wedge$-semilattice... 
\end{remark}

\subsection{Spectrum of hyperdoctrines}

We conclude this paper by applying the spectral construction to the subobject hyperdoctrine of syntactic sites for theories of fragment of logics. 
Though hyperdoctrine coding for the different fragments of first order logic are not in general explicitly required to satisfy the sheaf condition relative to a specified topology on their base, they often happen to do so: we will see for instance than the subobject hyperdoctrine of a regular, resp. a coherent category, defines a sheaf of $\wedge$-semilattices for the regular topology, resp. a sheaf of distributive lattice for the coherent topology. Then one can ask what begets to apply the spectral construction in such case: we prove at \cref{localness over classifying topos} that the spectrum is local over the classifying topos. \\


\begin{division}A primary doctrine on a small lex category $\mathcal{C}$ is a functor $ P : \mathcal{C}^{\op} \rightarrow \MSLat$. No left adjoint to the transition morphisms are yet required. If $\mathcal{C}$ is a small lex category, then the functor $ \Sub_\mathcal{C}$ sending an object $ c$ to the $\wedge$-semilattice of its subobject is a primary doctrine on $\mathcal{C}$, for mono are stable under pullback and subobject form (necessarily small) $\wedge$-semilattices.

Let be $ \mathbb{T}$ a finite-limit theory, with $ \mathcal{C}_\mathbb{T}$ its cartesian syntactic category. Then $ \Sub_{\mathcal{C}_\mathbb{T}} : \mathcal{C}_\mathbb{T}^{\op} \rightarrow \MSLat$ is a primary doctrine and furthermore a $ \wedge$-semilattice in the classifying topos $ \mathcal{S}[\mathbb{T}] \simeq \widehat{\mathcal{C}_\mathbb{T}} $. Hence we can apply the construction of the spectrum for Jipsen-Moshier geometry to $ \Sub_{\mathcal{C}_\mathbb{T}}$: we get a fibered spectral site $ \int \Sub_\mathcal{C}$ whose objects are pairs $(c, m)$ with $m$ a subobject of $c$ and arrows are $ u : (c_1,m_1) \rightarrow (c_2, m_2)$ such that $ m_1 \leq u^*m_2$ (or equivalently, $ \exists_u m_1 \leq m_2$); there is not topology at all to consider, for neither the base topos nor the Jipsen-Moshier spectral site admit non-trivial cover. 

Clearly this construction does not provide an equivalence of categories between the syntactic category of $\mathbb{T}$ and the spectral site for $\Sub_{\mathcal{C}_\mathbb{T}}$, for any monomorphism begets two copies of its domain $ m : (d,1_d) \rightarrow (c,m)$ and $ m : (c,m) \rightarrow (c,1_c)$, which cannot be identified. However we will see later what can be said of the relation between the spectrum and the classifying topos. 
\end{division}

\begin{division}
 
Now we turn to the case of existential (aka regular) hyperdoctrines: those are those $ P : \mathcal{C}^{\op} \rightarrow \MSLat $ with $\mathcal{C}$ lex such that each transition morphism $ P(u) P(c_2) \rightarrow P(c_1)$ at $ u : c_1 \rightarrow c_2$ has a left adjoint $ \exists_u $ satisfying:\begin{itemize}
    \item \emph{Frobenius condition}: for any $x_1$ in $P(c_1)$, $x_2$ in $P(c_2)$, one has 
    \[ \exists_u(c_1 \wedge P(u)(c_2)) = \exists_u(c_1) \wedge c_2 \]
    \item \emph{Beck-Chevalley condition}: for any pair of morphisms $ u_1 : c_1 \rightarrow c$, $u_2 : c_2 \rightarrow c$ in $\mathcal{C}$, the following square over their pullback commutes:
\[\begin{tikzcd}
	{P(c_1\times_c c_2)} & {P(c_2)} \\
	{P(c_1)} & {P(c)}
	\arrow["{P(u_1^*u_2)}", from=2-1, to=1-1]
	\arrow["{\exists_{u_1}}"', from=2-1, to=2-2]
	\arrow["{P(u_2)}"', from=2-2, to=1-2]
	\arrow["{\exists_{u_2^*u_1}}", from=1-1, to=1-2]
\end{tikzcd}\]
\end{itemize}

In particular, for any small regular category, the subobject functor $ \Sub_\mathcal{C} : \mathcal{C}^{\op} \rightarrow \MSLat $ defines an existential hyperdoctrine thanks to the existence of the image factorization. Now recall that any regular category is naturally equiped with the \emph{regular topology} $J_{reg}$ whose cover consists of all singleton families consisting of a regular epimorphism $ \{u : d \twoheadrightarrow c \}$. Though the formalism of hyperdoctrines generaly ignores topological data attached to the indexing category, it is natural to ask whether the subobject hyperdoctrine of a regular category is a special case of internal semilattice in the sheaf topos $ \Sh(\mathcal{C}, J_{reg})$. We first prove an auxiliary lemma:
   
\end{division}

\begin{lemma}
    For any regular epimorphism $u : d \rightarrow c$, we have a retraction 
\[\begin{tikzcd}
	{\Sub_\mathcal{C}(c)} && {\Sub_\mathcal{C}(c)} \\
	& {\Sub_\mathcal{C}(d)}
	\arrow["{\exists_u}"', from=2-2, to=1-3]
	\arrow[Rightarrow, no head, from=1-3, to=1-1]
	\arrow["{u_* }"', from=1-1, to=2-2]
\end{tikzcd}\]
Moreover, $ \exists_u$ preserves the top element.  
\end{lemma}

\begin{proof}
    This is just a consequence of the uniqueness of the epi-mono factorization combined with stability of epimorphisms. This can also be seen as applying Frobenius $ \exists_u (1_d \wedge u^*m) = \exists_u(1_d) \wedge m$ knowing that $ \exists_u(1_d)=1_c$. 
\end{proof}

\begin{lemma}
    For any small regular category $ \mathcal{C}$, $ \Sub_{\mathcal{C}}$ is a sheaf of $\wedge$-semilattices for the regular topology.
\end{lemma}

\begin{proof}
Recall that any regular epimorphism is the coequalizer of its own kernel pair: hence the descent condition of a sheaf for the regular topology reduce to prove that the following diagram is an equalizer 
\[\begin{tikzcd}
	{\Sub_\mathcal{C}(c)} & {\Sub_\mathcal{C}(d)} & {\Sub_\mathcal{C}(d \times_cd) }
	\arrow["{u^*}", from=1-1, to=1-2]
	\arrow["{\pi_1^*}", shift left=1, from=1-2, to=1-3]
	\arrow["{\pi_2^*}"', shift right=1, from=1-2, to=1-3]
\end{tikzcd}\]
First it is clear that one has an injection $ \Sub_\mathcal{C}(c)$ into the equalizer of this parallel pair by commutation of pullback along projections. In the converse direction, take a subobject $ m : e \rightarrowtail d$ such that $ \pi_1^*m = \pi_2^*m$: it defines a subobject $ \exists_u m $ of $c$, and we must show that pulling it back again along $u$ yeild again $m$. The Beck-Chevalley condition at the kernel pair of $u$ says that the following square commutes
\[\begin{tikzcd}
	{\Sub_\mathcal{C}(c)} & {\Sub_\mathcal{C}(d)} \\
	{\Sub_\mathcal{C}(d)} & {\Sub_\mathcal{C}(d \times_cd) }
	\arrow["{u^*}"', from=1-1, to=2-1]
	\arrow["{\exists_{\pi_2}}", shift left=1, from=2-2, to=2-1]
	\arrow["{\pi_1^*}", shift left=1, from=1-2, to=2-2]
	\arrow["{\exists_u}"', from=1-2, to=1-1]
\end{tikzcd}\]
But, observing that the projections $ \pi_1, \pi_2$ are regular epimorphisms by pullback stability, the previous lemma ensures that $ \exists_{\pi_2}$ is a retraction of $ \pi_2$, and applying those in the case of $m$ yields
\begin{align*}
    u^* \exists_u m &= \exists_{\pi_2} \pi_1^*m \\ 
    &= \exists_{\pi_2} \pi_2^*m \\
    &= m
\end{align*}
This ensures that $ \Sub_\mathcal{C}(c)$ is isomorphic to the equalizer of $\pi_1^*, \pi_2 ^*$.
\end{proof}


Seeing now $ \Sub_\mathcal{C}$ as an $\wedge$-semilattice object in the sheaf topos $ \Sh(\mathcal{C},J_{reg})$, we can apply the spectral construction relative to Jipsen-Moshier geometry. At each $ c$ of $\mathcal{C}$, the Jipsen-Moshier spectral site simply is the $\wedge$-semilattice of subobjects $ \Sub_{\mathcal{C}} (c)$ itself, with the trivial topology. Now the fibered site is the category $ \int \Sub_\mathcal{C}$ whose objects are pair $(c, m)$ with $m$ a subobject of $c$ and arrows are $ u : (c_1,m_1) \rightarrow (c_2, m_2)$ such that $ m_1 \leq u^*m_2$ (or equivalently, $ \exists_u m_1 \leq m_2$), equipped with a spectral topology generated only from horizontal families $ \{ u : (d,1_d) \rightarrow (c, 1_c) \}$ obtained by lifting a regular epimorphism $ u : d \twoheadrightarrow c$. Then the spectrum for Jipsen-Moshier geometry is the sheaf topos
\[ \Spec(\Sub_\mathcal{C}) = \Sh(\int \Sub_{\mathcal{C}},J_{\Sub_\mathcal{C}, J_{reg}}) \]

Actually the spectral site here is somewhat too big for the spectrum to coincide with the classifying topos: indeed each subobject $ m : d \rightarrow c$ is duplicated in $ \int {\Sub_{\mathcal{C}}}$ as both the object $(c, m)$ and the arrow $m : (d, 1_d) \rightarrow (c,m)$, and one cannot expect a sheaf for the spectral topology to satisfy the requirement that $ X (c,m) = X(d, 1_d)$ as the arrow $m : (d, 1_d) \rightarrow (c,m)$ fails to be a cover for the lifted topology, as $ m$ is not a regular epimorphism (unless being an isomorphism). One can easily define a retraction  
\[\begin{tikzcd}
	{\Sh(\int \Sub_{\mathcal{C}},J_{\Sub_\mathcal{C}, J_{reg}})} & {\Sh(\mathcal{C},J_{reg})}
	\arrow[from=1-1, to=1-2]
\end{tikzcd}\]
sending a sheaf $ X $ over $ \int \Sub_{\mathcal{C}}$ to the functor $ \widetilde{X} : \mathcal{C}^{\op} \rightarrow \mathcal{S}$ defined as $ \widetilde{X}(c) = X(c, 1_c)$, with a section sending a sheaf $F$ on $\mathcal{C}$ for the regular topology to the functor $ \widetilde{F} : \int \Sub_{\mathcal{C}}^{\op} \rightarrow \mathcal{S}$ sending $ (x,m)$ with $ m : d \rightarrowtail c$ to the value $F(d)$ of $F$ at the domaine of $m$. Thoses assignement are respectively the inverse and direct image part of a retraction: however we cannot improve this into an equivalence between the spectrum and the classifying topos. However we still can tell a bit more, see below \cref{localness over classifying topos} for a general statement that will encompass not only the regular case but also the other cases.

\begin{division}
We turn now to the case of coherent hyperdoctrines. For $ P: \mathcal{C}^{\op} \rightarrow \DLat$ a coherent hyperdoctrine which is moreover a sheaf for $J$ on $\mathcal{C}$, we can consider the spectrum of the underlying distributive lattice object $P$ in $\Sh(\mathcal{C},J)$, which is constructed as 
\[  \Spec(P) = \Sh(\int P, J_{P,J}) \]
where recall that $ \int P$ is the category whose objects are pairs $(c,a)$ with $a \in P(c)$ (recall that the spectral site at each $c$ is $ Zar_P(c)^{\op} \simeq P(c)$), morphisms are $u : (c_1,a_1) \rightarrow (c_2,a_2)$ with $ u : c_1 \rightarrow c_2$ and $c_1 \leq P(u)(a_2)$ in $P(c_1)$, and the topology $J_{P,J}$ is jointly generated from the Zariski topologies $ (J^{Zar}_{c})_{c \in \mathcal{C}}$ and the horizontal families $(u_i, 1_{1_i}) : (c_i, 1_{c_i}) \rightarrow (c,1_c))_{i \in I}$ for $ (u_i)_{i \in I}$ in $J$.


Now consider the case of the coherent hyperdoctrine $ \Sub_{\mathcal{C}} : \mathcal{C}^{\op} \rightarrow \DLat $ associated to a small coherent category $\mathcal{C}$, with $ J_{coh}$ the coherent topology on $\mathcal{C}$, which, recall is the topology generated from all finite families $(u_i : c_i \rightarrow c)_{i \in I}$ such that $ \coprod_{i \in I} c_i \rightarrow c$ is a regular epimorphism (so that we have a cover of the top element $ 1_c = \bigvee_{i \in I} \exists_{u_i} 1_{c_i}$ in $\Sub_{\mathcal{C}}(c)$). Though this is somewhat folklore, we think it is worth emphasizing that the coherent hyperdoctrine of subobject of a small coherent category is an sheaf of distributive lattices for the coherent topology, hence is an internal lattice in the associated sheaf topos. The following proof use essentially the same argument as \cite{coumans2012generalising}[Proposition 28] -- though the later provides a corresponding statement only for the \emph{canonical extension} of the subobject hyperdoctrine).    
\end{division} 

\begin{lemma}
For $\mathcal{C}$ a small coherent category, $\Sub_\mathcal{C} : \mathcal{C}^{\op} \rightarrow \DLat$ is a sheaf for $J_{coh}$. 
\end{lemma}

\begin{proof}
    Let be $(u_i : c_i \rightarrow c)_{i \in I}$ a (finite) covering family for $J_{coh}$: we must exhibit a limit decomposition 
    \[ \Sub_{\mathcal{C}} (c) = \lim \big{(} \prod_{i \in I} \Sub_{\mathcal{C}}(c_i) \rightrightarrows \prod_{i,j \in I} \Sub_{\mathcal{C}}(c_{ij}) \big{)} \]
    which will be provided by sending a subobject $ m : d \rightarrowtail c$ to the family $(u_i^*m : u_i^*d \rightarrowtail c_i)_{i \in I}$ and a matching family $ (m_i : d_i \rightarrowtail c_i)_{i \in I}$ to the finite join of their direct images $ \bigvee_{i \in I} \exists_{u_i}m_i$. We must prove this process to be a bijection. For each $i,j$ we have $ u_{ij}^*u_i^*m = u_{ji}^*u_j^*m $ as a subobject of $ c_{ij} = c_i \times_c c_j$; now take the coproduct $ u_I : \coprod_{i \in I} \rightarrow c$, which is a regular epi, and then the coproduct of the fibers $ (u_i^*m : u_i^*d \rightarrow c_i)_{i \in I}$ over $ \coprod_{i \in I} c_i $. Then the image this coproduct map is identified with the pullback $ u_I^*m$ as depicted below
\[\begin{tikzcd}
	& {\displaystyle\coprod_{i \in I}u_i^*d} \\
	{u_i^*d} & {u_I^*d} & d \\
	& {\displaystyle\coprod_{i \in I} c_i} & c \\
	{c_i}
	\arrow["{u_I}", two heads, from=3-2, to=3-3]
	\arrow["m", tail, from=2-3, to=3-3]
	\arrow["{\langle m^*u_i \rangle_{i \in I}}", two heads, from=1-2, to=2-3]
	\arrow["{q_i}", from=4-1, to=3-2]
	\arrow["{u_i}"', from=4-1, to=3-3]
	\arrow["{u_i^*m}"', tail, from=2-1, to=4-1]
	\arrow[from=2-1, to=1-2]
	\arrow[two heads, from=1-2, to=2-2]
	\arrow["{u_I^*m}"', tail, from=2-2, to=3-2]
	\arrow["{m^*u_I}"', two heads, from=2-2, to=2-3]
	\arrow["\lrcorner"{anchor=center, pos=0.125}, draw=none, from=2-2, to=3-3]
\end{tikzcd}\]
and moreover $ \im(\coprod_{i \in I} u_i^*m) = \bigvee_{i \in I} \exists_{q_i} u_i^*m $ in $\Sub_\mathcal{C}(\coprod_{i \in I} c_i)$ while uniqueness of the image factorization combined with stability of regular epimorphisms ensures that $ m = \exists_{u_I} u_I^*m $, and hence we have 
\begin{align*}  m = \exists u_I u_I^*m &=  \exists_{u_I} (\bigvee_{i \in I} \exists_{q_i} u_i^*m) \\
  &= \bigvee_{i \in I} \exists_{u_I} \exists_{q_i} u_i^*m \\
  &= \bigvee_{i \in I} \exists_{u_i} u_i^*m 
  \end{align*}

  For the converse, a matching family is a family of subobjects $ (\exists_{q_i}m_i : \exists_{q_i}d_i \rightarrowtail \coprod_{i \in I} c_i)_{i \in I}$ such that $ u_{ij}^*m_i = u_{ji}^*m_j$, and one takes $ \bigvee_{i \in I} \exists_{u_i}m_i$ in $\Sub_{\mathcal{C}}(c)$ as the gluing: we must prove this restrict backs. First observe that $ m_i \leq u_i^*\exists_{u_i} m_i \leq u_i^*( \bigvee_{j \in I} \exists_{u_j}m_j)$. We must prove the second inequality. As finite joint are pullback stable, one has $ u_i ^*(\bigvee_{j \in I} \exists_{u_j}m_j) = \bigvee_{j \in I} u_i^*\exists_{u_j}m_j$ so it suffices to prove that for each $j $ one has $ u_i^*\exists_{u_j}m_j \leq m_i $. Applying Beck-Chevalley to the pullback square at $u_i, u_j$ yields a square
\[\begin{tikzcd}
	{\Sub_\mathcal{C}(c)} & {\Sub_\mathcal{C}(c_j)} \\
	{\Sub_\mathcal{C}(c_i)} & {\Sub_\mathcal{C}(c_i \times_c c_i) }
	\arrow["{\exists_{u_j}}"', from=1-2, to=1-1]
	\arrow["{u_i^*}"', from=1-1, to=2-1]
	\arrow["{\exists_{u_{ij}}}", from=2-2, to=2-1]
	\arrow["{u_{ji}^*}", from=1-2, to=2-2]
\end{tikzcd}\]
whose commutativity combined with the compatibility condition entails the desired inequality: 
\begin{align*}
    u_i^*\exists_{u_j}m_j &= \exists_{u_{ij}} u_{ji}^*m_j \\
    &= \exists_{u_{ij}} u_{ij}^*m_i \\
    &\leq m_i
\end{align*}
This proves we get back the $ (m_i)_{i \in I}$ by pulling back their gluing, which concludes to prove $\Sub_{\mathcal{C}}$ to be a sheaf for the coherent topology. 
  \end{proof}

As a consequence the hyperdoctrine of subobject of a small coherent category defines a modelled topos for the theory of distributive lattices $ (\Sh(\mathcal{C},J_{coh}), \Sub_{\mathcal{C}})$, and the spectrum of this hyperdoctrine is obtained as 

\[  \Spec(\Sub_\mathcal{C}) \simeq \Sh(\int \Sub_\mathcal{C},J_{\Sub_\mathcal{C},J_{coh}}) \]

where $J_{\Sub_\mathcal{C},J_{coh}}$ is defined as above as jointly generated from lifting of coherent families and fiber-wise Zariski-covering families.\\

But we know this construction will be somewhat redundant: in fact, while the spectrum is aimed at correcting an ambient object into a local object, here a distributive lattice into a local one, it happens that for a coherent category, the internal distributive lattice $\Sub_\mathcal{C}$ already is local:

\begin{proposition}
    For $\mathcal{C}$ a small coherent category, $ \Sub_\mathcal{C}$ is an internal 1-local distributive lattice.  
\end{proposition}

\begin{proof}
    The subobject functor can be seen as the functor $ \DLat_{\omega}^{\op} \rightarrow \Sh(\mathcal{C},J_{coh})$ sending a finitely presented distributive lattice $ K$ (that is, a finite distributive lattice) to the functor sending an object $c$ of $\mathcal{C}$ to the homset $ \DLat[k, \Sub_\mathcal{C}(c)] $. We must prove this functor to send $J_{Zar}$-covers to jointly epimorphic families. We saw that covers for $J_{Zar}$ were generated by families of the form
\[(\begin{tikzcd}
	{\frac{\langle x_1, \dots x_n \rangle}{\bigvee_{i =1, \dots, n} x_i = 1}} & {\frac{\langle x_1, \dots x_n \rangle}{\bigvee_{i =1, \dots, n} x_i = 1 , x_j =1}}
	\arrow["{q_j}", two heads, from=1-1, to=1-2]
\end{tikzcd})_{j = 1, \dots, n} \]
It suffices now to show that the following natural transformation 
\[\begin{tikzcd}[column sep=large]
	{\displaystyle\coprod_{i=1, \dots, n} \DLat[ \frac{\langle x_1, \dots x_n \rangle}{\bigvee_{i =1, \dots, n} x_i = 1}, \Sub_{\mathcal{C}}(-)]} && {\DLat[\frac{\langle x_1, \dots x_n \rangle}{\bigvee_{i =1, \dots, n} x_i = 1 , x_j =1}, \Sub_{\mathcal{C}}(-)]}
	\arrow["{\DLat[q_j, \, \Sub_{\mathcal{C}}(-)]}", two heads, from=1-1, to=1-3]
\end{tikzcd}\]
is a $J_{coh}$-local epimorphism. For any $c$, the homset $\DLat[\frac{\langle x_1, \dots x_n \rangle}{\bigvee_{i =1, \dots, n} x_i = 1 , x_j =1}, \Sub_{\mathcal{C}}(c)] $ contains exactly $n$-tuples of subobjects $ (m_i : d_i \rightarrowtail c)_{i \in I}$ such that $ \bigvee_{i= 1, \dots, n} m_i = 1_c$: but any such tuples exactly defines a cover for the coherent topology, and for each $i = 1, ..., n$ the corresponding restriction functor $ m_i^* : \Sub_{\mathcal{C}}(c) \rightarrow \Sub_{\mathcal{C}}(c_i)$ sends $ m_i$ to $ 1_{d_i}$ as $m_i$ is a monomorphism so the following square is a pullback
\[\begin{tikzcd}
	{d_i} & {d_i} \\
	{d_i} & c
	\arrow["{m_i}", tail, from=1-2, to=2-2]
	\arrow["{m_i}"', tail, from=2-1, to=2-2]
	\arrow[Rightarrow, no head, from=1-1, to=2-1]
	\arrow[Rightarrow, no head, from=1-1, to=1-2]
	\arrow["\lrcorner"{anchor=center, pos=0.125}, draw=none, from=1-1, to=2-2]
\end{tikzcd}\]
But this exactly means that for each $j = 1, ... , n$ we have a factorization 
\[\begin{tikzcd}[column sep=large]
	{\frac{\langle x_1, \dots x_n \rangle}{\bigvee_{i =1, \dots, n} x_i = 1}} && {\Sub_\mathcal{C}(c)} \\
	{\frac{\langle x_1, \dots x_n \rangle}{\bigvee_{i =1, \dots, n} x_i = 1 , x_j =1}} && {\Sub_\mathcal{C}(d_j)}
	\arrow["{\ulcorner m_1, \dots m_n \urcorner}", from=1-1, to=1-3]
	\arrow["{q_j}"', from=1-1, to=2-1]
	\arrow["{m_j^*}", from=1-3, to=2-3]
	\arrow["{\ulcorner u_j^*m_1, \dots u_j^*m_n \urcorner}"', from=2-1, to=2-3]
\end{tikzcd}\]
This ensures that the cover $ (m_i)_{i \in I}$ satisfies the condition that any restriction of the arrow $\ulcorner m_1, \dots, m_n \urcorner$ along a $ m_j^*$ has an antecedents along the corresponding $q_i$ since $m_j^* $ preserves finite joins so that one still has $ 1_{d_j} = \bigvee_{i=1, \dots , n} m_j^*m_i$ and by what was said above one has $ m_j^*m_j=1_{d_j}$. Hence the morphism above is a local epimorphism, and $ \Sub_{\mathcal{C}}$ is ensured to be a local lattice.  
\end{proof}

\begin{remark}\label{localness of pralouf}
    Now a local Heyting algebra is exactly a Heyting algebra whose underlying distributive lattice is local, while the only local boolean algebra is 2. As a consequence, for the subobject hyperdoctrine is an internal local lattice in $\Sh(\mathcal{C},J_{coh})$, and that any Heyting category, and any boolean coherent category are in particular coherent, their subobject hyperdoctrine are respectively an internal local Heyting algebra, and internally the 2-element lattice -- which does not means of course that it has to return 2 as values !
\end{remark}

In particular we can apply this construction to the coherent syntactic category $\mathcal{C}_\mathbb{T}$ of a coherent theory $\mathbb{T}$: 

\begin{corollary}\label{localness over classifying topos}

For any finite-limit, resp. regular, resp. coherent, resp. intuitionistic, resp. classical first order theory $ \mathbb{T}$, one has a local geometric morphism 
\[\begin{tikzcd}
	{\Spec(\Sub_{\mathcal{C}_\mathbb{T}})} & {\mathcal{S}[\mathbb{T}]}
	\arrow[from=1-1, to=1-2]
\end{tikzcd}\]
where $ \Spec$ denote respectively the Jipsen-Moshier spectrum for the finite limit and regular case, the Zariski spectrum for the coherent case, Esakia spectrum for the intuitionnistic case and Stone spectrum for the boolean case. 
\end{corollary}

\begin{proof}
    In each case, the subobject hyperdoctrine defines a local object in $\mathcal{S}[\mathbb{T}]$ for the corresponding geometry, which can also be seen in the property of the fibration. \\

    For the finite-limit case, the fibration $ \int \Sub_{\mathcal{C}_\mathbb{T}} \rightarrow \mathcal{C}_\mathbb{T}$ has a right adjoint section, hence by \cite{elephant}[example C3.6.3(b)] the induced geometric morphism between presheaf topoi is local, but it is exactly the underlying part of the unit of the spectral adjunction. \\

    For the regular case, observe that the covers of $J_{\Sub_{\mathcal{C}_\mathbb{T}}} $ consists only of lifts of regular covers $ \{ u : (d, 1_d) \rightarrow (c,1_c) \}$ for $u : d \twoheadrightarrow c $ a regular epimorphism in $\mathcal{C}_\mathbb{T}$, hence their projections still are cover for the regular cover, which makes the spectral fibration a continuous comorphism of site with fully faithful right adjoint. Whence the localness of the induced morphism. \\

    For the coherent and additional cases, we saw at \cref{localness of pralouf} that the subobject hyperdoctrine is an internal local distributive lattice, so that the underlying geometric morphism of the unit of the spectral adjunction is local as established in \cref{Spectra of locally modelled topoi are local}.     
\end{proof}

\newpage

\section*{Declarations}

\subsubsection*{Authors contribution}
Axel Osmond is the author of this paper.

\subsubsection*{Funding} The author acknowledges the support of the Grothendieck Institute through his post-doctoral fellowship inside the project \emph{Topos theory and its applications}. 

\subsubsection*{Availability of Data and Material Data} sharing not applicable to this article as no datasets were generated or
analysed during the current study.

\subsubsection*{Competing interests} The author declares that there are no competing interests.

\subsubsection*{Open Access} This article is licensed under a Creative Commons Attribution 4.0 International License, which
permits use, sharing, adaptation, distribution and reproduction in any medium or format, as long as you give
appropriate credit to the original author and the source, provide a link to the Creative Commons licence,
and indicate if changes were made.

\printbibliography

\vfill

 \begin{minipage}{0.49\textwidth}
 
 \textsc{Axel Osmond} 

\vspace{0.2cm}
{\small \textsc{Istituto Grothendieck,
		Corso Statuto 24, 12084 Mondovì, Italy.}\\
	\emph{E-mail address:} \texttt{axelosmond@orange.fr}}
 
\end{minipage}

\end{document}